\documentclass[]{amsart}

\usepackage{amsmath}
\usepackage{amsthm}
\usepackage{amssymb}
\usepackage{tikz-cd}
\usepackage{geometry}
\usepackage{multirow}
\usepackage{lscape}

\usepackage{graphicx}
\usepackage{mathabx}
\usepackage{amsfonts}                                     
\usepackage{mathrsfs}                                       
\usepackage[title]{appendix}     
\usepackage{xcolor}
\usepackage{textcomp}
\usepackage{manyfoot}
\usepackage{booktabs}                                       
\usepackage{algorithm}                                       
\usepackage{algorithmicx}  
\usepackage{algpseudocode}   
\usepackage{listings}

\usetikzlibrary{decorations.markings}
\tikzstyle{base}=[circle, draw, fill=black,inner sep=0pt, minimum width=4pt]
\tikzstyle{affine}=[circle, draw, fill=red,inner sep=0pt, minimum width=4pt]
\tikzstyle{affine2}=[circle, draw, fill=red,inner sep=0pt, minimum width=8pt]
\tikzstyle{affine3}=[circle, draw, fill=red,inner sep=0pt, minimum width=12pt]
\tikzstyle{affine4}=[circle, draw, fill=red,inner sep=0pt, minimum width=16pt]
\tikzstyle{invis}=[circle,inner sep=0pt, minimum width=4pt]
\tikzstyle{fat2}=[circle,draw,fill=black,inner sep=0pt, minimum width=8pt]
\tikzstyle{fat3}=[circle,draw,fill=black,inner sep=0pt, minimum width=12pt]
\tikzstyle{fat4}=[circle,draw,fill=black,inner sep=0pt, minimum width=16pt]
\usepackage{graphicx}
\usepackage{subfiles}
\usepackage{subcaption}
\usepackage{mathtools}
\usepackage{hyperref}
\hypersetup{pdfauthor={KaufmanGreenberg}
	colorlinks=true,
	linkcolor=blue,
	hypertexnames=false,
	filecolor=black,      
	urlcolor=blue,
	citecolor=blue
}
\usepackage[nameinlink]{cleveref}

\newcommand{\lcm}{\mathrm{lcm}}

\newcommand{\gr}[2]{\mathrm{Gr}(#1, #2)}
\newcommand{\ClusterAlgebra}[1]{\mathcal{A}_{#1}}

\newcommand{\ClusterComplex}[1]{\mathcal{M}_{#1}}
\newcommand{\ClusterModularGroup}[1]{\Gamma_{#1}}
\newcommand{\Affine}[1]{{#1}^{(1)}}
\newcommand{\TwistedAffine}[2]{{#1}^{(#2)}}

\newcommand{\NormalClosure}[1]{\mathcal{N}(#1)}

\newcommand{\cyclicgroup}[1]{\mathbb{Z}_{#1}}

\newcommand{\FreeGroup}[1]{F_{#1}}
\newcommand{\Aut}[1]{\mathrm{Aut}(#1)}
\newcommand{\Mut}[1]{\mathrm{Mut}(#1)}
\newcommand{\PSL}[2]{\mathrm{PSL}(#1,\mathbb{#2})}
\newcommand{\SL}[2]{\mathrm{SL}(#1,\mathbb{#2})}

\newcommand{\db}[2]{#1_#2^{(1,1)}}
\newcommand{\dbf}[4]{#1_#2^{(#3,#4)}}

\newcommand{\revPath}[1]{\overline{#1}}
\renewcommand{\vec}[1]{\mathbf{#1}}
\newcommand{\MappingClassGroup}[1]{\mathrm{Mod}(#1)}
\newcommand{\groupgenby}[1]{\langle #1 \rangle}
\newcommand{\Z}{\mathbb{Z}}
\newcommand{\A}{\mathcal{A}}
\newcommand{\T}{T_{\vec{n},\vec{w}}}
\DeclarePairedDelimiter\floor{\lfloor}{\rfloor}
\newcommand{\downmapsto}{\rotatebox[origin=c]{-90}{$\scriptstyle\mapsto$}\mkern2mu}

\theoremstyle{definition}
\newtheorem{example}{Example}[section]
\newtheorem{definition}[example]{Definition}
\newtheorem{conjecture}[example]{Conjecture}
\newtheorem{remark}[example]{Remark}
\theoremstyle{plain}
\newtheorem{theorem}[example]{Theorem}
\newtheorem{corollary}[example]{Corollary}
\newtheorem{claim}[example]{Claim}
\newtheorem{lemma}[example]{Lemma}

\renewcommand{\tilde}{\widetilde}
\renewcommand{\hat}{\widehat}

\raggedbottom

\title{Cluster Modular Groups of Affine and Doubly Extended Cluster Algebras}

\date{}

\author{Zachary Greenberg}
\address{Max Planck Institute for Mathematics in the Sciences\\
Inselstr. 22\\04103 Leipzig, 
Germany }
\email{greenberg@mis.mpg.de}

\author{Dani Kaufman}
\address{Max Planck Institute for Mathematics in the Sciences\\
Inselstr. 22\\04103 Leipzig, 
Germany }
\email{kaufman@mis.mpg.de}

\begin{document}

\begin{abstract}
    We calculate the cluster modular groups of affine and doubly extended type cluster algebras in a uniform way by introducing a new family of quivers. We use this uniform description to construct a natural finite quotient of the cluster complex of each affine and doubly extended cluster algebra. Using this construction, we introduce the notion of affine and doubly extended generalized associahedra, and count their facets. 
\end{abstract}
\maketitle

\setcounter{tocdepth}{1}
\tableofcontents

\section{Introduction}

Cluster algebraic structures underlying the coordinate rings of various algebraic varieties have proven to be vital to many problems in geometry, number theory and mathematical physics. When a cluster algebra has finitely many clusters, it is called a \emph{finite type} cluster algebra, and all of the relevant information it provides can be easily computed and studied. This paper is motivated by considering the properties of cluster algebras and cluster complexes as they transition from finite to infinite type.

One of the foundational results in the theory is that finite type cluster algebras are exactly associated with the Dynkin diagrams of finite root systems. It should be expected that the simplest possible infinite type cluster algebras are associated with affine Dynkin diagrams, and our study begins in this situation. Thinking of affine Dynkin diagrams as ``singly extended'' Dynkin diagrams, we also take our analysis one step further to the ``doubly extended'' case. Doubly extended Dynkin diagrams were introduced by Saito in \cite{Saito:Extended_affine_root_systems}, and are used to classify ``elliptic root systems''.

We will study the cluster complexes of the affine and doubly extended cluster algebras by explicitly describing their associated cluster modular groups. We compute these particular groups by introducing a family of weighted quivers called $\T$ quivers. For various choices of vectors of positive integers $\vec{n}$ and $\vec{w}$ these quivers provide initial seeds for each of the affine and doubly extended cluster algebras. 

The primary results of this discussion are the following theorems:

\begin{theorem}
Let $\vec{n},\vec{w}$ be $m$ dimensional vectors of positive integers. Let $\chi(\T) = \sum(w_i(n_i^{-1}-1))+2$. Then we have the following:
\begin{enumerate}
    \item If $\chi>0$, then $\T$ provides a seed of an affine cluster algebra.
    \item If $\chi=0$, then $\T$ provides a seed of a doubly extended cluster algebra.
    \item If $\chi<0$, then $\T$ provides a seed of an infinite mutation type cluster algebra.
\end{enumerate}
Moreover almost\footnote{The twisted Dynkin diagrams that are Langlands dual to standard diagrams have ``dual'' $\T$ quivers. However their cluster structure is identical to their duals, so we mostly don't need to treat them. The $\db{A}{1}$ and $\TwistedAffine{BC}{4}_n $ cluster algebras are simple to treat as special cases.} every affine and doubly extended cluster algebra has a seed with underlying quiver isomorphic to a $T_{\vec{n},\vec{w}}$ for some $\vec{n},\vec{w}$.
\end{theorem}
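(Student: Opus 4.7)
The plan is to recognize the quantity $\chi(\T) = 2 - \sum w_i(1-n_i^{-1})$ as an orbifold Euler characteristic, and to exploit the classical fact that integer solutions to $\sum w_i(1-n_i^{-1}) < 2$, $=2$, $>2$ fall respectively into the spherical, Euclidean, and hyperbolic triangle-group data. Once the $\T$ quivers are pinned down concretely as star-shaped weighted quivers (with $m$ arms, the $i$-th of length controlled by $n_i$ and carrying weight $w_i$, attached to a central configuration encoding the orbifold structure), the three parts of the theorem reduce to matching a short, enumerated list of $(\vec n,\vec w)$ against known mutation classes.

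For $\chi > 0$, the integer solutions collapse to the spherical data $(2,2,n)$, $(2,3,3)$, $(2,3,4)$, $(2,3,5)$ with controlled weight vectors. For each such tuple I would exhibit an explicit mutation sequence from $\T$ to the standard quiver of the matching affine Dynkin type $\Affine{A}$, $\Affine{D}$, $\Affine{E}_6$, $\Affine{E}_7$, $\Affine{E}_8$ and their non-simply-laced relatives. For $\chi = 0$ the solutions are the Euclidean triples $(3,3,3)$, $(2,4,4)$, $(2,3,6)$ together with the rectangle datum $(2,2,2,2)$, again with constrained weights; I would match each to a doubly extended diagram of Saito, recovering in particular the $\DoubleExtended{E}_6, \DoubleExtended{E}_7, \DoubleExtended{E}_8$ families and the $\DoubleExtended{D}$ series.

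For $\chi < 0$, the strategy is to observe that any such $\T$ properly contains a sub-$\T'$ with $\chi(\T') \leq 0$, and invoke the Felikson–Shapiro–Tumarkin classification of finite-mutation-type quivers to deduce that $\T$ is of infinite mutation type; alternatively, a direct embedding of a known infinite-mutation subquiver suffices. For the ``moreover'' statement I would run the construction in reverse: taking each standard affine and doubly extended diagram not excluded by the footnote, I would exhibit a mutation sequence from its canonical quiver to some $\T$, using the inventory built up in the preceding steps.

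The hard part, I expect, will be organizing Steps 2 and 3 without the argument collapsing into an unenlightening catalogue of individual mutations. The non-simply-laced cases are the most delicate, since the interaction of the weights $w_i$ with mutation at weighted vertices must be tracked carefully, and the doubly extended diagrams are less uniformly documented in the literature than their affine counterparts. The footnote's exceptions $\db{A}{1}$ and $\TwistedAffine{BC}{4}_n$ will require a separate, direct verification, but once the spherical/Euclidean enumeration is in hand this should be routine.
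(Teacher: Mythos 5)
Your overall strategy — enumerate the finitely many solutions of the Euler-characteristic inequality, match each to an affine or doubly extended Dynkin type, and handle the non-simply-laced cases by folding — is the same skeleton the paper uses (its Lemma on finiteness of families with $\chi>0$, the case table in Figure \ref{fig:affineOptions}, and the appendix proofs). The paper's one genuinely different ingredient is that for the infinite families it realizes $T_{p,q,1}$ and $T_{n,2,2}$ as triangulations of an annulus and a twice-punctured disk, so that mutation-equivalence to the affine Dynkin quiver is free from flip-connectivity of triangulations rather than from an explicit inductive mutation sequence; only the $E$-type cases are done by an explicit source--sink mutation computation. That said, there are two concrete gaps in your proposal. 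First, your enumeration for $\chi>0$ is incomplete: the condition $\sum(1-n_i^{-1})<2$ is satisfied for \emph{every} choice of one or two tails, not only for the spherical triples $(2,2,n),(2,3,3),(2,3,4),(2,3,5)$. The omitted families $\vec{n}=(n)$ and $\vec{n}=(p,q)$ are precisely the $A_{p,q}$, $\Affine{C}_n$ and $\Affine{G}_2$ cases, which form the bulk of the affine types; without them the ``moreover'' clause cannot be established.

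Second, your argument for $\chi<0$ does not work as stated. Knowing that $\T$ \emph{contains} a full subquiver $\T'$ with $\chi(\T')\leq 0$ proves nothing: finite mutation type is inherited \emph{by} full subquivers, not conferred by them, and the doubly extended quivers with $\chi=0$ are themselves of finite mutation type. Moreover a ``direct embedding of a known infinite-mutation subquiver'' is not always available — for example $T_{7,3,2}$ has $\chi<0$ yet every proper full subquiver (e.g.\ $T_{6,3,2}$, which is $\db{E}{8}$) is of finite mutation type. To make this direction work you must invoke the Felikson--Shapiro--Tumarkin classification head-on: show that a $\T$ with $\chi<0$ is neither a quiver arising from a triangulated surface (a block-decomposition argument) nor one of the finitely many exceptional finite-mutation-type quivers (a rank and isomorphism check). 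To be fair, the paper itself never writes out a proof of part (3) — its Theorem \ref{thm:AffineType} asserts an ``if and only if'' but the appendix only constructs the forward direction — so this is a gap you share with the source rather than one it resolves for you.
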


Informally, the cluster modular group is the automorphism group of the mutation structure of the cluster algebra. We show that there is an abelian subgroup, $\Gamma_\tau$, of the cluster modular group of cluster algebras coming from $\T$ quivers generated by ``twists'' $\tau_i$ for each ``tail'' $i=1, \dots, m$ and an element $\gamma$ satisfying $\tau_i^{n_i} = \gamma^{w_i}$ for all $i$. Let $H = \Aut{T_{\vec{n},\vec{w}}}$ be the automorphism group of a $T_{\vec{n},\vec{w}}$ quiver. This group acts on $\Gamma_\tau$ by permuting twists $\tau_i$ and $\tau_j$ whenever $n_i=n_j$ and $w_i=w_j$. 

\begin{theorem}
\begin{enumerate} 
    \item The cluster modular group of an affine cluster algebra is isomorphic to $\Gamma_\tau \rtimes H$.
    \item The cluster modular group of a doubly extended cluster algebra is generated by the elements of $\Gamma_\tau \rtimes H$ and one new generator, $\delta$. 
\end{enumerate}
\end{theorem}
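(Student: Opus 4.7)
The plan is to first realize each of $\tau_i$, $\gamma$, the elements of $H$, and $\delta$ as explicit mutation-permutation pairs acting on the cluster complex, and then to argue completeness of this generating set via a covering-type argument based on the finite cluster complex quotient constructed earlier in the paper.

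For the construction, for each tail $i$ I would exhibit a finite mutation sequence supported on the vertices of the $i$th tail, composed with a cyclic relabelling of those vertices, which fixes the $T_{\vec{n},\vec{w}}$ quiver up to isomorphism; this is $\tau_i$. The element $\gamma$ is similarly constructed as a (typically longer) Coxeter-type mutation sequence through the central vertex whose net action is a small rotation of the quiver. The defining relation $\tau_i^{n_i} = \gamma^{w_i}$ is then verified by direct computation: both sides must induce the same vertex permutation and transform the initial seed identically. Since distinct $\tau_i$ have essentially disjoint supports, they commute, so $\Gamma_\tau$ is abelian with the claimed presentation. The group $H$ sits inside the cluster modular group by quiver automorphisms interchanging tails of equal parameters, and conjugation by $h \in H$ permutes the $\tau_i$ accordingly, producing $\Gamma_\tau \rtimes H$.

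Completeness in the affine case proceeds via the finite cluster complex quotient constructed earlier. Since a cluster modular group element is determined by its action on any single seed, it suffices to show that $\Gamma_\tau \rtimes H$ already acts transitively on the preimage of a fixed seed in the finite quotient; this reduces to a counting argument comparing the order of $\Gamma_\tau \rtimes H$ with the cardinality of that quotient. For the doubly extended case the quotient under $\Gamma_\tau \rtimes H$ is no longer finite, and $\delta$ is introduced precisely to cut it down to a finite one; it should correspond to an additional symmetry at the level of the elliptic root system, exchanging the two extending directions of the doubly extended diagram. Once $\delta$ has been identified and its compatibility with the existing generators checked, the same finite-quotient argument gives generation by $\Gamma_\tau \rtimes H$ together with $\delta$.

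The principal obstacle is twofold. First, writing down the mutation sequences realising $\gamma$ (and $\delta$) and verifying the non-obvious relation $\tau_i^{n_i} = \gamma^{w_i}$ uniformly in $\vec{n}$ and $\vec{w}$, rather than case by case, demands careful bookkeeping across the variety of shapes a $T_{\vec{n},\vec{w}}$ quiver can take. Second, showing that no hidden cluster automorphisms escape the constructed subgroup requires the finite-quotient argument to genuinely use the rigidity of the cluster complex --- an approach based solely on matching against known case-by-case computations in the literature would defeat the point of giving a uniform treatment across all affine and doubly extended types.
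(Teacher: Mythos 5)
Your construction of the generators matches the paper: $\tau_i$, $\gamma$ and $\delta$ are indeed realized as explicit mutation--permutation pairs (Definition \ref{def:tau} and Section \ref{sec:DoubleExtendedModularGroup}), the relation $\tau_i^{n_i}=\gamma^{w_i}$ is Theorem \ref{thm:TwistPowers} (established not by a uniform direct computation but by identifying $\tau_i$ with a $\tfrac{2\pi}{n_i}$ rotation of the inner boundary of an annulus and $\gamma$ with the full Dehn twist, via Lemma \ref{lem:TwistGroup}), and commutativity is argued exactly as you say, by disjointness of supports. The genuine gap is in your completeness argument. The finite quotient you propose to count against is $\ClusterComplex{\A}/\groupgenby{\gamma}$ in the affine case and $\ClusterComplex{\A}/\NormalClosure{\gamma}$ in the doubly extended case, and in the paper the cardinalities of these quotients are \emph{derived from} the already-established structure of the cluster modular group: the cluster-variable count uses that each application of $\tau_i$ produces a fresh collection of tail variables and that $r$ has order $\chi^{-1}$ in $\Gamma_Q/\groupgenby{\gamma}$ (via the presentation of Remark \ref{rem:Tgroup}), and the normality, freeness and finite index of $\NormalClosure{\gamma}$ rest on the exact sequence of Theorem \ref{them:DoubleRelations}, which is proved after generation is known. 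Feeding those counts back in to prove that $\Gamma_\tau\rtimes H$ exhausts the cluster modular group is circular; you would need an independent enumeration of the seeds whose quiver is isomorphic to $\T$, and you give no mechanism for producing one.

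What the paper actually does for surjectivity is concrete and unavoidably type-dependent (Appendix \ref{sec:AffineProofs}): for $A_{p,q}$ and $\Affine{D}_n$ it classifies all triangulations of the annulus, respectively the twice-punctured disc, whose associated quiver is a $\T$ --- the double edge forces one of the two local configurations of Remark \ref{rem:DoubleEdgeTriangles} --- and checks that the number of such triangulations modulo the Dehn twist equals $|\Gamma_\tau\rtimes H/\groupgenby{\gamma}|$; for the affine $E$ types it cites the known computation in \cite{Schiffler:cluster_automorphisms}; for the doubly extended $E$ types it matches Fraser's braid-group presentations for $E_7$ and $E_8$ and runs the finite-mutation-class algorithm of Remark \ref{rem:FiniteMutationTypeAlgorithm} for $E_6$; the non-simply-laced cases follow by folding. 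Your stated ambition to avoid case-by-case matching is understandable, but the exceptional types admit no surface model, so your counting argument has no input there. As written, the proposal establishes the easy inclusion $\Gamma_\tau\rtimes H\subseteq\Gamma$ together with the internal relations, but not the reverse inclusion, which is the substance of the theorem.
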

See \Cref{sec:TnwModularGroup,sec:DoubleExtendedModularGroup} for the full definitions of $\tau_i, \gamma,\delta$.\\
We conjecture the following about infinite mutation type $\T$ quivers, i.e. when $\chi<0$.
\begin{conjecture}
If $\chi<0$, then the cluster modular group of a cluster algebra with initial seed given by a $T_{\vec{n},\vec{w}}$ quiver is isomorphic to $\Gamma_\tau \rtimes H$.
\end{conjecture}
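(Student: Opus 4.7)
The strategy parallels the proofs for the affine and doubly extended cases in Sections \ref{sec:TnwModularGroup} and \ref{sec:DoubleExtendedModularGroup}, and splits into showing (a) that $\Gamma_\tau \rtimes H$ embeds into the cluster modular group when $\chi<0$, and (b) that no additional elements exist. Step (a) should be essentially formal, since the mutation sequences defining each twist $\tau_i$ and the element $\gamma$ depend only on the combinatorial structure of $\T$, not on the sign of $\chi$. I would first verify that these sequences return the initial seed to an isomorphic labeled seed, then check that the relations $\tau_i^{n_i} = \gamma^{w_i}$ hold as identities of mutation paths, and finally note that the $H = \Aut{\T}$ action by permuting tails with equal data is compatible with these twists. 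All of these calculations are local to a neighborhood of a single tail and the central vertex, so they carry over verbatim from the affine case.

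The hard direction is (b). My plan would be to pick an arbitrary element $\phi$ of the cluster modular group, represented by a mutation path taking the initial $\T$ seed to an isomorphic seed, and decompose it as $\phi = \tau \cdot h$ with $\tau \in \Gamma_\tau$ and $h \in H$. Inside each tail of $\T$ there sits a subquiver of affine type, and the restriction of $\phi$ to such a subquiver is forced to be a power of the corresponding Coxeter element, i.e.\ a power of $\tau_i$; simultaneously $\phi$ must permute tails in a way consistent with $H$, pinning down $h$. After dividing out by this candidate $\tau \cdot h$, one is reduced to analyzing automorphisms that fix the initial seed pointwise on every tail, and the claim is that any such element must also fix the central vertex, hence be trivial.

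The main obstacle is ruling out a generator analogous to the doubly extended $\delta$. In the $\chi=0$ case, $\delta$ arises because a particular mutation sequence through the ``elliptic direction'' happens to close up periodically; the content of the conjecture is that this closure genuinely fails as soon as $\chi<0$. I see two plausible routes. The first is to construct a numerical invariant on seeds --- for instance, a length or quadratic form analogous to the Tits form of the underlying root system --- that is preserved by $\Gamma_\tau \rtimes H$ but strictly monotone along any candidate extra sequence when $\chi<0$, thereby forbidding closure. The second is to realize $\T$ quivers with $\chi<0$ as coming from triangulated orbifolds of negative orbifold Euler characteristic and identify the cluster modular group with a subgroup of the corresponding mapping class group, where the absence of extra symmetries is a hyperbolic rigidity statement. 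Either execution is delicate, and I expect the non-closure argument to be the crux; the rest of the proof is a bookkeeping extension of the affine case.
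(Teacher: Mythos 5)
The statement you are proving is not proved in the paper at all: it appears only as a conjecture (in the introduction and again as the conjecture in Section \ref{sec:TnwModularGroup} that $\Gamma_{\T}$ is the full cluster modular group whenever $\chi\neq 0$). So there is no paper proof to compare against, and your proposal must stand on its own. As written it does not: part (b) is the entire content of the conjecture, and you explicitly defer its crux (ruling out an extra generator analogous to $\delta$) to two ``plausible routes'' without executing either. That is a strategy outline, not a proof.

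Beyond the admitted incompleteness, two of your specific steps are problematic. First, the second route cannot work: the paper's first theorem states that $\chi<0$ forces $\T$ to be of \emph{infinite} mutation type, whereas quivers arising from triangulated surfaces or orbifolds always have finite mutation class, so no such geometric realization exists; this also disables the finite-mutation-class algorithm (Remark \ref{rem:FiniteMutationTypeAlgorithm}) that the paper relies on for the exceptional affine and doubly extended cases. Second, in your decomposition $\phi=\tau\cdot h$ you speak of ``the restriction of $\phi$ to a tail subquiver,'' but a general element of the cluster modular group is a mutation path that need not preserve any subalgebra, so this restriction is not well defined; pinning down the $\Gamma_\tau$-component of an arbitrary element is precisely where a new idea is needed. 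Part (a) of your proposal is fine and matches what the paper actually establishes (Theorem \ref{thm:TwistPowers} and the surrounding discussion, which are proved locally on tail subquivers and are insensitive to the sign of $\chi$), but the injectivity/surjectivity statement in (b) remains open, as the authors themselves acknowledge.
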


We use the computation of the cluster modular group of $T_{\vec{n},\vec{w}}$ cluster algebras to construct natural finite quotients of the cluster complex. 

In the affine case, the element $\gamma$ generates a finite index subgroup of the cluster modular group. We define the \emph{quotient cluster complex} where cells are equivalence classes up to the action of $\gamma$. The dual to the quotient complex is analogous to the generalized associahedron associated to finite type cluster algebras. We compute the basic properties of this \emph{affine generalized associahedron} including the number of codimension 1-cells and dimension 0-cells and we conjecture that they are each homomorphic to a sphere. 

We have the following theorems,
\begin{theorem}[\Cref{thm:AffineCountingFacets}]
The number of distinct cluster variables in an affine cluster algebra up to the action of $\groupgenby{\gamma}$ is given by 
\begin{equation}
\sum_i(n_i-1)n_i + \frac{n}{\chi}
\end{equation}
The number of distinct clusters in an affine cluster algebra up to the action of  $\groupgenby{\gamma}$ is given by 
\begin{equation}
\frac{2}{\chi}\prod_i\binom{2n_i-1}{n_i}
\end{equation}
\end{theorem}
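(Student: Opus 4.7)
The plan is to exploit the well-understood module-theoretic structure of the cluster category of an affine cluster algebra, which decomposes into a transjective component together with a finite family of stable tubes of ranks $n_1,\ldots,n_m$. The element $\gamma$ should play the role of the Auslander--Reiten (AR) translation at the cluster-modular level, while each $\tau_i$ rotates the tube of rank $n_i$. Throughout, the combinatorial data $(n_i,w_i)$ of $T_{\vec{n},\vec{w}}$ should match the tubular ranks and the twist multiplicities predicted by this module category.

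I would first analyze the action of $\gamma$ on the AR structure. Using the relation $\tau_i^{n_i}=\gamma^{w_i}$ from Section \ref{sec:TnwModularGroup} together with the interpretation of $\tau_i$ as a rotation of the rank-$n_i$ tube, one sees that $\gamma$ must act trivially on every indecomposable object inside each tube (because $\tau_i^{n_i}$ is the trivial rotation on a rank-$n_i$ tube, forcing $\gamma^{w_i}$, and hence $\gamma$, to fix each tube object). On the transjective component, the same relations, matched against the natural Coxeter-type action on the $\mathbb{Z}$-family of slices, force $\gamma$ to translate by exactly $\chi$ slices. Deriving this precise translation rate is what I expect to be the main obstacle, and I would approach it by pushing $\tau_i^{n_i}=\gamma^{w_i}$ through the Coxeter element of the underlying affine root system and computing its action on the transjective slice.

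Granting the action, the cluster-variable count becomes a direct enumeration. In the tube of rank $n_i$ the indecomposable rigid objects are exactly those of quasi-length $\ell\in\{1,\ldots,n_i-1\}$ at each of the $n_i$ positions, giving $n_i(n_i-1)$ such objects; each is its own $\gamma$-orbit, so the tubes contribute $\sum_i n_i(n_i-1)$. The transjective component consists of a $\mathbb{Z}$-family of slices with $n$ objects per slice, and since $\gamma$ identifies slices at distance $\chi$, we obtain $n/\chi$ transjective orbits. Summing yields the asserted formula.

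For the cluster count I would use the standard fact that an affine cluster-tilting object decomposes as a transjective part together with a maximal rigid piece in each tube, chosen independently. A maximal rigid object in a tube of rank $n_i$ admits a bijective description as a noncrossing combinatorial object (a ``tube tilting''), counted by $\binom{2n_i-1}{n_i}$. The transjective part modulo $\gamma$ contributes $2/\chi$ choices: a fundamental domain of $\gamma$ on the transjective component contains $\chi$ slices and a single cluster-tilting slice admits two types of ``transjective windows'' (the preprojective and preinjective rays), giving $2/\chi$ orbits of transjective pieces. Multiplying the independent contributions across the tubes and the transjective part gives $\frac{2}{\chi}\prod_i\binom{2n_i-1}{n_i}$. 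The remaining subtleties, namely the independence of the tubular and transjective choices and the enumeration $\binom{2n_i-1}{n_i}$ of tube-tiltings, I would handle by an explicit combinatorial bijection with noncrossing partitions of an $n_i$-annulus together with an appeal to the fact that rigidity in the cluster category factors across the tubular/transjective decomposition.
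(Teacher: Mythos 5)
Your route through the cluster category (transjective component plus exceptional tubes of ranks $n_1,\dots,n_m$) is genuinely different from the paper's, which works with arcs on marked surfaces for types $A$ and $D$, brute force for $E$, and folding for the non-simply-laced types. For the first formula the two arguments are essentially parallel translations of one another: your rigid tube objects of quasi-length $1,\dots,n_i-1$ are the paper's ``affine'' variables living on tail nodes of a $\T$ quiver, your transjective objects are its ``finite'' variables living on source--sink Dynkin slices (Lemma~\ref{thm:AffineVariableLocation}), and the count $n/\chi$ of transjective $\groupgenby{\gamma}$-orbits is obtained in the paper by identifying the source--sink path with the reddening element $r$ and computing the order of $r$ in $\Gamma_Q/\groupgenby{\gamma}$ from the presentation of Remark~\ref{rem:Tgroup}. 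The step you flag as the main obstacle --- pinning down the action of $\gamma$ on the transjective component --- is real but fillable; note, however, that ``translates by exactly $\chi$ slices'' cannot be taken literally, since $1/\chi$ is not an integer in general (e.g.\ $\chi=3/2$ for $A_{2,1}$); only the total orbit count $n/\chi$ is well defined.

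The second formula is where the proposal breaks. A cluster-tilting object in an affine cluster category does \emph{not} decompose as a transjective part together with an independently chosen maximal rigid piece in each tube: any slice gives a cluster-tilting object with no tube summands at all (e.g.\ the all-crossing-arc triangulation of the annulus), and in general a cluster may meet any subset of the tubes in configurations smaller than a maximal rigid one, compensated by extra transjective summands. Three concrete symptoms: (i) $2/\chi$ is not an integer (it is $4/3$ for $A_{2,1}$), so it cannot count ``transjective windows''; (ii) $\binom{2n_i-1}{n_i}$ is not the number of maximal rigid objects in a tube of rank $n_i$ --- a rank-$2$ tube has exactly $2$ maximal rigid objects (its two quasi-simples, each of which extends the other), not $\binom{3}{2}=3$, the general count being $\binom{2(n_i-1)}{n_i-1}$; (iii) already for $A_{2,2}$ the $18$ clusters up to $\gamma$ do not split as $2\times 3\times 3$ independent choices. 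The expression $\frac{2}{\chi}\prod_i\binom{2n_i-1}{n_i}$ is a closed form, not a product over independent choices. The paper instead partitions triangulations by the triangle containing a fixed boundary edge, obtains the recurrences of Lemmas~\ref{thm:ApqRecurrence} and~\ref{thm:DnRecurrence}, solves them by generating functions (Lemma~\ref{thm:ApqCountingFormula}), and handles the exceptional and folded types inductively via Lemma~\ref{thm:AffineCountingFacets}; salvaging your approach would require enumerating all mixed tube/transjective configurations, which amounts to the same recurrence in categorical clothing.
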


These two equations provide the number of codimension 1-cells and dimension 0-cells of an affine generalized associahedron respectively. 

In the doubly extended case, the element $\gamma$ no longer generates a normal subgroup. Instead, we find that the normal closure of this element, in most cases, is a free, finite index normal subgroup of the cluster modular group. We compute the number of clusters in the quotient cluster complex by this group in \Cref{fig:DoublyExtendedCountingClusters}. We define doubly extended generalized associahedra to be the dual of this quotient complex. 

We conjecture that affine and doubly extended generalized associahedra are each homeomorphic to a product of spheres.

\begin{conjecture}
\begin{enumerate}
    \item The affine generalized associahedron of an affine cluster algebra of rank $n+1$ is homeomorphic to a sphere of dimension $n$.
    
    \item The cluster complex of a doubly extended cluster algebra of rank $n+2$ is homotopy equivalent to $S^{n-1}$. 
    
    \item The doubly extended associahedron associated with a doubly extended cluster algebra is homeomorphic to $S^{n-1} \times S^2$ in all cases other than $\db{E}{8}$ where it instead is homeomorphic to $S^7 \times S^1 \times S^1$.
\end{enumerate}

\end{conjecture}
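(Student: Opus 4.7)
The plan is to tackle the three parts of the conjecture separately, leveraging the explicit structure of the cluster modular group developed in the preceding sections and the facet/vertex counts from Theorem \ref{thm:AffineCountingFacets}. For part (1), I would first establish the conjecture by direct verification for each small-rank affine case ($\db{A}{n}$, $\db{B}{n}$, $\db{C}{n}$, $\db{D}{n}$, $\db{E}{6,7,8}$, $\db{F}{4}$, $\db{G}{2}$ in low rank) using the cell counts to compute the Euler characteristic of the quotient complex and comparing with $S^n$. To promote this numerical evidence to a genuine topological identification, I would verify that $\groupgenby{\gamma}$ acts freely and properly discontinuously on the cluster complex (which amounts to checking that no nontrivial power of $\gamma$ stabilizes any cluster, a fact that can likely be extracted from the description of $\Gamma_\tau$), so that the quotient inherits the structure of a finite regular cell complex. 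Then the key step is to show this quotient is a PL manifold by analyzing vertex links: in a cluster complex, the link of a maximal simplex corresponds to the local exchange graph, and a quotient by a free cellular action preserves this local structure, so each link should inherit the sphere structure of an appropriate finite-type associahedron.

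For part (2), the goal is different since it concerns the unquotiented cluster complex of a doubly extended algebra, which is infinite. Here the strategy is to build an explicit $\groupgenby{\gamma, \delta}$-equivariant deformation retraction from the cluster complex onto a finite subcomplex. The natural candidate is a fundamental domain for the action of the normal closure $\NormalClosure{\gamma}$, which is already known from Section \ref{sec:DoubleExtendedModularGroup} to be a free, finite-index normal subgroup. Using the product-like structure $\Gamma_\tau \rtimes H$ together with the extra generator $\delta$, one can hope to identify this fundamental domain as a contractible neighborhood of a core $(n-1)$-sphere, and then use discrete Morse theory or a classical nerve argument to collapse all additional cells.

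For part (3), I would combine part (1) with the extra modular direction coming from $\delta$. In most doubly extended cases the relations between $\gamma$ and $\delta$ should factor topologically as an action by $S^2$ (viewed as a join-type suspension of the affine associahedron direction), giving the claimed product $S^{n-1} \times S^2$. The exceptional behavior of $\db{E}{8}$ is the essential subtlety: here the abelianization of the doubly extended modular group should force a $S^1 \times S^1$ factor in place of $S^2$, and detecting this requires computing the explicit relations between $\tau_i$, $\gamma$, and $\delta$ in the $\db{E}{8}$ modular group and showing that these relations do not collapse to yield a single $S^2$ factor. Verifying this topologically will probably require exhibiting explicit commuting circle actions.

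The main obstacle across all three parts is upgrading enumerative and algebraic evidence into genuine homeomorphism statements. Proving that a cell complex built from cluster mutations is actually a topological manifold (as opposed to merely having the correct cell counts and Euler characteristic) is known to be delicate even in finite type, where the result was only established via polytopal realizations. In the affine and doubly extended settings no uniform polytopal model is yet available, so the realistic route is a case-by-case construction of local charts at each vertex of the quotient complex, combined with a careful check that the maximal cells glue consistently. The $\db{E}{8}$ case will almost certainly resist the uniform argument and require dedicated computation.
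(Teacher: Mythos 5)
The statement you are addressing is left as a conjecture in the paper: the authors offer no proof of any of its three parts, remarking only that part (1) is known in type $A$ via Penner's work on arc complexes and can be checked case by case for the exceptional types. So there is no ``paper proof'' to compare against, and your text should be judged as a research plan rather than a proof. As a plan it contains a concrete error and several unclosed gaps. The error is in your first step for part (1): you propose to show that $\groupgenby{\gamma}$ acts freely on the cluster complex so that the quotient is a regular cell complex. The paper's own examples contradict this. In the $A_{2,1}$ case the quotient associahedron has four \emph{folded} $2$-cells, and more generally the authors explicitly note that the quotient by $\gamma$ fails to be a combinatorial cell complex because some facets contain several cluster variables in a single $\gamma$-orbit; this is precisely why they pass to $\gamma^3$ before dualizing. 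Any argument that begins by assuming freeness of the $\gamma$-action will not get off the ground, and the link analysis you describe would have to contend with these identifications rather than inherit finite-type sphere links wholesale.

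Beyond that, parts (2) and (3) of your plan are not arguments so much as hoped-for mechanisms. For part (2) you posit an equivariant deformation retraction onto a fundamental domain for $\NormalClosure{\gamma}$ and a discrete Morse collapse, but you give no candidate Morse function and no reason the retraction exists; note also that the relevant free normal subgroup is not $\NormalClosure{\gamma}$ uniformly (the paper must replace it by $\NormalClosure{\gamma r^2}$, $\NormalClosure{\gamma r^4}$, or the groups $\mathcal{N}_k$ in the $E_6$ and $E_8$ cases, and $\NormalClosure{\gamma r^4}$ is not even of finite index for $\db{E}{8}$). For part (3), the proposed ``join-type suspension'' producing an $S^2$ factor and the ``commuting circle actions'' detecting $S^1\times S^1$ in $\db{E}{8}$ are not defined objects; the abelianization of the modular group does not by itself determine the homeomorphism type of the dual complex. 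You correctly identify the central obstacle --- upgrading cell counts and Euler characteristics to genuine manifold and homeomorphism statements --- but the proposal does not overcome it, so the conjecture remains open under your approach as written.
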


\subsection{Structure of the paper}
    
The structure of the paper is as follows:

\Cref{sec:Prelim,sec:ClusterModularGroup} review the background necessary to define the cluster complex and cluster modular group associated with a cluster algebra. 
\Cref{sec:TnwQuivers} studies the cluster modular group of a type $\T$ cluster algebra. \Cref{sec:AffineClusterAlgebra} deals with affine cluster algebras and studies affine generalized associahedra. Finally \Cref{sec:DoubleExtended} deals with doubly extended cluster algebras and doubly extended associahedra. 

The appendix contains a list of all of the Dynkin diagrams referenced in this paper and a review of the relationship between cluster algebras and triangulations of surfaces. The proofs of theorems which use this relationship are included in the appendix.

\subsection{Acknowledgements}
This research was partially supported by by the Deutsche Forschungsgemeinschaft (DFG, German Research Foundation) under both Project-ID 281071066-TRR 191 and Project ID 338644254-SPP2026  as well as PosLieRep ERC 101018839.\\
We would also like to give special thanks to Christian Zickert for his helpful advice and encouragement, to Chris Fraser for correction of a mistake, and to the anonymous referee for their very helpful and thorough comments and suggestions. 

\section{Preliminaries on Dynkin Diagrams\texorpdfstring{,}{} Quivers\texorpdfstring{,}{} and Mutations}\label{sec:Prelim}

While the goal of this paper is to study cluster algebras, most of our computations will be phrased  entirely in terms of quivers and mutations. We will give a definition of the cluster modular group entirely in terms of sequences of mutations and quiver isomorphisms, without any reference to cluster variables or cluster algebras. Firstly, we will discuss Dynkin diagrams and their associated quivers.
    
\subsection{Dynkin diagrams}

Dynkin diagrams are known to classify a number of important algebraic objects. These diagrams are graphs consisting of nodes and weighted edges. The weights are represented by drawing single, double, triple, or quadruple\footnote{Quadruple edges only appear in the $\TwistedAffine{BC}{4}_1$ Dynkin diagram.} edges between the corresponding nodes. Any non-single edge is given an orientation. We call this style of presenting Dynkin diagrams \emph{oriented Dynkin diagrams}. Each oriented Dynkin diagram we will consider is either acyclic or has no multi-edges. 

We will use a modified form of these diagrams where instead of using oriented multi edges, we will instead assign weights to the nodes and only use non-oriented edges. We call such a diagram a \emph{weighted Dynkin diagram}. See \Cref{app:DynkinDiagrams}  for a full list of weighted diagrams referenced in this paper. In the doubly extended case, some nodes are linked with a double edge; this should not be confused with a double edge in the oriented Dynkin diagrams. 

To construct a weighted Dynkin diagram from a oriented diagram, we make a diagram with the same underlying graph, and add weights to each node via the following procedure: We first select a sink of the oriented diagram and assign the corresponding node in the weighted diagram weight 1. Then we assign weights to the other nodes by traversing the graph from our starting node and assigning the same weight when we cross a single edge, assigning weight times the multiplicity of a multi-edge when crossed against its orientation and dividing the weight when crossed with the orientation. 

To show the weights of nodes visually, we display nodes larger when they have higher weight. See \Cref{fig:DynkinWeightedVsOriented} for examples of converting between oriented and weighted Dynkin diagrams.

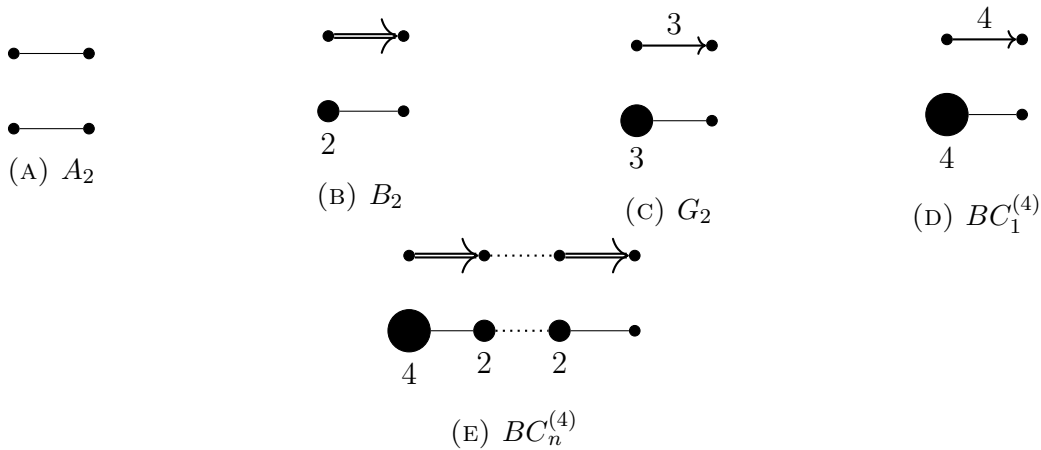
\begin{figure}[hb]
    \centering
    \begin{subfigure}{.24\textwidth}
     \centering
     \begin{tikzpicture}
        \node[base] (1) [] {};
        \node[base] (2) [right of = 1] {};
        \node[base] (3) [label=below:\vphantom{1},below of = 1] {};
        \node[base] (4) [right of = 3] {};
        \path[-] (1) edge [] node {} (2);
        \path[-] (3) edge [] node {} (4);
    \end{tikzpicture}
    \caption{$A_2$}
    \end{subfigure}
     \begin{subfigure}{.24\textwidth}
     \centering
     \begin{tikzpicture}
        \node[base] (1) [] {};
        \node[base] (2) [right of = 1] {};
        \node[fat2] (3) [label=below:2, below of = 1] {};
        \node[base] (4) [right of = 3] {};
        \path[->] (1) edge [double,thick] node {} (2);
        \path[-] (3) edge [] node {} (4);
    \end{tikzpicture}
    \caption{$B_2$}
    \end{subfigure}
    \begin{subfigure}{.24\textwidth}
     \centering
     \begin{tikzpicture}
        \node[base] (1) [] {};
        \node[base] (2) [right of = 1] {};
        \node[fat3] (3) [label=below:3,below of = 1] {};
        \node[base] (4) [right of = 3] {};
        \path[->] (1) edge [thick,above] node {3} (2);
        \path[-] (3) edge [] node {} (4);
    \end{tikzpicture}
    \caption{$G_2$}
    \end{subfigure}
    \begin{subfigure}{.24\textwidth}
     \centering
     \begin{tikzpicture}
        \node[base] (1) [] {};
        \node[base] (2) [right of = 1] {};
        \node[fat4] (3) [label=below:4,below of = 1] {};
        \node[base] (4) [right of = 3] {};
        \path[->] (1) edge [thick,above] node {4} (2);
        \path[-] (3) edge [] node {} (4);
    \end{tikzpicture}
    \caption{$BC_1^{(4)}$}
    \end{subfigure}
    \begin{subfigure}{.9\textwidth}
         \centering
     \begin{tikzpicture}
        \node[base] (1) [] {};
        \node[base] (2) [right of = 1] {};
        \node[base] (3) [right of = 2] {};
        \node[base] (4) [right of = 3] {};
        \node[fat4] (5) [label=below:4,below of = 1] {};
        \node[fat2] (6) [label=below:2,right of = 5] {};
        \node[fat2] (7) [label=below:2,right of = 6] {};
        \node[base] (8) [right of = 7] {};
        \path[->] (1) edge [thick, double] node {} (2);
        \path[-] (2) edge [thick, dotted] node {} (3);
        \path[->] (3) edge [thick, double] node {} (4);
        \path[-] (5) edge [] node {} (6);
        \path[-] (6) edge [thick, dotted] node {} (7);
        \path[-] (7) edge [] node {} (8);
    \end{tikzpicture}
    \caption{$\TwistedAffine{BC}{4}_n$}
    \end{subfigure}
    \caption{Correspondence between oriented and weighted Dynkin diagrams.}
    \label{fig:DynkinWeightedVsOriented}
\end{figure}

The ``finite'' Dynkin diagrams classify irreducible root systems, simple complex Lie algebras, and finite type cluster algebras. 
Finite Dynkin diagrams, shown in \Cref{fig:DynkinFiniteSimplyLaced}, can be simply laced (types $A_n$, $D_n$, $E_6$, $E_7$, $E_8$) or non-simply laced. 

Each of the non simply laced diagrams (\Cref{fig:DynkinFiniteFolded}) can be constructed by folding the appropriate simply laced diagrams. Given a graph automorphism of a diagram, $\sigma$, we may fold the diagram along $\sigma$ by combining nodes in the same orbit of the action of $\sigma$ into a single node with weight given by the size of the orbit.

\begin{example}
For example, $C_n$ comes from folding an $A_{2n+1}$ diagram in half, $B_n$ is obtained by folding the two tails of a $D_{n+1}$, $F_4$ comes from folding the length three tails of an $E_6$ diagram and $G_2$ comes from folding all three tails of a $D_4$ diagram. 
\end{example}

Affine Dynkin diagrams can be obtained by connecting an extra node to a finite Dynkin diagram. We call this node an extending node and we color it red in \Cref{fig:DynkinAffineSimplyLaced,fig:DynkinAffineFolded}. As in the finite case, the non-simply laced diagrams also come from folding corresponding simply laced diagrams. 

There is also a notion of a doubly extended or elliptic Dynkin diagram introduced by Saito in \cite{Saito:Extended_affine_root_systems}. Of these elliptic diagrams only the ``1 co-dimensional'' elliptic Dynkin diagrms have associated cluster algebras. We show all the relevant doubly extended weighted Dynkin diagrams in \Cref{fig:DynkinDouble,fig:DynkinDoubleFolded}. These weighted diagrams contain double edges. 

There is a notion of ``Langlands duality'' of non-simply laced Dynkin diagrams. The Langlands dual of a diagram is obtained by replacing each node of weight $W$ with a node of weight $M/W$ where $M$ is the maximum of all the node weights. 

\begin{remark}
The cluster algebras associated with a Dynkin diagram and its Langlands dual will have the same cluster structure and thus the same cluster modular group. For this reason,  we will not consider the ``twisted'' affine types, which are generally Langlands dual to the normal affine types described above. The only exception to this is the affine type $\TwistedAffine{BC}{4}_n$ shown in \Cref{fig:DynkinAffineTwisted}. This type is its own Langlands dual, so we will consider it as a special case when necessary. 
\end{remark}

\subsection{Quivers}
    
    We will define cluster algebras and cluster modular groups in terms of \emph{weighted quivers} and show how to construct a weighted quiver out of a Dynkin diagram (\Cref{sec:DynkinQuiver}).
    First, we will define unweighted quivers, which will come from simply laced Dynkin diagrams.
    
    \begin{definition}
    
    An \emph{unweighted quiver} is a finite directed graph without self loops or 2-cycles. An unweighted quiver is equivalent to a skew symmetric matrix with entries $e_{ij}$  equal to the number of arrows from node $i$ to node $j$ with arrows from $j$ to $i$ counted negatively. We write $V(Q)$ for the set of nodes of of the quiver $Q$.
    
    \end{definition}
    
    Each of the simply-laced Dynkin diagrams can be transformed into an unweighted  quiver by assigning an orientation to each edge. In the non simply-laced case, we will need the notion of a weighted quiver. 
    
    \begin{definition}
    A \emph{weighted quiver} is a quiver where each node, $i$, is assigned an integer weight $w_i > 0$. 
    \end{definition}
    
    The exchange matrix encoded by such a weighted quiver will not be skew symmetric, but will be ``skew-symetrizeable''.
    
    \begin{definition}
    A \emph{skew-symmetrizable} matrix is a matrix $M$ for which there exists a diagonal matrix $D$ such that $MD^{-1}$ is skew symmetric. 

    The \emph{exchange matrix} of a weighted quiver is a skew-symmetrizable matrix $\epsilon_{ij}$ with entries $\epsilon_{ij}=e_{ij}w_j/\gcd(w_j,w_i)$, where $e_{ij}$ is the number of arrows from node $i$ to node $j$ counted with signs.  The diagonal matrix which symmetrizes $\epsilon_{ij}$ is given by $D_{ii}=w_i$
    \end{definition}
    
    For convenience, we will generally refer to weighted/unweighted quivers simply as ``quivers'' unless we want to emphasize the distinction. 
    
    \begin{definition}
    An \emph{isomorphism} of quivers is a graph isomorphism which preserves all arrow directions and node weights. Two quivers are isomorphic if and only if their exchange matrices are identical after conjugation by a permutation matrix. 
    \end{definition}
    
    Let $Q$ be a quiver.
    
    \begin{definition}
    \emph{Mutation} of $Q$ at node $k$ is an operation which produces a new quiver, $\mu_k(Q)$, where the edges of $Q$ change by the following rule:
    \begin{itemize}
        \item For each pair of arrows $(i \rightarrow k)$, $(k \rightarrow j)$ add $w_k \gcd(w_i,w_j) / (\gcd (w_i,w_k) \gcd(w_k,w_j))$ arrows from node $i$ to node $j$, cancelling any 2-cycles between nodes $i$ and $j$. 
        \item Reverse every arrow incident to $k$. 
    \end{itemize}
    
    The exchange matrix of $\mu_k(Q)$ given by $[\epsilon'_{ij}]$ is obtained from the exchange matrix of $Q$ by the following formula:
    \begin{align}
        \epsilon'_{ij} &= -\epsilon_{ij} &\text{ if }  i \text{ or } j = k \\
        \epsilon'_{ij} &= \epsilon_{ij} + \frac{|\epsilon_{ik}|\epsilon_{kj}+\epsilon_{ik}|\epsilon_{kj}|}{2} &\text{ otherwise }
    \end{align}
    
    The set of all quivers up to isomorphism which may be obtained from $Q$ by any sequence of mutations is called the \emph{mutation class} of $Q$ and is denoted $\Mut{Q}$.
    \end{definition}
    
   \subsection{Quivers from Dynkin diagrams}\label{sec:DynkinQuiver}
  
    A choice of an orientation of the edges of a weighted Dynkin diagram, $X$, gives a weighted quiver, $Q$. The choice of orientations gives the set of arrows of $Q$. 
    
    It is easy to see that for any weighted Dynkin diagram without double edges, other than $X=\Affine{A}_n$, that any two choices of orientations give mutation equivalent quivers.
    
    When $X=\Affine{A}_n$ there is a family $A_{p,q}$, $p\geq q , p+q = n+1$, of mutation classes of quivers obtained by choosing $p$ and $q$ arrows in each direction around the cycle. One may check that $A_{n+1,0}$ is mutation equivalent to $D_{n+1}$.
    
    Doubly extended Dynkin diagrams include a single double edge $e$. In these cases, the orientation of edges incident to the endpoints of $e$ must be chosen to form an oriented cycle with the orientation of $e$.

\subsection{Cluster algebras from quivers}

Cluster algebras (of geometric type) are formed by starting with an initial ``seed'' and applying all possible mutations. 

Let $Q$ be a quiver with $n$ nodes and let $\boldsymbol{z} =\{z_1, \dots,z_n\}$ be algebraically independent elements  of $\mathbb{Q}(x_1,\dots,x_n)$.
\begin{definition}
 A \emph{seed}, $\boldsymbol{i}=(Q,\boldsymbol{z})$ is a pair of a quiver $Q$ and the set of variables $\boldsymbol{z} = \{z_1, \dots,z_n\}$ with each variable associated to a distinct node of $Q$. The set $\boldsymbol{z}$ is called a $\emph{cluster}$ and the variables are called \emph{cluster variables}. Given any quiver, $Q$, we can make an \emph{initial seed} associated with $Q$ simply by associating the variables $z_1,\dots,z_n$ to the nodes of $Q$. 
\end{definition}

We can mutate a seed at a node $k$ by mutating $Q$ as before and replacing the variable $z_{k}$ associated with node $k$ with a new variable $z_{k}'$ satisfying
\begin{equation}
    z_{k}\cdot z_{k}' = \prod_{\epsilon_{ki}<0} z_{i}^{-\epsilon_{ki}} + \prod_{\epsilon_{kj}>0} z_{j}^{\epsilon_{kj}}
\end{equation} 

\begin{definition}
The $\mathbb{Z}-$Algebra generated by all of the cluster variables obtained from all possible mutations of a seed is the \emph{cluster algebra} associated with that seed. We write $\ClusterAlgebra{Q}$ for the cluster algebra associated with an initial seed with quiver $Q$.
\end{definition}

 \subsection{Folding quivers and cluster algebras}\label{sec:FoldingQuivers}

The relationship between the classical ``folding'' of the simply laced Dynkin diagrams to form the non simply laced diagrams can be extended to quivers. This is used to classify the finite mutation class skew symmetrizable exchange matrices in \cite{FST:finite_mutation_via_unfoldings}.
The definitions of folded quivers and folded cluster algebras we review here can be found in \cite{KaufmanSpecialFoldingQuivers2024a}. 

Essentially, to fold a quiver we will group its nodes into disjoint sets and do mutations by requiring that we mutate all the nodes of a given set together. We call the operation of mutating each of the elements of a set of nodes in turn a ``group mutation.'' In general a group mutation will depend on the order the nodes are mutated. However if all the mutations commute, the order doesn't matter. A sufficient condition for all the mutations to commute is that there are no arrows between any two nodes in each set. 

With this in mind, we have the following definition:
\begin{definition}
A \emph{folding} of a quiver, $Q$, with $n$ nodes is a choice of $k$ non empty and disjoint sets of nodes $\mathbb{K}= \{K_1,\dots,K_k\}$ whose union contains all of the nodes of $Q$ satisfying the following conditions called the \emph{folding conditions}:
\begin{enumerate}
    \item The nodes contained in a given set have no arrows between themselves.
    \item Condition 1 is satisfied after any number of group mutations of these fixed sets.
\end{enumerate}
We call the pair $(Q,\mathbb{K})$ a \emph{valid folding} of the quiver $Q$ if it satisfies the folding conditions. The \emph{folded mutation class} of a folding of a quiver is the set of folded quivers which are group mutation equivalent it.
\end{definition}

\begin{lemma}[lemma 2.8 \cite{KaufmanSpecialFoldingQuivers2024a}]\label{lem:fold_aut_2}
    If $\sigma \in \Aut{Q}$ is an involution, then the sets consisting of orbits of the action of $\sigma$ is a valid folding of $Q$. 
\end{lemma}
\begin{proof}
    One easily checks that the folding conditions are satisfied since any arrow between nodes in a orbit of $\sigma$ would induce a 2-cycle. This is still true after group mutations since $\sigma$ still acts as an automorphism after group mutation. 
\end{proof}
Let $G \subset \Aut{Q}$ be a subgroup of automorphisms of $Q$ and let $\mathbb{K}_G$ denote the orbits of the action of $G$ on $Q$.

\begin{definition}
    We call a subgroup $G \subseteq \Aut{Q}$ \emph{saturated} if any automorphism of $Q$ which fixes all the orbits of $G$ is actually in $G$ and if for any two nodes $i,j$ in the same orbit there is an element $\sigma \in G$  with $\sigma(i)= j$ and $\sigma(j)=i$. 
\end{definition}

When $G$ is saturated the orbits of the action of $G$ form a valid folding and group mutations still have $G$ as a subgroup of automorphisms by \Cref{lem:fold_aut_2}.

\begin{definition}
    Given $(Q,\mathbb{K}_G)$ a folding by a saturated subgroup of automorphisms $G$ as above, we can define a \emph{folded seed} by assigning $k$ variables to the nodes in $Q$ where we assign the same initial cluster variable for all nodes in the same orbit.    
    The \emph{folded cluster algebra} $\ClusterAlgebra{(Q,\mathbb{K})}$ is defined by beginning with the folded seed associated to $(Q,\mathbb{K})$.  The algebra $\ClusterAlgebra{(Q,\mathbb{K}_G)}$ is then generated by performing all possible sequences of group mutations of this folded quiver. 
\end{definition}

Since this quiver is folded by an automorphism group, we can see that group mutations of this folded seed actually produce new folded seeds since the group of automorphisms naturally acts on the exchange relations. 

In some cases this folded cluster algebra is related to the cluster algebra associated to a particular weighted quiver:

\begin{definition}
     Let $G \subset \Aut{Q}$ be a saturated subgroup of automorphisms and $\mathbb{K}_G$ the set of orbits as above. We denote by $Q_{G}$ the weighted quiver with $k$ nodes of weight $|K_i|$ respectively and $\frac{m_{ij}}{\lcm{(|K_i|,|K_j|)}}$ arrows from node $K_i$ to node $K_j$ where $m_{i,j}$ is the total number of arrows between the sets $K_i$ and $K_j$ in $Q$ counted with signs.\\
     We call the quiver $Q$ an \emph{unfolding} of $Q_G$ if $\ClusterAlgebra{(Q,\mathbb{K}_G)}$ is isomorphic to $\ClusterAlgebra{Q_G}$.
\end{definition}

\begin{theorem}\label{thm:unfolding}
    Every cluster algebra associated to a weighted quiver associated to an affine or doubly extended Dynkin diagram $R$ has a unique unfolding $Q$ such that $\ClusterAlgebra{R} \simeq \ClusterAlgebra{Q_G}$ for some saturated subgroup of automorphisms of $Q$.
\end{theorem}
\begin{proof}
    \cite{FST:finite_mutation_via_unfoldings} gives an unfolding for each skew-symmetrizable finite mutation type exchange matrix, which gives an unfolding of the corresponding weighted quivers. The uniqueness only needs to be checked for a single quiver in each mutation class, which is easily checked for the quivers corresponding to Dynkin diagrams. 
\end{proof}

\begin{remark}
     We define an unfolding to preserve the weights of the initial quiver $R$. Without this restriction, one can obtain other quivers $Q'$ with an isomorphic folded cluster algebra to $\ClusterAlgebra{R}$. For example consider $Q_n$ the quiver consisting on $n$ disjoint copies of $Q$ and fold by the additional permutation group of these copies. While $\ClusterAlgebra{(Q_n)_G}$ is isomorphic to $\ClusterAlgebra{R}$ every node in $(Q_n)_G$ will have weight $n$ times the weight of the node in $R$ and thus is not an unfolding of $R$ by our definition. 
\end{remark}

\subsection{C-vectors and reddening sequences}

    Now we will recall some definitions needed to understand c-vectors and reddening mutation sequences. 
    
    \begin{definition}
    A \emph{frozen} node of a quiver is a node which we do not allow mutations at. A \emph{mutable} node is a node which is not frozen. We write $F(Q)$ for the set of frozen nodes of $Q$ and $\mu(Q)$ for the sub-quiver of $Q$ consisting of mutable nodes. 
    \end{definition}
    Let $Q$ be a quiver with frozen vertices. Let $R$ be obtained by a sequence of mutations from $Q$.
    \begin{definition}
      A \emph{frozen isomorphism}  between $Q$ and $R$ is directed graph isomorphism $\sigma:Q \rightarrow R$ which preserves node weights and restricts to the identity on the frozen nodes. 
    The \emph{mutation class} of a quiver with frozen vertices is the set of quivers obtained from mutations of $Q$ up to frozen isomorphism. We also denote this mutation class by $\Mut{Q}$. 
    \end{definition}
    
    Let $n=|V(\mu(Q))|$ and $f=|F(Q)|$. We call $n$ the ``rank'' of $Q$ and we will generally number the mutable nodes of $Q$ with $1, \dots ,n$ and the frozen nodes with $n+1, \dots ,n+f$. 

    \begin{definition}
    A \emph{framing} of a quiver $Q$ is any quiver $\tilde{Q}$ such that $\mu(\tilde{Q})=\mu(Q)$.
    The \emph{c-vectors} of $Q$ is the collection, $\{\vec{c}_{i}| 0\leq i \leq n\}$, of  $f$-dimensional vectors given by $\vec{c}_{i}^j = \epsilon_{i,(j+n)}$
    \end{definition}
    
    Let $Q$ be a quiver which consists of only mutable nodes. There is a canonical framing, $\hat{Q}$, obtained from $Q$ by adding a frozen node $F_i$ with matching weight $w_i$ for each node $N_i$ and a single arrow from $N_i$ to $F_i$. $\hat{Q}$ is called the ``ice'' quiver associated with $Q$. The cluster algebra formed by starting with $\hat{Q}$ is called the cluster algebra with \emph{principal coefficients}.

\begin{remark}
    There are two possible conventions of c-vectors, the other possibility is $\vec{c}_{i}^j = \epsilon_{(j+n),i)}$. This is the convention used by Bernhard Keller's quiver mutation applet\footnote{\url{https://webusers.imj-prg.fr/~bernhard.keller/quivermutation/}}. With the convention we chose, the matrix of c-vectors $[\vec{c}_i^j]$ associated to $\hat{Q}$ is the identity matrix.
    \end{remark}
    
    \begin{theorem}[\cite{Zelevinsky:Tropical_dualities}]
    The sets of c-vectors of quivers in $\Mut{\hat{Q}}$ are in one-to-one correspondence with the clusters in the cluster algebra with principal coefficients associated with $Q$. 
    \end{theorem}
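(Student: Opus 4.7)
The plan is to construct natural maps in both directions between the collections of c-vectors arising from quivers in $\Mut{\hat{Q}}$ and the clusters of $\ClusterAlgebra{\hat{Q}}$, and then to show that the c-vectors of a quiver uniquely determine its cluster. Given $Q' \in \Mut{\hat{Q}}$, any mutation sequence producing $Q'$ from $\hat{Q}$ simultaneously produces a seed $(Q', \vec{x}')$ in the principal coefficients cluster algebra, so there is a canonical assignment sending the c-vectors read off from $Q'$ to the cluster $\vec{x}'$. Well-definedness follows because a frozen isomorphism $Q' \to Q''$ forces the ice parts, and hence the c-vectors, to coincide; and surjectivity is immediate since every cluster arises from some mutation sequence applied to the initial seed. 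The substance of the theorem is \emph{injectivity}: two quivers in $\Mut{\hat{Q}}$ whose c-vector collections agree must yield the same cluster.

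For injectivity I would appeal to the Fomin--Zelevinsky separation formula, which in the principal coefficients setting expresses each cluster variable as $x_i' = F_i(\hat{y}_1, \dots, \hat{y}_n) \prod_j x_j^{g_{ij}}$, where $g_{ij}$ are the g-vectors of the seed and $F_i$ are F-polynomials with constant term $1$. Since the g-vectors of any seed form a $\Z$-basis of $\Z^n$ and the F-polynomials are canonically determined by the mutation path, the cluster $\vec{x}'$ is recovered from the g-matrix together with the F-polynomials. The essential input is then \emph{tropical duality} (Nakanishi--Zelevinsky), which asserts $G^T C = I$ in appropriate conventions; thus the c-matrix determines the g-matrix. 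The F-polynomials can likewise be reconstructed inductively along any mutation path from the recursion for y-variables, whose tropical parts are precisely the c-vectors. Combining these, any two seeds with the same c-vectors must have the same g-vectors and F-polynomials, hence the same cluster, closing the injectivity argument.

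The hard part will be justifying tropical duality itself, which rests on the \emph{sign-coherence} of c-vectors: the statement that within any seed of $\Mut{\hat{Q}}$, each c-vector has all entries of the same sign. Sign-coherence was conjectured by Fomin and Zelevinsky and subsequently proved in full generality by Gross, Hacking, Keel, and Kontsevich via scattering diagrams, with earlier partial results by Derksen--Weyman--Zelevinsky in the skew-symmetric case and Demonet in the skew-symmetrizable case. Once sign-coherence is granted, both tropical duality and the desired bijection follow by a direct induction on mutation sequences, comparing how the c-matrix and g-matrix transform under a single mutation and verifying that the relation $G^T C = I$ is preserved.
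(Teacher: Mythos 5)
The paper does not prove this statement; it is imported verbatim from the literature (the citation is to Nakanishi--Zelevinsky's tropical dualities paper), so there is no in-paper argument to compare yours against. Judged on its own terms, your sketch correctly identifies the two external pillars --- sign-coherence of c-vectors (GHKK) and tropical duality relating the c- and g-matrices --- and the easy parts (surjectivity, and that a frozen isomorphism preserves the set of c-vectors) are fine.

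There is, however, a genuine gap in the injectivity step, which is where all the content lives. You reduce to showing that two seeds with the same c-matrix have the same g-matrix and the same F-polynomials. Tropical duality gives you the g-matrix from the c-matrix. But your claim that the F-polynomials ``can be reconstructed inductively along any mutation path from the recursion for y-variables, whose tropical parts are precisely the c-vectors'' only shows that the F-polynomials are determined by the \emph{path} (using the c-vectors at every intermediate vertex), not by the terminal c-matrix alone. Two distinct mutation paths ending at seeds with identical c-vector sets could a priori produce different F-polynomials, which is exactly the situation you must rule out; as written the argument is circular at this point. The standard repair is to bypass F-polynomials entirely: use tropical duality to pass from the c-matrix to the g-matrix, and then invoke the theorem that distinct cluster monomials have distinct g-vectors (proved by Gross--Hacking--Keel--Kontsevich via the theta basis, or alternatively the Cao--Li result that the c-matrix determines the seed). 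With that substitution the chain c-matrix $\Rightarrow$ g-matrix $\Rightarrow$ cluster closes, and the rest of your outline goes through.
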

    
    Via this theorem, we see that by considering sets of c-vectors, one may understand whether a mutation sequence returns to a cluster with the same cluster variables without actually computing them. We only need to check that their sets of c-vectors are the same. 
    
    \begin{definition}
    Let $k$ be a node of a quiver $Q$ with frozen vertices. We call $k$ \emph{green} (resp. red) if the c-vector associated with $k$ has all positive (resp. negative) entries. 
    \end{definition}
    
    In $\hat{Q}$ every node is green. 
    
    \begin{theorem}[sign coherence \cite{DWZ:quivers_with_potentials,GHHK:canonical}]
    Let $Q$ be a quiver without frozen variables. Then every quiver $R \in \Mut{\hat{Q}}$ also has the property that every node of $R$ is either red or green. 
    \end{theorem}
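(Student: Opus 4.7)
The plan is to approach this via the representation-theoretic framework of Derksen--Weyman--Zelevinsky (in the skew-symmetric case) and the scattering diagram framework of Gross--Hacking--Keel--Kontsevich (in the general skew-symmetrizable case needed here, since $\T$ quivers carry nontrivial node weights). In either setup the first step is to rephrase sign coherence as a positivity statement about F-polynomials: by Nakanishi--Zelevinsky tropical duality, the c-vectors of any $R \in \Mut{\hat{Q}}$ can be recovered from the g-vectors of the corresponding cluster, and the assertion that every c-vector has constant sign is equivalent to the assertion that every associated F-polynomial has constant term $1$.

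For the DWZ approach I would attach to $Q$ a generic nondegenerate potential $W$ and work in the category of decorated representations of the Jacobian algebra $\mathcal{P}(Q,W)$. After establishing that cluster mutation is modeled on the categorical side by DWZ mutation of decorated representations, the F-polynomial of a cluster variable $x$ acquires the geometric form $\sum_{e}\chi(\mathrm{Gr}_e(M_x))\, u^e$, a generating function of Euler characteristics of quiver Grassmannians of the associated decorated representation $M_x$. The constant term corresponds to the Grassmannian of the zero-dimensional subrepresentation, which is a single point, so it equals $1$; together with the analogous statement for the ``negative'' part of the cluster category, this yields the required sign coherence on both sides.

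In the general skew-symmetrizable case one cannot directly use quivers with potentials. Instead the cluster scattering diagram of GHHK exhibits c-vectors as primitive normal vectors to the walls of a consistent scattering diagram in $\mathbb{R}^n$, and the construction guarantees that every wall is supported in a closed half-space determined by the initial seed. Sign coherence is then an immediate structural consequence of this positivity feature of the diagram. In both frameworks the main obstacle — and the deep content of the theorem — is the construction itself: existence of a nondegenerate potential whose Jacobian algebra behaves correctly under mutation, or consistency and positivity of the cluster scattering diagram. Given the scope of the present paper, I would not reproduce either construction and would instead cite the original results of DWZ and GHHK.
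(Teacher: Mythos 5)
The paper does not prove this statement at all: it is imported as a known result with citations to Derksen--Weyman--Zelevinsky and to the scattering-diagram literature, exactly as you propose to do. Your sketch of the two standard proofs (quiver Grassmannians giving constant term $1$ of the F-polynomials in the skew-symmetric case, wall positivity of the cluster scattering diagram in the skew-symmetrizable case) is accurate, and your decision to cite rather than reproduce the constructions matches the paper's treatment.
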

    
    Let $\Check{Q}$ be the framing of $Q$ by adding a frozen node $F_i$ with matching weight $w_i$ for each node $N_i$ and a single arrow from $F_i$ to $N_i$.
    
    \begin{theorem}[\cite{Muller_maximalgreensequences}]
    Suppose there is $R \in \Mut{\hat{Q}}$ satisfying that every node of $R$ is red. Then $R$ is frozen isomorphic to $\Check{Q}$. 
    \end{theorem}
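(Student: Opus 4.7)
The plan is to analyze the $c$-matrix $C(R)$ of the seed and show that it forces $R$ to agree with $\check{Q}$ up to a relabeling of the mutable nodes.

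First, by the sign coherence theorem cited just above, every $c$-vector of any seed in $\Mut{\hat{Q}}$ is either entrywise non-negative or entrywise non-positive. Since every node of $R$ is red by hypothesis, every row of $C(R)$ is non-positive; equivalently, every arrow between a frozen node and a mutable node of $R$ is directed from the frozen node into the mutable node. So the frozen block of $R$ already has the qualitative shape of the frozen block of $\check{Q}$, and it remains only to identify which frozen node is joined to which mutable node and to describe the mutable subquiver $\mu(R)$.

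The central step is to show that $C(R)$ agrees with the $c$-matrix of $\check{Q}$ up to a permutation $P$ of the $n$ mutable indices. The main input I would use is the tropical duality of Nakanishi and Zelevinsky: along any mutation sequence from $\hat{Q}$, the $c$-matrix and the $g$-matrix of the resulting seed are sign-coherent and satisfy a duality which forces the $c$-matrix to be non-singular and tightly constrained. Combining this with the piecewise-linear mutation rule for $c$-vectors, one proves by induction along a sequence of mutations producing $R$ that a $c$-matrix whose rows share a common sign must be obtained from $\pm C(\check{Q})$ by permuting rows; equivalently, the negative $c$-vectors of $R$ realize the negative simple roots of the root system associated with $Q$. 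I expect this to be the main obstacle, and it is essentially the content of Muller's argument.

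Once the $c$-matrix is pinned down, relabel the mutable nodes of $R$ by $P^{-1}$. After this relabeling $C(R) = C(\check{Q})$, so the frozen part of $R$ matches that of $\check{Q}$ precisely: each $F_{i}$ is joined to $N_{i}$ by a single arrow $F_{i} \to N_{i}$, and there are no other frozen arrows. For the mutable block, invoke tropical duality once more: $C(R)$ determines the $g$-matrix, which determines the mutable exchange matrix and thus forces $\mu(R)$ to agree with $Q$ under the same relabeling. Assembling the two pieces gives $R \simeq \check{Q}$ as ice quivers.
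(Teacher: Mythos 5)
First, note that the paper itself offers no proof of this statement---it is quoted from \cite{Muller_maximalgreensequences}---so there is no internal argument to compare against; your proposal must stand on its own. Its architecture is right (sign coherence, then $C(R)=-P$ for a permutation matrix $P$, then recovery of the mutable part), but the pivotal middle step is asserted rather than proved, and the reasons you offer for it do not suffice. Sign coherence plus non-singularity of the $c$-matrix---even unimodularity---does not force a non-positive $c$-matrix to be $-P$: the matrix $\left(\begin{smallmatrix}-1&-1\\0&-1\end{smallmatrix}\right)$ is non-positive, integral, and has determinant $1$. Likewise, an ``induction along the mutation sequence producing $R$'' has no evident inductive hypothesis, since the intermediate seeds are neither all green nor all red and a single $c$-matrix with rows of mixed signs carries no constraint of the kind you need. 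The standard way to close the gap uses a different consequence of Nakanishi--Zelevinsky duality than the one you invoke: the inverse of $C=C(R)$ is itself a $c$-matrix (the one from $R$ back to the initial seed, computed with the exchange matrix at $R$), hence is also row sign-coherent. Writing $C=-N$ with $N\geq 0$, the identity $N^{-1}N=I$ forces every row of $N^{-1}$ to be non-negative (a non-positive row times $N\geq 0$ cannot produce the non-negative, nonzero row $e_i^{T}$), and two mutually inverse non-negative integer matrices are permutation matrices. Without an argument of this kind the proof does not go through.

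The final step also leans on heavier machinery than necessary and is stated imprecisely: the chain ``$C(R)$ determines the $g$-matrix, which determines the mutable exchange matrix'' is true only via the deep fact that $g$-vectors parametrize cluster variables, which you neither cite nor need. Once $C(R)=-P$ is in hand, the congruence identity $B_R=C^{T}B_{0}C$ (with the skew-symmetrizer $D$ inserted appropriately in the weighted case) immediately gives $B_R=P^{T}B_{0}P$, so the mutable subquiver is $Q$ up to the same relabeling, and assembling this with the frozen block yields the frozen isomorphism $R\simeq\Check{Q}$. I would also drop the aside about the negative $c$-vectors realizing ``negative simple roots of the root system associated with $Q$'': it is meaningful only for acyclic $Q$ and plays no role in the argument.
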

    
    \begin{definition}
    Suppose that $\Check{Q} \in \Mut{\hat{Q}}$. We call a sequence of mutations taking $\hat{Q}$ to $\Check{Q}$ a \emph{reddening sequence}. 
    \end{definition}
    
    The existence of a reddening sequence is an important property of a given quiver. We will explicitly construct reddening sequences for the family of quivers introduced in \Cref{sec:TnwQuivers}. 
    
    \begin{theorem}[\cite{Muller_maximalgreensequences}]\label{thm:MullerReddening}
    Let $Q$ be a quiver with no frozen vertices and let $R \in \Mut{Q}$. Then $\hat{Q}$ has a reddening sequence if and only if $\hat{R}$ does. 
    \end{theorem}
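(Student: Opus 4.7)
The plan is to reduce to a single mutation and then directly construct a reddening sequence for $\hat{R}$ via c-vector bookkeeping.

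\textbf{Reduction.} By definition, any $R \in \Mut{Q}$ is connected to $Q$ by a finite sequence of single mutations, and the statement is symmetric in $Q$ and $R$ since mutation is an involution. Induction on the length of a connecting sequence reduces the theorem to the case $R = \mu_k(Q)$ for a single vertex $k$. We must show: if $\hat{Q}$ admits a reddening sequence $\nu$, so does $\hat{R}$.

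\textbf{Constructing the reddening sequence.} My first attempt would be the conjugate sequence $\nu' := (k) \cdot \nu \cdot (k)$ applied to $\hat{R}$. Computing with the c-vector mutation rule, $\mu_k(\hat{R})$ has mutable part $Q$ and a c-matrix whose $k$-th column is $-e_k$, while each $i$-th column ($i \neq k$) equals $e_i + [\epsilon^R_{ik}]_+ e_k$. One then applies $\nu$ (which consists of mutations at mutable vertices only) and tracks how the c-matrix evolves, and finally applies $\mu_k$ to bring the mutable part back to $R$. The hope is that the initial c-matrix deviation from the identity cancels cleanly with the final $\mu_k$, producing $-I$ as the c-matrix of $\nu'(\hat{R})$, whence $\nu'(\hat{R}) \simeq \Check{R}$.

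\textbf{Main obstacle.} A direct check on small cases such as $A_2$ shows that the naive conjugate $\nu'$ generally does not work: the c-vector deviations introduced by the initial $\mu_k$ persist in a more complicated way than a single sign flip, and they do not cancel with the final $\mu_k$ after being propagated through $\nu$. The core technical challenge is therefore to modify $\nu$ by inserting auxiliary mutations, typically at vertex $k$ or at its neighbors, at carefully chosen intermediate points to compensate for the discrepancy in initial framings between $\mu_k(\hat{R})$ and $\hat{Q}$. Sign coherence guarantees that each intermediate c-matrix is sign-coherent, which constrains the combinatorics substantially, but the bookkeeping remains intricate. A conceptually cleaner alternative, which is how this result was originally proved by Muller, is to recast reddening sequences in terms of the cluster scattering diagram or the Jacobian algebra of the mutable quiver: objects for which mutation invariance is built in, making the theorem essentially immediate once the machinery is set up.
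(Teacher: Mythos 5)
This statement is not proved in the paper at all: it is quoted verbatim from Muller's work and used as a black box (indeed, the paper later invokes the closely related fact that conjugating a reddening sequence by an arbitrary mutation path again yields a reddening sequence, also with a citation rather than a proof). So there is no in-paper argument to compare against; your proposal has to stand on its own.

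As a proof, it does not close. The reduction to a single mutation $R=\mu_k(Q)$ is fine, and the conjugation $\nu'=(k)\cdot\nu\cdot(\,\cdot\,)$ is in fact the right idea --- it is essentially Muller's own argument --- but you stop exactly where the content of the theorem lives. Two concrete issues. First, your diagnosis that the naive conjugate ``generally does not work'' is off: the correct candidate is $(k)\cdot\nu\cdot(\sigma(k))$, where $\sigma$ is the frozen isomorphism $\nu(\hat{Q})\simeq\check{Q}$ carried by the reddening sequence; in the $A_2$ example (where $\sigma=\mathrm{id}$) the conjugate does work after cancelling the repeated mutation, so the small-case ``failure'' you report needs to be re-examined. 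Second, and more importantly, the verification that the c-matrix of $\mu_k(\hat{R})$ differs from that of $\hat{Q}$ only by a controlled change of framing, and that this discrepancy propagates through $\nu$ in a way that the final mutation at $\sigma(k)$ repairs, is precisely the lemma one must prove; calling this bookkeeping ``intricate'' and deferring to sign coherence does not discharge it. The closing appeal to scattering diagrams or Jacobian algebras is a pointer to the literature, not an argument: none of that machinery is set up, and ``mutation invariance is built in'' is itself a nontrivial theorem in those frameworks. In short, the proposal identifies the correct strategy but proves neither the key conjugation lemma nor any substitute for it.
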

    
\section{The Cluster Modular Group}\label{sec:ClusterModularGroup}  

    We will now review how to associate a group to any quiver or cluster cluster algebra called the \emph{cluster modular group}.
    This group is essentially the automorphism group of the mutation structure of a cluster algebra associated with a given quiver. We can use our definitions of c-vectors to give a definition of this group without any reference to the cluster variables.

    Let $Q$ be a quiver without frozen vertices. By identifying the mutable nodes of $Q$ with the integers $[n] = {1,...,n}$, we obtain a right action of $\cyclicgroup{2}^{*n}$ on quivers in the mutation class, $\Mut{Q}$, by mutating at each node in sequence. We refer to elements of $\cyclicgroup{2}^{*n}$ as mutation paths.
    
    We would now like to focus on the subset of paths that return $Q$ to an isomorphic quiver. In order to define a group structure on this subset, we need to consider pairs $(P,\sigma)$ of mutation paths $P$ and quiver isomorphisms $\sigma:Q \rightarrow P(Q)$. We write quiver isomorphisms as elements of the symmetric group $S_n$. The symmetric group acts on mutation paths on the left by $\sigma(\mu_i)= \mu_{\sigma(i)}$ and on itself by conjugation.

    Given two such pairs $(P,\sigma)$ and $(R,\tau)$ we can multiply by forming the composite path $P \circ \sigma(R)$ and the composite quiver isomorphism $\sigma(\tau) \circ \sigma$:
    \begin{equation}
        \begin{tikzcd}
        Q \arrow[r,"\sigma"] \arrow[rrr,bend right ,"\sigma \tau"]& P(Q)\arrow[rr,"\sigma(\tau) = \sigma \tau \sigma^{-1}"] & & (P,\sigma(R))(Q).
    \end{tikzcd}
    \end{equation}
    This multiplication rule can also be obtained by viewing these pairs as elements of the semidirect product 
    \begin{equation}
        \cyclicgroup{2}^{*n} \rtimes S_n.
    \end{equation} 
    This convenient embedding is why we consider quiver isomorphism from $Q$ to $P(Q)$.
    
    This gives a group structure on the set of mutation paths which return $Q$ to an isomorphic quiver paired with isomorphisms from the starting to ending quiver; we call this group the \emph{quiver modular group} associated with $Q$ denoted $\tilde{\Gamma}_Q$. 

    Elements of the quiver modular group act on the cluster variables of a seed $\boldsymbol{i}$ associated with $Q$. The path $P$ provides a path to a new seed, and $\sigma$ gives a map from the cluster variables on $\boldsymbol{i}$ to those on $P(\boldsymbol{i})$
    
    \begin{definition}
    A pair $(P,\sigma)$ which acts trivially on the cluster variables of any initial seed associated with $Q$ is called a \emph{trivial cluster transformation}. Let $T$ be the group of trivial cluster transformations; this is a normal subgroup of $\tilde{\Gamma}_Q$. The group $\Gamma_Q = \tilde{\Gamma}_Q/T$ is called the \emph{cluster modular group} associated with the quiver $Q$. 
    \end{definition}
    
    Equivalently, a trivial cluster transformation is an element  $(P,\sigma)$ of $\tilde{\Gamma}_Q$ for which $\sigma$ is a frozen isomorphism $\hat{Q} \rightarrow P(\hat{Q})$. In this way, we may define $\Gamma_Q$ without any regard to cluster variables.

\begin{remark}
    The automorphisms of $Q$ are the subgroup of $\Gamma_Q$ given by $(\langle\rangle,\Aut{Q})$, the set of elements with an empty mutation path.
\end{remark}
    
    \begin{remark}
    Our notion of a quiver isomorphism requires that all of the arrow directions are preserved. In other definitions of the cluster modular group, such as those in \cite{FockGonch:cluster_ensembles,Schiffler:cluster_automorphisms,fraser_braid_2020}, one includes arrow reversing quiver automorphisms. Our version of the cluster modular group is an index two subgroup of this more general notion. 
    \end{remark}
    
    \begin{example}
    Consider the quiver $Q$ with two nodes and a single edge between them (\Cref{fig:simpleQuiverQ}).
    Mutation at $1$ in $Q$ yields a quiver with the edge now going from $2$ to $1$ (\Cref{fig:simpleQuiverQ'})
    If we want to perform the ``same'' mutation in $Q'$ that we did in $Q$ we want to mutate at the vertex corresponding to 1 under the isomorphism $f:Q \rightarrow Q'$, which is 2. In this case there is a unique isomorphism, but in general each choice of isomorphism gives rise to a different element of the cluster modular group. It is convenient to write these isomorphisms as permutations in $S_n$. The element described above would be written $g = (1, (12))$. In this case $g$ generates the cluster modular group and $g^5 = \text{id}$.  
    \begin{figure}
        \centering
        \begin{subfigure}{.4\textwidth}
        \centering
        \begin{tikzpicture}
        \node[base] (1) [label=1] {};
        \node[base] (2) [right of = 1, label=2] {};
        \path[->] (1) edge [] node {} (2); 
        \end{tikzpicture}
        \caption{Q}
        \label{fig:simpleQuiverQ}
        \end{subfigure}
         \begin{subfigure}{.4\textwidth}
            \centering
            \begin{tikzpicture}
            \node[base] (1) [label=1] {};
            \node[base] (2) [right of = 1,label=2] {};
            \path[->] (2) edge [] node {} (1); 
            \end{tikzpicture}
            \caption{Q'}
            \label{fig:simpleQuiverQ'}
        \end{subfigure}
        \caption{A simple quiver before and after mutation.}
        \label{fig:simpleQuiver}
    \end{figure}
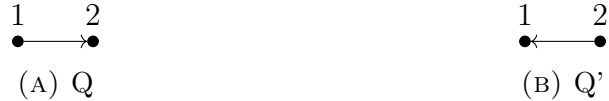
    
    \end{example}
    
    \subsection{The cluster complex}
    
    Recall that for any cluster algebra, $\ClusterAlgebra{Q}$, there is an associated simplicial complex $\ClusterComplex{Q}$ called the cluster complex. This complex is defined in detail in \cite{fomin_clusterI,FockGonch:Moduli_of_local_systems}. We will review the basic definitions of this complex here. First we will need the notion of compatibility of cluster variables. 
    
    \begin{definition}
    Two cluster variables are \emph{compatible} if they appear in a cluster together.
    \end{definition}
    
    The $k$-dimensional simplices of $\ClusterComplex{Q}$ correspond to size $k$ collections of mutually compatible cluster variables in $\ClusterAlgebra{Q}$. In other words, the cluster complex is the ``clique complex'' of the compatibility rule for cluster variables. In particular each vertex corresponds to an individual cluster variable and each edge connects two cluster variables when they can be found in a cluster together. The maximal dimension simplices correspond to the clusters of $\ClusterAlgebra{Q}$.
    
    \begin{remark}
     In \cite{FockGonch:cluster_ensembles} the cluster modular group is defined to be the simplicial symmetry group of the cluster complex. This symmetry group contains the cluster modular group as described in this paper as a proper subgroup. The distinction between these groups does not affect the main results of this paper.
    \end{remark}
    
    The 1-skeleton of the dual complex of the cluster complex is called the ``exchange graph'' of the cluster algebra. The vertices of this graph correspond to clusters and the edges correspond to mutations between clusters.

    \subsection{Computing cluster modular groups}
    
    We would like to have an algorithm to compute the cluster modular group. For general quivers, this can be very difficult since the mutation class can be infinite. When the quiver in question has finitely many quivers in its mutation class, there is an algorithmic construction of the cluster modular group, see Ishibashi's paper \cite{Ishibashi:cluster_modular}. We present a simplified version of the algorithm which only computes a generating set without computing all the relations.
    
    \begin{definition}
    The \emph{directed quiver mutation graph}, $G$, associated to a finite mutation class cluster algebra is a multi graph with a node for each quiver isomorphism class and a directed edge for each single mutation between isomorphism classes. The \emph{(undirected) quiver mutation graph} replaces directed two cycles corresponding to inverse mutations with a single undirected edge.
    \end{definition}
  
   Note, unlike the graph in \cite{Ishibashi:cluster_modular}, in our formulation the degree of each node is the rank of the cluster algebra.

    Each element $(P,f)$ of the cluster modular group corresponds to a cycle in $G$ by following $P$ in $G$. Furthermore the set of cycles in $G$ is finitely generated with one generator for each edge not in a fixed spanning tree of $G$. Since the automorphism group of each quiver is finite, this gives a finite list of  generators of the cluster modular group.
    
    In practice this method doesn't give the shortest possible list of generators of the cluster modular group. However it places an upper bound on how long the shortest path representing a generator of the cluster modular group can be. If $d$ is the diameter of the spanning tree for $G$, then the maximum length of the mutation path of a generator is $2d + 1$.
    
    \begin{remark}\label{rem:FiniteMutationTypeAlgorithm}
    To check if a group surjects onto the cluster modular group it suffices to check that it reaches every quiver isomorphic to the starting quiver within distance $2d + 1$.
    \end{remark}
    
    \begin{example}
    
    The mutation class of an $A_{2,1}$ quiver has two quiver isomorphism classes $Q_1, Q_2$, shown in \Cref{fig:QuiverMutationGraphExample}. It is easy to compute the directed and undirected quiver mutation graphs for this quiver simply by performing each of the three mutations on each quiver isomorphism class. 
    
    We can then compute a set of generators of the cluster modular group. There are two generators $e_1, e_2$ corresponding to the two loops from $Q_1$ and $Q_2$ to themselves.
    
    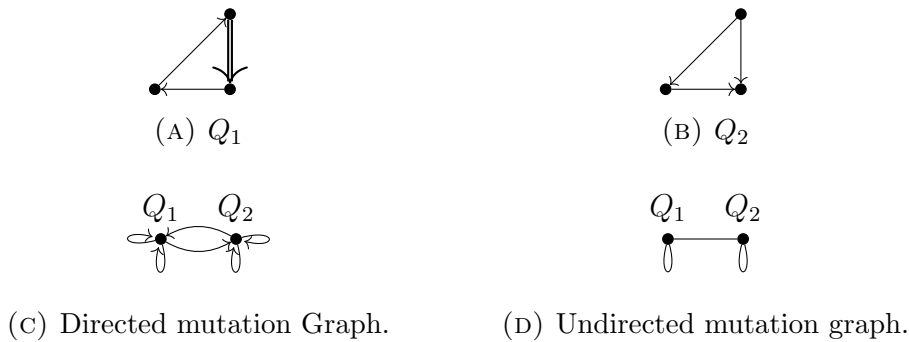
\begin{figure}[hb]
        \centering
        \begin{subfigure}{.4\textwidth}
            \centering
            \begin{tikzpicture}
            \node[base] (1) [] {};
            \node[base] (2) [right of = 1] {};
            \node[base] (3) [above of = 2] {};
            \path[->] (2) edge [] node {} (1); 
            \path[->] (1) edge [] node {} (3);
            \path[->] (3) edge [double, thick ] node {} (2);
            \end{tikzpicture}
            \caption{$Q_1$}
        \end{subfigure}
        \begin{subfigure}{.4\textwidth}
            \centering
            \begin{tikzpicture}
            \node[base] (1) [] {};
            \node[base] (2) [right of = 1] {};
            \node[base] (3) [above of = 2] {};
            \path[->] (1) edge [] node {} (2); 
            \path[->] (3) edge [] node {} (1);
            \path[->] (3) edge [] node {} (2);
            \end{tikzpicture}
            \caption{$Q_2$}
        \end{subfigure}\\
        \vspace{1pc}
        \begin{subfigure}{.4\textwidth}
            \centering
            \begin{tikzpicture}
            \node[base] (1) [label=$Q_1$] {};
            \node[base] (2) [right of = 1,label=$Q_2$] {};
            \path[->] (2) edge [bend right] node {} (1); 
            \path[->] (1) edge [bend right] node {} (2);
            \path[->] (1) edge [loop left ] node {} (1);
            \path[->] (1) edge [loop below] node {} (1);
            \path[->] (2) edge [loop right ] node {} (2);
            \path[->] (2) edge [loop below] node {} (2);
            \end{tikzpicture}
            \caption{Directed mutation Graph.}
        \end{subfigure}
        \begin{subfigure}{.4\textwidth}
            \centering
            \begin{tikzpicture}
            \tikzset{every loop/.style={}}
            \node[base] (1) [label=$Q_1$] {};
            \node[base] (2) [right of = 1,label=$Q_2$] {};
            \path[-] (2) edge [] node {} (1); 
            \path[-] (1) edge [loop below ] node {} (1);
            \path[-] (2) edge [loop below ] node {} (2);
            \end{tikzpicture}
            \caption{Undirected mutation graph.}
        \end{subfigure}
        \caption{The quiver mutation graphs for $\Affine{A}_2$.}
        \label{fig:QuiverMutationGraphExample}
    \end{figure}

   \end{example}

\subsection{Reddening elements}    

    If a quiver $Q$ has a reddening sequence, then there is a unique element $r \in \Gamma_Q$ called the ``reddening element'' of $\Gamma_Q$.
    
    Explicitly, $r = (P_r,\sigma_P)$ where $P_r$ is any reddening sequence and $\sigma_P : Q \rightarrow P(Q) $ is the isomorphism which extends to an isomorphism $\Check{Q} \rightarrow P(\hat{Q})$ by adding the identity permutation on all of the frozen vertices.  
    
    The following theorem is proved as Corollary 3.7 of \cite{goncharov_donaldsonthomas_2018}, but we give a proof for completeness. 
    \begin{theorem}
    The reddening element (when it exists) is in the center of $\Gamma_Q$.
    \end{theorem}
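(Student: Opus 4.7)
The plan is to prove centrality by showing that for any $g \in \ClusterModularGroup{Q}$, the conjugate $g^{-1} r g$ is itself represented by a reddening sequence, and then invoking a uniqueness lemma for reddening elements.

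First I would establish the uniqueness lemma: if $(P_1, \sigma_1)$ and $(P_2, \sigma_2)$ both satisfy the defining property of $r$---each $P_i$ being a reddening sequence and each $\sigma_i$ extending by the identity on frozen vertices to an isomorphism $\Check{Q} \to P_i(\hat{Q})$---then they project to the same element of $\ClusterModularGroup{Q}$. Indeed, the quotient $(P_2, \sigma_2)^{-1}(P_1, \sigma_1)$ has an underlying path returning $\hat{Q}$ to $\hat{Q}$ whose accompanying isomorphism is the identity on frozen vertices, and so by definition it is a trivial cluster transformation.

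Next, given $g = (P, \sigma) \in \ClusterModularGroup{Q}$, I would compute $g r g^{-1}$ explicitly in the semidirect product $\cyclicgroup{2}^{*n} \rtimes \symmetricgroup{n}$, obtaining a representative whose permutation part is $\sigma \sigma_P \sigma^{-1}$. Since $\sigma_P$ is the identity on frozen vertices and $\sigma$ only permutes mutable nodes, this conjugate is again the identity on frozen vertices. It remains to verify that the underlying mutation path $P \cdot \sigma(P_r) \cdot (\sigma \sigma_P \sigma^{-1})(P^{-1})$ is a reddening sequence for $\hat{Q}$. Tracing the action step by step: $P$ moves $\hat{Q}$ to a framed seed whose mutable part is isomorphic via $\sigma$ to $Q$, with some new c-matrix $C$; the relabelling $\sigma(P_r)$ is a reddening sequence for the mutable quiver $P(Q)$, and applied here it carries the framing to its dual (c-matrix $-C$); finally the conjugated inverse of $P$ returns the mutable part to $Q$ while carrying the c-matrix to $-I$, producing $\Check{Q}$ up to frozen isomorphism.

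The main obstacle is the middle step: justifying that the relabelling $\sigma(P_r)$, applied not to $\hat{Q}$ but to the mutated seed $P(\hat{Q})$, still has the effect of negating its c-matrix. This relies on the fact that reddening sequences are intrinsic to the mutable quiver, so once the mutable part matches (via $\sigma$) the mutable part of $\hat{Q}$, the relabelled sequence reddens in the appropriate sense. This can be verified either by a direct c-vector computation using the mutation rule, or more conceptually via sign coherence of reddening under seed relabelling. Once this compatibility is established, the three-step trace produces $\Check{Q}$ with the identity on frozens, and by the uniqueness lemma $g^{-1} r g = r$, proving that $r$ is central.
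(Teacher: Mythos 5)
Your proof follows essentially the same route as the paper: compute $g r g^{-1}$ in the semidirect product, observe that the permutation part is the identity on frozen vertices, show the conjugated path is again a reddening sequence, and conclude by uniqueness of the reddening element. The only genuine difference is that the step you flag as the main obstacle --- that the relabelled sequence $\sigma(P_r)$ applied at $P(\hat{Q})$ still negates the c-matrix, so that $P \cdot \sigma(P_r) \cdot \revPath{P}$ is a reddening sequence --- is precisely the conjugation lemma the paper simply cites from Muller, and your sketch via sign coherence and the composition rule for c-matrices is the standard proof of that lemma.
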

    \begin{proof}
    To show $r$ is in the center we take any other group element $g = (P,f)$. Using the labeling induced by the initial framing the permutation $\sigma_r$ is the identity. Then 
    \begin{equation}
    g\cdot r \cdot g^{-1} = (P \cdot f(P_r)\cdot f(\sigma_r(f^{-1}(\revPath{P}))), f\circ \sigma_r \circ f^{-1}) = (P\cdot f(P_r) \cdot\revPath{P}, \text{id})
    \end{equation} Conjugating the reddening path $P_r$ by any other path again produces a reddening sequence (see \cite{Muller_maximalgreensequences}) so 
    \begin{equation}
        P_r \sim P \cdot f(P_r)\cdot \revPath{P}
    \end{equation} and we have $r = g r g^{-1}$ as needed.
    \end{proof}

\subsection{Folding cluster modular groups}
We will use folding to understand the cluster modular groups of non-simply laced affine and doubly extended cluster algebras. 

Let $R=Q_G$ be a weighted quiver with unique unfolding given by $Q$ and $G \subset \Aut{Q}$ a saturated subgroup. 
We denote by $N_{\Gamma_Q}(G)$ the normalizer of the subgroup $G \subset \ClusterModularGroup{Q}$.
\begin{theorem}\label{thm:FoldingClusterModularGroups}
    Every non identity element $\gamma \in \ClusterModularGroup{R}$ lifts to a non identity element of $\ClusterModularGroup{Q}$ which normalizes $G$.  This gives a well defined injection $\ClusterModularGroup{R} \rightarrow N_{\Gamma_Q}(G)/G$.
\end{theorem}
\begin{proof}
    Given a mutation path, $P$, in $\ClusterAlgebra{R}$ we obtain a lift $\tilde{P}$ to $\ClusterAlgebra{Q}$ by replacing each instance of mutation at a node $i$ with the corresponding group mutation $K_i$, noting that the order mutations within each group does not matter. 
    
    For an isomorphism $\tau:R \to P(R)$ we can define a lift $\tilde{\tau}: Q\to \tilde{P}(Q)$ as follows: Since $Q$ is a unique unfolding of $R$ we know that $\tilde{P}(Q)$ must be isomorphic to $Q$. Therefore we simply pick $\tilde{\tau}$ to be an isomorphism which restricts to $\tau$ after folding.   

    We now show that $(\tilde{P},\tilde{\tau})$ normalizes $G$. We compute
    \begin{align*}
        (\tilde{P},\tilde{\tau})\cdot (\langle\rangle, G) =& (\tilde{P},\tilde{\tau}G)\\
        (\langle\rangle, G) \cdot (\tilde{P},\tilde{\tau}) =& (G(\tilde{P}),G\tilde{\tau}) = (\tilde{P},G\tilde{\tau})
    \end{align*}
    The final equality is due to the fact that $G$ simply shuffles the order of elements inside a group mutation which does not effect the resulting path. Thus it suffices to show $\tilde{\tau}$ normalizes $G$. Given a labeling of $Q$ we realize $G \subset \Aut{Q}$ as a subset of $S_n$. We also view $\tilde{\tau}$ as an element of $S_n$ by recording the permutation on the labels of $\tilde{P}(Q)$ induced by the isomorphism. By \Cref{lem:fold_aut_2} and the fact that $G$ is saturated, we see that a group mutation preserves the subgroup of $S_n$ corresponding to $G$. Therefore for any $g\in G$, the element $\tilde{\tau}^{-1}g \tilde{\tau}$ fixes all the groups of the folding and thus by saturation is again an element of $G$. Thus $\tilde{\tau}$ normalizes $G$ as needed.

    Next we verify the lift is well defined in $N_{\Gamma_Q}(G)/G$. Let $\tilde{\tau}$ and $\tilde{\tau}'$ be two lifts of $\tau$. Then $\tilde{\tau}^{-1}\tilde{\tau}$ is an element of $\Aut{Q}$ which restricts to the identity on $R$. Since $G$ is saturated this implies $\tilde{\tau}^{-1}\tilde{\tau}' \in G$ and the coset of $(\tilde{P},\tilde{\tau})G$ is well defined.

    Finally, we can see that this element is not the identity since it is not the identity on the folded cluster algebra $\ClusterAlgebra{R}$ which is isomorphic to $\ClusterAlgebra{(Q,\mathbb{K}_G)}$ where the lifted element more clearly acts. 
    
\end{proof}

\begin{remark}
    We expect this map to actually be an isomorphism of cluster modular groups. The injection is enough for our purposes since it gives a faithful description of the cluster modular group of each non-simply laced affine and doubly-extended type cluster algebra within the cluster modular group of a simply laced one. 
\end{remark}

\section{Type \texorpdfstring{$\T$}{Tnw} Cluster Algebras}\label{sec:TnwQuivers}

In this section we will consider a family of quivers  $\T$ for $\vec{n},\vec{w}$ equal length vectors of positive integers, and their associated cluster algebras. These algebras each have a canonical subgroup of the cluster modular group with a simple description in terms of ``twist mutation paths'' and automorphisms of quivers. We call a cluster algebra ``type $\T$'' if it has a seed with a $\T$ quiver underlying it.

We then show in \Cref{sec:AffineClusterAlgebra} and \Cref{sec:DoubleExtended} that each of the affine-type and doubly extended cluster algebras are type $\T$ for certain values of $\vec{n}$ and $\vec{w}$. We show that that the canonical subgroup is the cluster modular group of each affine type cluster algebra. In the doubly-extended case, we will find that this subgroup along with one extra element generates the cluster modular group. We conjecture that in all other cases, this canonical subgroup is exactly the cluster modular group.

\subsection{\texorpdfstring{$\T$ quivers}{T quivers}}

Let $\vec{n}= (n_1,n_2,\dots,n_m)$, $n_i>1$ and $\vec{w}= (w_1,w_2,\dots,w_m)$ be $m$ tuples of positive integers. We consider a weighted quiver, $\T$, with $n=\sum(n_i-1)+2$ nodes constructed in the following way: 
First consider the star shaped quiver $T'_{\vec{n},\vec{w}}$ with $n-1$ nodes consisting of one central node, $N_1$ of weight 1 and $m$ tails of length $n_i-1$ of weight $w_i$ nodes $i_2, \dots, i_{n_i}$ connected in a source-sink pattern with $N_1$ as a source (\Cref{fig:QT'}).

\begin{figure}
    \centering
    \begin{tikzcd}
                  & 2_{n_2} & \dots \arrow[r] \arrow[l] & 2_2 &                                               & 3_2 \arrow[d, no head, dotted] & \dots \arrow[l] \\
1_{n_1} \arrow[r] & \dots   & 1_3 \arrow[r] \arrow[l]   & 1_2 & N_1 \arrow[l] \arrow[lu] \arrow[ru] \arrow[r] & m_2                            & \dots \arrow[l]
\end{tikzcd}
    \caption{The quiver $T'_{\vec{n},\vec{w}}$.}
    \label{fig:QT'}
\end{figure}

\begin{figure}
    \centering
    \begin{tikzcd}
                  & 2_{n_2} & \dots \arrow[r] \arrow[l] & 2_2 \arrow[r]  & N_\infty \arrow[d, Rightarrow]                & 3_2 \arrow[d, no head, dotted] \arrow[l] & \dots \arrow[l] \\
1_{n_1} \arrow[r] & \dots   & 1_3 \arrow[r] \arrow[l]   & 1_2 \arrow[ru] & N_1 \arrow[l] \arrow[lu] \arrow[ru] \arrow[r] & m_2 \arrow[lu]                           & \dots \arrow[l]
\end{tikzcd}
    \caption{The quiver $\T$.}
    \label{fig:QT}
\end{figure}

$\T$ is constructed from $T'_{\vec{n},\vec{w}}$ by adding an additional weight 1 node $N_\infty$ along with a double arrow from $N_\infty$ to $N_1$ and single arrows from each of the $m$ other neighbors of $N_1$ to $N_\infty$, as shown in \Cref{fig:QT}.

When $m\leq 3$ and $w_i=1$ for all $i$, we let $(p,q,r)=(n_1,n_2,n_3)$ with $p,q,r$ possibly equal to 1 and write $T_{p,q,r}$ for $\T$. 

\begin{definition}
The nodes $i_j$ are called the \emph{tail nodes} of $\T$. The nodes $i_2$ are called the \emph{boundary} tail nodes. The \emph{$i$th tail subquiver} is the quiver obtained by removing all of the tail nodes $k_j, k \neq i$.
\end{definition}

Our motivation for considering these quivers is based on the following remark: 

\begin{theorem}\label{rem:dynkin_types}
Let $\chi(\T) = \sum(w_i(n_i^{-1}-1))+2$.
If $\chi>0$ then $\T$ has a (non-twisted) affine Dynkin quiver in its mutation class and $T'_{\vec{n},\vec{w}}$ is a finite Dynkin quiver. If $\chi=0$ then $\T$ is a doubly extended Dynkin quiver and $T'_{\vec{n},\vec{w}}$ is an affine Dynkin quiver.
\end{theorem}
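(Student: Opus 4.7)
The plan is to verify the statement by directly enumerating the tuples $(\vec n, \vec w)$ with $\chi(\T) > 0$ and with $\chi(\T) = 0$ respectively, and for each to identify the resulting quivers with named entries in the Dynkin catalogue of Appendix~\ref{app:DynkinDiagrams}. First I would note that the formula for $\chi$ generalizes a classical orbifold Euler characteristic: specialising to $\vec w = \vec 1$ recovers $2 - \sum (1 - n_i^{-1})$, the familiar expression that distinguishes when a star-shaped graph with arm lengths $n_i - 1$ is finite, affine, or indefinite Dynkin. The $\chi > 0$ versus $\chi = 0$ split is thus the natural generalization to the weighted setting, and one expects $T'_{\vec n, \vec w}$ to be finite Dynkin in the first regime and affine Dynkin in the second.

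For the $\chi > 0$ case, I would enumerate $(\vec n, \vec w)$ with $n_i \geq 2$, $w_i \geq 1$. The simply laced solutions ($\vec w = \vec 1$) are, for $m \leq 2$, paths giving type $A$; for $m = 3$ the classical spherical-triangle solutions $(2,2,k), (2,3,3), (2,3,4), (2,3,5)$ give $D_{k+2}$ and $E_6, E_7, E_8$; and $m \geq 4$ yields no solutions. The non-simply-laced solutions come from raising one or two of the $w_i$ and each matches one of $B_n, C_n, F_4, G_2$. In each case $T'_{\vec n, \vec w}$ is, as written, the weighted star of the named Dynkin diagram. For the second claim, that $\T$ has an affine Dynkin quiver in its mutation class, I would exhibit an explicit mutation sequence family by family: typically one mutates at $N_\infty$, which reverses the double arrow to $N_1$ and, because the boundary nodes $i_2$ are all sources of $N_\infty$, produces a quiver whose underlying weighted graph is the standard affine extension of $T'_{\vec n, \vec w}$ by one new node. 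In a few cases a further mutation at $N_1$ is needed to realise the standard orientation.

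For the $\chi = 0$ case, the enumeration yields $(3,3,3), (2,4,4), (2,3,6), (2,2,2,2)$ in the simply laced regime and a short list of weighted analogues elsewhere. Directly from the definition, each $T'_{\vec n, \vec w}$ in this list is one of the affine Dynkin quivers $\Affine{E}_6, \Affine{E}_7, \Affine{E}_8, \Affine{D}_4$, and more generally $\Affine{D}_n, \Affine{B}_n, \Affine{C}_n, \Affine{F}_4, \Affine{G}_2$, displayed in Figures~\ref{fig:DynkinAffineSimplyLaced} and~\ref{fig:DynkinAffineFolded}. Adjoining $N_\infty$ with its double arrow to $N_1$ and the single arrows from each $i_2$ then produces exactly the oriented realisation of the corresponding doubly extended Dynkin quiver drawn in Figures~\ref{fig:DynkinDouble} and~\ref{fig:DynkinDoubleFolded}, so no mutation is required here.

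The principal obstacle is the case-by-case nature of the verification, together with careful bookkeeping of the Langlands-dual and twisted types flagged by the footnote in the introductory theorem. The subtlest point is the $\chi > 0$ mutation-equivalence step, where $\T$ is only mutation equivalent to the affine Dynkin quiver, not equal to it; one must guess the correct short mutation sequence and check by inspection that the outcome lies in the mutation class of the standard affine Dynkin quiver for the given type. Once this is pinned down, the content of the theorem is precisely that the single numerical invariant $\chi$ of $(\vec n, \vec w)$ already suffices to identify the Dynkin regime.
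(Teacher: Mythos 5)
Your enumeration of the pairs $(\vec n,\vec w)$ with $\chi>0$ and $\chi=0$, and the observation that for $\chi=0$ the quiver $\T$ literally \emph{is} an oriented doubly extended Dynkin quiver (so no mutation is needed there), both match the paper, which disposes of the $\chi=0$ claim by inspecting the finitely many cases of Figure \ref{fig:doubleExtendedOptions}. The identification of $T'_{\vec n,\vec w}$ with the finite (resp.\ affine) Dynkin star in the two regimes is likewise fine.

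The genuine gap is in the $\chi>0$ mutation-equivalence step, and it is not merely that the ``correct short mutation sequence'' remains to be guessed: the sequence you propose provably fails. Mutating $\T$ at $N_\infty$ does not remove the double arrow or the triangles; a direct computation with the weighted mutation rule shows that $\mu_{N_\infty}(\T)$ is isomorphic to $\T$ itself via the transposition $(N_1\,N_\infty)$ --- this is precisely the cluster modular group element $\gamma=\{N_\infty,(N_1N_\infty)\}$ that the paper introduces --- and following with a mutation at $N_1$ returns the quiver $\T$ on the nose. Since every affine Dynkin quiver outside type $A$ is a tree orientation with only single arrows, whereas $\T$ carries a double arrow and $m$ oriented triangles $N_1\to i_2\to N_\infty\Rightarrow N_1$, no one- or two-step sequence supported on $\{N_1,N_\infty\}$ can reach one. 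The paper's actual argument is substantially heavier: for types $A_{p,q}$ and $\Affine{D}_n$ it realizes $\T$ and the affine Dynkin quiver as the quivers of two explicit triangulations of an annulus, resp.\ a twice-punctured disk, and invokes flip-connectivity of triangulations (Appendix \ref{sec:AffineProofs}); for $\Affine{E}_{6},\Affine{E}_7,\Affine{E}_8$ it applies the source--sink mutation path $g=[N_1,i_{\text{odd}},i_{\text{even}},i_{2}]$ on the underlying finite diagram $h/2$ times, with $h=7,10,16$; the non-simply-laced cases are then obtained by folding. To repair your proof you would need to replace the $\mu_{N_\infty}$ step with one of these (or some other genuinely longer) constructions.
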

The first statement will be proved in \Cref{sec:AffineClusterAlgebra}. The second statement can be verified by checking the finitely many cases where $\chi = 0$ (\Cref{fig:doubleExtendedOptions}).
\begin{remark}\label{rem:FoldingT_nw}
$\chi$ is preserved by replacing a length $n$ tail with weight $w$ with $w$ weight 1 tails of length $n$. This follows the idea that higher weight nodes can be analyzed by folding larger quivers. 
\end{remark}

\begin{remark}\label{rem:middle_nodes_TQ}
The middle two nodes of a $\T$ quiver as we have described always have weight 1. The twisted affine types will have quivers which look like $\T$ quivers in their mutation classes, but with weighted nodes in the middle positions. The non-BC twisted affine types are dual to ordinary affine quivers. However the type BC twisted affine quivers are special. For example, the type $\TwistedAffine{BC}{4}_n$ quivers have the following quiver in their mutation class:
\begin{equation}
\begin{tikzpicture}
\node[fat4] (1) [] {};
\node[base] (2) [below of = 1] {};
\node[fat2] (3) [right of = 2] {};
\node[fat2] (4) [right of = 3] {};
\node[invis] (5) [right of = 4] {\dots};
\node[fat2] (6) [right of = 5] {};

\path[->] (1) edge [] node {} (2); 
\path[->] (2) edge [] node {} (3);
\path[->] (3) edge [] node {} (1); 
\path[->] (4) edge [] node {} (3); 
\path[->] (4) edge [] node {} (5);
\path[->] (6) edge [] node {} (5); 
\end{tikzpicture}
\end{equation}
\end{remark}

In light of this remark we will define a $BC$ variant of $\T$ quiver denoted $T^{BC}_{\vec{n}}$ which will have the $\TwistedAffine{BC}{4}_n$ types in their mutation class. 
\begin{definition}
A $T^{BC}_{\vec{n}}$ quiver consists of two middle nodes of weight 4 and 1 with a single arrow between them and tails of weight 2 nodes of length $n_i$, see \Cref{fig:TBCQuiver}. We define 
\begin{equation}
    \chi(T^{BC}_{\vec{n}}) = \sum_i (\frac{1}{n_i} - 1) + 1
\end{equation}
\end{definition}

\begin{figure}
    \centering
\begin{tikzpicture}
\node[fat4] (1) [] {};
\node[base] (2) [below of = 1] {};
\node[fat2] (3) [right of = 2] {};
\node[invis] (4) [right of = 3] {\dots};
\node[fat2] (5) [right of = 4] {};
\node[fat2] (6) [left of = 2] {};
\node[invis] (7) [left of = 6] {\dots};
\node[fat2] (8) [left of = 7] {}; 
\node[fat2] (9) [left of = 1] {};
\node[invis] (10) [left of = 9] {\dots};
\node[fat2] (11) [left of = 10] {}; 

\path[->] (1) edge [] node {} (2); 
\path[->] (2) edge [] node {} (3);
\path[->] (3) edge [] node {} (1); 
\path[->] (3) edge [] node {} (4);
\path[->] (5) edge [] node {} (4); 
\path[->] (2) edge [] node {} (6);
\path[->] (6) edge [] node {} (1); 
\path[->] (6) edge [] node {} (7);
\path[->] (8) edge [] node {} (7); 
\path[->] (2) edge [] node {} (9);
\path[->] (9) edge [] node {} (1); 
\path[->] (9) edge [] node {} (10);
\path[->] (11) edge [] node {} (10); 
\end{tikzpicture}
    \caption{A $T^{BC}_{\vec{n}}$ quiver with 3 tails}
    \label{fig:TBCQuiver}
\end{figure}
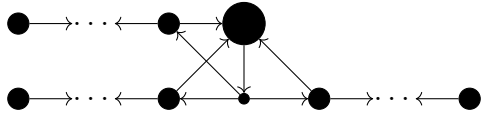

\subsection{The cluster modular group of a \texorpdfstring{$\T$}{Tnw} cluster algebra}\label{sec:TnwModularGroup}
We will construct a subgroup, $\Gamma_\tau$, of the cluster modular group of a $\T$ cluster algebra generated by ``twist'' mutation paths associated with each tail. The automorphism group $\Aut{\T}$ acts on $\Gamma_\tau$ by permuting twists associated to tails of the same length and weight.

Let 
\begin{equation}
    i_{\text{odd}} = \{i_j |3\leq j \leq n_i, j \text{ odd}\}  \text{ and } i_{\text{even}} = \{i_j | 3\leq j \leq n_i, j \text{ even}\} .
\end{equation}

\begin{definition}\label{def:tau}
We have a twist $\tau_i \in \Gamma_\tau$ given by the following mutation paths depending on $w_i$:
\begin{align}
    w_i = 1 \hspace{5mm} & \text{let } \tau_i = \{i_{\text{odd}}i_{\text{even}}i_{2} N_\infty N_1 , (i_2 N_1 N_\infty)\} \\
    w_i = 2 \hspace{5mm} & \text{let } \tau_i = \{i_{\text{odd}}i_{\text{even}}i_{2} N_\infty N_1 i_2 N_1, id\} \\
    w_i = 3 \hspace{5mm} & \text{let } \tau_i = \{i_{\text{odd}}i_{\text{even}}i_{2} N_\infty N_1 i_2 N_\infty i_2 N_1, id\} 
\end{align}
When $w_i \geq 4$ there is no twist for tail $i$.
\end{definition}
Let $\gamma = \{N_\infty,(N_1N_\infty)\}$, which we think of as a twist of a tail of length 1.
\begin{definition}
  \emph{$\Gamma_\tau$ } is the group generated by all of twists, $\tau_i$, and $\gamma$.
\end{definition}
\begin{remark}\label{rem:FoldingTwists}
Once again we see the importance of using folding to understand weighted quivers. One can verify that when $w_i = 2$, $\tau_i$ is the same as replacing tail $i$ with two tails of the same length twisting each of them and then refolding into a tail of weight 2. 
The same holds for splitting into 3 tails when $w_i = 3$. However when $w_i = 4$  mutation at $i_2$ reverses the direction of the double edge without mutating at $N_1$ or $N_\infty$ and so there is no possible equivalent twist of 4 tails. When $w_i > 4$ mutation at $i_2$ results in edge of weight higher than 2; this situation only happens in infinite mutation type cluster algebras which we don't consider for the remainder of the paper.
\end{remark}
\begin{definition}
The group $\Gamma_{\T} = \Gamma_\tau \rtimes \Aut{\T}$ is the canonical subgroup of the cluster modular group of a $\T$ type cluster algebra. 
\end{definition}

\begin{conjecture}
If $\chi(\T) \neq 0$ then the cluster modular group of a type $\T$ cluster algebra is exactly $\Gamma_{T_{\vec{n},\vec{w}}}$.
\end{conjecture}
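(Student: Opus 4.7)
\emph{Proof plan.} Since the $\chi>0$ case already follows from the second theorem of the introduction (which identifies $\T$ with an affine Dynkin seed and computes its cluster modular group as $\Gamma_\tau \rtimes H$), the genuinely open content of the conjecture lies in the $\chi<0$ regime, where $\T$ has infinite mutation type. I will organize the approach by sign of $\chi$ and then address the hard case.

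The inclusion $\Gamma_\tau\rtimes\Aut{\T}\subseteq \Gamma_{\T}$ is, for either sign of $\chi$, essentially by construction. One verifies: (i) every twist path $\tau_i$ is a mutation loop at the initial seed up to the declared isomorphism; (ii) the central generator $\gamma$ is a valid modular group element (naturally built from a reddening sequence in the sense of Section \ref{sec:ClusterModularGroup}, or as a suitable rotation on the two middle nodes); (iii) the relations $\tau_i^{n_i}=\gamma^{w_i}$ hold in the quotient by trivial cluster transformations; and (iv) conjugation by $\Aut{\T}$ permutes the $\tau_i$ exactly on pairs with $(n_i,w_i)=(n_j,w_j)$. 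Each check is local to a single tail plus the two middle nodes, and hence reduces to a routine finite computation on the corresponding sub-$\T$ quiver.

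The reverse inclusion is the substance of the conjecture. For $\chi>0$ the mutation class modulo isomorphism is finite, so Remark \ref{rem:FiniteMutationTypeAlgorithm} reduces matters to a bounded-length check: one verifies, case by case across the affine Dynkin list, that every mutation loop at $\T$ of length at most $2d+1$ (where $d$ is the diameter of a spanning tree of the quiver mutation graph) is already expressible as a word in our generators. For $\chi<0$ this approach is unavailable, and I would instead attempt a geometric argument. Using the surface correspondence of the appendix, I would model $\T$ as the tagged adjacency quiver of a triangulation of an orbifold surface $\Sigma$ with $\chi(\Sigma)$ essentially equal to $\chi(\T)$; in this picture $\tau_i$ becomes a Dehn twist around a curve enclosing the $i$th tail, $\gamma$ becomes a twist around a curve separating the two middle nodes, and $\Aut{\T}$ corresponds to the orbifold symmetry group. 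The desired inclusion would then follow from a standard presentation of $\MappingClassGroup{\Sigma}$ in terms of these elements, combined with the surface-type identification of the cluster modular group with $\MappingClassGroup{\Sigma}$ modulo tagging.

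The main obstacle is that not every $\T$ quiver with $\chi<0$ is known to arise from such an orbifold surface. For those exceptional cases I would fall back on the c-vector action: any element of $\Gamma_{\T}$ induces a permutation of the c-vectors of the initial seed, so one can try to classify these permutations and match them against the ones realized by $\Gamma_\tau\rtimes\Aut{\T}$. When $\chi<0$, however, the ambient root system is an indefinite (often hyperbolic) Kac--Moody system, and a concrete structural description of its real Schur roots is hard to come by; I expect this is precisely the reason the statement is left as a conjecture rather than being proven outright.
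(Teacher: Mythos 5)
The statement you are asked about is left as a conjecture in the paper: no proof is given there, and the only portion that is actually established is the $\chi>0$ case, which is Theorem \ref{thm:AffineModularGroup} (proved in Appendix \ref{sec:AffineProofs} via triangulated surfaces for types $A$ and $D$, citation of Schiffler's computation for the exceptional types, and folding for the non-simply-laced types). Your proposal correctly identifies this split and is honest that it is a plan rather than a proof, so there is no disagreement about what is known; but as written it does not prove the statement, and it could not, since the $\chi<0$ half is genuinely open.

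The concrete flaw is in your proposed strategy for $\chi<0$. You suggest modelling $\T$ as the adjacency quiver of a triangulated orbifold surface $\Sigma$ and importing a presentation of $\MappingClassGroup{\Sigma}$, treating the non-surface cases as exceptional. In fact \emph{no} $\T$ quiver with $\chi<0$ arises from a surface or orbifold: quivers of triangulated surfaces and orbifolds always have finite mutation class, whereas the paper's first theorem states that $\chi<0$ forces infinite mutation type. So the geometric route is unavailable for the entire $\chi<0$ regime, not just for exceptional members of it, and your fallback (classifying the permutation action on c-vectors, i.e.\ on real Schur roots of an indefinite Kac--Moody root system) is the only avenue you actually propose there --- and you concede it is out of reach. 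Two smaller points: your item (ii) misdescribes $\gamma$, which is the single mutation $\{N_\infty,(N_1N_\infty)\}$ and not something built from a reddening sequence (the reddening element is $r=\gamma^2\prod_i(\tau_i\gamma^{-w_i})$ by Theorem \ref{thm:tnw_reddening}); and for $\chi>0$ your bounded-length check via Remark \ref{rem:FiniteMutationTypeAlgorithm} is a legitimate alternative in principle, but the paper instead argues by enumerating all triangulations realizing a $\T$ quiver and showing $\Gamma_\tau\rtimes\Aut{\T}$ already acts transitively on them, which is what makes the surjectivity argument uniform rather than a case-by-case computer search.
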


We have the following theorem:

\begin{theorem}\label{thm:TwistPowers}
$\Gamma_\tau $ is an abelian group and the only relations are $\tau_i^{n_i}=\gamma^{w_i}$.
\end{theorem}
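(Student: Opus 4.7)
The plan is to prove the theorem in four stages: well-definedness of the generators, pairwise commutativity, verification of the central relations, and completeness of the relations.

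First, I would verify that each $\tau_i$ and $\gamma$ are actually well-defined elements of the cluster modular group by directly tracking the effect of the mutation paths on $\T$ and confirming that the specified permutation is indeed a quiver isomorphism from $\T$ to the mutated quiver. For $\gamma$, this is essentially a one-mutation check at $N_\infty$. For each $\tau_i$, I would compute the path in stages: mutating the odd-indexed tail nodes first causes the arrows along tail $i$ to reverse in a controlled way, then mutating the even-indexed nodes, the boundary node $i_2$, and finally the central nodes accomplishes a cyclic ``shift'' along the tail. The case split on $w_i \in \{1,2,3\}$ handles the different behaviors at the central edge; in particular the higher-weight cases require extra mutations at $i_2$, $N_\infty$, and $N_1$ to restore the orientation of the weighted double edge.

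Next, for commutativity, I would treat the two types of relations separately. To show $\tau_i$ commutes with $\tau_j$ for $i\neq j$, I would show that after performing $\tau_i$ the mutation path $\tau_j$ can be executed in the conjugated seed with its mutations confined to tail $j$ together with the central nodes, and that reordering the interleaved mutations at $N_1, N_\infty, i_2, j_2$ can be accomplished via a sequence of trivial cluster transformations (using the fact that the tails are independent once the central mutations at $N_1$ and $N_\infty$ have been commuted across). For $\gamma$ commuting with each $\tau_i$, a direct computation of $\gamma \tau_i \gamma^{-1}$ suffices; since $\gamma$ is short, the conjugation rewrites the tail-$i$ mutations in a seed isomorphic to the original, producing the same path up to isomorphism.

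The third step is to verify the relations $\tau_i^{n_i} = \gamma^{w_i}$. Conceptually, $\tau_i$ shifts cluster variables along tail $i$ by one position toward the hub; after $n_i$ applications the tail has been rotated all the way around and returned to its starting configuration, while the central nodes have absorbed a net rotation equivalent to $w_i$ applications of $\gamma$. I would carry this out by induction on $n_i$, computing $\tau_i^k$ as a reduced mutation path and showing that the ``tail portion'' becomes trivial at $k = n_i$, leaving only $\gamma^{w_i}$ up to trivial cluster transformations. Remark \ref{rem:FoldingTwists} gives a useful sanity check for $w_i \in \{2,3\}$ via unfolding, reducing those cases to multiple copies of the $w_i = 1$ argument.

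Finally, to show these are the only relations, let $G$ be the abstract abelian group with presentation $\langle \tau_1, \ldots, \tau_m, \gamma \mid \tau_i^{n_i}\gamma^{-w_i}\rangle$. The previous steps supply a surjection $G \twoheadrightarrow \Gamma_\tau$, so it remains to show injectivity. The strategy is to exhibit a faithful $G$-action by tracking c-vectors: compute the effect of each $\tau_i$ and $\gamma$ on the initial c-vector frame of $\widehat{T_{\vec{n},\vec{w}}}$, and observe that the induced linear maps satisfy $\tau_i^{n_i} = \gamma^{w_i}$ and generate a subgroup of $GL_n(\mathbb{Z})$ isomorphic to $G$. The main obstacle is this last step: verifying that the resulting matrix representation has no additional relations. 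Computing the c-vector action of $\gamma$ typically gives an element of infinite order (shifting c-vectors cyclically among the ``imaginary'' directions), and the $\tau_i$ act by independent shifts on each tail's c-vectors; exhibiting these as elements of, for example, a lattice of rank $m+1$ modulo the stated relations will complete the proof.
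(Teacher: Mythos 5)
Your outline agrees with the paper on the commutativity step (the paper likewise reduces to a finite check on length-2 tails after observing that the extra mutations on longer tails always occur at sources), but it diverges at the two hardest points, and at both there is a real gap. First, for $\tau_i^{n_i}=\gamma^{w_i}$ you propose an induction on $k$ showing that ``the tail portion of $\tau_i^k$ becomes trivial at $k=n_i$,'' but you never identify the invariant that would let the induction close: after one application of $\tau_i$ the quiver is only isomorphic to the original via a nontrivial permutation, so $\tau_i^k$ is a conjugated, interleaved path whose length grows linearly in $k$, and deciding that the concatenation is a trivial cluster transformation requires tracking the full c-vector (or cluster-variable) data across all of it. The paper avoids this entirely by passing to the surface model: Lemma \ref{lem:TwistGroup} identifies $\tau$ with the $\frac{2\pi}{n}$ fractional rotation of the inner boundary of an annulus with $n$ marked points and $\gamma$ with the full Dehn twist, so $\tau^n=\gamma$ is immediate, and the weighted cases follow by the folding of Remark \ref{rem:FoldingTwists}. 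If you want to stay purely combinatorial you would need to prove, as your inductive invariant, an explicit description of the seed reached by $\tau_i^k$ (in the surface language, ``the zig-zag triangulation rotated by $k$ ticks''), which is exactly what the geometric picture supplies.

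Second, your plan for ``these are the only relations'' rests on a faithful \emph{linear} representation on c-vectors, and this does not work as stated: the cluster modular group does not act linearly on the c-vector data --- the mutation rule for the extended exchange matrix is piecewise linear, so the ``induced linear maps'' you want to place in $GL_n(\mathbb{Z})$ are not well defined. You correctly flag this as the main obstacle, but that means the completeness half of the theorem remains unproved in your write-up. What actually pins the group down is again the geometric realization (the fractional twist has infinite order, and $\tau^k$ for $0<k<n_i$ is visibly not a power of the Dehn twist), together with the explicit embedding $\Gamma_\tau\subset\mathbb{Z}\times\prod\cyclicgroup{n_i}$ recorded in Remark \ref{rem:Tgroup}; some such concrete model is needed to rule out extra relations.
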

\begin{proof}
In order to show that $\Gamma_\tau$ is abelian, we simply need to check that two twists tails of length 2 commute with each other and with $\gamma$. This is because the additional mutations which appear as the tail length increases always happen at sources. Thus they don't change the adjacency of the quiver and stay disconnected from the other tail through the entire path. Therefore all that remains is a simple computation to check commutativity for each possible combination of weights for tails of length 2. 

We now focus on a single tail of length $n$ and weight $1$ and show that  $\tau^{n} = \gamma$. It suffices to look at $T_{(n),(1)}$ since $\tau_i$ only mutates at vertices on tail $i$. In \Cref{sec:AffineProofs} we see that this quiver is associated to an annulus with $n$ marked points on the interior (labeled $v_1,\dots,v_{n}$ clockwise) and one marked out on the outer boundary component. In \Cref{lem:TwistGroup} we see that $\tau$ corresponds to rotating the interior circle by $\frac{2\pi}{n}$ radians and $\gamma$ is the full Dehn twist. So $\tau^n$ is a full rotation and is equal to $\gamma$.

The previous remark completes the theorem when $w_i > 1$.
\end{proof}

\begin{remark}\label{rem:Tgroup}
Let $\ell=\prod n_i$. We may view $\Gamma_\tau$ as the subgroup of $\mathbb{Z}\times\prod \cyclicgroup{n_i}$ generated by the elements $\gamma = (\ell,0,\dots,0)$ and $\tau_i = (w_i \ell/{n_i},0,\dots,1,\dots,0)$. Let $\Gamma_\tau^\circ$ be the kernel of the projection $\Gamma_\tau  \rightarrow \mathbb{Z}$. Then $\Gamma_\tau \simeq \Gamma_\tau^\circ \rtimes \mathbb{Z}$. We remark that this isomorphism relies on the identification of $d\Z$ with $\Z$ where $d = \gcd(\ell, w_i \ell/n_i)$, since the projection onto the first $\Z$ factor is not surjective.
\end{remark}

\begin{remark}
When there are zero tails, $T_{(),()}$ is just a double edge. It is clear in this case $\gamma^2$ is the reddening element. This generalizes to the following theorem.
\end{remark}
\begin{theorem}\label{thm:tnw_reddening}
The element $r \in \Gamma_\tau$ given by $r=\gamma^2\prod_i(\tau_i\gamma^{-w_i})$ is the reddening element of $T_{\vec{n},\vec{w}}$. 
\end{theorem}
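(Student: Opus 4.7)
The plan is to exhibit an explicit mutation path representing $r$ and check directly that it is a reddening sequence for $\hat{T}_{\vec{n},\vec{w}}$. By the description of the reddening element in Section \ref{sec:ClusterModularGroup} and Muller's Theorem \ref{thm:MullerReddening}, this will identify $r$ uniquely as the reddening element. Because $\Gamma_\tau$ is abelian by Theorem \ref{thm:TwistPowers}, I am free to reorder the factors of $r$ so that each tail twist $\tau_i\gamma^{-w_i} = \tau_i^{1-n_i}$ is performed one tail at a time, with the central factor $\gamma^2$ applied last.

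First I would handle the base case of no tails, where $\T$ is the Kronecker quiver $N_\infty \Rightarrow N_1$ and $r = \gamma^2$. The underlying mutation path $N_\infty, N_1$ is the classical two-step reddening sequence for the Kronecker quiver, which is verified by a direct computation of $c$-vectors in $\hat{T}_{(),(\,)}$.

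Next I would argue inductively by analyzing the effect of each factor $\tau_i^{1-n_i}$. The proof of Theorem \ref{thm:TwistPowers} shows that $\tau_i$ acts on the $i$th tail subquiver as ``rotation by one step'' (via the annulus model for $w_i=1$), so $\tau_i^{1-n_i}$ is one step short of a full rotation $\gamma^{w_i}$. I expect this truncated rotation to flip the sign of every $c$-vector attached to the tail nodes $i_2,\ldots,i_{n_i}$. Distinct tails interact only through the shared central vertices $N_1$ and $N_\infty$, both of which are mutated by each twist, so I would carefully track how the $c$-vectors at those two nodes evolve through the successive twists. The role of the final $\gamma^2$ is then to redden the central Kronecker-like subquiver once the accumulated tail twists have left it in the appropriate configuration.

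The hard part will be controlling this interaction at the central vertices uniformly across all weight profiles $\vec{w}$. My expectation is that the cleanest path is to verify the full calculation only for an unweighted single tail $T_{(n),(1)}$ by a direct argument using the annulus model from Appendix \ref{sec:AffineProofs}, and then reduce each higher-weight twist $\tau_i$ to a commuting product of weight-$1$ twists on parallel unfolded tails via Remarks \ref{rem:FoldingT_nw} and \ref{rem:FoldingTwists}. Assembling these pieces yields a mutation sequence turning $\hat{T}_{\vec{n},\vec{w}}$ into $\check{T}_{\vec{n},\vec{w}}$, and the uniqueness of the reddening element then shows that the associated element of $\Gamma_\tau$ is precisely $r$.
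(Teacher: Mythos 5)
Your proposal follows essentially the same route as the paper's proof: both use commutativity of $\Gamma_\tau$ to rewrite $r$ as $\bigl(\prod_i \tau_i\gamma^{-w_i}\bigr)\gamma^2$, argue that each factor $\tau_i\gamma^{-w_i}$ reddens the nodes of tail $i$ while leaving the two central nodes green (so the factors can be applied in sequence without interference), and let the final $\gamma^2$ redden the central Kronecker subquiver. The only tactical difference is in the single-tail verification --- you reduce higher-weight tails to unfolded weight-$1$ tails via the annulus model, whereas the paper checks each weight directly at $n_i=2$ and extends to longer tails by observing that the mutations at $i_{\text{even}}i_{\text{odd}}$ occur at sources --- but this does not change the structure of the argument.
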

\begin{proof}
Suppose that $m=1$.  It is a simple computation to check this statement for each possible weight when $n_1= 2$. Then, for $n_1 > 2$ we can see that the mutating at $i_{\text{even}}i_{\text{odd}}$ always mutates at a source and so is a reddening sequence for the non-boundary nodes of the tail. Since $1_3$ is initially connected towards $1_2$, we now have $1_2$ out to $1_3$. Finally, we can complete the reddening sequence by using the the $n=2$ case.

Now consider $m > 1$. Let $r_i = \gamma^2\tau_i\gamma^{-w_i} $ be the reddening element for the $i$th tail subquiver. Rewrite $r$ as follows
\begin{equation}\label{eq:r_factor}
    r = \gamma^2\prod_i(\tau_i\gamma^{-w_i}) = \big(\prod_i(r_i\gamma^{-2})\big)\gamma^2
\end{equation}
We can see that this element is reddening by noting that $r_i\gamma^{-2}$ has the effect of reddening the nodes on the tail $i$, while keeping the middle two nodes green. Moreover no additional frozen arrows are connected to any other tail. Thus for each $i$, the element $r_i$ always gets applied to an all green subquiver. Therefore, the effect of the product of elements of the right hand side of \Cref{eq:r_factor} is to make all of the nodes other than the middle two red. Finally applying $\gamma^2$ makes all the nodes red. This element returns us to an isomorphic quiver without permuting any of the frozen nodes, and is thus the reddening element.

\end{proof}

\begin{corollary}
When $\chi>0$, $r$ is a conjugation of the source-sink mutation path on the corresponding affine Dynkin diagram. 
\end{corollary}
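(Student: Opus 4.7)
The plan is to leverage the uniqueness of the reddening element as a central element of $\Gamma_Q$ (established just before the corollary) together with the mutation-invariance of the construction. Since $\chi>0$, Theorem \ref{rem:dynkin_types} implies that $\T$ lies in the mutation class of an affine Dynkin quiver $D$. I would first fix a bipartite acyclic orientation on $D$, partitioning its mutable vertices into a set of sources $S$ and a set of sinks $T$. For the cyclic affine $\Affine{A}_n$ types, where a strictly bipartite acyclic orientation need not exist, one can treat them as a separate case by using a Coxeter-like ordering of mutations that plays the same role. Let $P$ be any fixed mutation sequence with $P(\T) = D$.

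Next I would show that the source-sink mutation path $s$ on $D$, defined as mutating at all vertices of $S$ followed by mutating at all vertices of $T$ (the order within each group is irrelevant since non-adjacent mutations commute), is a reddening sequence for $D$. The key computation is in principal coefficients: in the framing $\hat{D}$, each $v \in S$ has only outgoing arrows among mutable vertices together with a single outgoing arrow to its frozen companion, so mutation at $v$ simply flips its incident arrows and makes its c-vector $-e_v$, turning $v$ red. Because the sources are pairwise non-adjacent, performing this simultaneously for all of $S$ produces a quiver in which every $v \in S$ is red, every $v \in T$ is still green, and each original sink has become a source. Repeating the argument for the vertices of $T$ turns them red as well, and no mutable vertex is mutated more than once. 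The resulting quiver coincides with $\Check{D}$ up to a frozen-preserving isomorphism, so $s$ is reddening.

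Finally, I would transport this sequence back from $D$ to $\T$ using $P$. The composite path $P \cdot s \cdot \revPath{P}$, paired with the appropriate frozen-preserving isomorphism, represents a reddening element in $\Gamma_{\T}$; this follows from Theorem \ref{thm:MullerReddening} together with the conjugation computation already used in the proof that the reddening element is central. By the uniqueness of the reddening element and Theorem \ref{thm:tnw_reddening}, this conjugate must equal $r$, giving precisely the claimed presentation of $r$ as a conjugate of the source-sink mutation path on the affine Dynkin diagram.

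The main obstacle is establishing that the source-sink path is reddening for every affine Dynkin type rather than the conjugation argument itself. The simply-laced tree cases $\Affine{D}_n$ and $\Affine{E}_{6,7,8}$ are handled cleanly by the bipartite computation above, but the cyclic $\Affine{A}_n$ case and the non-simply-laced cases obtained by folding will require additional care; the cleanest route is likely to reduce to the simply-laced case via the unfolding framework of Section \ref{sec:FoldingQuivers}, where a bipartite orientation of the cover descends to the compatible mutation pattern on the folded quiver.
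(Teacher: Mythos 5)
Your proposal is correct and follows essentially the same route as the paper: the paper's proof simply asserts that the reddening element of an affine Dynkin quiver is the source-sink mutation path, invokes Theorem \ref{rem:dynkin_types} to place such a quiver in the mutation class when $\chi>0$, and concludes conjugacy via Theorem \ref{thm:MullerReddening} and uniqueness of the reddening element. The only difference is that you supply the bipartite principal-coefficients computation justifying the asserted fact (and correctly flag the cyclic $\Affine{A}_{p,q}$ and folded cases as needing separate care), which the paper leaves implicit.
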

This corollary follows since the reddening element of an affine Dynkin diagram is the source-sink mutation path. Then since $\chi > 0$ implies there is an quiver corresponding to an affine Dynkin diagram, \Cref{thm:MullerReddening} states the two reddening elements must be conjugate.

\begin{remark}
In terms of the group presentation of \Cref{rem:Tgroup}, the reddening sequence is given by the element $(\chi\ell ,1,1,\dots,1)$.
\end{remark}

\subsection{BC type quivers}

The $T^{BC}_{\vec{n}}$ type quivers have an analogous abelian subgroup, $\Gamma_\tau = \groupgenby{\tau_i, \gamma | \tau_i^{n_i} = \tau_j^{n_j} = \gamma}$, generated by twists of the tails. $\gamma$ is the mutation path consisting of mutation at the weight 4 node and then the weight 1 node and the twist paths are the same twist paths in the $w_i = 2$ case of a regular $\T$ quiver. 

The reddening element is given by 
\begin{equation}
    r=\gamma\prod_i(\tau_i\gamma^{-1}).
\end{equation}

\section{Affine Cluster Algebras}\label{sec:AffineClusterAlgebra}

Our analysis of the cluster modular group of the affine cluster algebras stems from the observation in \Cref{rem:dynkin_types}. Our primary goal is the following theorems:

\begin{theorem}\label{thm:AffineType}
The cluster algebra associated to the quiver $\T$ is of affine type if and only if $\chi > 0$. Furthermore, every affine type cluster algebra has a seed whose quiver is a  $\T$ or $T^{BC}_{\vec{n}}$ with $\chi > 0$. 
\end{theorem}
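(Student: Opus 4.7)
The plan is a two-directional classification argument built around the observation that the quantity $\chi(\T) = 2 - \sum_i w_i(1-1/n_i)$ is, up to normalization, the Euler characteristic of a $2$-orbifold with cone points of orders $n_i$ weighted by $w_i$. Positivity of $\chi$ is the classical spherical condition, so only finitely many $(\vec{n}, \vec{w})$ can occur, and I expect these to match exactly the affine Dynkin types.

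For the forward direction, I would first enumerate all solutions of $\chi > 0$. When every $w_i = 1$, the inequality $\sum(1-1/n_i) < 2$ yields: $m \leq 2$ (with unbounded $\vec{n}$), or $m = 3$ with $(n_1,n_2,n_3)$ one of $(2,2,n)$, $(2,3,3)$, $(2,3,4)$, $(2,3,5)$. For each $m=3$ case I would exhibit an explicit mutation sequence transforming $\T$ into a simply-laced affine Dynkin quiver, namely $T_{2,2,n} \to \Affine{D}_{n+2}$, $T_{2,3,3} \to \Affine{E}_6$, $T_{2,3,4} \to \Affine{E}_7$, $T_{2,3,5} \to \Affine{E}_8$; each such sequence consists of mutating first at $N_\infty$ and then propagating source-sink mutations outward along the tails. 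The cases $m = 0,1,2$ produce double edges or cycles which, after at most one mutation, are of type $\Affine{A}_n$, with the correct orientation $A_{p,q}$ determined by the initial orientation of $\T$.

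For weighted tails I would appeal to Remark \ref{rem:FoldingT_nw} and the folding construction of Section \ref{sec:FoldingQuivers}: unfolding each weight-$w_i$ tail into $w_i$ weight-$1$ tails of the same length preserves $\chi$ and produces a simply-laced $\T$ quiver already known to be affine. Since folding commutes with mutations that respect the folding partition, the folded mutation sequence from the simply-laced case descends to a mutation sequence producing the corresponding non-simply-laced affine Dynkin quiver (types $\Affine{B}_n, \Affine{C}_n, \Affine{F}_4, \Affine{G}_2$, and their Langlands-dual twisted variants). The only type that resists this approach is $\TwistedAffine{BC}{4}_n$, which by Remark \ref{rem:middle_nodes_TQ} is its own Langlands dual and is not obtained by folding an ordinary affine quiver; for it I would use the separate family $T^{BC}_{\vec{n}}$, verifying by direct computation that its canonical representative (Figure \ref{fig:TBCQuiver}) lies in the $\TwistedAffine{BC}{4}_n$ mutation class. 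The converse direction then follows immediately: having exhibited a $\T$ or $T^{BC}_{\vec{n}}$ in each affine mutation class, every affine cluster algebra has a seed of the required form.

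The main obstacle is the verification of the folding step, where one must check that the group mutations used to fold actually commute with the mutation sequence realizing the affine Dynkin target, so that one lands on the intended non-simply-laced quiver rather than an unrelated one; this requires verifying the hypothesis of Section \ref{sec:FoldingQuivers} along the entire sequence. A secondary subtlety is the $A_{p,q}$ issue from Section \ref{sec:DynkinQuiver}: multiple non-mutation-equivalent orientations of the affine $A$ cycle exist, and one must orient $\T$ carefully so that the cycle orientation produced matches the intended $(p,q)$. Finally, the ``only if'' implication---that $\chi \leq 0$ excludes affine type---will follow from the statements already announced for $\chi = 0$ (doubly extended) and $\chi < 0$ (infinite mutation type), so no additional argument is required on that side.
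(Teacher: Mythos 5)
Your overall architecture coincides with the paper's: enumerate the finitely many families with $\chi>0$, identify each simply-laced case with its affine Dynkin type, obtain the non-simply-laced types by folding tails, treat $\TwistedAffine{BC}{4}_n$ separately via $T^{BC}_{\vec{n}}$, and derive the ``only if'' direction from the $\chi=0$ and $\chi<0$ classifications. The genuine difference lies in how the simply-laced identifications are carried out. For the exceptional types the paper does essentially what you propose: it applies the source--sink move $g=[N_1,i_{\text{odd}},i_{\text{even}},i_{2}]$ on the underlying finite $E_{k+3}$ quiver $h/2$ times (with $h=7,10,16$) and checks directly that this lands on the affine $\Affine{E}_{k+3}$ orientation. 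But for the two infinite families it exhibits no mutation sequence at all: it realizes $T_{p,q,1}$ and $T_{n,2,2}$ as quivers of explicit triangulations of the annulus $S_{0,2,0,p+q}$ and of the twice-punctured disk, exhibits second triangulations of the same surfaces whose quivers are the $A_{p,q}$ and $\Affine{D}_n$ Dynkin orientations, and invokes flip-connectedness of the arc complex. That surface detour buys a uniform-in-$n$ argument with no induction on tail length, and it also settles automatically the orientation subtlety you flag, since the surface itself records $p$ and $q$. On your route, the specific claim that the $m\le 2$ cases become type $\Affine{A}$ ``after at most one mutation'' is not correct as stated: the mutation distance from $T_{p,q,1}$ to a cyclic orientation of the affine $A$ quiver grows with $p+q$, so you would need either an explicit mutation sequence of length on the order of $p+q$ verified inductively, or the surface argument. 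The folding step you identify as the main obstacle is handled in the paper exactly as you suggest, and the required commutativity is automatic there because the folding partitions are orbits of quiver automorphisms preserved along the relevant mutation paths; the point needing care is only that the source--sink path on the folded quiver unfolds to the simultaneous source--sink path on the unfolded one, which the paper checks case by case.
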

\begin{theorem}\label{thm:AffineModularGroup}
The cluster modular group of a cluster algebra of affine type is  $\Gamma_\tau \rtimes \Aut{\T}$, where the action of the automorphism group is by permuting the twists of tails of the same weight and length. 
\end{theorem}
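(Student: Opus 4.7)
The inclusion $\Gamma_\tau \rtimes \Aut{\T} \hookrightarrow \Gamma_{\ClusterAlgebra{\T}}$ is essentially free: the generators $\tau_i$ and $\gamma$ were constructed in Definition \ref{def:tau} precisely as mutation loops returning $\T$ to an isomorphic quiver, and Theorem \ref{thm:TwistPowers} already pins down the abelian structure; any automorphism of $\T$ lifts to a cluster modular element with empty mutation path, and the twist $\tau_i$ is sent by an automorphism swapping tails $i$ and $j$ (necessarily with $n_i=n_j$ and $w_i=w_j$) to $\tau_j$, giving the semidirect product rather than a direct product. So the substantive work is the reverse containment.

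The plan is to split the affine types according to whether a surface model is available. By Theorem \ref{thm:AffineType}, which has already reduced matters to the $\T$ and $T^{BC}_{\vec{n}}$ quivers with $\chi>0$, the infinite families are $\Affine{A}_{p,q}$, $\Affine{D}_n$, $\Affine{B}_n$, $\Affine{C}_n$, and $\TwistedAffine{BC}{4}_n$, and the exceptional types are $\Affine{E}_6,\Affine{E}_7,\Affine{E}_8,\Affine{F}_4,\Affine{G}_2$. For the infinite families I would deploy the surface construction promised in Appendix \ref{sec:AffineProofs}: each of these algebras comes from a triangulated annulus (with appropriate marked points or orbifold points for the folded and twisted versions). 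Under this identification the cluster modular group is isomorphic to the orientation-preserving mapping class group of the marked surface (modulo trivial cluster transformations). I would identify $\gamma$ with the Dehn twist around the core of the annulus, each $\tau_i$ with a fractional rotation of the corresponding boundary component by $2\pi/n_i$ (as in the proof of Theorem \ref{thm:TwistPowers}), and $\Aut{\T}$ with the symmetry group permuting boundary components of equal type and possibly swapping the two boundaries. Since a short generating set for the mapping class group of an annulus with marked points on each boundary is known to consist of exactly these rotations, the Dehn twist, and the boundary symmetries, this identifies the image and forces equality.

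For the exceptional types, the mutation class is finite, so the plan is to invoke the algorithm of Remark \ref{rem:FiniteMutationTypeAlgorithm}: compute the diameter $d$ of a spanning tree of the quiver mutation graph, verify that the subgroup $\Gamma_\tau \rtimes \Aut{\T}$ acts transitively on the quivers in the mutation class, and check that every mutation loop of length at most $2d+1$ starting and ending at (an isomorphic copy of) $\T$ is already contained in $\Gamma_\tau \rtimes \Aut{\T}$. The orders of $\Gamma_\tau/\groupgenby{\gamma}$ and of $\Aut{\T}$ can be compared against the number of quivers in the mutation class (which is tabulated in, e.g., the mutation atlas), giving the transitivity check essentially for free. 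The $T^{BC}_{\vec{n}}$ variant is handled by the analogous construction outlined at the end of Section \ref{sec:TnwModularGroup}.

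The hardest step is the surface case, for two reasons. First, one must argue cleanly that the mapping class group generators described above really exhaust the cluster modular group and not merely a finite-index subgroup; this requires comparing against the index-two subtlety flagged in the remark about orientation-reversing quiver automorphisms, so as to be sure we are computing the correct version of $\Gamma_{\ClusterAlgebra{\T}}$. Second, the twisted and folded affine cases require justifying that the folding procedure of Section \ref{sec:FoldingQuivers} carries cluster modular generators to cluster modular generators, i.e. that no extra element appears upon folding; this is where I expect the detailed bookkeeping to live, and it is the reason the surface lemmas are deferred to the appendix rather than inlined here.
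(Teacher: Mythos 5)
Your overall skeleton matches the paper's: the containment $\Gamma_\tau\rtimes\Aut{\T}\subseteq\Gamma$ is immediate, the infinite families are handled on surfaces, the exceptional simply laced types by an external computation of the finite data, and the folded types by showing folding neither adds nor loses generators. The exceptional-type step is fine (the paper cites the known groups from \cite{Schiffler:cluster_automorphisms} rather than re-running the $2d+1$ algorithm, but either works), and the folding step is exactly the paper's Lemma.

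The genuine gap is in the surface step, specifically your identification of the cluster modular group with the (orientation-preserving) mapping class group of the marked surface. For the twice-punctured disk realizing $\Affine{D}_n$ this identification is false: the cluster modular group contains elements that switch the tagging at a puncture, and these are not induced by any homeomorphism. They are not a technicality --- they supply the $\cyclicgroup{2}\times\cyclicgroup{2}$ factor needed to make $\Gamma_\tau/\groupgenby{\gamma}\simeq\cyclicgroup{n}\times\cyclicgroup{2}\times\cyclicgroup{2}$ of order $4n$ match the count of seeds carrying a $T_{n,2,2}$ quiver, so an argument built on MCG generators alone would undercount and could not close the reverse containment. Even in the unpunctured annulus case, "the cluster modular group is the MCG" is itself a nontrivial theorem you would need to import; the paper avoids it entirely by a rigidity-plus-counting argument: since $\T$ contains a double arrow, Remark \ref{rem:DoubleEdgeTriangles} forces any triangulation whose quiver is $T_{p,q,1}$ (resp.\ $T_{n,2,2}$) to have the specific local form constructed in the proof of Theorem \ref{thm:AffineType}; one then enumerates the finitely many such triangulations up to the Dehn twist $\gamma$ (including the tagged ones) and checks that $\Gamma_\tau/\groupgenby{\gamma}$ acts simply transitively on them. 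If you replace your "known generating set of the MCG" step with this enumeration --- or explicitly adjoin the tagged mapping class group --- the argument goes through.
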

The full proofs of \Cref{thm:AffineType,thm:AffineModularGroup} are given in \Cref{sec:AffineProofs}. The association between the affine types and values of $\vec{n}$ and $\vec{w}$ is given in \Cref{fig:affineOptions}. The following well known Lemma (included for completeness) proves that this is every possible option for $\chi > 0$.

\begin{figure}[hb]
    \centering
    \begin{subfigure}{0.49\textwidth}
        \begin{tabular}{c c c c }
     Type &  $\vec{n}$ & $\vec{w}$ & $\chi^{-1}$ \\
     \hline\hline
     $A_{1,1}$ & () & () & 2\\
      $A_{p,q}$   & $(p,q)$ & $(1,1)$ & $\frac{pq}{p+q}$\\
      $\Affine{D}_{n}$   & $(n-2,2,2)$ & $(1,1,1)$ & $n-2$ \\
      $\Affine{E}_6$ & $(3,3,2)$ & $(1,1,1)$ & $6$\\
      $\Affine{E}_7$ & $(4,3,2)$ & $(1,1,1)$ & $12$\\
      $\Affine{E}_8$ & $(5,3,2)$ & $(1,1,1)$ & $30$\\
      \end{tabular}
    \end{subfigure}
    \begin{subfigure}{0.5\textwidth}
    \begin{tabular}{c c c c}
     Type &  $\vec{n}$ & $\vec{w}$ & $\chi^{-1}$ \\
     \hline\hline
      $\Affine{C}_n $ & $(n)$ & $(2)$ & $\frac{n}{2}$\\
      $\Affine{B}_{n} $ & $(n-1,2)$ & $(1,2)$ & $n-1$\\
      $\Affine{F}_{4} $ & $(3,2)$ & $(2,1)$ & 6\\
      $\Affine{G}_{2}$ & $(2)$  & $(3)$ & 2\\
      \hline
      $\TwistedAffine{BC}{4}_{n}$ \\(BC-Type) & $(n)$ & - & $n$\\
    \end{tabular}
            
    \end{subfigure}
    \caption{All possible values of $\T$ that result in affine cluster algebras.}
    \label{fig:affineOptions}
\end{figure}

\begin{lemma}
There are finitely many families of $(\vec{n},\vec{w})$ such that $\chi > 0$.
\end{lemma}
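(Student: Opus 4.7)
I rewrite the condition $\chi(\T) > 0$ as the single numerical inequality
\begin{equation}
\sum_{i=1}^{m} w_i\Bigl(1 - \tfrac{1}{n_i}\Bigr) < 2.
\end{equation}
Since $n_i \geq 2$ and $w_i \geq 1$, each summand is at least $1/2$, so summing gives $m/2 < 2$. Thus $m \leq 3$, providing a uniform bound on the number of tails.

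Next, I bound the weights. Any $w_i \geq 4$ would contribute a term $w_i(1-1/n_i) \geq 4\cdot\tfrac{1}{2} = 2$, violating the inequality, so $w_i \leq 3$. The case $w_i = 3$ forces $3(1-1/n_i) < 2$, hence $n_i = 2$, contributing $3/2$ to the sum; then for $m \geq 2$ the remaining slack is less than $1/2$, which is impossible since every other summand is at least $1/2$. So $w_i = 3$ can occur only if $m = 1$. A similar sharper bookkeeping shows that at most one tail can have $w_i = 2$ when $m = 3$.

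Finally, I enumerate by cases on $m$. The cases $m = 0$ and $m = 1$ immediately yield finitely many families: with $m = 1$ the options are $(w_1, n_1)$ with $w_1 \in \{1, 2\}$ and $n_1 \geq 2$ arbitrary, or $(w_1, n_1) = (3,2)$. For $m = 2$, the analogous bookkeeping produces the families with either both $w_i = 1$ (the $A_{p,q}$ family, with two free parameters), or one $w_i = 2$ and bounded $n_i$ (giving $\Affine{B}_n$ and $\Affine{F}_4$). For $m = 3$, the constraint that three terms each $\geq 1/2$ sum to less than $2$ forces each term to be strictly less than $1$; since $w_i = 2$ already gives a term $\geq 1$, all $w_i = 1$. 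The problem reduces to the classical spherical triangle inequality
\begin{equation}
\frac{1}{n_1} + \frac{1}{n_2} + \frac{1}{n_3} > 1
\end{equation}
with $n_i \geq 2$, whose solutions are the well-known list $\{2,2,k\}$ for $k \geq 2$ (the $\Affine{D}_n$ family) together with $\{2,3,3\}, \{2,3,4\}, \{2,3,5\}$ (corresponding to $\Affine{E}_6, \Affine{E}_7, \Affine{E}_8$).

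Each branch produces only finitely many families (each with at most one unbounded $n_i$), completing the proof. The argument is elementary throughout; the only mild subtlety is recognizing the $m = 3$ case as the classical Platonic/spherical inequality, whose finite enumeration is standard.
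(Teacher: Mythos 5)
Your proof is correct, and it reaches the same list of families, but by a different route than the paper. The paper first restricts to the all-weight-one case, where $\chi>0$ becomes the classical inequality $\sum 1/n_i > m-2$ with solution set $(n)$, $(p,q)$, $(n,2,2)$, $(3,3,2)$, $(4,3,2)$, $(5,3,2)$, and then obtains every higher-weight family by folding tails of these (justified by Remark \ref{rem:FoldingT_nw}, which says $\chi$ is unchanged when a weight-$w$ tail is replaced by $w$ weight-one tails of the same length). You instead work directly with the weighted inequality $\sum w_i(1-1/n_i)<2$: the uniform lower bound of $1/2$ per summand gives $m\leq 3$ and $w_i\leq 3$ immediately, and a short case analysis on $m$ finishes the enumeration, with the $m=3$ branch reducing to the spherical triangle inequality $\sum 1/n_i>1$. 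Your version is more self-contained, since it never needs the folding remark; the paper's version buys a cleaner conceptual link between the weighted families and their simply laced unfoldings, which it reuses throughout. Two trivial slips that do not affect the conclusion: your closing parenthetical ``each with at most one unbounded $n_i$'' fails for the $A_{p,q}$ family, which you yourself correctly note has two free parameters; and your remark that ``at most one tail can have $w_i=2$ when $m=3$'' is superseded by your later (correct and sharper) conclusion that all $w_i=1$ there. You also do not address the $T^{BC}_{\vec{n}}$ variant, which the paper dispatches by inspection, but that lies outside the literal statement about $(\vec{n},\vec{w})$ pairs.
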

\begin{proof}
Following \Cref{rem:FoldingT_nw}, we begin with the case where every tail has weight 1. If $\chi > 0$, we need: \[\sum\frac{1}{n_i} > m-2 \]
The only options for $\vec{n}$ are 
$(n)$, $(p,q)$, $(n,2,2)$, $(3,3,2)$, $(4,3,2)$ and $(5,3,2)$. \\
Then the higher weight tails come from folding the above cases. When $p=q$ we can fold to obtain $((p),(2))$. Similarly the two length 2 tails in $(n,2,2)$ and the length 3 tails of $(3,3,2)$ can be folded to obtain $((n,2),(1,2))$ and $((3,2),(2,1))$. Finally we can fold $(2,2,2)$ to obtain $((2),(3))$.

The $BC$ case follows easily by direct inspection.
\end{proof}

\begin{remark}
These cluster modular groups have already been computed \cite{Schiffler:cluster_automorphisms} based at the Dynkin type quivers. However the computations based at the $\T$ quivers allows for a uniform treatment of the affine and doubly extended cluster algebras.
\end{remark}

\subsection{The normal subgroup generated by \texorpdfstring{$\gamma$}{gamma}}

Our goal now is to construct a natural finite quotient of the exchange graphs and cluster complexes of each of the affine cluster algebras. We dualize the quotient cluster complexes to produce an ``affine generalized associahedron''.

    The subgroup of the cluster modular groups of the affine $T_{\vec{n},\vec{w}}$ quivers generated by $\gamma = \{N_\infty,(N_1N_\infty)\}$ is a normal, finite index subgroup. 
    
    \begin{remark}
    In the $\Affine{A}$ case, this subgroup can be seen to be given by the mapping class group action on the triangulations of an annulus. We therefore consider this subgroup to be an analog to the mapping class group in each of the affine cases. 
    \end{remark}
    
    \Cref{fig:AffineGroupsAndQuotients} shows the cluster modular groups and quotients by the subgroup generated by $\gamma$.
    
    \begin{figure}

    \begin{center}
         \begin{tabular}{c c c}
            Affine Type & Cluster Modular Group & Quotient\\
            \hline\hline
            $A_{p,p}$  & $D_{2p} \rtimes \mathbb{Z}$ &  $ (\cyclicgroup{p} \times \cyclicgroup{p} ) \rtimes \cyclicgroup{2}$\\ 
            
            $A_{p,q}$  & $\cyclicgroup{gcd(p,q)} \times \mathbb{Z}$ & $\cyclicgroup{p} \times \cyclicgroup{q}$  \\
            
            $\Affine{D}_4$ & $S_4 \times \mathbb{Z}$ & 
            $ (\cyclicgroup{2} \times \cyclicgroup{2} \times \cyclicgroup{2}) \rtimes S_3$ \\
            
            $\Affine{D}_n$ Even & $(\cyclicgroup{2} \times \cyclicgroup{2}) \rtimes \cyclicgroup{2} \times \mathbb{Z}$ 
            & $ \cyclicgroup{n-2} \times (\cyclicgroup{2} \times \cyclicgroup{2}) \rtimes \cyclicgroup{2}$ \\
            
            $\Affine{D}_n$ Odd  & $ (\cyclicgroup{2} \times \cyclicgroup{2}) \rtimes \mathbb{Z}$ 
            & $ \cyclicgroup{n-2} \times (\cyclicgroup{2} \times \cyclicgroup{2}) \rtimes \cyclicgroup{2}$ \\
            
            $\Affine{E}_6$ & $ S_3 \times \mathbb{Z}$ & 
            $ \cyclicgroup{2} \times ( \cyclicgroup{3} \times \cyclicgroup{3}) \rtimes \mathbb{Z}_2$  \\
            
            $\Affine{E}_7$ & $ \cyclicgroup{2} \times \mathbb{Z}$ & $\cyclicgroup{2} \times \cyclicgroup{3} \times \cyclicgroup{4}$\\
            
            $\Affine{E}_8$ & $ \mathbb{Z}$ & $ \cyclicgroup{2} \times \cyclicgroup{3} \times \cyclicgroup{5}$ \\
            
            \hline 
            
            $\Affine{C}_n $ & $\mathbb{Z}$ & $\cyclicgroup{n}$ \\
            $\Affine{B}_{n} $ & $\cyclicgroup{2} \times \mathbb{Z} $ & $\cyclicgroup{n-1} \times \cyclicgroup{2}$ \\
             $\Affine{F}_{4} $ & $\mathbb{Z} $ & $\cyclicgroup{2} \times \cyclicgroup{3}$ \\
             $\Affine{G}_{2}$ & $\mathbb{Z}$  & $\cyclicgroup{2}$ \\
             \hline
             $\TwistedAffine{BC}{4}_{n}$ & $\mathbb{Z}$ & $\cyclicgroup{n}$ 
         \end{tabular}

    \end{center}
        \caption{Affine cluster modular groups and their quotients }
        \label{fig:AffineGroupsAndQuotients}
    \end{figure}

    We wish to understand the quotient of the exchange graph of an affine cluster algebra by the action of the group $\groupgenby{\gamma}$. A possible way to accomplish this is by introducing a special framing of a $T_{\vec{n},\vec{w}}$ quiver, and compute the graph by identifying the clusters via their c-vectors as usual. 
    
    Consider the quiver $T^f_{\vec{n},\vec{w}}$ obtained from $\T$ by adding a frozen node for vertices $i_2,\dots,i_{n_i}$ in each tail and one vertex associated with the double edge. In particular for each tail $i$ add frozen nodes of weight $w_i$ labeled $f_{i,2}, \dots ,f_{i,n_i}$ with a single arrow from $i_j$ to $f_{i,j}$. Then add a frozen node $f_1$ of weight 1 along with single arrows $N_1$ to $f_1$ and $f_1$ to $N_\infty$. 
    
    \begin{conjecture}\label{con:SpecialFraming}
    Two quivers in the exchange graph of $Q=T_{\vec{n},\vec{w}}$ are in the same orbit of the action of $\groupgenby{\gamma}$ if and only if the projection of those quivers in the exchange graph of $T^f_{\vec{n},\vec{w}}$ is the same. 
    \end{conjecture}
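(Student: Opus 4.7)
The plan is to identify the kernel of the natural projection $\Gamma_{\T} \to \Gamma_{T^f_{\vec{n},\vec{w}}}$ with the cyclic group $\groupgenby{\gamma}$, via a comparison of c-vectors in the principal framing $\hat{T}$ and the special framing $T^f$.

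For the direction ``same $\groupgenby{\gamma}$-orbit implies same $T^f$-projection,'' I would directly verify that $\gamma = \{N_\infty,(N_1 N_\infty)\}$ induces a frozen isomorphism on $T^f$. A short mutation computation shows that mutating $T^f$ at $N_\infty$ reverses the inner $3$-cycle $N_\infty \Rightarrow N_1 \to f_1 \to N_\infty$ without altering the tails, and the transposition $(N_1 N_\infty)$ then restores the original labeled quiver while fixing every frozen node. Hence $\gamma$ and all its powers lie in the kernel, so $\groupgenby{\gamma}$-orbits project to single vertices of the $T^f$-exchange graph.

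For the reverse direction, the central observation is that $T^f$ can be obtained from $\hat{T}$ by reversing the arrow $N_\infty \to F_{N_\infty}$ and then identifying $F_{N_\infty}$ with $F_{N_1}$ as the single frozen node $f_1$. I would encode this as a $\Z$-linear reduction $\Phi$ on the frozen lattice sending a c-vector with $(F_{N_1},F_{N_\infty})$-coordinates $(b_1,b_2)$ to a c-vector with $f_1$-coordinate $b_1 - b_2$, preserving all tail coordinates. Since the c-vector mutation formula is linear in the frozen columns and the mutable exchange matrix is identical in $\hat{T}$ and $T^f$, the map $\Phi$ intertwines the c-vector evolution. Combined with Zelevinsky's theorem in the principal framing, two $\T$-seeds project to the same vertex of the $T^f$-exchange graph if and only if a mutable permutation matches their mutable quivers along with the elementwise $\Phi$-images of their $\hat{T}$-c-vectors.

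It then suffices to show that any $g \in \Gamma_{\T}$ whose induced action preserves this $\Phi$-image data lies in $\groupgenby{\gamma}$. Using Theorem \ref{thm:AffineModularGroup} and Remark \ref{rem:Tgroup}, write $g = \gamma^a \prod_i \tau_i^{b_i} h$ with $a \in \Z$, $b_i \in \Z/n_i\Z$, and $h \in \Aut{\T}$. Because $\tau_i^{n_i} = \gamma^{w_i}$ is $T^f$-trivial, the twist $\tau_i$ acts on the tail-$i$ c-vectors with order exactly $n_i$ under $\Phi$, hence cyclically and non-trivially; and a non-trivial $h$ permutes tails of equal length and weight, again changing which frozen label each tail c-vector carries. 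Since $\Phi$ preserves tail coordinates exactly, preservation of $\Phi$-image data forces $b_i \equiv 0 \pmod{n_i}$ for all $i$ and $h = \mathrm{id}$, leaving $g = \gamma^a$. This identifies the kernel and completes the equivalence.

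The principal obstacle is the last step — rigorously verifying that twists and automorphisms never combine to leave the tail c-vector configuration invariant. For the surface-type affine cases treated in Appendix \ref{sec:AffineProofs}, I expect a clean geometric argument in which tail c-vectors correspond to intersection or winding data that $\tau_i$ rotates cyclically around the distinguished marked point; for the exceptional affine types this should reduce to a finite verification implicit in the computations behind Theorem \ref{thm:AffineCountingFacets}.
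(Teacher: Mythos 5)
This statement is a \emph{conjecture} in the paper: the authors justify only the ``if'' direction (the framing $T^f_{\vec{n},\vec{w}}$ is preserved by $\gamma$) and explicitly state that the converse is not clear. Your forward direction reproduces that observation correctly — the computation that mutation at $N_\infty$ followed by the swap $(N_1 N_\infty)$ restores $T^f_{\vec{n},\vec{w}}$ with $f_1$ fixed does check out. So you are not reconstructing a proof from the paper; you are attempting to settle an open statement, and the attempt has a genuine gap in the hard direction.

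The gap is the intertwining claim for $\Phi$. The frozen columns do \emph{not} evolve linearly under mutation: by the paper's own formula, mutating at a mutable node $k$ updates a frozen entry by
$\epsilon'_{if} = \epsilon_{if} + \tfrac{1}{2}\left(|\epsilon_{ik}|\epsilon_{kf} + \epsilon_{ik}|\epsilon_{kf}|\right)$,
which depends on the sign of the individual entry $\epsilon_{kf}$. In $\hat{Q}$ the columns for $F_{N_1}$ and $F_{N_\infty}$ evolve using $|b_{1,k}|$ and $|b_{2,k}|$ separately, while in $T^f_{\vec{n},\vec{w}}$ the column for $f_1$ evolves using $|b_{1,k}-b_{2,k}|$. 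For $\Phi$ to commute with mutation you would need $|b_{1,k}-b_{2,k}| = |b_{1,k}| - |b_{2,k}|$ at every step, which fails already for $(b_{1,k},b_{2,k})=(0,1)$. Sign coherence does not rescue this: it governs rows of the principal framing, not arbitrary $\Z$-linear combinations of frozen columns, and need not hold for the framing $T^f_{\vec{n},\vec{w}}$ at all. Without the intertwining, your identification of the $T^f$-exchange graph with the $\Phi$-reduction of principal c-vector data collapses, and with it the reduction of the converse to a kernel computation in $\Gamma_\tau \rtimes \Aut{\T}$. Finally, even granting that reduction, the step you flag as ``the principal obstacle'' — that no nontrivial combination of twists and quiver automorphisms preserves the reduced frozen data — is precisely the content the paper leaves open, and your sketch (cyclic action of $\tau_i$ on tail c-vectors, detection of $h\in\Aut{\T}$ by frozen labels) asserts rather than proves it; note also that a frozen isomorphism of $T^f$-framed quivers is free to permute mutable nodes, so permutation of tail data by $h$ is not by itself an obstruction. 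The conjecture remains open on this argument.
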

    
    The ``if'' part of the statement follows since the framing is preserved by the action of $\gamma$. However, it is not clear that the only quivers which are identified are the ones which are in the same $\gamma$ orbit. 
    
\subsection{Affine associahedra}
    Recall the cluster complex associated to a finite cluster algebra has a dual complex called the ``generalized associahedon''. We cannot simply dualize an affine cluster complex immediately as there are vertices in the cluster complex with infinite degree. This is because there are cluster variables that are compatible with infinitely many other cluster variables, and so occur in infinitely many seeds. However, up to action of $\gamma$, there are only finitely many cluster variables. If we quotient the cluster complex by the action of $\gamma$, we obtain a finite cell complex. 
    
    In order to construct a dual complex, we need to see that the quotient complex is a ``combinatorial cell complex''. Technically, the quotient by $\gamma$ is not combinatorial because there are facets that contain multiple cluster variables in the same orbit. For example in \Cref{fig:A21CluserComplex} we see the cluster complex for a quiver of type $A_{2,1}$. Here $\gamma$ fixes the top and bottom vertex and translates the interior rows one step to the right. This results in triangles with multiple vertices in the same orbit.
    
    Instead we quotient by $\gamma^3$ which ensures that every maximal facet corresponds to a unique collection of distinct orbits. There is still a finite number of clusters up to $\gamma^3$ so by the work of \cite{basak:CombinatorialCellComplex} this complex has a dual cell complex. We then can quotient the dual by $\gamma$ to obtain the dual cell complex we originally desired.

\begin{figure}[hb]
    \centering
\begin{tikzcd}
   &                                                                                      &                                                                              & \bullet \arrow[lld, no head] \arrow[d, no head] \arrow[rrd, no head] \arrow[rrrrd, no head] \arrow[rrrrrrd, no head, dashed] &                                                                                                                            &                                                  &                             &                                                                                                   &                                                                 &    \\
{} & \bullet \arrow[rdd, no head] \arrow[l, no head, dashed] \arrow[ldd, no head, dashed] &                                                                              & \bullet \arrow[ll, no head] \arrow[rdd, no head]                                                                             &                                                                                                                            & \bullet \arrow[ll, no head] \arrow[rdd, no head] &                             & \bullet \arrow[ldd, no head] \arrow[rdd, no head] \arrow[ll, no head] \arrow[rr, no head, dashed] &                                                                 & {} \\
   &                                                                                      &                                                                              &                                                                                                                              &                                                                                                                            &                                                  &                             &                                                                                                   &                                                                 &    \\
{} &                                                                                      & \bullet \arrow[rr, no head] \arrow[ruu, no head] \arrow[ll, no head, dashed] &                                                                                                                              & \bullet \arrow[rr, no head] \arrow[ruu, no head]                                                                           &                                                  & \bullet \arrow[rr, no head] &                                                                                                   & \bullet \arrow[r, no head, dashed] \arrow[ruu, no head, dashed] & {} \\
   &                                                                                      &                                                                              &                                                                                                                              & \bullet \arrow[u, no head] \arrow[rru, no head] \arrow[rrrru, no head] \arrow[llu, no head] \arrow[llllu, no head, dashed] &                                                  &                             &                                                                                                   &                                                                 &   
\end{tikzcd}
    \caption{$A_{2,1}$ Cluster Complex}
    \label{fig:A21CluserComplex}
\end{figure}

    \begin{definition}
    Let $\ClusterComplex{A}$ be the cluster complex associated to the affine cluster algebra $\mathcal{A}$. The \emph{affine associahedron} is the dual complex to $\ClusterComplex{A}/\groupgenby{\gamma}$.  The 1-skeleton of an affine associahedron is the quotient exchange complex of an affine cluster algebra.
    \end{definition}
    
    \begin{remark}
    We could define an affine associahedron as a quotient of any power of $\gamma$. All of our analysis of the combinatorics of affine associahedra can be easily extended to a quotient by any other power of $\gamma$.
    \end{remark}
    
    \begin{example}
            The simplest example is the $A_{2,1}$ cluster algebra. In \Cref{fig:A2QuotientExchangeGraph} we see the full exchange graph extending infinitely in both directions. Below is the quotient associahedron. There are four folded 2-cells. Two correspond to the top and bottom pentagons and the remaining two correspond to the $A_{1,1}$ subalgebras. Despite the folding, this associahedron has the homology type of a sphere.  
        
        \begin{figure}[!hb]
        \centering
        \begin{subfigure}{.8\textwidth}
            \centering
            \includegraphics[width=\textwidth]{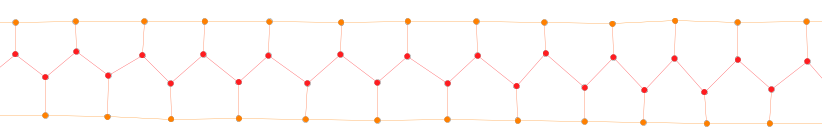}
            \caption{Exchange Graph}
            \label{fig:A2QuotientExchangeGraph}
        \end{subfigure}
        \begin{subfigure}{.5\textwidth}
            \centering
            \includegraphics[height=1in]{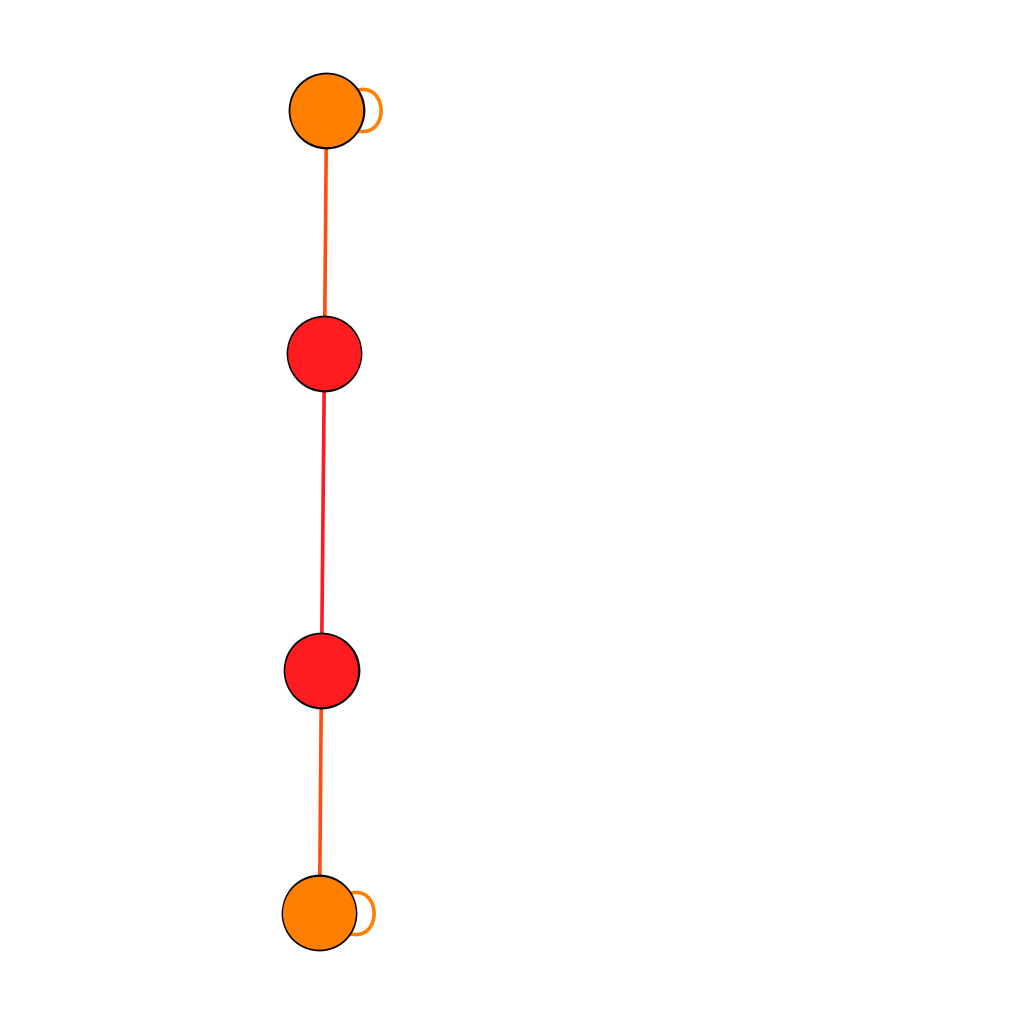}
            \caption{Associahedron}
            \label{fig:A2QuotientAssociahedron}
        \end{subfigure}
        \caption{$A_{2,1}$ Exchange Graph and Associahedron.}
        \label{fig:A2QuotientComplex}
    \end{figure}
    \end{example}
    
    \begin{example}
    A slightly more complicated example is $\Affine{D}_4$ in \Cref{fig:D4QuotientComplex}. Again we see the exchange graph extending infinitely in both directions. The 1-skeleton of the affine associahedron is shown. This graph was computed using the special framing mentioned in the previous section. This computation finds the correct number of $0-$cells in the associahedron, and thus confirms \Cref{con:SpecialFraming} in this case. The complete counts of all subalgebras in $\Affine{D}_4$ up to the action of $\gamma$ can be found in \Cref{fig:D4AffCounts}. The total counts of corank $k$ subalgebras is the number of codimension $k$ facets of the affine associahedron.
    
    \begin{figure}[!hb]
        \centering
        \begin{subfigure}{.5\textwidth}
            \centering
            \includegraphics[angle=90,width=\textwidth]{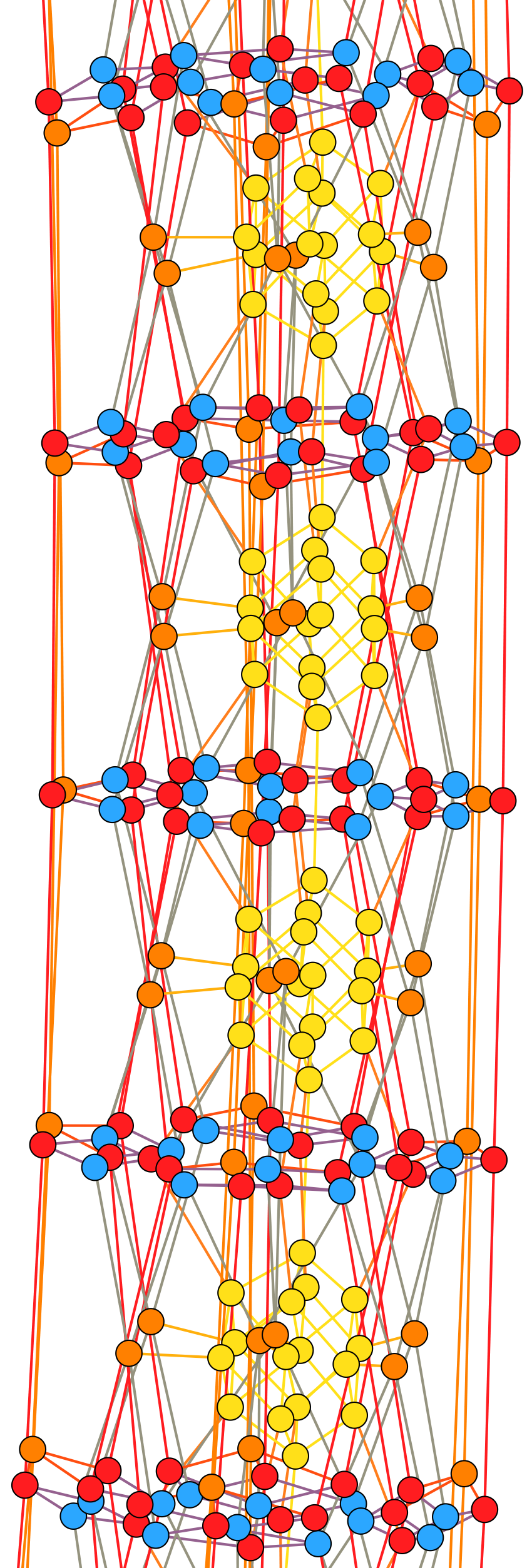}
            \caption{Exchange Graph}
        \end{subfigure}
        \begin{subfigure}{.4\textwidth}
            \centering
            \includegraphics[angle=15, width=\textwidth]{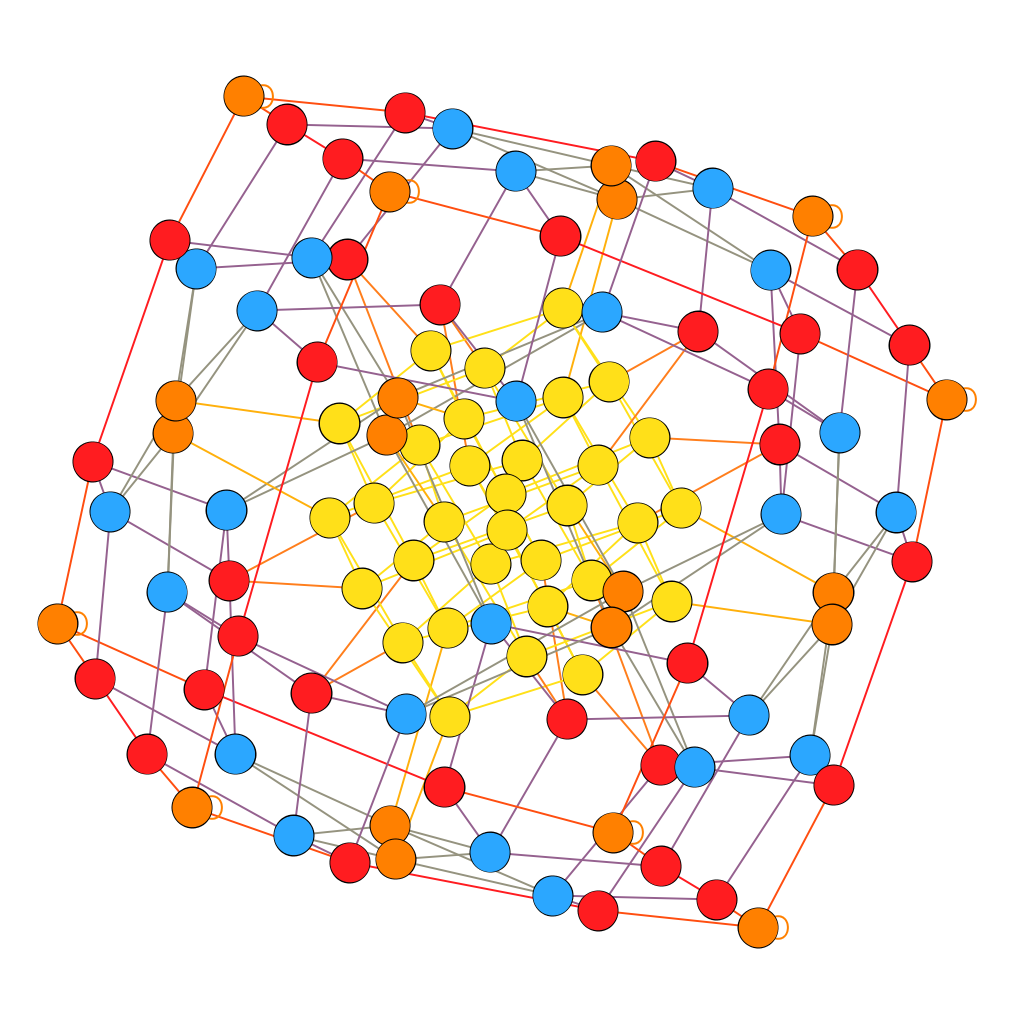}
            \caption{1-skeleton of the associahedron}
        \end{subfigure}
        \caption{$\Affine{D}_4$ exchange graph and affine associahedron.}
        \label{fig:D4QuotientComplex}
    \end{figure}
    
    \begin{figure}[!hb]
        \centering
        \begin{tabular}{c c c c c}
             Corank & \multicolumn{3}{c}{Subalgebra Types} & Total  \\
             \hline
             \hline
             \multirow{2}{*}{1} & $A_{2,2}$ & $ D_4$ & $D_2\times D_2$ & \multirow{2}{*}{16}\\
             & 6 & 8 & 2 & \\
             \hline
             \multirow{2}{*}{2} & $A_{2,1}$ &  $A_3$ & $A_1\times A_1\times A_1$ & \multirow{2}{*}{96}\\
             & 12 & 60 & 24 & \\
             \hline
             \multirow{2}{*}{3}  & $A_{1,1}$ &  $A_2$ & $A_1\times A_1$ & \multirow{2}{*}{244}\\
             & 8 & 128 & 108 & \\
             \hline
             \multirow{2}{*}{4} &  \multicolumn{3}{c}{$A_1$} & \multirow{2}{*}{270}\\
             & \multicolumn{3}{c}{270} & \\
            \hline
             \multirow{2}{*}{5} & 
             \multicolumn{3}{c}{$A_0$ (Clusters)} & \multirow{2}{*}{108}\\
             &  \multicolumn{3}{c}{108} & \\
        \end{tabular}
        \caption{Type and number of subalgebras in the $\Affine{D}_4$ cluster algebra up to the action of $\gamma$.}
        \label{fig:D4AffCounts}
    \end{figure}
    \end{example}
    
    \subsection{Counting facets in the affine associahedra}
    Let $Q$ be any quiver of affine mutation type of rank $n$, and let $\mathcal{A}$ be the cluster algebra associated to this quiver. Let $\vec{n} = (n_i), \vec{w} = (w_i)$ be the vectors defining a $\T$ quiver in the mutation class of $Q$ and let $\chi(\A)= \sum(w_i(n_i^{-1}-1))+2 $. 
    
    The affine associahedron associated to $\A$ will have a $k-$cell for each rank $k$ subalgebra of $\A$. Since a rank $k$ subalgebra is obtained by freezing $n-k$ nodes in a quiver underlying a seed of $\A$, we will count codimension 1 facets by counting cluster variables up to the action of $\gamma$.

    \begin{definition}
    Let $R \in \Mut{Q}$. We call a node $k$ of $R$ \emph{finite} if the quiver obtained by freezing $k$ is of finite type. We call $k$ \emph{affine} if the quiver obtained by freezing $k$ is of affine type. We call the cluster variables associated with affine or finite nodes affine or finite respectively.
    \end{definition}
    
    \begin{lemma}
    Every node of $R$ is either finite or affine. 
    \end{lemma}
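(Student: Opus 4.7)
The plan is to reduce to a direct inspection on the $\T$ (or $T^{BC}$) and affine Dynkin representatives of the mutation class, using an invariance principle: mutating $R$ at any node $j\neq k$ commutes with freezing node $k$, so the mutation class of the frozen-at-$k$ sub-quiver depends only on the cluster variable attached to $k$ and not on the ambient seed. Consequently, to decide whether a given node is finite or affine, I may pick any convenient seed containing its cluster variable and analyze the induced sub-quiver there.

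First I would pass to an affine Dynkin representative $D$ in the mutation class, guaranteed by Theorem \ref{thm:AffineType} combined with Theorem \ref{rem:dynkin_types}. A classical property of affine Dynkin diagrams is that every proper connected sub-diagram is finite Dynkin, so freezing any single node of $D$ yields a disjoint union of finite Dynkin quivers and is therefore of finite mutation type. This handles every cluster variable appearing as a node of $D$. For a cluster variable $x$ that does not appear as a node in any affine Dynkin seed, I would instead use a $\T$ representative and check each of the five node types: freezing $N_\infty$ yields $T'_{\vec n,\vec w}$, which is finite Dynkin by Theorem \ref{rem:dynkin_types}; freezing $N_1$ yields the symmetric star centered at $N_\infty$, again a finite Dynkin quiver; freezing a boundary tail node $i_2$ disconnects tail $i$ and produces an $A_{n_i-2}$ chain together with a shorter $\T$-quiver whose Euler characteristic $\chi$ is strictly larger; freezing an interior tail node $i_j$ splits tail $i$ into a shorter tail attached to the main $T$-structure and an isolated $A$-chain, again increasing $\chi$; freezing the endpoint $i_{n_i}$ simply shortens tail $i$ by one. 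In every case $\chi$ remains non-negative, so by Theorem \ref{thm:AffineType} the main piece is finite or affine and the auxiliary pieces are finite Dynkin. The $T^{BC}$ case is entirely analogous.

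The main obstacle is the bookkeeping for cluster variables that appear only in seeds obtained by mutating at a node of $\T$. For each node $k$ of $\T$, one must verify the claim in the mutated quiver $\mu_k(\T)$, which differs from $\T$ only in the arrows incident to $k$. Since the $\chi$-monotonicity argument relies only on the tail-and-center combinatorics preserved by $\mu_k$, it extends verbatim to each node of $\mu_k(\T)$, closing the induction. For the sporadic affine types $\Affine{E}_6$, $\Affine{E}_7$, $\Affine{E}_8$, $\Affine{F}_4$, $\Affine{G}_2$, $\TwistedAffine{BC}{4}_n$ a finite case check suffices, while for the infinite families $\Affine{A}_{p,q}$, $\Affine{B}_n$, $\Affine{C}_n$, $\Affine{D}_n$ one can alternatively give a uniform surface-theoretic proof via the dictionary of Appendix \ref{sec:AffineProofs}, in which freezing a node corresponds to cutting the underlying surface along an arc, producing only finite-or-affine surface pieces.
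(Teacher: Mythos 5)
Your reduction to ``one seed per cluster variable'' is a reasonable idea, but as written the argument has a genuine gap at its load-bearing step: you never establish that every node of every $R\in\Mut{Q}$ is actually covered by the seeds you analyze. Your case analysis handles the nodes of an affine Dynkin seed and the nodes of a $\T$ seed, and those checks are essentially correct (deleting $N_\infty$ or $N_1$ gives a finite Dynkin star, deleting a tail node gives a smaller $\T$ with larger $\chi$ plus an $A$-chain). But the claim that every cluster variable appears on one of these two families of seeds is precisely the content of Lemma \ref{thm:AffineVariableLocation}, which in the paper comes \emph{after} this lemma, is proved by the same surface/brute-force methods, and is not available to you here. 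Your attempt to close this hole --- ``the $\chi$-monotonicity argument\dots extends verbatim to each node of $\mu_k(\T)$, closing the induction'' --- is not an induction: there is no inductive invariant, no decreasing quantity, and no reason the ``tail-and-center combinatorics'' survives two or more mutations. The mutation class of an affine quiver contains many seeds that are neither Dynkin-shaped nor $\T$-shaped, and your argument says nothing about their nodes. (A secondary, fixable issue: your invariance principle needs the fact that any two seeds sharing a cluster variable are connected by mutations avoiding that variable; this is true but should be cited, and the paper sidesteps it by arguing directly about arcs.)

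The paper's actual proof is the one you relegate to an aside in your last sentence: in types $A$ and $D$ every node of every seed corresponds to an arc on the annulus or twice-punctured disk, cutting along any arc yields only pieces of type $A$ or $D$, and this covers \emph{all} seeds at once with no coverage lemma needed; the $E$ cases are a finite brute-force check over the whole (finite) mutation class, and the non-simply-laced cases follow by folding. If you want to salvage your route, you would need to first prove the coverage statement of Lemma \ref{thm:AffineVariableLocation} independently --- which again requires the surface model or an exhaustive computation --- so the surface-cutting argument is not really avoidable.
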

    \begin{proof}
     If $Q$ is of type $A$ or $D$ then this follows by seeing that every possible arc in the associated marked surface cuts the surface into regions which are surfaces of type $A$ or $D$. In the $E$ case, this may be checked by brute force. Note that this also shows that all subalgebras of affine or finite algebras are affine or finite. 
     
     The non simply laced cases then follow by folding as freezing any node in the folded quiver corresponds to freezing the corresponding group in the unfolded quiver. This results in a subalgebra of the unfolded quiver, which we just show is finite or affine. 
    \end{proof}
    
    \begin{remark}
    The arcs which correspond to cluster variables on finite nodes in the $A$ and $D$ cases are exactly the arc which have non trivial intersection number with the arc that generates the Dehn twist $\delta$ i.e. the crossing arcs (\Cref{def:SurfaceArcClasses}). 
    \end{remark}
    
    \begin{lemma}\label{thm:AffineVariableLocation}
    The cluster variable associated with every finite node appears on a quiver which is an orientation of the associated affine Dynkin diagram. Every affine cluster variable appears on a tail node of a $T_{\vec{n},\vec{w}}$ quiver in the mutation class of $Q$. 
    \end{lemma}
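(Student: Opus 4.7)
The plan is to prove the lemma via the surface model of the appendix for the simply-laced A and D cases, by direct enumeration for the E cases, and by folding for the non-simply-laced types.

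For the A case, the cluster algebra associated to $Q$ arises from triangulations of an annulus with $p$ and $q$ marked points on the two boundary components (as developed in Appendix \ref{sec:AffineProofs}). Cluster variables correspond to isotopy classes of arcs, and by the remark preceding the lemma the finite cluster variables are exactly the crossing arcs while the affine cluster variables are the non-crossing arcs. Given any crossing arc $\alpha$, we complete $\alpha$ to a ``fan'' triangulation in which every other arc emanates from a fixed marked point; the dual quiver of such a triangulation is an oriented cycle, which is an orientation of the affine Dynkin diagram $\Affine{A}_{p,q}$. Given a non-crossing arc $\beta$, we complete $\beta$ to a triangulation whose dual is a $T_{(p,q),(1,1)}$ quiver with $\beta$ appearing on the tail that lies in the annular component containing $\beta$. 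The D case is analogous, using the surface associated to $\Affine{D}_n$ (a twice-punctured disk); the two length-two tails of the $T_{(n-2,2,2),(1,1,1)}$ quiver correspond to the pairs of small arcs enclosing the two punctures.

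For the E cases, the cluster algebras of type $\Affine{E}_6,\Affine{E}_7,\Affine{E}_8$ have finite mutation class, so we can enumerate all cluster variables and verify the two statements by direct computation, using either quiver mutation software or the algorithm of Section \ref{sec:ClusterModularGroup}. For the non-simply-laced types, let $Q'$ be a simply-laced quiver that folds to $Q$ along a graph automorphism $\sigma$. A finite (resp.\ affine) node of $Q$ lifts to a $\sigma$-orbit of finite (resp.\ affine) nodes of $Q'$; by the simply-laced case, each such orbit appears on an orientation of the affine Dynkin diagram (resp.\ on tails of a $T$-quiver) for $Q'$, and the folding procedure of Section \ref{sec:FoldingQuivers} carries these to the corresponding orientations and $\T$-quivers for $Q$. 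The $\TwistedAffine{BC}{4}_n$ type is handled by the same argument using $T^{BC}_{\vec{n}}$ directly.

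The main obstacle will be the surface-theoretic step for the A and D cases: rigorously identifying which triangulations of the annulus and of the surface associated to $\Affine{D}_n$ produce $\T$ quivers and which produce orientations of affine Dynkin diagrams, and showing that every crossing (resp.\ non-crossing) arc extends to a triangulation of the required form. This content is developed in the appendix; once the dictionary between triangulations and quivers is in place, the required triangulations can be exhibited by explicit construction and the lemma follows.
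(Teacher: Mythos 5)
Your proposal follows essentially the same route as the paper: identify finite nodes with crossing arcs and affine nodes with boundary arcs in the surface models for types $A$ and $D$, exhibit an explicit triangulation containing a given arc whose quiver is an orientation of the affine Dynkin diagram (resp.\ a $\T$ quiver with the arc on a tail), handle the $E$ cases by brute force, and deduce the non-simply-laced cases by folding. One small correction: a triangulation of the $(p,q)$-annulus can never have a fully oriented cycle as its quiver (that is the $A_{p+q,0}$ quiver, which is mutation equivalent to $D_{p+q}$ rather than $A_{p,q}$), so the crossing arc should instead be completed to the nested triangulation of Appendix \ref{sec:AffineProofs}, whose quiver is the cycle orientation with a single source and a single sink.
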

    \begin{proof}
    The first statement follows in the $A$ and $D$  cases by noticing that each arc which intersects $\delta$ can be found in a triangulation which is an orientation of the Dynkin diagram. In the $E$ case, this follows by a slightly more sophisticated brute force calculation similar to the calculation of the previous lemma. 
     
     The second statement is proved in a similar way to the first. We notice that each affine cluster variable is associated with a boundary arc, and each boundary arc can be found in a triangulation corresponding to a $T_{\vec{n},\vec{w}}$ quiver. Again, in the $E$ case this is checked by brute force.

     The folded algebras require similar analysis. In type $\Affine{C}_n$ we observe the automorphism which folds type $\Affine{A}_{n,n}$ is the automorphism of the annulus which swaps the two boundary components and fixes $\delta$. Label the boundary points $o_1,\cdots o_n, i_1,\cdots,i_n$.  The orbits of this action come in four classes, pairs of boundary arcs ($(o_{k_1},o_{k_2})$ and $(i_{k_1},i_{k_2})$), pairs of self loops, pairs of crossing arcs between distinct marked points with the same winding number ($(o_{k_1},i_{k_2})$ and $(o_{k_2},i_{k_1})$ and a single crossing arc connecting marked points with the same index $(o_k,i_k)$. The first two cases are affine and appear simultaneously in the construction of a $T_{n,n}$ quiver given in \Cref{sec:AffineProofs}. Similarly the last two cases are finite and appear together in the Dynkin seed with one source and one sink constructed in \Cref{sec:AffineProofs}. A similar analysis of tagged arcs on a twice punctured disk invariant under changing the tagging at one puncture proves the statement for $\Affine{B}_n$ folded from $\Affine{D}_{n+1}$. The remaining cases $\Affine{F}_4$ and $\Affine{G}_2$ follow from explicit calculation as in type $E$.
    \end{proof}
    
    \begin{remark}
    Freezing an affine node produces an affine subalgebra of $\A$. Since these nodes always appear on the tail of a $T_{\vec{n},\vec{w}}$ quiver, we can see that $\gamma$ is also an element of the cluster modular group of every affine subalgebra of $\A$. Thus, the action of $\gamma$ on the cluster complex of $\A$ restricts to the action of $\gamma$ on the cluster complex of any affine subalgebra of $\A$. Thus it makes sense to consider the affine associahedra of subalgebras to be facets of the affine associahedra of $\A$. 
    \end{remark}

    \begin{definition}
    We write $C^k(\A)$ resp. $C_k(\A)$ for the sets of codimension resp. dimension $k$ facets of the affine  associahedron of $\A$.
    \end{definition}
    
    The size of $C^1(\A)$ is equal to the number of distinct cluster variables in $\A$ up to the action of $\gamma$.
    
    \begin{theorem} The number of distinct cluster variables in an affine cluster algebra up to the action of $\groupgenby{\gamma}$ is given by 
    \begin{equation}
        |C^1(\A)| = \sum_i(n_i-1)n_i + \frac{n}{\chi(\A)}
    \end{equation}
    \end{theorem}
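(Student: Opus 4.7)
The plan is to use Lemma \ref{thm:AffineVariableLocation} to split the cluster variables of $\A$ into \emph{affine} and \emph{finite} classes, count each modulo $\groupgenby{\gamma}$ separately, and add. I expect the affine contribution to equal $\sum_i(n_i-1)n_i$ and the finite contribution to equal $n/\chi(\A)$.

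For the affine contribution, I would fix a $\T$-quiver in $\Mut{Q}$ and analyze one tail at a time. Tail $i$ carries $n_i-1$ cluster variables, and the twist $\tau_i$ acts as a cluster automorphism which advances each tail variable to a new tail variable; in the annulus picture used in the proof of Theorem \ref{thm:TwistPowers}, $\tau_i$ is a rotation of the marked inner boundary by $2\pi/n_i$, and the $n_i-1$ tail arcs sweep out $n_i-1$ disjoint peripheral-arc orbits, each of size $n_i$. In general, $\tau_i^{n_i}=\gamma^{w_i}$ (Theorem \ref{thm:TwistPowers}) together with the embedding $\Gamma_\tau\hookrightarrow \Z\times\prod\cyclicgroup{n_j}$ of Remark \ref{rem:Tgroup} shows $\tau_i$ has order exactly $n_i$ in $\Gamma_\tau/\groupgenby{\gamma}$, while a direct inspection of Definition \ref{def:tau} confirms that $\gamma$ fixes every tail variable (its underlying mutation path mutates only $N_\infty$). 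Hence each of the $n_i-1$ initial tail positions produces a $\tau_i$-orbit of exactly $n_i$ distinct variables modulo $\gamma$; these orbits are pairwise disjoint because their underlying arcs cut off different numbers of marked points, and different tails contribute disjoint sets because their frozen complements give non-isomorphic affine subalgebras. Summing yields $\sum_i(n_i-1)n_i$.

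For the finite contribution, Lemma \ref{thm:AffineVariableLocation} guarantees every finite variable appears on an orientation of the affine Dynkin quiver, so I would count them type by type using the surface models of the appendix. In the $\Affine{A}_{p,q}$ case, the finite variables are the spanning arcs of the $(p,q)$-annulus; modulo the Dehn twist $\gamma$, each is uniquely determined by its pair of boundary endpoints, giving $pq=n/\chi$. For $\Affine{D}_n$ the analogous count runs over crossing arcs in the appropriate punctured-disk triangulation model, modulo a Dehn twist about the puncture. For $\Affine{E}_6,\Affine{E}_7,\Affine{E}_8$ the count is a finite direct computation on the mutation class. The non-simply-laced types and $\TwistedAffine{BC}{4}_n$ then follow by folding (Remark \ref{rem:FoldingT_nw}), once one verifies that both the set of finite variables and the quantity $n/\chi$ are preserved under folding. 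Adding the affine and finite contributions gives the claimed formula.

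The main obstacle is the finite count. The affine count reduces cleanly to the group theory of $\Gamma_\tau/\groupgenby{\gamma}$, but the finite count is intrinsically combinatorial: $\groupgenby{\gamma}$ neither fixes any single affine Dynkin orientation nor acts freely on the set of finite variables. A uniform conceptual argument would interpret $2/\chi$ as the degree of the universal orbifold cover of the sphere orbifold carrying cone points of orders $n_i$ with weights $w_i$, and match finite variables with specific lifts of the $n$ nodes of the affine Dynkin quiver along that cover; in the absence of such a framework the proof is forced to proceed type by type, which is why the appendix treatment of the surface models is the real engine behind this step.
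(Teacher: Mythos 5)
Your decomposition into affine and finite cluster variables via Lemma \ref{thm:AffineVariableLocation}, and your count of the affine contribution, match the paper's proof essentially step for step: the paper likewise notes that $\gamma$ acts trivially on the tail variables, that each tail carries $n_i-1$ of them, and that successive applications of $\tau_i$ produce entirely new collections, giving $\sum_i(n_i-1)n_i$. Your extra care about why the orbits are pairwise disjoint is a welcome addition that the paper glosses over.

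Where you diverge is the finite count, and here you have correctly diagnosed your own weakness: you fall back on a type-by-type surface computation (spanning arcs of the annulus for $\Affine{A}_{p,q}$, crossing arcs for $\Affine{D}_n$, brute force for the $E$ types, folding for the rest), and you explicitly lament the absence of a uniform mechanism. The paper has exactly the uniform mechanism you are asking for, and it is already assembled from results you cite elsewhere in your proposal. By Lemma \ref{thm:AffineVariableLocation} every finite variable appears on a source--sink orientation of the affine Dynkin diagram, which carries $n$ of them; the source--sink mutation path takes each such collection to an entirely new collection (Schiffler); and by Theorem \ref{thm:tnw_reddening} and its corollary the source--sink path is conjugate to the reddening element $r=\gamma^2\prod_i(\tau_i\gamma^{-w_i})$, whose image in the presentation of Remark \ref{rem:Tgroup} is $(\chi\ell,1,\dots,1)$, so its order in $\Gamma_Q/\groupgenby{\gamma}$ is $\chi^{-1}$. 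Multiplying $n$ variables per collection by $\chi^{-1}$ collections gives $n/\chi(\A)$ at once, with no case analysis and no appeal to the surface models. Your route is not wrong in principle, but as written the $\Affine{D}_n$ and $E$-type counts are promissory notes rather than arguments, whereas the reddening-element computation discharges all types simultaneously; the ``universal orbifold cover of degree $2/\chi$'' picture you gesture at is, in effect, realized algebraically by the order of $r$ modulo $\gamma$.
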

    \begin{proof}
    
    We simply need to count the number of finite and affine cluster variables up to the action of $\groupgenby{\gamma}$. The action of $\gamma$ is trivial on the affine cluster variables, so we simply need to count them. By \Cref{thm:AffineVariableLocation}, each affine cluster variable appears on the tail of a $T_{\vec{n},\vec{w}}$. On tail $i$ there are $n_i-1$ affine cluster variables, and each application of $\tau_i$ gives an entirely new collection of affine cluster variables; This may be seen by examining the $A_{p,1}$ case. Thus, in total there are $\sum_i(n_i-1)n_i$ affine cluster variables. 
    
    To count the number of finite cluster variables up to the action of $\gamma$, we again use \Cref{thm:AffineVariableLocation}, so that we only need to count the number of cluster variables appearing on source-sink oriented Dynkin diagrams. The source-sink mutation path takes each collection of cluster variables to an entirely new collection \cite{Schiffler:cluster_automorphisms}. By \Cref{thm:tnw_reddening} we know that the source-sink mutation path is equivalent in the cluster modular group to $r$. We can calculate that the order of $r$ in $\Gamma_Q / \groupgenby{\gamma}$ is $\chi^{-1}$ using the presentation of \Cref{rem:Tgroup}. Thus since there are $n$ finite cluster variables on each source-sink oriented Dynkin quiver, there must be $\frac{n}{\chi}$ up to the action of $\gamma$. There is small subtlety when $\chi^{-1} = \frac{a}{b}$ is not an integer (types $\Affine{A}_{p.q}$ and $\Affine{C}_p$). In this case $a$ is the order of $r$ and $r^a = \gamma^b$ in $\Gamma_Q$. Thus the $n a$ cluster variables along the source sink mutation path over count the number of cluster variables in a single orbit $b$ times. Thus the number of finite cluster variables is $n \frac{a}{b} = n \chi^{-1}$ as needed. 
    \end{proof}
    
    \begin{remark}
    The number of distinct cluster variables up to the action of $\groupgenby{\gamma^\ell}$ is given by 
    \begin{equation}
        \sum_i(n_i-1)n_i + \frac{\ell n}{\chi(\A)}.
    \end{equation}
    This is because higher powers of $\gamma$ identify fewer finite cluster variables.
    \end{remark}

    \begin{lemma}\label{thm:AffineCountingFacets}
    \begin{equation}
        |C_k(\A)| = \frac{1}{n-k}\sum_{\mathcal{B} \in C^{1}(\A)}C_k(\mathcal{B})
    \end{equation}
    \end{lemma}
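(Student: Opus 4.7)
The plan is a standard double-counting argument on incidence pairs in the affine associahedron. First I would fix the dictionary between subalgebras and cells: a rank $k$ subalgebra $\mathcal{C}$ of $\A$ is determined by its set of $n-k$ frozen cluster variables, which form an $(n-k)$-simplex in the cluster complex and dually correspond to a $k$-cell of the associahedron; a rank $n-1$ subalgebra $\mathcal{B}$ corresponds to a 1-simplex (a single frozen cluster variable $v$) and dually to a codimension-1 facet. The containment $\mathcal{C} \subset \mathcal{B}$ (meaning $\mathcal{B}$ is obtained by unfreezing a single frozen node of $\mathcal{C}$) is equivalent to $v$ being one of the $n-k$ vertices of the $(n-k)$-simplex defining $\mathcal{C}$, equivalently to the $k$-cell $\mathcal{C}$ lying on the boundary of the facet $\mathcal{B}$.

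Next I would count the set of $\gamma$-orbits of pairs $(\mathcal{B},\mathcal{C})$ with $\mathcal{B} \in C^1(\A)$, $\mathcal{C} \in C_k(\A)$, and $\mathcal{C} \subset \mathcal{B}$, in two ways. Summing by fixing $\mathcal{B}$ first: for each $\mathcal{B}$, the pairs with that fixed $\mathcal{B}$ correspond to the rank $k$ subalgebras of $\mathcal{B}$, which, by the remark that $\gamma$ restricts to an action on the cluster complex of any affine subalgebra, are exactly enumerated by $|C_k(\mathcal{B})|$. This gives a total of $\sum_{\mathcal{B} \in C^1(\A)}|C_k(\mathcal{B})|$. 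Summing by fixing $\mathcal{C}$ first: for each rank $k$ subalgebra $\mathcal{C}$, unfreezing any one of its $n-k$ frozen nodes produces a distinct rank $n-1$ subalgebra containing it, yielding exactly $n-k$ pairs per $\mathcal{C}$. This gives $(n-k)\,|C_k(\A)|$. Equating the two expressions yields the formula.

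The main obstacle is ensuring that the $\langle\gamma\rangle$-quotient, which is implicit in the definitions of $C_k(\A)$, $C^1(\A)$ and $C_k(\mathcal{B})$, does not corrupt the local count of $n-k$ containing facets. The worry is that $\gamma$ could permute the $n-k$ rank $n-1$ subalgebras containing a given $\mathcal{C}$, collapsing them in the quotient and dropping the multiplier below $n-k$. This is ruled out by the authors' construction of the associahedron via the intermediate $\gamma^3$-quotient on the simplicial side: every simplex in that quotient has distinct cluster variable orbits as vertices, so the $n-k$ codimension-1 facets containing a given $k$-cell remain distinct after the final quotient by $\gamma$. Once this is verified, the double count proceeds exactly as in the finite-type (polytopal) case, and the lemma follows.
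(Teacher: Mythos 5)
Your proposal is correct and is essentially the same argument as the paper's, which justifies the formula in one line by observing that each dimension $k$ facet appears $n-k$ times as a facet of distinct corank-1 subalgebras. Your version simply makes the double count explicit and verifies the point (which the paper leaves implicit) that the $\gamma$-quotient does not collapse the $n-k$ containing facets.
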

    This follows since each dimension $k$ facet appears $n-k$ times as a dimension $k$ facet of distinct corank 1 subalgebras. This lemma allows us to compute the number of facets of any particular affine associahedron inductively. 
    
    \begin{conjecture}
    Each affine associahedron is topologically a sphere.
    \end{conjecture}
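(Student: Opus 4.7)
The plan is to establish that the affine associahedron $\ClusterComplex{A}/\groupgenby{\gamma}$ is a closed PL-manifold with the homology of a sphere, and then conclude via the resolved Poincar\'e conjecture. I would induct on the rank $n+1$ of $\A$, with base cases of low rank (such as $A_{p,q}$ for small $p,q$ and $\Affine{D}_4$, visible in Figures \ref{fig:A2QuotientComplex} and \ref{fig:D4QuotientComplex}) verified by direct inspection.

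The inductive step rests on Lemma \ref{thm:AffineVariableLocation}. Every codimension-$1$ facet of the affine associahedron corresponds to a rank-$n$ subalgebra that is either of finite type --- whose associahedron is a classical generalized associahedron and thus a sphere by Chapoton--Fomin--Zelevinsky --- or a proper affine subalgebra, which is a sphere of dimension $n-1$ by the inductive hypothesis. More generally, the link of a codimension-$k$ cell corresponding to a rank-$(n+1-k)$ subalgebra $\mathcal{B}$ should be dual to the cluster complex of a complementary finite-type cluster algebra of rank $k$, hence a sphere $S^{k-1}$. Combined with the fact that every codimension-$1$ cell lies in exactly two top-dimensional cells (since an exchange relation links exactly two clusters), this establishes the closed PL-manifold condition.

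For the sphere conclusion, I would compute the Euler characteristic using the facet counts of Lemma \ref{thm:AffineCountingFacets} and verify that it matches that of $S^n$. Simple connectivity should follow from the fact that the fundamental group of the full cluster complex $\ClusterComplex{A}$ is generated by pentagon and square relations, both of which are already trivial in the cluster complex, together with an analysis showing that $\gamma$ acts as a translation along a direction that becomes null-homotopic in the quotient. The quotient is then a simply connected closed $n$-manifold with the homology of $S^n$, hence homeomorphic to $S^n$.

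The main obstacle is that $\gamma$ need not act freely on every cell of the cluster complex --- certain subalgebras may be stabilized by a power of $\gamma$, producing orbifold-type singularities in the naive quotient. My remedy is to first pass to the quotient $\ClusterComplex{A}/\groupgenby{\gamma^3}$ (required anyway for Basak's combinatorial cell complex framework to apply), show that this larger quotient is a sphere by the above argument, and then analyze the residual cyclic $\groupgenby{\gamma}/\groupgenby{\gamma^3}$-action to descend the sphere property, verifying that the exceptional fixed locus has codimension at least two. Handling this descent uniformly across the exceptional $\Affine{E}$ types and the non-simply-laced types will likely require a case-by-case folding argument using Section \ref{sec:FoldingQuivers}, together with the $\Affine{A}$-type surface model of Appendix \ref{sec:AffineProofs} as a prototype where the action of $\gamma$ as a Dehn twist can be visualized and the quotient computed geometrically.
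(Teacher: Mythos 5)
You should first be aware that the paper does not prove this statement: it is explicitly a conjecture, supported only by the remark that the type-$A$ cases follow from Penner's results on quotients of arc complexes \cite{Penner:arc_complexes} and that the exceptional types could in principle be verified case by case. So your proposal must stand entirely on its own, and as written it is a strategy outline whose two load-bearing steps each have a genuine gap. The first concerns the manifold claim. The quotient $\ClusterComplex{A}/\groupgenby{\gamma}$ is not a free quotient: $\gamma$ acts trivially on every affine cluster variable (this is used in the proof of the facet count), and the paper's own $A_{2,1}$ example (Figure \ref{fig:A2QuotientComplex}) exhibits four \emph{folded} top-dimensional cells. Where cells fold, your assertion that every codimension-one cell lies in exactly two top-dimensional cells fails --- a single top cell can abut a codimension-one cell from both sides --- so the closed-PL-manifold condition is precisely what breaks; passing to $\gamma^3$ repairs only the combinatorial-cell-complex condition, not freeness. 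Moreover, your plan to treat the non-free locus as a codimension-$\ge 2$ nuisance is backwards: if $\gamma$ acted freely on a simply connected complex, the quotient would have fundamental group $\mathbb{Z}$, not be a sphere. The folded cells along the fixed locus are exactly what cap off the $\gamma$-loop and kill $\pi_1$, so a correct proof must analyze them head-on rather than excise them; and the final descent from the $\gamma^3$-quotient to the $\gamma$-quotient is a quotient of a (putative) sphere by a cyclic action with nonempty fixed set, which in dimension $\ge 3$ need not be a manifold at all.

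The second gap is the homology step. Matching the Euler characteristic of $S^n$ (which is $0$ when $n$ is odd, so carries almost no information) is far from showing the quotient is a homology sphere; Lemma \ref{thm:AffineCountingFacets} gives you only the alternating sum of face numbers, not the Betti numbers. Without ``simply connected closed $n$-manifold with the homology of $S^n$'' actually in hand, the appeal to the resolved Poincar\'e conjecture has nothing to act on. The most promising ingredient in your sketch is the link analysis via Lemma \ref{thm:AffineVariableLocation} combined with induction on corank-one subalgebras, which plausibly controls points in the interiors of unfolded cells; but the points on the folded locus are where the entire difficulty of the conjecture lives, and there the proposal is silent. A viable route would more likely follow the paper's implicit suggestion: use the surface model (annulus, twice-punctured disk) where the $\gamma$-quotient of the arc complex is understood geometrically, and reduce the exceptional and folded types to finite explicit checks.
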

    This conjecture is known to be true in the type-$A$ cases, see \cite{Penner:arc_complexes}. One may also check it case-by-case for the exceptional types. 
    
    We will now compute a uniform closed form expression for the number of vertices (number of clusters) of an affine associahedron. 
    
    \begin{theorem} The number of distinct clusters in an affine cluster algebra up to the action of  $\groupgenby{\gamma}$ is given by 
    \begin{equation}
        |C_0(\A)| = \frac{2}{\chi(\A)}\prod_i\binom{2n_i-1}{n_i}
    \end{equation}

    \end{theorem}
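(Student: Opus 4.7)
The plan is to apply Lemma~\ref{thm:AffineCountingFacets} with $k=0$, which gives
\[
|C_0(\A)| = \frac{1}{n}\sum_{\mathcal{B}\in C^1(\A)} |C_0(\mathcal{B})|,
\]
reducing the count of clusters of $\A$ to the cluster counts of its corank-$1$ subalgebras. Combined with the preceding theorem and Lemma~\ref{thm:AffineVariableLocation}, the sum on the right breaks up into contributions from the $n/\chi(\A)$ finite cluster variables and the $\sum_i (n_i-1)n_i$ affine tail variables.

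First I would classify the corank-$1$ subalgebras. Freezing a finite cluster variable, which by Lemma~\ref{thm:AffineVariableLocation} lives on some orientation of the affine Dynkin diagram of $\A$, yields a finite-type subalgebra whose cluster count is given by the classical Fomin--Zelevinsky Catalan-type number for its Dynkin type. Freezing an affine tail variable $i_j$ of a $\T$ quiver splits the length-$n_i$ tail at position $j$ into two shorter tails, producing a smaller affine algebra with updated $\vec{n},\vec{w}$, whose cluster count is given by the same formula applied inductively. The multiplicities of each subalgebra isomorphism type in the sum are provided by the finite- and affine-variable counts from the preceding theorem.

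By Remark~\ref{rem:FoldingT_nw} I may reduce to the simply-laced case $\vec{w}=(1,\ldots,1)$ via folding. For the two-tail family $A_{p,q}$, clusters correspond to triangulations of an annulus with $p$ inner and $q$ outer marked points and $\gamma$ is the Dehn twist (see the appendix); orbits of triangulations modulo Dehn twist are counted by a classical annular Catalan-type identity yielding $\frac{2pq}{p+q}\binom{2p-1}{p}\binom{2q-1}{q}$, provable either by an explicit bijection to pairs of two-sided lattice paths or via cyclic sieving. The $\Affine{D}_n$ case is handled via the analogous surface model of the appendix, and the exceptional types $\Affine{E}_6,\Affine{E}_7,\Affine{E}_8$ together with $\TwistedAffine{BC}{4}_n$, each of finite mutation type, can be verified by direct enumeration using the algorithm of Remark~\ref{rem:FiniteMutationTypeAlgorithm} and matched against the closed form.

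The main obstacle is the combinatorial identity in the infinite families: showing that the surface-model enumeration yields exactly $\frac{2}{\chi(\A)}\prod_i\binom{2n_i-1}{n_i}$. In the inductive viewpoint this manifests as the algebraic identity that the recursion from Lemma~\ref{thm:AffineCountingFacets} closes onto the claimed closed form, which requires carefully balancing the finite-type Catalan contributions against the affine tail-splitting contributions and verifying that the resulting sum simplifies as required.
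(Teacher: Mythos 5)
Your outline matches the paper's strategy at the level of architecture (surface models for the infinite families, induction via Lemma \ref{thm:AffineCountingFacets} for the exceptional types, folding for the non-simply-laced types), but it leaves the actual mathematical content of the theorem unproved. The entire difficulty is the closed-form count for the infinite families, and you defer it to ``a classical annular Catalan-type identity'' provable ``by an explicit bijection to pairs of two-sided lattice paths or via cyclic sieving,'' then candidly flag the same identity as ``the main obstacle.'' That is the gap: no bijection is exhibited, no citation is given, and no computation is carried out, so the step that the theorem actually consists of is missing. The paper's proof fills exactly this hole: for $A_{p,q}$ it derives the recurrence $A_{p+1,q}=2\sum_{i=0}^{p-1}C_iA_{p-i,q}+qC_{p+q}$ (Lemma \ref{thm:ApqRecurrence}) by partitioning triangulations of the annulus according to the triangle containing a fixed outer boundary edge, converts it to the functional equation $A_q(x)=2xC(x)A_q(x)+qx\bigl(C(x)-C_{\floor{q}}(x)\bigr)$, and extracts coefficients via the integral identity of Lemma \ref{thm:ApqCountingFormula} (evaluating $\int_0^x 2z^q(1-4z)^{-3/2}\,dz$ two ways); the $\Affine{D}_n$ case is the analogous recurrence of Lemma \ref{thm:DnRecurrence} solved with $D(x)=3xB(x)+B^{-1}(x)$. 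None of this is routine enough to be waved through.

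A secondary point: your opening move, applying Lemma \ref{thm:AffineCountingFacets} with $k=0$ to the infinite families, is not what the paper does there and would give you a different, less tractable recursion. The paper's recurrences come from fixing an entire triangle adjacent to a boundary edge, which is a finer decomposition than summing over corank-$1$ subalgebras (freezing a single arc); the corank-$1$ induction is used only for the exceptional types, where every term is a finite computation. If you insist on the corank-$1$ route for $A_{p,q}$ and $\Affine{D}_n$, you would still need to identify every subalgebra type and multiplicity and then verify that the resulting sum telescopes to $\frac{2}{\chi(\A)}\prod_i\binom{2n_i-1}{n_i}$ — an identity you have not established and which is not obviously easier than the paper's generating-function argument.
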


    We will prove this theorem in the simply laced cases. Each of the exceptional cases can be computed inductively by \Cref{thm:AffineCountingFacets}. The non-simply laced cases have similar proofs to the one for $\Affine{D}_n$ shown here. 

    First we review some facts about the Catalan numbers, $C_n = \frac{1}{n+1}\binom{2n}{n}$, and the middle binomial coefficients $B_i = \binom{2i}{i}$ that will be useful in proving this counting formula. Let $C(x) = \sum\limits_{i=0}^{\infty}C_i x^i$ and $B(x) = \sum\limits_{i=0}^{\infty} B_i x^i$ be the generating functions for the Catalan numbers and middle binomial coefficients respectively. Then we have the following identities that hold  wherever the sums converge.
    \begin{align}
        C(x) = \frac{1-\sqrt{1-4x}}{2x} ,& \hspace{4mm} 1-2xC(x) = \sqrt{1-4x} \label{eqn:CatalanFunction}\\
        (1-2xC(x))^{-1} &= (1-4x)^{-1/2} = B(x) = \sum_{i=0}^\infty \binom{2i}{i}x^i \label{eqn:BinomialFunction} \\
        2(1-4x)^{-3/2} &= \sum_{i=1}^\infty i\binom{2i}{i}x^{i-1} \label{eqn:DerivativeOfBinomial}
    \end{align}
     It will also be helpful to define the truncated generating function $C_{\floor{k}}(x) = \sum\limits_{i=0}^{k-1}C_i x^i$.\\
    We are now ready to consider the $A_{p,q}$ case. Let $A_{p,q}$ be the number of clusters in an $A_{p,q}$ cluster algebra up to $\gamma$.  In this case the formula for the number of distinct clusters simplifies to:
    \begin{equation}
        A_{p,q} = \frac{pq}{2(p+q)}\binom{2p}{p}\binom{2q}{q}
    \end{equation}
    \begin{proof}[Proof of \Cref{thm:AffineCountingFacets} for $\Affine{A}_{n}$]
    In \Cref{thm:ApqRecurrence} we establish the recurrence $A_{p,q} = 2\sum\limits_{i=0}^{p-1}C_i A_{p-i,q} + qC_{p+q}$.  Then for each $q$, let $A_q(x) = \sum\limits_{i=1}^\infty A_{i,q}x^{i+q}$. The recurrence corresponds to the following equation of generating functions:
    \begin{equation}
        A_q(x) = 2xC(x)A_q(x) + qx\big(C(x) - C_{\floor{q}}(x)\big)
    \end{equation}
    Solving for $A_q(x)$ gives 
    \begin{equation}\label{eq:aqx}
         A_q(x) =  \frac{q\vphantom{C_{\floor{q}}\big)}}{2}\cdot\frac{2x\big(C(x) - C_{\floor{q}}(x)\big)}{1-2xC(x)}
    \end{equation}
    Using \Cref{thm:ApqCountingFormula} we compute the powers series expansion of the right hand side is:
     \begin{equation}
        \frac{q\vphantom{C_{\floor{q}}\big)}}{2}\cdot\frac{2x(C(x)-C_{\floor{q}}(x))}{1-2xC(x)} = \frac{q}{2}\sum_{i=1}^{\infty} \frac{i}{(i+q)}\binom{2i}{i}\binom{2q}{q}x^{i+q}.
    \end{equation}
    As $A_{p,q}$ is the coefficient of $x^{p+q}$ this means that $A_{p,q} = \frac{pq}{2(p+q)}\binom{2p}{q}\binom{2q}{q}$ as needed.
    \end{proof}

\begin{lemma}\label{thm:ApqCountingFormula}
     \begin{equation}
        \frac{2x(C(x)-C_{\floor{q}}(x))}{1-2xC(x)} = \sum_{i=1}^{\infty} \frac{i}{i+q}\binom{2i}{i}\binom{2q}{q}x^{i+q}.
    \end{equation}
\end{lemma}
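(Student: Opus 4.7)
My plan is to prove the identity by showing that both sides have the same derivative as formal power series and the same value at $x = 0$. Denote the left-hand side by $L(x)$ and the right-hand side by $R(x)$. Using $1 - 2xC(x) = \sqrt{1-4x}$ from equation (\ref{eqn:CatalanFunction}), I first rewrite
$$L(x) = \frac{1 - 2xC_{\floor{q}}(x)}{\sqrt{1-4x}} - 1,$$
which shows immediately that $L(0) = 0 = R(0)$.

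Next I compute $R'(x)$ directly. Pulling $x^q$ outside and differentiating term-by-term, the factor $\frac{i}{i+q}$ collapses, giving
$$R'(x) = \binom{2q}{q} x^{q-1} \sum_{i \geq 1} i \binom{2i}{i} x^i = \frac{2 \binom{2q}{q} x^q}{(1-4x)^{3/2}},$$
where the last equality is equation (\ref{eqn:DerivativeOfBinomial}). The goal therefore reduces to showing that $L'(x)$ equals the same closed-form expression.

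Writing $p(x) = 1 - 2xC_{\floor{q}}(x)$, the quotient rule yields $L'(x) = \dfrac{(1-4x)\, p'(x) + 2\, p(x)}{(1-4x)^{3/2}}$, so it suffices to prove the polynomial identity
$$(1-4x)\, p'(x) + 2\, p(x) = 2 \binom{2q}{q} x^q.$$
This is the main step of the argument, and the only one requiring any care. Expanding $p(x) = 1 - \sum_{i=1}^{q} 2 C_{i-1} x^i$ and collecting coefficients on the left-hand side, the constant term reduces to $2 - 2 C_0 = 0$; for $1 \leq i \leq q-1$ the coefficient of $x^i$ reads $(4i-2) \cdot 2 C_{i-1} - (i+1) \cdot 2 C_i$, which vanishes by the standard Catalan recursion $(i+1) C_i = 2(2i-1) C_{i-1}$ (immediate from $C_i = \frac{1}{i+1}\binom{2i}{i}$); and the top coefficient similarly collapses via the same recursion to $2(q+1) C_q = 2 \binom{2q}{q}$, giving exactly the right-hand side. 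Since $L$ and $R$ are elements of $\mathbb{Q}[[x]]$ that agree at $x = 0$ and have identical formal derivatives, they are equal, completing the proof.
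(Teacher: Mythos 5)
Your proof is correct, and it takes a recognizably different route from the paper's. The paper introduces the integral $I_q(x)=\int_0^x 2z^q(1-4z)^{-3/2}\,dz$, expands the integrand termwise to recover the right-hand side, and then evaluates the integral in closed form as $R(x)(1-4x)^{-1/2}-R(0)$ for an a priori unknown degree-$q$ polynomial $R$, which it pins down by a coefficient-matching and uniqueness argument (the only degree-$q$ polynomials whose product with $(1-4x)^{-1/2}$ kills the coefficients of $x^1,\dots,x^q$ are scalar multiples of the truncation of $1-2xC(x)$, and $R(0)$ is then found by comparing $x^{q+1}$ coefficients). You instead differentiate: since the candidate closed form is already supplied by the statement, you reduce the lemma to the finite polynomial identity $(1-4x)p'(x)+2p(x)=2\binom{2q}{q}x^q$ with $p(x)=1-2xC_{\floor{q}}(x)$, which you verify coefficient by coefficient using the Catalan recursion $(i+1)C_i=(4i-2)C_{i-1}$. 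The two arguments are integrated and differentiated versions of one another --- both ultimately rest on the common derivative being $2\binom{2q}{q}x^q(1-4x)^{-3/2}$ --- but yours is the more self-contained execution: it replaces the paper's somewhat informal uniqueness claim about $R$ with an explicit, fully checked identity in $\mathbb{Q}[x]$, at the cost of needing the answer in advance rather than deriving it. All of your individual steps (the rewriting of the left side via $1-2xC(x)=\sqrt{1-4x}$, the termwise differentiation, the formal quotient rule, and the boundary cases $i=0$ and $i=q$ of the polynomial identity) check out.
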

\begin{proof}
    In order to determine the coefficients of this power series we will examine the power series associated with the following integral.
    \begin{equation}\label{eq:i}
        I_q(x) = \int_0^x 2 z^q (1-4z)^{-3/2} dz 
    \end{equation}
    We will evaluate $I_q$ in two different ways. First, notice the integrand has a power series expansion given by \Cref{eqn:DerivativeOfBinomial}. By integrating this power series we find that: 
    \begin{equation}\label{eq:i1}
        I_q(x) = \sum_{i=1}^{\infty} \frac{i}{i+q}\binom{2i}{i}x^{i+q} 
    \end{equation}
    Second, we use the standard calculus method of substitution to find that 
    \begin{equation}\label{eq:i2}
        I_q(x) = R(x)(1-4x)^{-1/2} - R(0)  
    \end{equation}
    where $R(x)$ is some polynomial of degree $q$. \\
    We claim $R(x) = \binom{2q}{q}^{-1} (1-2xC_{\floor{q}}(x))$. We verify this claim in the following two steps. 
    
    First, by comparing the two different power series representations of $I_q$ obtained in \Cref{eq:i1} and \Cref{eq:i2}, we may see that $R(x)(1-4x)^{-1/2}$ must have coefficient zero on $x^i$ in its power series for $1 \leq i \leq q$. The only polynomials of degree $q$ which we can multiply $(1-4x)^{-1/2}$ and achieve this are constant multiples of $(1-2xC(x))_{\floor{q+1}}$ since $1-2xC(x)$ is the inverse of $(1-4x)^{-1/2}$. Thus we have $R(x) = R(0)(1-2xC_{\floor{q}}(x))$. 
    
    Now we may evaluate $R(0)$ by comparing the $x^{q+1}$ terms of each of the power series representations. From \Cref{eq:i1}, we have the $q+1$ term is $\frac{2}{1+q}x^{q+1}$. From \Cref{eq:i2}, we find that the $q+1$ term is 
    \begin{equation}
        R(0)\left(\binom{2q}{q}-2\sum_{i=1}^{q}C_{i-1}\binom{2(q+1-i)}{q+1-i}\right)x^{q+1} = R(0)(2C_q) x^{q+1}
    \end{equation}
    since $1-2xC(x)$ is the inverse power series of $\sum_{i=0}^\infty \binom{2i}{i}x^i$. Thus we find that 
    \begin{equation}
        R(0) = \binom{2q}{q}^{-1}.
    \end{equation}
    Finally, multiplying through by $\binom{2q}{q}$ , we obtain the equation 
    \begin{equation}
        \sum_{i=1}^{\infty} \frac{i}{i+q}\binom{2i}{i}\binom{2q}{q}x^{i+q}  = \frac{1-2xC_{\floor{q}}(x)}{\sqrt{1-4x}} - 1  = \frac{2x(C(x)-C_{\floor{q}}(x))}{1-2xC(x)}.
    \end{equation}
    \end{proof}

    Next we will show a similar proof for the $\Affine{D}_n$ case. We will simply write $\Affine{D}_n$ for the number of tagged triangulations of a twice punctured disk with $n-2$ marked points on the boundary. As before we build on the combinatorics in the finite case. Recall that $D_n = \frac{3n-2}{n}\binom{2(n-1)}{n-1}$ is the number of tagged triangulations of a once punctured disk with $n$ marked points. For notational convenience let $D_0 = 1$. This lets us define the generating function $D(x) = \sum\limits_{i=0}^{\infty}D_i x^i$\\
    In this case the statement of \Cref{thm:AffineCountingFacets} becomes:
    \begin{equation}
        \Affine{D}_n = 9(n-2)\binom{2(n-2)}{(n-2)}, n \geq 3
    \end{equation}
    
    \begin{proof}[Proof of \Cref{thm:AffineCountingFacets} for $\Affine{D}_n$]
    In \Cref{thm:DnRecurrence} we show that \[\Affine{D}_{n+1} = 2\sum\limits_{i=0}^{n-3} C_i \Affine{D}_{n-i} + 2 \sum\limits_{j=0}^{n}D_jD_{n-j} \]
    Let $\Affine{D}(x) = \sum_{i=3}^\infty \Affine{D}_i x^i$ be the generating function for $\Affine{D}_i$. The recurrence above becomes:
     \begin{equation}
        \Affine{D}(x) = 2xC(x)\Affine{D}(x) + 2x(D(x)^2 - 1 - 2x) 
    \end{equation}
    
    Again solving for $\Affine{D}(x)$ we find
    \begin{equation}
        \Affine{D}(x) = \frac{2x(D(x)^2 - 1 - 2x) }{1-2xC(x)}
    \end{equation}
    We can see easily that $D(x) = 3xB(x) - 2xC(x) +1 = 3xB(x) + B^{-1}(x)$. Thus the previous equation becomes 
    \begin{equation}
        \Affine{D}(x) = \frac{2x(9x^2B^2(x) + B^{-2}(x) + 6x - 1 - 2x) }{1-2xC(x)}
    \end{equation}
    and using the fact that $B^2(x) = \frac{1}{1-4x}$ (\Cref{eqn:BinomialFunction}) we have 
    \begin{equation}
        \Affine{D}(x) = 18x^3(1-4x)^{-3/2} = \sum_{i=3} 9(i-2)\binom{2(i-2)}{(i-2)}x^{i}
    \end{equation}
    as desired.

    \end{proof}

\section{Doubly Extended Cluster Algebras}\label{sec:DoubleExtended}

In this section we consider $Q=\T$ to be of doubly-extended type, i.e we have $\chi=0$. Let $\mathcal{A}$ be the cluster algebra associated to $Q$. There are only finitely many possibilities for $\vec{n},\vec{w}$ with $\chi=0$ listed in \Cref{fig:doubleExtendedOptions}. Other than $\db{A}{1}$, which has to be treated separately, only $\db{D}{4}$ is associated to a surface (the four punctured sphere).

We will not consider the $A$ or $BC$ cases for the first part of this section, and treat them separately later. Since our $\T$ quivers always have weight 1 middle nodes, we will only construct quivers for the types on the left hand side of the table in \Cref{fig:doubleExtendedOptions}. The types on the right hand side are dual to types with $\T$ quivers.

\begin{figure}
    \centering
\renewcommand{\arraystretch}{1.5}
    \begin{tabular}{c c c c c c}
     Type &  $\vec{n}$ & $\vec{w}$ & $|N|$ & $ord(r)$ & dual \\
     \hline\hline
      $A_1^{(1,1)}$   &  N/A & N/A  & 1 & 1 & self\\
      $D_4^{(1,1)}$   & $(2,2,2,2)$ & $(1,1,1,1)$ & 196 & 2 & self \\
      $E_6^{(1,1)}$ & $(3,3,3)$ & $(1,1,1)$ & 54 & 3 & self \\
      $E_7^{(1,1)}$ & $(4,4,2)$ & $(1,1,1)$ & 16 & 4 & self\\
      $E_8^{(1,1)}$ & $(6,3,2)$ & $(1,1,1)$ & 6 & 6 & self\\
       \hline\hline
      $BC_1^{(4,1)}$ & $(2)$ & $(4)$ & 1 & 1 & $BC_1^{(4,4)}$ \\
      $B_2^{(2,1)}$ & $(2,2)$ & $(2,2)$ & 4 & 2 & self \\
      $BC_2^{(4,2)}$ & $(2,2)$ & (BC-Type) & 2 & 2 & self \\
      $B_3^{(1,1)}$ & $(2,2,2)$ & $(1,1,2)$ & 24 & 2 & $C_3^{(2,2)}$ \\
      $F_4^{(1,1)}$ & $(3,3)$  & $(1,2)$ & 3 & 3 & $F_4^{(2,2)}$ \\
      $F_4^{(2,1)}$ & $(4,2)$ & $(2,1)$ & 4 & 4 & self\\
      $G_2^{(1,1)}$ & $(2,2)$ & $(1,3)$ & 2 & 2 & $G_2^{(3,3)}$\\
      $G_2^{(3,1)}$ & $(3)$ & $(3)$ & 3 & 3 & self\\
    \end{tabular}

    \caption{All possible values of $\T$ that result in doubly extended cluster algebras.}
    \label{fig:doubleExtendedOptions}
\end{figure}

\subsection{Structure of the cluster modular group}\label{sec:DoubleExtendedModularGroup}
Let $\Gamma$ be the cluster modular group of $\mathcal{A}$. Let $Q'=T'_{\vec{n},\vec{w}}$ be the underlying affine-type quiver of the doubly extended type quiver, $Q$. Let $s$ be the source-sink mutation path on $Q'$,  $\chi'= \chi(Q')$ and arrange that $n_1=\max(n_i)$ and that $w_1$ is minimal if there are multiple tails of the same maximal length. It is easy to verify in each case that $s^{(\chi'n_1)^{-1}}$ returns to an isomorphic quiver. Thus $\delta = (s^{(\chi'n_1)^{-1}},\text{id}) \in \Gamma$.

\begin{theorem}
$\Gamma$ is generated by $\Gamma_{\tau}, \Aut{Q}$ and $\delta$. 
\end{theorem}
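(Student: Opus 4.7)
My plan is to apply the finite-mutation-type algorithm of Remark \ref{rem:FiniteMutationTypeAlgorithm}: since every doubly extended cluster algebra in Figure \ref{fig:doubleExtendedOptions} has a finite mutation class (the finite count $|N|$ of isomorphism classes is listed there), the cluster modular group $\Gamma$ is generated by loops in the quiver mutation graph of $Q = T_{\bold{n},\bold{w}}$ paired with closing isomorphisms. The theorem therefore reduces to the finite combinatorial problem of showing that each such loop-plus-isomorphism lies in the subgroup $H := \groupgenby{\Gamma_T, \delta}$.

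First I would pin down the role of $\delta$ more precisely. By the corollary at the end of Section \ref{sec:TnwModularGroup}, the source-sink path $s$ on the affine sub-quiver $Q'$ is conjugate (inside $\Gamma_{Q'}$) to the reddening element, which is described explicitly via Theorem \ref{thm:tnw_reddening} in terms of the twists $\tau_i$ and $\gamma$. The exponent $(\chi' n_1)^{-1}$ is the least positive integer power of $s$ for which the action on the extra doubly extended nodes $N_1, N_\infty$ returns to the identity. Verifying that $\delta$ is well-defined as an element of $\Gamma$ is then a direct check performed once per type.

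The heart of the argument is to show that the index $[\Gamma : H]$ equals one. Since the mutation class is finite, this index is automatically finite, so the question is quantitative. The natural approach is to analyze the action of $\Gamma$ on $\Mut{Q}$: $\Gamma_T$ already accounts for automorphisms of $Q$ itself (via $\Aut{\T}$) and all twists of tails (via $\Gamma_\tau$), so any ``new'' element of $\Gamma$ must reorganize the affine substructure in a way that $\Gamma_\tau$ cannot. The role of $\delta$ is precisely to provide this missing affine-level rotation, as it acts as a nontrivial power of the source-sink path on $Q'$ while fixing the middle nodes. The main obstacle is that, unlike in the affine case, $\gamma$ no longer generates a normal subgroup of $\Gamma$, so conjugates of $\gamma$ by arbitrary mutation paths can appear as genuinely new elements; the correct interpretation of $\delta$ is as the generator that keeps track of these conjugates.

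Finally, the verification will proceed case by case over Figure \ref{fig:doubleExtendedOptions}. For $\db{D}{4}$, identification with the mapping class group of the four-punctured sphere lets us translate the check into a topological statement about how Dehn twists interact with the tail twists $\tau_i$. The exceptional types $\db{E}{6}, \db{E}{7}, \db{E}{8}$ have $|N| \leq 54$ and can be verified by direct enumeration of the quiver mutation graph, using the spanning-tree bound $2d+1$ from Remark \ref{rem:FiniteMutationTypeAlgorithm} to keep the search finite. The non-simply-laced types reduce by folding (Section \ref{sec:FoldingQuivers}) to the simply-laced cases, and the $BC$ types together with $\db{A}{1}$ are handled separately as the paper indicates.
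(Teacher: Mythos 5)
Your overall strategy --- reduce to a finite case check over Figure \ref{fig:doubleExtendedOptions}, handle the non-simply-laced types by folding, and treat $\db{A}{1}$ and the $BC$ types separately --- is the same shape as the paper's argument, and the $\db{D}{4}$ reduction to the four-punctured sphere is also how the paper proceeds. But there are two concrete problems. First, you have misread the table: the column $|N|$ in Figure \ref{fig:doubleExtendedOptions} is the order of the normal subgroup $N = \Gamma_\tau^\circ \rtimes \Aut{Q}$ (e.g.\ $|N|=6$ for $\db{E}{8}$, where $\Gamma_\tau^\circ$ has order $6$ and $\Aut{Q}$ is trivial), \emph{not} the number of quiver isomorphism classes in $\Mut{Q}$. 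The actual mutation classes of $\db{E}{7}$ and $\db{E}{8}$ contain thousands of isomorphism classes, so the ``direct enumeration'' you propose is vastly heavier than ``$|N|\leq 54$'' suggests. The paper avoids exactly this for $E_7$ and $E_8$ by importing Fraser's presentations of the cluster modular groups of $\gr{4}{8}$ and $\gr{3}{9}$ and exhibiting his generators as explicit words in the proposed generating set (e.g.\ $\rho = r\delta\tau_1$, $P=r^2\delta\tau_1\delta$, $t=r$ for $\db{E}{8}$); only the $E_6$ case is done by the mutation-graph algorithm of Remark \ref{rem:FiniteMutationTypeAlgorithm}.

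Second, your claim that ``since the mutation class is finite, this index is automatically finite'' is false: finiteness of $\Mut{Q}$ gives finite generation of $\Gamma$, not finiteness of the index of an arbitrary subgroup. Indeed $\groupgenby{\gamma}$ is an infinite-index, non-normal subgroup of $\Gamma$ in the doubly extended case, which is precisely why the paper passes to normal closures later. This step is not actually needed if you verify directly that every generator coming from the mutation graph lies in $\groupgenby{\Gamma_T,\delta}$, but as written the proposal never carries out that verification --- and producing those explicit expressions (or equivalently rewriting a known presentation's generators) is the entire mathematical content of the theorem. As it stands the proposal is a plan for the proof rather than the proof.
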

\begin{proof}
 This is checked in a case by case way for each of the simply-laced doubly extended cluster modular groups. Most of these groups have been computed elsewhere. Fraser has presentations for the $E_7$ and $E_8$ cases using the Grassmannian cluster algebra structures of $\gr{4}{8}$ and $\gr{3}{9}$ respectively (\cite{fraser_braid_2020}). We note that our notion of the cluster modular group does not include arrow reversing quiver automorphisms, so our groups are the orientation preserving subgroups of his. 
 
 Its a simple matter to check that each of Fraser's generators can be written with the above elements. For example Fraser's presentation of $\Gamma_{\db{E}{8}}$ is 
 \begin{equation}
     \groupgenby{\rho, P, t, : \rho^3 = P^2 = t^{2}, \rho^9 = 1, t\rho = \rho t, tP = Pt }. 
 \end{equation} In our notation 
 \begin{equation}
     \rho = r\delta\tau_1, \quad P = r^2\delta\tau_1\delta, \quad t = r
 \end{equation}
 where $r$ is the reddening element.
 
Fraser's presentation of the cluster modular group for $\db{E}{7}$ is 
\begin{align}
     \groupgenby{\sigma_1,\sigma_2,\sigma_3,t| & \sigma_1\sigma_2\sigma_1 = \sigma_2\sigma_1\sigma_2, \quad  
     \sigma_2 \sigma_3\sigma_2 = \sigma_3\sigma_2\sigma_3, \quad
     \sigma_1\sigma_3 = \sigma_3\sigma_1, \\
      & \sigma_1\sigma_2\sigma_3^2\sigma_2\sigma_1 = (\sigma_3\sigma_2\sigma_1)^8 = 1, \quad
      (\sigma_3\sigma_2\sigma_1)^4 = t^2, \quad t\sigma_i = \sigma_i t}.
 \end{align}
 In our presentation we have 
 \begin{equation}
     \sigma_1 =\tau_1 \quad \sigma_2 =r\delta \quad \sigma_3 =\tau_2 \quad t = r.
 \end{equation}
 
 The $E_6$ case is new and we have computed it using \Cref{rem:FiniteMutationTypeAlgorithm} and \Cref{them:DoubleRelations} below. It has the following presentation:
 \begin{align*}
     \groupgenby{\tau_1,\tau_2,\tau_3,\sigma_{23}, \omega, \delta |& \tau_i \tau_j = \tau_j \tau_i,~ \tau_i^3 = \tau_j^3=\gamma,\\
     & \sigma_{23}^2 = 1, ~\omega^3 = 1,~ \sigma_{23}\omega=\omega^{-1}\sigma_{23},~ \tau_2=\omega \tau_1 \omega^{-1},~ \tau_3=\omega \tau_2 \omega^{-1},\\
     &\tau_1 \delta \tau_1 = \delta \tau_1 \delta,~ (\tau_1 \delta)^3 = r^2\sigma_{23}}
 \end{align*}
 where $r = \tau_1\tau_2\tau_3\gamma^{-1}$ is the reddening element. The automorphism group of $T_{3,3,3}$ is generated by $\sigma_{23}$ that swaps tails 2 and 3 and $\omega$ which rotates all three tails.\\
 The non-simply laced cases follow from \Cref{rem:FoldingT_nw}. 
\end{proof}

To best describe the relations between $\delta$ and the other generators of $\Gamma$, it will be helpful to recall some basic properties of the rank 2 Artin-Tits braid groups of type $A_2, B_2$ and $G_2$. The groups $\mathcal{B}(X_2)$ have the presentation
\begin{align}
    \mathcal{B}(A_2) &= \{a,b | aba=bab\} \\
    \mathcal{B}(B_2) &= \{a,b | abab=baba\} \\
    \mathcal{B}(G_2) &= \{a,b | ababab=bababa\} 
\end{align}

\begin{remark}\label{rem:braid_subgroups}
$\mathcal{B}(A_2)$  is generally known as the braid group on 3 strands, $\mathcal{B}_3$.
The center, $\mathcal Z$ of these groups is an infinite cyclic group generated by $z=ababab$, $z=abab$ and $z=ababab$ for $\mathcal{B}(A_2)$, $\mathcal{B}(B_2)$, $\mathcal{B}(G_2)$ respectively. We have an isomorphism 
\begin{equation}
    \mathcal{B}(A_2)/\mathcal{Z} \simeq \PSL{2}{Z}
\end{equation}
If we let $X_2(k) = A_2 , B_2, \text{ or } G_2$ if $k = 1, 2, \text{ or }  3$ respectively, then the subgroup of $\mathcal{B}(A_2)$ generated by $\{a,b^k\}$ is isomorphic to $\mathcal{B}(X_2(k))$
\end{remark}

\begin{claim}\label{claim:braids}
For each $i$ we have a map $\psi_i: \mathcal{B}(X_2(n_1w_i/n_i)) \rightarrow \Gamma$ given by $\{a,b\} \rightarrow \{\tau_i, r\delta\}$. Moreover, the image of the element $z$ is shown in \Cref{fig:DoubleCenters}.
\end{claim}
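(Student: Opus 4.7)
The plan is to verify, for each of the finitely many doubly extended types listed in Figure~\ref{fig:doubleExtendedOptions}, that the defining braid relation of $\mathcal{B}(X_2(n_1w_i/n_i))$ holds in $\Gamma$ under the substitution $a \mapsto \tau_i$, $b \mapsto r\delta$, and then to read off the image of the central generator $z$ in each case.

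First I would reduce to the simply-laced cases. For each non-simply-laced entry of Figure~\ref{fig:doubleExtendedOptions}, Remark~\ref{rem:FoldingT_nw} and Remark~\ref{rem:FoldingTwists} together give a simply-laced parent obtained by unfolding each weighted tail into $w_i$ simply-laced tails of the same length; the twist $\tau_i$ unfolds to the simultaneous composition of $w_i$ commuting simply-laced twists, and the element $r\delta$ is compatible with the folding automorphism. Granted the $A_2$ braid relation for each simply-laced tail, the folding principle promotes it to the $X_2(k)$ relation with $k = n_1 w_i/n_i$ on the folded quiver.

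Next, for the simply-laced types $\db{D}{4}, \db{E}{6}, \db{E}{7}, \db{E}{8}$, I would use the presentations established in the preceding theorem. For $\db{E}{7}$, Fraser's translation $\sigma_1 = \tau_1,\ \sigma_2 = r\delta,\ \sigma_3 = \tau_2$ turns $\sigma_1\sigma_2\sigma_1 = \sigma_2\sigma_1\sigma_2$ directly into the $A_2$ braid relation for tails~$1$ and~$2$; the third tail (type $B_2$) is handled by deriving the length-four relation from the remaining relations using centrality of $r$. For $\db{E}{8}$, the identification $\rho = r\delta\tau_1$ with $\rho^3 = r^2$ encodes the $A_2$ braid for tail~$1$, and analogous $B_2$ and $G_2$ relations for the shorter tails can be extracted from the full set of Fraser relations together with the abelian structure of $\Gamma_\tau$. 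For $\db{E}{6}$, the new presentation provides $\tau_1\delta\tau_1 = \delta\tau_1\delta$, which combined with centrality of $r$ yields the $A_2$ relation for $(\tau_1, r\delta)$; the remaining tails follow by conjugation with $\omega$. For $\db{D}{4}$, I would use the identification of the cluster algebra with triangulations of a four-punctured sphere: each $\tau_i$ is realised by a half-twist around a pair of punctures while $r\delta$ is a Dehn twist, and the $A_2$ braid relation is then a standard fact about the mapping class group of the sphere with four marked points.

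Finally, once the braid relations are in place, the image of $z$ is computed by expanding the appropriate power of $\tau_i \cdot r\delta$ and using centrality of $r$. Since $\Gamma_\tau$ is abelian (Theorem~\ref{thm:TwistPowers}) and $r$ is central, we have $(\tau_i r\delta)^m = r^m (\tau_i\delta)^m$ for all $m$; specialising to $m = 3, 2, 3$ for types $A_2, B_2, G_2$ respectively and applying identities such as $\rho^3 = r^2$ in $\db{E}{8}$ and $(\tau_1\delta)^3 = r^2\sigma_{23}$ in $\db{E}{6}$ produces each entry of Table~\ref{fig:DoubleCenters}. The main obstacle is obtaining the braid relations for the tails whose twist is not already one of Fraser's distinguished generators, namely the $B_2$ and $G_2$ tails in $\db{E}{7}$ and $\db{E}{8}$: deriving these from the existing relations requires a careful word manipulation, and if that becomes unwieldy one can fall back on the finite-mutation-type algorithm of Remark~\ref{rem:FiniteMutationTypeAlgorithm} to verify the equality computationally by exhibiting explicit mutation sequences for each side in $\Gamma$.
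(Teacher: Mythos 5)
Your proposal follows essentially the same route as the paper: the paper's entire proof is the observation that, since $\mathcal{B}(X_2(k))$ is a two-generator, one-relator group, one only needs to verify the single braid relation for the pair $(\tau_i, r\delta)$ in each of the finitely many cases, with the image of $z$ then read off by direct computation using centrality of $r$. Your plan simply fills in how those finitely many checks can be organized (via Fraser's presentations, the surface model for $\db{D}{4}$, and folding), which the paper leaves implicit.
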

\begin{proof}
In each case it suffices to check the images satisfy the braid relations.
\end{proof}

\begin{figure}[hb]
    \centering
    \begin{tabular}{c c c c c}
        Type \  & $i=1$& $i=2$& $i=3$ & $i=4$\\ \hline \hline 
        $\db{D}{4}$ & id & id & id & id\\
         $\db{E}{6}$ & $r^2\sigma_{23}$ & $r^2\sigma_{13}$ & $r^2\sigma_{12}$ & - \\
         $\db{E}{7}$ & $r^2$ & $r^2$ & $r \sigma_{12}$ & - \\
         $\db{E}{8}$ & $r^2$ & $r^4$ & $r$ & - \\ \hline 
         $\dbf{B}{2}{2}{1}$ & $r$ & $r$ & - & - \\
         $\db{B}{3}$ & id & id & $r \sigma_{12}$& -  \\
         $\db{F}{4}$ & $r^2$ & $ r$ & - & - \\
         $\dbf{F}{4}{2}{1}$ & $r$ & $r$ & - & - \\
         $\db{G}{2}$ & id & $r$ & - & - \\
         $\dbf{G}{2}{3}{1}$ & $r$ & - & -& -  
    \end{tabular}
    \caption{Images of the central element $\psi_i(z) = c$ for the group homorphisms of \Cref{claim:braids}.}
    \label{fig:DoubleCenters}
\end{figure}

Let $N = \Gamma_\tau^\circ \rtimes \Aut{Q} $ where $\Gamma_\tau^\circ$ was defined in \Cref{rem:Tgroup} by the following exact sequence: 
\begin{equation}
    1 \rightarrow \Gamma_\tau^\circ \rightarrow \Gamma_{\tau} \rightarrow \mathbb{Z} \rightarrow 1 
\end{equation}

\begin{theorem}\label{them:DoubleRelations}
The following sequence is exact:
\begin{equation}
    1 \rightarrow N \rightarrow \Gamma \rightarrow \mathcal{B}(X_2(w_1))/\mathcal{Z} \rightarrow 1 
\end{equation}
\end{theorem}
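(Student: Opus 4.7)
The plan is to build an isomorphism $\bar\psi_1\colon\mathcal{B}(X_2(w_1))/\mathcal{Z}\xrightarrow{\sim}\Gamma/N$ out of the homomorphism $\psi_1$ of Claim \ref{claim:braids} for $i=1$. Two preliminary observations make this possible. First, the doubly-extended hypothesis $\chi(\mathcal{A})=0$ forces the reddening element of Theorem \ref{thm:tnw_reddening} to be $(0,1,\ldots,1)$ in the coordinates of Remark \ref{rem:Tgroup}, so $r\in\Gamma_\tau^\circ\subseteq N$. Second, by Table \ref{fig:DoubleCenters} the image $c=\psi_1(z)$ of the central generator $z$ of $\mathcal{B}(X_2(w_1))$ is, in every case, a power of $r$ times an element of $\Aut{Q}$, and hence lies in $N$.

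I would next verify that $N$ is normal in $\Gamma$. Inside $\Gamma_T=\Gamma_\tau\rtimes\Aut{Q}$ this is automatic, since $\Gamma_\tau^\circ$ is the kernel of the degree map $\Gamma_\tau\to\mathbb{Z}$ and so is preserved by the tail-permuting action of $\Aut{Q}$. That conjugation by $\delta$ preserves $N$ is checked case by case from the presentations of $\Gamma$ compiled in the proof of the preceding theorem; in each case one rewrites $\delta\tau_i\delta^{-1}$ via a braid relation from Claim \ref{claim:braids} and confirms that the result lies in $N\cdot\tau_i$. Given normality, $\psi_1$ descends to $\bar\psi_1$ because $\mathcal{Z}=\langle z\rangle$ maps into $N$.

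Surjectivity of $\bar\psi_1$ is quick. By the preceding theorem $\Gamma=\langle\Gamma_T,\delta\rangle$, and modulo $N$ one has $\Gamma_T/N=\Gamma_\tau/\Gamma_\tau^\circ\cong\mathbb{Z}$. The indexing convention that $n_1$ is maximal and $w_1$ is minimal among tails of maximal length, combined with direct inspection of the finite list in Figure \ref{fig:doubleExtendedOptions}, shows $\tau_1$ is a generator of this cyclic quotient. Since $r\in N$, the element $r\delta$ projects to the same class as $\delta$, so the images of the two braid generators $a,b$ span $\Gamma/N$.

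The main obstacle is injectivity, which I would handle case by case. For each doubly-extended type in Figure \ref{fig:doubleExtendedOptions}, I would start from the presentation of $\Gamma$ exhibited in the proof of the preceding theorem, impose $r=1$ and $\sigma=1$ for every $\sigma\in\Aut{Q}$ (thereby killing $N$), and verify that the resulting presentation is the standard one of $\mathcal{B}(X_2(w_1))/\mathcal{Z}$. For example, in $\db{E}{8}$ Fraser's presentation collapses to $\langle\rho,P\mid\rho^3=P^2=1\rangle\cong\mathbb{Z}/3\ast\mathbb{Z}/2\cong\PSL{2}{Z}=\mathcal{B}(A_2)/\mathcal{Z}$; the $\db{E}{6}$, $\db{E}{7}$, and $\db{D}{4}$ cases reduce analogously to $\PSL{2}{Z}$ since all have $w_1=1$. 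The non-simply-laced cases then follow from the simply-laced ones by the folding correspondence of Remark \ref{rem:FoldingT_nw} together with the identification of $\mathcal{B}(X_2(k))$ as the subgroup of $\mathcal{B}(A_2)$ generated by $\{a,b^k\}$ from Remark \ref{rem:braid_subgroups}.
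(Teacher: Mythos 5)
Your skeleton matches the paper's: normality of $N$, surjectivity from the fact that $\tau_1$ and $\delta$ generate $\Gamma/N$, and injectivity as the crux; your observations that $r=(\chi\ell,1,\dots,1)=(0,1,\dots,1)\in\Gamma_\tau^\circ$ and that $\psi_1(z)\in N$ are correct and are exactly what the paper's commutative diagram encodes. Where you diverge is the injectivity step, and there your plan has two genuine gaps. First, for $\db{D}{4}$ there is no presentation of $\Gamma$ "exhibited in the proof of the preceding theorem" to impose $r=1$ on; the paper instead gets this base case from the identification of $\Gamma_{\db{D}{4}}$ with (a quotient of) the mapping class group of the four-punctured sphere, where $\tau_1$ and $\delta$ visibly generate a $\PSL{2}{Z}$ quotient. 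Second, and more seriously, the $\db{E}{6}$ presentation you propose to collapse was itself computed by the authors \emph{using} Theorem \ref{them:DoubleRelations} (they say so explicitly), so invoking it here is circular unless you supply an independent verification of that presentation. Your route is legitimate for $\db{E}{7}$ and $\db{E}{8}$, where Fraser's presentations are external inputs — and there it is arguably cleaner than the paper's argument, which instead bootstraps $\db{E}{7}$ and $\db{E}{8}$ from $\db{D}{4}$ through a chain of standard and "exotic" foldings with explicit conjugating mutation paths. A smaller but real issue: killing $r$ and $\Aut{Q}$ only kills the normal closure $\NormalClosure{r,\Aut{Q}}\subseteq N$, and if this containment were proper your collapsed presentation would exhibit $\Gamma/\NormalClosure{r,\Aut{Q}}\cong\PSL{2}{Z}$ while forcing $\Gamma/N$ to be a proper quotient of it, destroying injectivity; equality does hold in each case (e.g.\ for $\db{E}{7}$ one needs $\tau_2\tau_1^{-1}$ and $\tau_3\tau_1^{-2}$ to lie in $\NormalClosure{r,\sigma_{12}}$), but it is a case-by-case computation you should flag rather than assert parenthetically.

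On the remaining points you are on firmer ground. Your argument that $N\lhd\Gamma_T$ is automatic (because $\Aut{Q}$ only permutes tails of equal length and weight, so $\tau_i\tau_{\sigma(i)}^{-1}\in\Gamma_\tau^\circ$) is a clean improvement on the paper's "check each case," and deferring $\delta$-conjugation to a case check matches the paper. For the non-simply-laced types your one-sentence appeal to folding and Remark \ref{rem:braid_subgroups} compresses a substantial argument — one must track that $\delta$ descends, that the folded twist $\eta$ equals $\tau^z$ modulo $N$, and that folding only quotients by elements of $N$ — but since every non-$BC$ non-simply-laced doubly extended type is a \emph{standard} folding of a simply-laced one, your approach would at least avoid the paper's three exotic-folding computations entirely, provided the simply-laced cases are first secured.
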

\begin{proof}
First, it is necessary to check that $N$ is a normal subgroup, which we may do for each of the four simply laced cases and fold to get the non simply laced cases. To see that the quotient is as described, we only need to show that the induced map 
\begin{equation}
    \mathcal{B}(X_2(w_1))/\mathcal{Z} \rightarrow \Gamma/N 
\end{equation}
from \Cref{claim:braids} is an isomorphism. Since the $\mathbb{Z}$ component of $\tau_1$ is the $\gcd$ of all possible $\mathbb{Z}$ components in $\Gamma_\tau $ and $\Aut{Q} \subset N$,  $\tau_1$ generates $\Gamma_\tau \rtimes \Aut{Q} / N \simeq \mathbb{Z}$. Thus $\tau_1 $ and $ \delta$ generate $\Gamma/N $. Therefore, we only need to check that the only relations come from those in the braid group modulo its center. In the simply laced cases, this will follow by checking that the only relations between $\delta$ and $\tau_1$ is $(\delta \tau_1)^3 = \text{id}$.

We first check the $\db{D}{4}$ case, since this algebra is associated with a 4-punctured sphere. Here $\delta$ and $\tau_1$ correspond half twists sharing a single puncture and thus to elements of $\PSL{2}{Z}$ as a quotient group of the mapping class group. 

Then, we can check that the maps of cluster modular groups induced by folding operations of \Cref{fig:double_extended_family} preserve this subgroup faithfully. By \Cref{thm:FoldingClusterModularGroups} we know the cluster modular group of the folded algebra is a subquotient of the unfolded algebra. It then suffices to verify that $\delta$ and $\tau_1$ appear in the image and no extra relations are added by the quotient.

Let $\mathcal A \rightarrow \mathcal B$ be any folding of doubly extended type cluster algebras. Let $n = n_1(\mathcal{A}), w= w_1(\mathcal{A})$ and $m= n_1(\mathcal{B}), z= w_1( \mathcal{B})$ be the length and weights of the first tail of $\T$ quivers representing seeds of these algebras. Let $\tau , \eta$ be the twist elements of the first tails and $\delta ,\epsilon$ be the extra generators in the modular groups of $\mathcal{A}$ and $\mathcal{B}$ respectively.

The double arrows corresponding to Langlands dual obviously preserve the subgroup. The solid edges, corresponding to folding the $\T$ quivers directly, only quotient by elements in $N$, which are zero in $\Gamma/N$. This follows since we fold by an automorphism of the $\T$ quiver which are contained in $N$.

Furthermore, we clearly have that $\delta = \epsilon$ in the standard folding case. We see that if $w=z$ we have that $\tau$ directly descends to the cluster modular group of the folded algebra. In this case we have an isomorphism $\ClusterModularGroup{\mathcal{A}}/N_{\mathcal{A}}$ with $\ClusterModularGroup{\mathcal{B}}/N_{\mathcal{B}}$.

Otherwise, $z$ tails of length $n$ are folded, and we have that $\eta$ is equivalent to successive twists around each of these unfolded tails. In the quotient $\Gamma_\mathcal{A}/N_\mathcal{A}$ we have that successive twists around $z$ tails of the same length is equal to $\tau^z$. Thus \Cref{rem:braid_subgroups} extends the theorem across algebras related by a standard folding.  

To connect the graph of double extended cluster algebras in \Cref{fig:double_extended_family} we only need to check the  nonstandard unfolding (dashed arrows) $\dbf{C}{3}{2}{2} \dashleftarrow \db{E}{7}$, $\dbf{G}{2}{3}{3} \dashleftarrow \db{E}{8}$ and the nonstandard folding $\db{E}{8} \dashrightarrow \dbf{F}{4}{2}{2}$. See \Cref{fig:nonstandardFolding} to see the folds in each case. One checks for each of these cases that these automorphisms are also contained in $N$. We will dualize each of these folded algebras so that we may compare their cluster modular groups using the presentation coming from $\T$ quivers.

We start with the unfolding $\db{G}{2} \Leftrightarrow \dbf{G}{2}{3}{3} \dashleftarrow \db{E}{8}$. We have a path of valid folds and unfolds from $\db{D}{4}$ to $\dbf{G}{2}{1}{1}$ so we know their are no extra relations in $\dbf{G}{2}{1}{1}$. Thus it suffices to write $\delta^{E}$ and $\tau_1^{E}$ in terms of $\delta^{G}$ and $\tau_1^{G}$. Let 
\begin{equation}
   P = (2_2 1_4 1_5 1_3 1_2 1_4 2_3 N_1 N_\infty) 
\end{equation}
be a path of mutations from $T_{6,3,2}$ to the triangular quiver shown in \Cref{fig:nonstandardFoldingG2}. Then 
\begin{align}
    \delta^E &= P \delta^G \tau^G (\delta^G)^{-2}\tau^G P^{-1} \\
    \tau_1^E &= P \tau^G P^{-1}.
\end{align}
By replacing $P$ with $P'=P(\tau^G)^2$ and using braid relations, we can see that 
\begin{align}
    \delta^E &= P' \delta^G P'^{-1} \\
    \tau_1^E &= P' \tau^G P'^{-1}.
\end{align}
This shows the theorem holds for $\db{E}{8}$.

The next case to consider is the folding $\db{E}{8} \dashrightarrow \dbf{F}{4}{2}{2} \Leftrightarrow \db{F}{4}$. Once again if we can write $\tau_1^F$ and $\delta^F$ in terms of the generators $\tau_1^E$ and $\delta^E$ any extra relations in $\dbf{F}{4}{1}{1}$ would descend to relations in $\db{E}{8}$ which we just showed didn't have extra relations. A simple computation shows that 
\begin{equation}
    P = (1_6 3_2 1_5 1_4 1_6 1_3 1_2 N_1)
\end{equation}
is a path from $T_{6,3,2}$ to the quiver shown in \Cref{fig:nonstandardFoldingF4}. Then 
\begin{align}
    \tau_1^F &= P^{-1} \tau_1^E P \\
    \delta^F &= P^{-1} (\tau_1^E)^{-1} (\delta^E)^{-3}(\tau_1^E)^{-1}(\delta^{E})^{-1}(\tau_1^E)P = P^{-1}.
\end{align} 
Again using braid relations we can see in the quotient that $\delta^F = P^{-1}\delta^E P$.

The final case is the unfolding $\db{B}{3} \Leftrightarrow \dbf{C}{3}{2}{2} \dashleftarrow \db{E}{7}$. Here we have a path of valid folds and unfolds $\db{D}{4} \rightarrow \db{B}{3}$. So all that remains is to write the generators for $\db{E}{7}$, $\tau^E$ and $\delta^E$ in terms of the generators for $\dbf{B}{3}{1}{1}$, $\delta^B$ and $\tau^B$. Let \begin{equation}
    P = (2_4 1_4 2_3 2_2 1_3 1_2 N_1).
\end{equation} Then 
\begin{align}
    \delta^E &= P \delta^{B} P^{-1} \\
    \tau_1^E &= P\tau_1^B P^{-1} .
\end{align}
             
\begin{figure}
    \centering
    \begin{subfigure}{\textwidth}
        \centering
        \begin{tikzpicture}
        \node[base,fill=yellow] (7) [] {};
         \node[base,fill=blue] (2) [right of = 7, above of = 7] {};
        \node[base,fill=red] (4) [right of = 2,below of = 2] {};
        \node[base] (3) [right of = 4,above of = 4] {};
        \node[base,fill=blue] (5) [right of = 3,below of = 3] {};
        \node[base,fill=red] (1) [right of = 5,above of = 5] {};
        \node[base,fill=yellow] (9) [right of = 1,below of = 1] {};
       
        \node[base,fill=red] (6) [right of = 2,above of = 2] {};
        \node[base,fill=blue] (8) [right of = 3,above of = 3] {};
        \node[base,fill=yellow] (10) [above of = 6,right of = 6] {};
        \path[->] (7) edge [] node {} (2); 
        \path[->] (2) edge [] node {} (4);
        \path[->] (4) edge [] node {} (7);
        
        \path[->] (2) edge [] node {} (6);
        \path[->] (6) edge [] node {} (3);
        \path[->] (3) edge [] node {} (2);
        
        \path[->] (3) edge [] node {} (8);
        \path[->] (8) edge [] node {} (1); 
        \path[->] (1) edge [] node {} (3);
        
        \path[->] (4) edge [] node {} (3);
        \path[->] (3) edge [] node {} (5);
        \path[->] (5) edge [] node {} (4);
        
        \path[->] (1) edge [] node {} (9);
        \path[->] (9) edge [] node {} (5);
        \path[->] (5) edge [] node {} (1);
        
        \path[->] (6) edge [] node {} (10);
        \path[->] (10) edge [] node {} (8);
        \path[->] (8) edge [] node {} (6);
    \end{tikzpicture}
        \hspace*{1pc}
    \begin{tikzpicture}
        \node[fat3,fill=yellow] (1) [] {};
        \node[fat3,fill=red] (4) [right of = 1] {};
        \node[fat3,fill=blue] (3) [above of = 4] {};
        \node[base] (2) [right of =4] {};

        \path[->] (1) edge [] node {} (3); 
        \path[->] (3) edge [double] node {} (4);
        \path[->] (4) edge [] node {} (1);
        \path[->] (4) edge [] node {} (2);
        \path[->] (2) edge [] node {} (3);

    \end{tikzpicture}
    \hspace*{1pc}
    \begin{tikzpicture}
        \node[base,fill=yellow] (1) [] {};
        \node[base,fill=red] (4) [right of = 1] {};
        \node[base,fill=blue] (3) [above of = 4] {};
        \node[fat3] (2) [right of =4] {};

        \path[->] (1) edge [] node {} (3); 
        \path[->] (3) edge [double] node {} (4);
        \path[->] (4) edge [] node {} (1);
        \path[->] (4) edge [] node {} (2);
        \path[->] (2) edge [] node {} (3);

    \end{tikzpicture}
    \caption{$\db{E}{8}\dashrightarrow \dbf{G}{2}{3}{3}\Leftrightarrow \db{G}{2}$}
    \label{fig:nonstandardFoldingG2}
    \end{subfigure}
    
    \begin{subfigure}{1\textwidth}
        \centering
        \begin{tikzpicture}[node distance = .6cm]
        \node[base,fill=purple] (0) [] {};
        \node[base,fill=yellow] (1) [right of = 0] {};
        \node[base,fill=blue] (2) [right of = 1,above of = 1] {};
        \node[base,fill=red] (3) [below of = 1,right of = 1] {};
        \node[base] (4) [right of = 2, below of = 2] {};
        \node[base,fill=red] (5) [right of = 4,above of = 4] {};
        \node[base,fill=blue] (6) [below of = 4,right of = 4] {};
        \node[base,fill=yellow] (7) [right of = 6,above of = 6] {};
        \node[base,fill=purple] (8) [right of = 7] {};
        \node[invis] (10) [below of = 4] {};
        \node[base] (9) [below of = 10] {};
        \path[->] (0) edge [] node {} (1); 
        \path[->] (1) edge [] node {} (2);
        \path[->] (2) edge [] node {} (3); 
        \path[->] (3) edge [] node {} (1);
        \path[->] (3) edge [] node {} (4);
        \path[->] (4) edge [] node {} (2);
        \path[->] (2) edge [] node {} (5);
        \path[->] (5) edge [] node {} (4); 
        \path[->] (4) edge [] node {} (6);
        \path[->] (6) edge [] node {} (5);
        \path[->] (6) edge [] node {} (3);
        
        \path[->] (5) edge [] node {} (7);
        \path[->] (7) edge [] node {} (6);
        \path[->] (8) edge [] node {} (7);
        \path[->] (9) edge [] node {} (4);
    \end{tikzpicture}
    \hspace*{0pc}
    \begin{tikzpicture}[node distance = 1cm]
        \node[fat2,fill=purple] (0) [] {};
        \node[fat2,fill=yellow] (1) [right of = 0] {};
        \node[fat2,fill=red] (2) [right of = 1] {};
        \node[fat2,fill=blue] (5) [above of = 2] {};
        \node[base] (3) [right of = 2] {};
        \node[base] (4) [right of = 3] {};
       
        \path[->] (0) edge [] node {} (1); 
        \path[->] (5) edge [double] node {} (2);
        \path[->] (2) edge [] node {} (1);
        \path[->] (1) edge [] node {} (5);
        \path[->] (4) edge [] node {} (3);
        
        \path[->] (2) edge [] node {} (3);
        \path[->] (3) edge [] node {} (5);
    \end{tikzpicture}
    \hspace*{1pc}
    \begin{tikzpicture}[node distance = 1cm]
        \node[base,fill=purple] (0) [] {};
        \node[base,fill=yellow] (1) [right of = 0] {};
        \node[base,fill=red] (2) [right of = 1] {};
        \node[base,fill=blue] (5) [above of = 2] {};
        \node[fat2] (3) [right of = 2] {};
        \node[fat2] (4) [right of = 3] {};
       
        \path[->] (0) edge [] node {} (1); 
        \path[->] (5) edge [double] node {} (2);
        \path[->] (2) edge [] node {} (1);
        \path[->] (1) edge [] node {} (5);
        \path[->] (4) edge [] node {} (3);
        
        \path[->] (2) edge [] node {} (3);
        \path[->] (3) edge [] node {} (5);
    \end{tikzpicture}
    \caption{$\db{E}{8}\dashrightarrow \dbf{F}{4}{2}{2}\Leftrightarrow \db{F}{4}$}
    \label{fig:nonstandardFoldingF4}
    \end{subfigure}

    \begin{subfigure}{.8\textwidth}
        \centering
        \begin{tikzpicture}
        \node[base,fill=blue] (0) [] {};
        \node[base,fill=red] (1) [right of = 0] {};
        \node[base,fill=yellow] (2) [right of = 1] {};
        \node[base,fill=purple] (3) [below of = 0] {};
        \node[base] (4) [right of = 3] {};
        \node[base,fill=purple] (5) [right of = 4] {};
        \node[base,fill=yellow] (6) [below of = 3] {};
        \node[base,fill=red] (7) [right of = 6] {};
        \node[base,fill=blue] (8) [right of = 7] {};
        \path[->] (0) edge [] node {} (1); 
        \path[->] (3) edge [] node {} (0);
        \path[->] (1) edge [] node {} (3); 
        \path[->] (2) edge [] node {} (1);
        \path[->] (1) edge [] node {} (5);
        \path[->] (5) edge [] node {} (2);

        \path[->] (4) edge [] node {} (1); 
        \path[->] (4) edge [] node {} (7);
        \path[->] (3) edge [] node {} (4);
        \path[->] (5) edge [] node {} (4);
        
        \path[->] (6) edge [] node {} (7);
        \path[->] (7) edge [] node {} (3);
        \path[->] (3) edge [] node {} (6);
        \path[->] (7) edge [] node {} (5);
        \path[->] (5) edge [] node {} (8);
        \path[->] (8) edge [] node {} (7);
    \end{tikzpicture}
        \hspace*{2pc}
    \begin{tikzpicture}
        \node[base] (0) [] {};
        \node[fat2,fill=purple] (1) [right of = 0] {};
        \node[fat2,fill=blue] (2) [right of = 1] {};
        \node[fat2,fill=red] (3) [above of = 1] {};
        \node[fat2,fill=yellow] (4) [right of = 3] {};
       
        \path[->] (0) edge [] node {} (3); 
        \path[->] (3) edge [double] node {} (1);
        \path[->] (1) edge [] node {} (0);
        
        \path[->] (4) edge [] node {} (3);
        \path[->] (1) edge [] node {} (4);
        
        \path[->] (2) edge [] node {} (3);
        \path[->] (1) edge [] node {} (2);
    \end{tikzpicture}
    \hspace*{2pc}
    \begin{tikzpicture}
        \node[fat2] (0) [] {};
        \node[base,fill=purple] (1) [right of = 0] {};
        \node[base,fill=blue] (2) [right of = 1] {};
        \node[base,fill=red] (3) [above of = 1] {};
        \node[base,fill=yellow] (4) [right of = 3] {};
       
        \path[->] (0) edge [] node {} (3); 
        \path[->] (3) edge [double] node {} (1);
        \path[->] (1) edge [] node {} (0);
        
        \path[->] (4) edge [] node {} (3);
        \path[->] (1) edge [] node {} (4);
        
        \path[->] (2) edge [] node {} (3);
        \path[->] (1) edge [] node {} (2);
    \end{tikzpicture}
    \caption{$\db{E}{7}\dashrightarrow \dbf{C}{3}{2}{2} \Leftrightarrow \db{B}{3}$}
    \label{fig:nonstandardFoldingC3}
    \end{subfigure}\\
    
    \caption{Nonstandard folding of doubly extended quivers. The first fold is by the 3-fold rotational symmetry and the last folds are by the 180 degree rotational symmetry.}
    \label{fig:nonstandardFolding}
\end{figure}
\end{proof}

The following commutative diagram summarizes the structure of the cluster modular groups of doubly extended cluster algebras in each case where $w_1 =1$.
\begin{equation}\label{eq:double_cmgs}
    \begin{tikzcd}
1 \arrow[r] & \mathcal{Z} \arrow[d, "\substack{z\\ \downmapsto\\~ c}"] \arrow[r, hook] & \mathcal{B}_3 \arrow[d] \arrow[r, two heads] & \PSL{2}{Z} \arrow[r] \arrow[d, Rightarrow, no head] & 1 \\
1 \arrow[r] & N \arrow[r, hook]                                          & \Gamma \arrow[r, two heads]                  & \PSL{2}{Z} \arrow[r]                                & 1
\end{tikzcd}                                                                                   
\end{equation}

\begin{corollary}
Cluster modular groups are generated by ``cluster Dehn twists'' of  \cite{Ishibashi:nielsen-thurston}. 
\end{corollary}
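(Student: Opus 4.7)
The strategy is to verify, case by case across Figures \ref{fig:affineOptions} and \ref{fig:doubleExtendedOptions}, that the explicit generators $\tau_i$, $\gamma$, $\delta$, and the elements of $\Aut{\T}$ appearing in the presentations of Theorems \ref{thm:AffineModularGroup} and \ref{them:DoubleRelations} are either themselves cluster Dehn twists in the sense of \cite{Ishibashi:nielsen-thurston} or can be written as finite products of such elements.

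The first step is to identify the obvious cluster Dehn twists. The element $\gamma$ is realized as an honest Dehn twist along the core curve of an annulus in the $A_{p,1}$ surface model (Lemma \ref{lem:TwistGroup}), and each $\tau_i$ is realized, on the $i$th tail subquiver, as the $\frac{2\pi}{n_i}$-rotation of the inner boundary of an annulus with $n_i$ interior punctures, a suitable power of which is again a Dehn twist. Both assertions propagate from the simply-laced surface types through the folding framework of Section \ref{sec:FoldingQuivers} to every type appearing in our tables. For the doubly extended cases I would treat $\delta$ as the remaining generator of interest: in the $\db{D}{4}$ case it corresponds to an honest mapping class on the four-punctured sphere and is therefore a cluster Dehn twist, while for the exceptional simply-laced doubly extended types I would use the braid relations from Theorem \ref{them:DoubleRelations} (in particular that $\tau_1$ and $\delta$ generate, modulo $N$, the quotient $\mathcal{B}_3/\mathcal{Z}\simeq\PSL{2}{Z}$) to transport Dehn twist status along the embeddings $\mathcal{B}(X_2(w)) \hookrightarrow \Gamma$ produced by Claim \ref{claim:braids}. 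The non-simply-laced doubly extended types then follow by pushing cluster Dehn twists through the folding arrows of Figure \ref{fig:nonstandardFolding}, as already exploited in the proof of Theorem \ref{them:DoubleRelations}.

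The remaining step is to handle the finite automorphism factor $H=\Aut{\T}$, and this is where I expect the main obstacle. Each $h\in H$ permutes tails of equal length and weight and is therefore periodic, so it cannot be a cluster Dehn twist on its own. The plan is to rewrite each such $h$ as a product of two cluster Dehn twists via short mutation cycles in the quiver mutation graph: concretely, one exhibits for a transposition of equal tails $(n_i,w_i)=(n_j,w_j)$ a mutation path returning the quiver to itself via $h$ whose dynamics on the tropical cluster variety splits as a product of two reducible elements, each of which fixes a coordinate subspace complementary to a tail. Each factor can then be identified, through its restriction to the affected tail together with the folding argument used for $\tau_i$, with a cluster Dehn twist, and Remark \ref{rem:FiniteMutationTypeAlgorithm} guarantees that a bounded-length mutation path realizing $h$ always exists. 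I expect the hardest point to be carrying out this decomposition uniformly across the cases where $|H|$ is large, namely $\db{D}{4}$ and $\db{E}{6}$; a convenient uniform reduction is to pass first to a finite-index torsion-free subgroup (e.g.\ the kernel of $\Gamma\to H$) in which every element is a product of $\tau_i$'s and $\gamma$'s, and then lift the required automorphisms one at a time using the relations already recorded in Theorem \ref{them:DoubleRelations}.
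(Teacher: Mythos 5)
Your identification of $\gamma$ and the $\tau_i$ as cluster Dehn twists matches the paper, but there are two genuine gaps. First, for $\delta$ the paper does not ``transport Dehn twist status along the braid embeddings''; it exhibits an explicit conjugation $\alpha\gamma\alpha^{-1}=\delta^{n_1}$ with $\alpha=\{g^{h/2},(1_{n_1}N_\infty)\}$, where $g$ is the source--sink path on the finite quiver obtained by freezing $1_{n_1}$ and $N_\infty$. Your braid-group route is not a substitute: conjugacy of the two standard generators holds in $\mathcal{B}(A_2)$ but fails in $\mathcal{B}(B_2)$ and $\mathcal{B}(G_2)$ (the braid relation has even length, so $a$ and $b$ are not conjugate), and even where it works it only shows $r\delta$ is conjugate to $\tau_1$, leaving you to express $\delta$ indirectly as $r^{-1}(r\delta)$; you never actually produce a power of $\delta$ conjugate to $\gamma$, which is what Ishibashi's definition requires.

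The larger gap is your treatment of $H=\Aut{\T}$. The proposed decomposition of a periodic automorphism into ``two reducible elements'' detected by tropical dynamics is a hoped-for mechanism, not an argument, and your fallback of passing to the kernel of $\Gamma\to H$ cannot work: the elements you need to express lie outside that kernel, so no amount of rewriting inside it produces them. The paper's actual mechanism is concrete and already recorded in Table~\ref{fig:DoubleCenters}: the central elements $c=\psi_i(z)$ are by construction words in $\tau_i$ and $r\delta$ (hence in cluster Dehn twists, since $r\in\Gamma_\tau$), and the table shows $c=r^k\sigma$ for the various quiver automorphisms $\sigma$, so each $\sigma=r^{-k}\psi_i(z)$ is explicitly a product of cluster Dehn twists (with the one extra identity $\sigma_{12}=r(\tau_3\tau_4 r\delta)^2$ needed in the $\db{D}{4}$ case). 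Without something playing the role of these identities, your proof of the generation statement does not close.
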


\begin{proof}

Consider the twist generators $\tau_i \in \Gamma_\tau$. From \Cref{thm:TwistPowers}, we saw that $\tau_i^{n_i} = \gamma^{w_i}$. In the surface cases $\gamma$ is a Dehn twist and in the exceptional cases is a cluster Dehn twist. 

Furthermore, the element $\delta^{n_1}=s^{1/\chi'}$ can be seen to be conjugate to $\gamma$ in the following way. First by freezing nodes $1_{n_1}$ and $N_\infty$ we are left with the corresponding finite type quiver. Let $g$ be the sources sinks mutation pattern on this finite type quiver and let $h$ be the order of this element.  Then we have $\alpha = \{g^{h/2},(1_{n_1} N_\infty)\} \in \Gamma$ and $\alpha \gamma \alpha^{-1} = \delta^{n_1}$. Thus $\delta$ is a cluster Dehn twist. As in \Cref{sec:AffineProofs} when $h$ is odd we interpret $g^{h/2}$ as $\floor{h/2}$ applications of $g$ followed by mutation only at the sources.  

Finally, we see that the elements of $\Aut{Q}$ each are periodic elements akin to periodic mapping class group elements. It is possible to generate these elements in each case using cluster Dehn twists. The images of central element, $c$, for various maps from braid groups is always generated by the cluster Dehn twists $\tau_i$ and $\delta$. We can see in \Cref{fig:DoubleCenters} that quiver automorphisms can be obtained in case from this central element. We note that in the $\db{D}{4}$ case we obtain $\sigma_{12} = r (\tau_3\tau_4r\delta)^2$, as can be seen via the folding $\db{D}{4} \rightarrow \db{B}{3}$

\end{proof}

\subsection{Other cases}

In the previous section, we ignored the $A$ and $BC$ cases. These cases are simpler, so we simply show their cluster modular groups.
\begin{align}
    \Gamma_{\db{A}{1}} &= \mathcal{B}(A_2)/\mathcal{Z} = \PSL{2}{Z} \\
    \Gamma_{\dbf{BC}{1}{4}{1}} &= \Gamma_{\dbf{BC}{1}{4}{1}} = \mathcal{B}(B_2)/\mathcal{Z} = \mathbb{Z}*\cyclicgroup{2}\\
    \Gamma_{\dbf{BC}{2}{4}{2}} &= \mathcal{B}(B_2)/\mathcal{Z} \times \cyclicgroup{2} = (\mathcal{Z}*\cyclicgroup{2})\times \cyclicgroup{2}
\end{align}

\subsection{Special quotients and doubly extended associahedra}\label{sec:SpecialQuotients}

We will construct a special finite quotient of the cluster modular group of each of the simply laced doubly extended cluster algebras. We will use this normal subgroup to construct a finite quotient of the cluster complex and thereby construct a doubly extended generalized associahedron. 

Following the ideas in the affine case, we would like to quotient $\Gamma$ by $\groupgenby{\gamma}$. However, $\groupgenby{\gamma}$ is no longer a normal subgroup. We will now construct free normal subgroups $\mathcal{N}$, such that  $\gamma^k \in \mathcal{N} \lhd \Gamma$ and $\Gamma/\mathcal{N}$ is finite group containing the normal subgroup $N$. 

Let $n = ord(r)$ be the order of the reddening element. We can see that in the quotient $\Gamma/N = \PSL{2}{Z}$, we have 
\begin{equation}
    \gamma = \begin{bmatrix} 1 & n \\ 0 & 1
    \end{bmatrix}
\end{equation} 
in each case. 
We denote the normal closure in $\Gamma$ of the group element $\gamma$ by $\NormalClosure{\gamma}$. This is a finite index subgroup of the cluster modular group in all cases other than $\db{E}{8}$ since $\NormalClosure{\gamma}/N$ is finite index in $\PSL{2}{Z}$. This group is not free in the $\db{E}{6} or \db{E}{8}$ cases, but $\NormalClosure{\gamma r^2}$ and $\NormalClosure{\gamma r^4}$ are free in these cases respectively. 

\begin{claim}
For $\db{D}{4}$, the group $\NormalClosure{\gamma}$ is the puncture preserving mapping class group of a four punctured sphere.
\end{claim}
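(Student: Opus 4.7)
The approach rests on the identification, detailed in the appendix, of the $\db{D}{4}$ cluster algebra with the cluster algebra coming from the four-punctured sphere $\Sigma_{0,4}$. Under this identification, the cluster modular group $\Gamma$ is essentially the tagged mapping class group of $\Sigma_{0,4}$, and (as noted in the discussion preceding the claim) the element $\gamma$ corresponds to the Dehn twist around a simple closed curve separating two of the four punctures from the other two. The pure mapping class group $\text{PMod}(\Sigma_{0,4})$ is the kernel of the homomorphism $\Gamma \to S_4$ that records the induced permutation of punctures, and it is classically known to be free of rank 2, isomorphic to the principal congruence subgroup $\Gamma(2)\subset \PSL{2}{Z}$.

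The first step is to verify the easy inclusion $\NormalClosure{\gamma}\subseteq \text{PMod}(\Sigma_{0,4})$. Since $\text{PMod}(\Sigma_{0,4})$ is normal in $\Gamma$ and contains the Dehn twist $\gamma$ (every conjugate of a Dehn twist is again a Dehn twist around some simple closed curve, which necessarily fixes each puncture setwise), the entire normal closure of $\gamma$ is contained in $\text{PMod}(\Sigma_{0,4})$.

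The second step is to use the surjection $\Gamma \twoheadrightarrow \PSL{2}{Z}$ of Theorem \ref{them:DoubleRelations}. Since $\mathrm{ord}(r)=2$ in the $\db{D}{4}$ case, the image of $\gamma$ in $\PSL{2}{Z}$ is $T^2 = \bigl[\begin{smallmatrix} 1 & 2 \\ 0 & 1 \end{smallmatrix}\bigr]$. A classical computation in $\PSL{2}{Z}\cong \cyclicgroup{2}\ast \cyclicgroup{3}$ shows that the normal closure of $T^2$ is precisely the principal congruence subgroup $\Gamma(2)$, a free group of rank 2 and index 6 with $\PSL{2}{Z}/\Gamma(2)\cong S_3$. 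Under the identification above, the restriction of $\Gamma \to \PSL{2}{Z}$ to $\text{PMod}(\Sigma_{0,4})$ is an isomorphism onto $\Gamma(2)$.

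Combining these two steps, the composition $\NormalClosure{\gamma}\hookrightarrow \text{PMod}(\Sigma_{0,4}) \xrightarrow{\cong} \Gamma(2)$ is surjective, since the image of $\NormalClosure{\gamma}$ in $\PSL{2}{Z}$ is the normal closure of $T^2$, namely $\Gamma(2)$. Because this composition is both injective (it is a restriction of an isomorphism) and surjective, $\NormalClosure{\gamma}$ must equal $\text{PMod}(\Sigma_{0,4})$. The main obstacle is the clean identification of $\gamma$ with the appropriate Dehn twist and the verification that the normal closure of $T^2$ in $\PSL{2}{Z}$ really is $\Gamma(2)$; the first requires unwinding the surface model of the appendix, and the second is standard but worth stating explicitly, for instance by noting that $\PSL{2}{Z}/\langle\langle T^2\rangle\rangle$ has the presentation $\langle S,T \mid S^2, (ST)^3, T^2\rangle \cong S_3$, forcing the kernel to be $\Gamma(2)$.
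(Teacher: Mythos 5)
Your argument is correct in outline but takes a genuinely different route from the paper, and the step where it differs is also where it leaves a gap. The paper's proof is surface-topological: $\gamma$ is a Dehn twist, all Dehn twists on the four-punctured sphere are conjugate under the (tagged) mapping class group by change of coordinates, the pure mapping class group is generated by two such twists, and Dehn twists fix punctures; these facts from \cite{Margalit:PMCG} give both inclusions at once. You instead push everything through the exact sequence of Theorem \ref{them:DoubleRelations}, computing $\langle\langle T^2\rangle\rangle = \bar{\Gamma}(2)$ in $\PSL{2}{Z}$ — a nice, purely group-theoretic computation that the paper does not make at this point (it reappears implicitly in the later $\db{E}{6}$ and $\db{E}{7}$ claims). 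The cost is that your whole argument hinges on the assertion that the restriction of $\Gamma \to \PSL{2}{Z}$ to $\mathrm{PMod}(\Sigma_{0,4})$ is an isomorphism onto $\bar{\Gamma}(2)$, which you state without proof; this is exactly the content of the claim in disguise. It can be repaired within your framework: injectivity holds because $N$ is finite and $\mathrm{PMod}$ is free, hence torsion-free, so $\mathrm{PMod}\cap N$ is trivial; and the image of $\mathrm{PMod}$ is then a normal subgroup of $\PSL{2}{Z}$ isomorphic to $\FreeGroup{2}$, hence of index $6$ by the Euler characteristic count, and since it contains the index-$6$ subgroup $\bar{\Gamma}(2)$ (the image of $\NormalClosure{\gamma}$) it must equal it. Without some such argument the final "injective and surjective, hence equal" step is circular.

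One smaller inaccuracy: $\mathrm{PMod}(\Sigma_{0,4})$ is \emph{not} the kernel of $\Gamma \to S_4$, because the cluster modular group of $\db{D}{4}$ is the \emph{tagged} mapping class group, which is strictly larger than $\MappingClassGroup{\Sigma_{0,4}}$; the kernel of the puncture-permutation map also contains the tag-changing elements (this is why $|N|$ is on the order of $192$ rather than $24$). This does not sink your first inclusion — your parenthetical observation that every conjugate of a Dehn twist is again a Dehn twist about some simple closed curve already gives $\NormalClosure{\gamma}\subseteq \mathrm{PMod}$, provided you also note that conjugation by tag changes sends Dehn twists to Dehn twists — but the normality of $\mathrm{PMod}$ in $\Gamma$ should be justified by the semidirect product structure of the tagged mapping class group rather than by the incorrect kernel description.
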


We can verify this claim easily by seeing that $\gamma$ is a Dehn twist. Thus by \cite{Margalit:PMCG} we have that $\NormalClosure{\gamma} \simeq \FreeGroup{2}$. We have an exact sequence
\begin{equation}
    1 \rightarrow \NormalClosure{\gamma} \rightarrow \Gamma_{\db{D}{4}} \rightarrow H \rightarrow 1
\end{equation}
where $H$ is a group of order 1152 given by an extension
\begin{equation}
    1 \rightarrow N \rightarrow H \rightarrow S_3 \rightarrow 1.
\end{equation}

\begin{claim}
For $\db{E}{7}$, the group $\NormalClosure{\gamma}$ is a finite index free group. It is isomorphic to the congruence subgroup $\Bar{\Gamma}(4)$ of $\PSL{2}{Z}$
\end{claim}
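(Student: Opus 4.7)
The plan is to combine Theorem \ref{them:DoubleRelations} with classical results on congruence subgroups of $\PSL{2}{Z}$. Since $\db{E}{7}$ has $w_1 = 1$, so that $X_2(w_1) = A_2$ and $\mathcal{B}(A_2)/\mathcal{Z} \cong \PSL{2}{Z}$, Theorem \ref{them:DoubleRelations} provides the exact sequence
\begin{equation*}
1 \to N \to \Gamma_{\db{E}{7}} \to \PSL{2}{Z} \to 1
\end{equation*}
with $|N| = 16$. The element $\gamma$ projects to $T^{\mathrm{ord}(r)} = T^4$ in $\PSL{2}{Z}$, as noted preceding Claim \ref{claim:braids}. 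Classically, the normal closure of $T^k$ in $\PSL{2}{Z}$ for $k \geq 2$ is the congruence subgroup $\bar{\Gamma}(k)$, which for $k = 4$ is a free subgroup of index $24$. Thus the projection sends $\NormalClosure{\gamma}$ onto $\bar{\Gamma}(4)$, yielding
\begin{equation*}
1 \to K \to \NormalClosure{\gamma} \to \bar{\Gamma}(4) \to 1,\qquad K = \NormalClosure{\gamma} \cap N,
\end{equation*}
with $K$ finite. The remaining task is to show $K$ is trivial.

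The key structural observation is that $\gamma$ centralizes every element of $N$. Indeed, $\Gamma_\tau$ is abelian by Theorem \ref{thm:TwistPowers}, and the action of $\Aut{Q} = \cyclicgroup{2}$ on $\Gamma_\tau$ permutes the twists $\tau_1, \tau_2$ while fixing $\gamma$. Therefore every conjugate of $\gamma$ in $\Gamma$ also centralizes $N$, giving $\NormalClosure{\gamma} \subseteq C_\Gamma(N)$ and $K \subseteq Z(N)$. Using Remark \ref{rem:Tgroup}, a direct computation of the fixed points of the tail-swap action on $\Gamma_\tau^\circ \subset \mathbb{Z} \times \cyclicgroup{4} \times \cyclicgroup{4} \times \cyclicgroup{2}$ identifies $Z(N) = \langle r\rangle \cong \cyclicgroup{4}$. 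So $K \leq \langle r\rangle$.

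To conclude $K = 1$, I would construct a splitting $\sigma: \PSL{2}{Z} \to \Gamma$ of the projection with the specific value $\sigma(T^4) = \gamma$. Given such a $\sigma$, any $g \in \Gamma$ admits a decomposition $g = n \sigma(\bar{g})$ with $n \in N$, and the centralization property yields
\begin{equation*}
g\gamma g^{-1} = n\, \sigma(\bar{g})\gamma \sigma(\bar{g})^{-1}\, n^{-1} = \sigma(\bar{g})\gamma \sigma(\bar{g})^{-1} = \sigma(\bar{g}\, T^4\, \bar{g}^{-1}),
\end{equation*}
since $\sigma(\bar{g})\gamma\sigma(\bar{g})^{-1}$ again centralizes $N$. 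Consequently every generator of $\NormalClosure{\gamma}$ lies in $\sigma(\bar{\Gamma}(4))$; the reverse containment is automatic since the conjugates $\bar{g}\,T^4\,\bar{g}^{-1}$ normally generate $\bar{\Gamma}(4)$ in $\PSL{2}{Z}$. Hence $\NormalClosure{\gamma} = \sigma(\bar{\Gamma}(4)) \cong \bar{\Gamma}(4)$, which is torsion-free and free of rank $5$. The finite-index assertion follows since $[\Gamma : \NormalClosure{\gamma}] = |N|\cdot[\PSL{2}{Z} : \bar{\Gamma}(4)] = 16 \cdot 24$.

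The main obstacle is constructing the splitting $\sigma$ satisfying the pointwise condition $\sigma(T^4) = \gamma$. Since $\PSL{2}{Z} \cong \cyclicgroup{2} * \cyclicgroup{3}$ has cohomological dimension one, every extension by an abelian module splits; the non-abelian nature of $N = \Gamma_\tau^\circ \rtimes \cyclicgroup{2}$ means this is not automatic. I would build $\sigma$ by lifting the order-$2$ and order-$3$ generators of $\PSL{2}{Z}$ to torsion elements of $\Gamma$ using Fraser's presentation of $\Gamma_{\db{E}{7}}$, then adjusting by a suitable $1$-cocycle valued in $N$ (the adjustments being compatible precisely because $\gamma$ centralizes $N$) to enforce $\sigma(T^4) = \gamma$. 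As a concrete fallback, one may verify $r^j \notin \NormalClosure{\gamma}$ for each $j \in \{1,2,3\}$: from the presentation one computes $\Gamma^{ab} \cong \cyclicgroup{6}\times \cyclicgroup{2}$ with $\gamma \mapsto (4,0)$ and $r \mapsto (0,1)$, which immediately rules out $j = 1, 3$, while $j = 2$ requires a finer finite quotient (e.g.\ via the reduction $\PSL{2}{Z} \to \PSL{2}{Z/8}$ combined with the conjugation action on $N$).
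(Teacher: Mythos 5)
Your setup matches the paper's: you use the exact sequence $1 \to N \to \Gamma \to \PSL{2}{Z} \to 1$, the fact that $\gamma$ centralizes $N$ (so $\NormalClosure{\gamma}\cap N \subseteq Z(N) = \groupgenby{r}$), and the classical identification of the normal closure of $T^4$ with $\bar{\Gamma}(4)$. But the decisive step --- showing $K=\NormalClosure{\gamma}\cap N$ is trivial --- is not actually carried out. Your primary route requires a splitting $\sigma:\PSL{2}{Z}\to\Gamma$ with $\sigma(T^4)=\gamma$, which you only describe as a plan (``I would build $\sigma$ by lifting\dots then adjusting by a suitable $1$-cocycle''), and its existence is genuinely in doubt: the natural subgroup of $\Gamma$ through which $\gamma=\tau_1^4$ sits over $\PSL{2}{Z}$ is $\groupgenby{\tau_1, r\delta}$, and by Claim \ref{claim:braids} the central element $(\tau_1\cdot r\delta)^3$ equals $r^2\neq \mathrm{id}$, so this subgroup is a copy of $\SL{2}{Z}$, not of $\PSL{2}{Z}$. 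Your fallback --- ruling out $r^j\in\NormalClosure{\gamma}$ for $j=1,2,3$ --- correctly disposes of $j=1,3$ via the abelianization but explicitly leaves the essential case $j=2$ to an unspecified ``finer finite quotient.'' Since $r^2$ is exactly the element that could survive, the proof is incomplete at its crux.

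The paper closes this gap differently and more cleanly: it observes that the image of $\psi_1:\mathcal{B}(A_2)\to\Gamma$ is a copy of $\SL{2}{Z}$ inside $\Gamma$ (the kernel $\cyclicgroup{2}$ mapping to $\groupgenby{r^2}\subset N$), in which $\gamma$ is the matrix $\begin{bmatrix}1&4\\0&1\end{bmatrix}$. The normal closure of this matrix in $\SL{2}{Z}$ is $\Gamma(4)$, which is torsion-free and does not contain $-I=r^2$; since $\gamma$ commutes with $N$ and $\Gamma$ is generated by $N$ together with this $\SL{2}{Z}$, all conjugates of $\gamma$ already lie in the $\SL{2}{Z}$ copy, so $\NormalClosure{\gamma}\cong\Gamma(4)\cong\bar{\Gamma}(4)$ with no separate argument needed for $K$. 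If you want to salvage your version, replacing the hypothetical splitting of $\PSL{2}{Z}$ with this embedded $\SL{2}{Z}$ is the fix. One further caution: your assertion that the normal closure of $T^k$ in $\PSL{2}{Z}$ equals $\bar{\Gamma}(k)$ ``for $k\geq 2$'' is false for $k\geq 6$ (where it has infinite index --- this is precisely why the paper treats $\db{E}{8}$ differently); it is true for $k\leq 5$, so the case $k=4$ you need is fine, but the general statement should not be asserted.
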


 In $\db{E}{7}$ the image of center $\mathcal{Z} = \Z$ is generated by $r$ which has order 4 in $\Gamma$. Thus taking the quotient of the top row of \Cref{eq:double_cmgs} by $2\Z$ results in the following diagram:
\begin{equation}
    \begin{tikzcd}
1 \arrow[r] & \cyclicgroup{2} \arrow[d, hook] \arrow[r, hook] & \SL{2}{Z} \arrow[d, hook] \arrow[r, two heads] & \PSL{2}{Z} \arrow[r] \arrow[d, Rightarrow, no head] & 1                                              \\
1 \arrow[r] & N \arrow[r, hook]                               & \Gamma \arrow[r, two heads]              & \PSL{2}{Z} \arrow[r]                                & 1                            
\end{tikzcd}
\end{equation}
The element $\gamma \in \Gamma$ is in the image of the map from $\SL{2}{Z}$ and is given by the matrix $\begin{bmatrix} 1 & 4 \\ 0 & 1 \end{bmatrix}$. The normal closure of this matrix in $\SL{2}{Z}$ is the level 4 congruence subgroup $\Gamma(4)$ and is torsion free. Since $\gamma$ commutes with all of $N$, its normal closure in $\Gamma$ is isomorphic to its normal closure in $\SL{2}{Z}$. 

Thus we have the following diagram
\begin{equation}
    \begin{tikzcd}
            &                                                  & 1 \arrow[d]                                                  & 1 \arrow[d]                                   &   \\
            &                                                  & \NormalClosure{\gamma} \arrow[d, hook] \arrow[r, Rightarrow, no head]     & \Gamma(4) \arrow[d, hook]                     &   \\
1 \arrow[r] & N \arrow[r, hook] \arrow[d, Rightarrow, no head] & \Gamma \arrow[r, two heads] \arrow[d, two heads]             & \PSL{2}{Z} \arrow[r] \arrow[d, two heads]     & 1 \\
1 \arrow[r] & N \arrow[r, hook]                           & \Gamma/\NormalClosure{\gamma} \arrow[r, two heads] \arrow[d] & S_4 \arrow[r] \arrow[d] & 1 \\
            &                                                  & 1                                                            & 1                                             &  
\end{tikzcd}.
\end{equation}

\begin{claim} The normal closure $\NormalClosure{\gamma r^{2}}$ is a finite index free group of the cluster modular group of $\db{E}{6}$. It is isomorphic to the congruence subgroup $\Bar{\Gamma}(3)$ of $\PSL{2}{Z}$. 
\end{claim}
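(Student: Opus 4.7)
The plan parallels the $\db{E}{7}$ argument, with the caveat that $\Gamma_{\db{E}{6}}$ does not contain a faithful copy of $\SL{2}{Z}$, so we must work intrinsically with $\gamma r^2$. The strategy is to show that the projection $\Gamma_{\db{E}{6}} \to \PSL{2}{Z}$ restricts to an isomorphism $\NormalClosure{\gamma r^2} \xrightarrow{\sim} \bar{\Gamma}(3)$.

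First I would set up the image. Using Remark \ref{rem:Tgroup} and the formula for the reddening element in coordinates ($r = (0,1,1,1) \in \mathbb{Z}\times \cyclicgroup{3}^3$, since $\chi = 0$ and $\ell = 27$), we obtain $r \in \Gamma_\tau^\circ \subset N$, so $\gamma r^2$ and $\gamma$ have the same image $T^3 = \left[\begin{smallmatrix}1 & 3 \\ 0 & 1\end{smallmatrix}\right]$ in $\PSL{2}{Z} = \Gamma/N$. A standard argument in $\PSL{2}{Z} \cong \cyclicgroup{2}\ast\cyclicgroup{3}$ identifies the normal closure of $T^3$ with $\bar{\Gamma}(3)$, and a Riemann--Hurwitz count ($\chi = 12 \cdot (-1/6) = -2$) gives $\bar{\Gamma}(3) \cong F_3$, an index-12 torsion-free subgroup. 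In particular the projection $\NormalClosure{\gamma r^2} \to \PSL{2}{Z}$ surjects onto $\bar{\Gamma}(3)$.

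Second, I would pin down the kernel. Since $\Gamma_\tau$ is abelian (Theorem \ref{thm:TwistPowers}), $\gamma r^2$ commutes with $\Gamma_\tau^\circ$; and $\Aut{T_{3,3,3}} = S_3$ permutes $\tau_1,\tau_2,\tau_3$ symmetrically, fixing both $\gamma$ and $r = \tau_1\tau_2\tau_3\gamma^{-1}$. Hence $\gamma r^2$ centralizes $N$, and since this property is preserved under $\Gamma$-conjugation, $\NormalClosure{\gamma r^2} \subset C_\Gamma(N)$. Computing $S_3$-invariants of $\Gamma_\tau^\circ = \{(a_1,a_2,a_3) \in \cyclicgroup{3}^3 : a_1+a_2+a_3 = 0\}$, together with $Z(S_3) = 1$, yields $Z(N) = C_\Gamma(N) \cap N = \langle r\rangle$, so the kernel is contained in $\langle r \rangle$. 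The resulting central extension $1 \to (\NormalClosure{\gamma r^2} \cap \langle r \rangle) \to \NormalClosure{\gamma r^2} \to \bar{\Gamma}(3) \to 1$ automatically splits since $F_3$ has cohomological dimension one.

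The main obstacle is the third step: ruling out that $\NormalClosure{\gamma r^2}$ contains $r$. Using centrality of the reddening element, every element of the normal closure can be rewritten as $w \cdot r^{2k}$ with $w$ a product of $\Gamma$-conjugates of $\gamma^{\pm 1}$ and $k$ the total signed exponent of $\gamma r^2$ used; so $r \in \NormalClosure{\gamma r^2}$ would force a nontrivial word in conjugates of $\gamma$ to equal a power of $r$. Since the abelianization of $\Gamma_{\db{E}{6}}$ kills $r$, excluding this requires a non-abelian homomorphism $\phi : \Gamma_{\db{E}{6}} \to G$ with $\phi(\gamma r^2) = 1$ and $\phi(r) \neq 1$. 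The natural candidate is built from the composition $\Gamma \to \PSL{2}{Z} \to A_4 = \mathrm{PSL}(2,\mathbb{Z}/3)$ combined with the $A_4$-equivariant structure on the quotient $\Gamma_\tau^\circ / \langle r \rangle \cong \mathbb{F}_3$, packaged as a central extension of $A_4$ by $\cyclicgroup{3}$ detecting $r$. Producing and verifying this $\phi$ is the technical crux of the proof; once it is in place, $\NormalClosure{\gamma r^2} \cap \langle r \rangle = 1$, so the projection yields the isomorphism $\NormalClosure{\gamma r^2} \cong \bar{\Gamma}(3) \cong F_3$ of index 12, establishing the claim.
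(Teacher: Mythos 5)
Your reduction is sound, and it actually supplies justification that the paper's own treatment (which consists only of the commutative diagram of exact sequences) omits: since $r\in\Gamma_\tau^\circ\subset N$, the image of $\gamma r^2$ in $\PSL{2}{Z}=\Gamma/N$ is $T^3$, whose normal closure is $\bar{\Gamma}(3)\cong \FreeGroup{3}$ of index $12$; and since $\gamma$ centralizes $N$ and $r$ is central, every conjugate of $\gamma r^2$ centralizes $N$, so $\NormalClosure{\gamma r^2}\cap N\subseteq C_\Gamma(N)\cap N=Z(N)=\groupgenby{r}$. Your observation that a split central extension of $\FreeGroup{3}$ by a subgroup of $\cyclicgroup{3}$ is free only when that subgroup is trivial correctly isolates the entire content of the claim in the single statement $r\notin\NormalClosure{\gamma r^2}$.

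But that statement is exactly what you do not prove, and it cannot be finessed at the level of generality of your first two steps. Everything you establish there holds verbatim with $\gamma r^2$ replaced by $\gamma$ (same image $T^3$, same centralizing property, same bound on the kernel), yet the paper asserts that $\NormalClosure{\gamma}$ is \emph{not} free in type $\db{E}{6}$; so the dichotomy must be settled by a computation that distinguishes $\gamma$ from $\gamma r^2$ --- for instance by evaluating, in the explicit presentation of $\Gamma_{\db{E}{6}}$ (with its relation $(\tau_1\delta)^3=r^2\sigma_{23}$), the lifts of the relations satisfied by the conjugates of $T^3$ that normally generate $\bar{\Gamma}(3)$, and checking which power of $r$ they produce; or equivalently by exhibiting the separating homomorphism $\phi$ with $\phi(\gamma r^2)=1$ and $\phi(r)\neq 1$. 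You name this as the technical crux and then describe only a candidate target (a central extension of $A_4$ by $\cyclicgroup{3}$ ``detecting $r$'') without constructing the map or verifying it against the relations. As written, the proposal establishes only that $\NormalClosure{\gamma r^2}$ is a finite-index normal subgroup isomorphic either to $\FreeGroup{3}$ or to $\FreeGroup{3}\times\cyclicgroup{3}$; the claim itself remains open.
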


We have the following diagram of exact sequences: 
\begin{equation}
    \begin{tikzcd}
            &                                                  & 1 \arrow[d]                                                  & 1 \arrow[d]                                   &   \\
            &                                                  & \NormalClosure{\gamma r^2} \arrow[d, hook] \arrow[r, Rightarrow, no head]     & \Gamma(3) \arrow[d, hook]                     &   \\
1 \arrow[r] & N \arrow[r, hook] \arrow[d, Rightarrow, no head] & \Gamma \arrow[r, two heads] \arrow[d, two heads]             & \PSL{2}{Z} \arrow[r] \arrow[d, two heads]     & 1 \\
1 \arrow[r] & N \arrow[r, hook]                           & \Gamma/\NormalClosure{\gamma r^2} \arrow[r, two heads] \arrow[d] & A_4 \arrow[r] \arrow[d] & 1 \\
            &                                                  & 1                                                            & 1                                             &  
\end{tikzcd}.
\end{equation}

\begin{claim}
In the $\db{E}{8}$ case, the normal closure $\NormalClosure{\gamma r^{4}}$ is a free group, but is not of finite index. The groups $\mathcal{N}_k = \NormalClosure{\gamma r^{4}, (r\delta)^k(\tau)^k}$ are free groups of index $36 , 108$ and $144$ for $k = 1, 2, 3$.  
\end{claim}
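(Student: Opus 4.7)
The plan is to exploit the central extension $1 \to N \to \Gamma \to \PSL{2}{Z} \to 1$ established in Theorem~\ref{them:DoubleRelations}. For $\db{E}{8}$, $\Aut{Q}$ is trivial, so $N = \Gamma_\tau^\circ = \groupgenby{r} \cong \cyclicgroup{6}$ and $N$ is central. Let $\pi\colon \Gamma \to \PSL{2}{Z}$ be the projection. Under $\pi$, the generator $\tau_1$ maps to an infinite-order parabolic $T$, so $\gamma = \tau_1^{6}$ and $\gamma r^4$ both map to $T^6$ (because $r^4 \in N$). Since the quotient $\PSL{2}{Z}/\NormalClosure{T^6}$ is the Euclidean $(2,3,6)$ triangle group $\Delta(2,3,6) \cong \mathbb{Z}^2 \rtimes \cyclicgroup{6}$, which is infinite, $\NormalClosure{T^6}$ has infinite index in $\PSL{2}{Z}$; hence $\NormalClosure{\gamma r^4}$ has infinite index in $\Gamma$, handling the second half of the first assertion.

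For freeness, let $F := \NormalClosure{T^6} \subset \PSL{2}{Z}$. This group is torsion-free, because the order-$2$ and order-$3$ generators of $\PSL{2}{Z}$ project to elements of the same orders in $\Delta(2,3,6)$, so no nontrivial torsion lies in $F$. By the Kurosh subgroup theorem applied to $\PSL{2}{Z} = \cyclicgroup{2} * \cyclicgroup{3}$, every torsion-free subgroup is free, so $F$ is free. It then suffices to show $\NormalClosure{\gamma r^4} \cap N = \{1\}$, which forces $\pi$ to restrict to an isomorphism $\NormalClosure{\gamma r^4} \xrightarrow{\sim} F$. Because $F$ is free, $H^2(F, N) = 0$, and the pullback central extension $1 \to N \to \pi^{-1}(F) \to F \to 1$ splits as $\pi^{-1}(F) \cong F \times N$; the correction factor $r^4$ is chosen precisely so that, in a convenient such splitting, $\gamma r^4$ lies in the $F$-factor. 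Centrality of $r$ then ensures every conjugate $g(\gamma r^4)g^{-1} = (g\gamma g^{-1})r^4$ also lies in that factor, forcing $\NormalClosure{\gamma r^4}$ to sit inside $F$ and hence to be free. The specific choice $r^4$ (rather than any other power or $\gamma$ alone) is dictated by the braid-group identity $(\tau_1\delta)^3 = r^{-1}$ derived from Claim~\ref{claim:braids} and the $\db{E}{8}$ row of Table~\ref{fig:DoubleCenters}.

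For the finite-index groups $\mathcal{N}_k$, the same framework applies: each additional generator $(r\delta)^k(\tau)^k$ imposes a further relation that, together with $T^6 = 1$, cuts $\Delta(2,3,6)$ down to a finite quotient. Tietze computations inside the standard presentation $\Delta(2,3,6) = \langle a, b \mid a^2 = b^3 = (ab)^6 = 1 \rangle$ identify the image $\pi(\mathcal{N}_k)/F$ as an explicit finite-index subgroup of $\Delta(2,3,6)$ in each case, giving $\pi(\mathcal{N}_k) \subset \PSL{2}{Z}$ of indices $6, 18, 24$ for $k = 1, 2, 3$. These subgroups are torsion-free (as verified case-by-case by checking that the quotients are torsion quotients that do not collapse the $\cyclicgroup{2}$ and $\cyclicgroup{3}$ factors of $\PSL{2}{Z}$), so they are free by Kurosh. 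The cohomological splitting argument of the previous paragraph then yields $\mathcal{N}_k \cap N = \{1\}$, so $\mathcal{N}_k \cong \pi(\mathcal{N}_k)$ is free, and $[\Gamma : \mathcal{N}_k] = |N| \cdot [\PSL{2}{Z} : \pi(\mathcal{N}_k)] = 6 \cdot 6, \ 6 \cdot 18, \ 6 \cdot 24 = 36, 108, 144$.

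The main obstacle throughout is the triviality of $\NormalClosure{\gamma r^4} \cap N$ and of the analogous intersections for each $\mathcal{N}_k$. While Kurosh yields freeness immediately given trivial intersection, verifying triviality requires an explicit $2$-cocycle computation for the $\db{E}{8}$ central extension. This cocycle calculation is governed by the braid relations $(\tau_i(r\delta))^{p_i} = \psi_i(z)$ of Claim~\ref{claim:braids} together with the central images in Table~\ref{fig:DoubleCenters}, and it is precisely the specific form of these relations that forces the correction factors to take the values $r^4$ and $(r\delta)^k(\tau)^k$ rather than the naive $\gamma$ and $(\delta\tau)^k$.
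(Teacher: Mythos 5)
Your scaffolding is the same as the paper's: the central extension $1\to N\to\Gamma\to\PSL{2}{Z}\to 1$ with $N=\groupgenby{r}\cong\cyclicgroup{6}$ central, the identification $\pi(\gamma r^4)=T^6$, infinite index of $\NormalClosure{T^6}$ via the Euclidean triangle group $\Delta(2,3,6)$, and freeness of torsion-free subgroups of $\cyclicgroup{2}*\cyclicgroup{3}$ by Kurosh. (The paper itself gives little more than this: it names the images and quotients $G_{k,6/k}$, $F_{k,6/k}$ via Newman's classification of normal subgroups of the modular group and records the commutative diagram.) Those parts of your argument are correct.

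The genuine gap is precisely the step you flag as the main obstacle, and your splitting argument does not close it. $H^2(F,N)=0$ gives \emph{some} complement $C$ of $N$ in $\pi^{-1}(F)$, but complements are highly non-unique and are not normalized by $\Gamma$: even if $\gamma r^4\in C$, the conjugate $g(\gamma r^4)g^{-1}=(g\gamma g^{-1})r^4$ lies in $gCg^{-1}$, and whether it lies in $C$ is exactly the question of whether the $N$-component of $g\gamma g^{-1}$ in $C\times N$ coordinates equals $r^{-4}$ --- which is the cocycle computation you never perform. Tellingly, your argument as written never uses the exponent $4$: it would apply verbatim to $\gamma r^c$ for any $c$, and in particular would ``prove'' that $\NormalClosure{\gamma}$ is free, which the paper explicitly states is false for $\db{E}{8}$. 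The actual content of the claim is the relation among conjugates of $T^6$ inside $F$ (coming from commuting translations in $\Delta(2,3,6)\cong\mathbb{Z}^2\rtimes\cyclicgroup{6}$) and the element of $N$ that this relation forces into $\NormalClosure{\gamma r^c}$ as a function of $c$; that computation is absent. The same deferral occurs for the $\mathcal{N}_k$: the indices $6,18,24$ of the images $\pi(\mathcal{N}_k)$ and the triviality of $\mathcal{N}_k\cap N$ are asserted rather than derived, and the promised Tietze computations are not routine --- note for instance that $\pi\bigl((r\delta)\tau_1\bigr)$ is conjugate to $\pi(\tau_1 r\delta)$, which has order $3$ in $\PSL{2}{Z}$ by Claim \ref{claim:braids}, so identifying the normal closure of $\{T^6,\pi((r\delta)^k\tau^k)\}$ and its index requires explicit care rather than citation.
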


The images of the groups $\mathcal{N}_k$ in $\PSL{2}{Z}$ are normal subgroups of index $6, 18$ and $24$ for $k =1,2,3$. We denote these groups and their respective quotients by $G_{k, 6/k}$ and $F_{k,6/k}$, see \cite{Newman:modular_subgroups}. 

We have the following diagram of exact sequences: 
\begin{equation}
    \begin{tikzcd}
            &                                                  & 1 \arrow[d]                                                  & 1 \arrow[d]                                   &   \\
            &                                                  & \mathcal{N}_k \arrow[d, hook] \arrow[r, Rightarrow, no head]     & G_{k,6/k} \arrow[d, hook]                     &   \\
1 \arrow[r] & \cyclicgroup{6} \arrow[r, hook] \arrow[d, Rightarrow, no head] & \Gamma \arrow[r, two heads] \arrow[d, two heads]             & \PSL{2}{Z} \arrow[r] \arrow[d, two heads]     & 1 \\
1 \arrow[r] & \cyclicgroup{6} \arrow[r, hook]                           & \Gamma/\mathcal{N}_k \arrow[r, two heads] \arrow[d] & F_{k,6/k} \arrow[r] \arrow[d] & 1 \\
            &                                                  & 1                                                            & 1                                             &  
\end{tikzcd}.
\end{equation}

While we have not explicitly described them, there are analogous finite index normal free subgroups of each of the non simply laced doubly extended cluster modular groups. These can be understood by folding the simply laced algebras. We can define doubly extended generalized associahedra by first quotienting the cluster complexes by the action of these subgroups and then dualizing.  

\subsection{Counting facets in doubly extended associahedra}

We can compute the total number of cluster variables and clusters in the quotient of doubly extended cluster complexes by the distinguished normal subgroup $\mathcal{N}$. The number of cluster variables is equal to the number of corank 1 subalgebras of our given algebra and is equal to the number of codimension 1 facets of the generalized associahedra. 

Recall from \Cref{sec:SpecialQuotients} that the cluster modular group modulo $\mathcal{N}$ factors as
\begin{align*}
    1 \rightarrow N \rightarrow \ClusterModularGroup{}/\mathcal{N} \rightarrow F \rightarrow 1 
\end{align*}  
where $N = \Gamma_\tau^\circ \rtimes \Aut{\T}$ as usual and $F$ is a finite group. We will use this factorization to count elements of $\ClusterComplex{}/\mathcal{N}$ by counting the size of orbits by $N$ and then considering the action of $F$ on these orbits.

\begin{theorem}\label{thm:DoubleExtendedCountVariables}
    The number of cluster variables in $\ClusterComplex{}/\mathcal{N}$ is 
    \begin{equation}
    d|F| \frac{w_1}{n_1}(\sum(n_i-1)n_i)
\end{equation} where $d=1$ unless $\A$ is self dual and nonsimply laced, in which case we have $d=2$. 
\end{theorem}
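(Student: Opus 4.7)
The plan is to follow the template of the affine cluster variable count (Theorem \ref{thm:AffineCountingFacets}), replacing $\groupgenby{\gamma}$ by the normal subgroup $N = \Gamma_\tau^\circ \rtimes \Aut{Q}$. There are four steps.

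First, I would establish a doubly extended analog of Lemma \ref{thm:AffineVariableLocation}: every cluster variable of $\mathcal{A}$ appears on a tail node of some $T_{\bold{n},\bold{w}}$ quiver in the mutation class of $Q$. For $\db{D}{4}$ this is a geometric statement about tagged arcs on the four punctured sphere, every tagged arc being a boundary arc in a tagged triangulation corresponding to a $T_{(2,2,2,2),(1,1,1,1)}$ quiver. For $\db{E}{6}, \db{E}{7}, \db{E}{8}$ it would be verified by a direct inspection, using Theorem \ref{thm:AffineModularGroup} to certify the subquivers obtained by freezing non-tail candidates as affine. The non simply laced cases then follow by folding via Remark \ref{rem:FoldingT_nw}, exactly as in the affine proof.

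Second, count the total number of tail variables. On tail $i$ of one fixed $T$ quiver there are $n_i-1$ variables; applying $\tau_i$ produces an entirely new collection, and $\tau_i^{n_i} = \gamma^{w_i}$ by Theorem \ref{thm:TwistPowers}. Because $\gamma$ acts trivially on the variables of any tail (by the same argument as in the affine case, restricting to the affine subalgebra obtained by freezing the far ends of the other tails), this gives $n_i(n_i-1)$ distinct tail-$i$ variables per $\langle\tau_i\rangle$-orbit, and $\sum_i n_i(n_i-1)$ total.

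Third, analyze the quotient by $N$. By Remark \ref{rem:Tgroup} and the choice that $w_1/n_1$ is minimal, $\Gamma_\tau/\Gamma_\tau^\circ \cong \mathbb{Z}$ is generated by the image of $\tau_1$, with $\tau_i$ having degree $(w_i n_1)/(w_1 n_i)$. Fixing a coset of $N$ in $\Gamma$ amounts to fixing this $\mathbb{Z}$-degree modulo the degree of $\gamma$, and the tail-$i$ variables in the chosen coset are exactly $w_1/n_1$ of the $n_i$ rotational positions on tail $i$; each such position contributes $n_i-1$ variables. Summing gives $(w_1/n_1)\sum_i(n_i-1)n_i$ variables per coset. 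The $\Aut{Q}$ factor of $N$ is already consistent with this enumeration because tails of equal length and weight contribute identical terms to the sum.

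Finally, the factor $d$ accounts for self-dual algebras, where the Langlands duality provides an extra involution of the cluster complex not lying in $\Gamma$; this involution pairs $N$-cosets and doubles the count of cluster variables visible per coset. I would verify this case by case using Figure \ref{fig:doubleExtendedOptions}. The main obstacle is the bookkeeping in Step 3: unlike the affine case, where the single element $\gamma$ controlled everything, here $\Gamma_\tau^\circ$ has higher rank and involves nontrivial torsion from the presentation $\tau_i^{n_i}=\gamma^{w_i}$; getting the ``$w_1/n_1$ of $n_i$ positions'' bookkeeping correct cleanly is what requires care, and is most safely checked against the finite list of doubly extended types.
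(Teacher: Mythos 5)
Your proposal follows essentially the same route as the paper's proof: localize every cluster variable to a tail node of a $\T$ quiver by a finite case check, count $n_i(n_i-1)$ variables per tail using $\tau_i^{n_i}=\gamma^{w_i}$ and the triviality of the $\gamma$-action, divide by the index $n_1/w_1$ of $\Gamma_\tau^\circ$ in $\Gamma_\tau/\groupgenby{\gamma}$, and double in the self-dual cases. The only difference is cosmetic — the paper verifies the localization step by computationally exhibiting mutation paths to double-edge quivers (Figure \ref{fig:D4DoubleDoubleEdge}) rather than by your surface/inspection argument — so the approaches coincide.
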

\begin{proof}
    We show in \Cref{thm:DoublyExtendedVariableLocation} that every cluster variable appears on the tail of $\T$ quiver. In the case when $\A$ is self dual and non-simply laced, then every cluster variable appears uniquely on the tail of a $\T$ quiver or its dual. In this case we will multiply our final count by 2. 
    
    Thus it suffices to count the variables in a $\ClusterModularGroup{}/\mathcal{N}$ orbit of a $\T$ quiver. By the factorization, we first look at orbits of $N$ and then orbits of $F$.     Since the red to green element is in $N$ and the action of this element twists all of the tails we see that each orbit of $N$ on a single $\T$ quivers has $(\sum(n_i-1)n_i) $ cluster variables. 
    
    Now we consider the action of $F$ on these orbits. In each case the element $\gamma$ maps to the matrix $\begin{bmatrix} 1 & n_1 \\ 0 & 1 \end{bmatrix}$ while the twist $\tau_1$ maps to $\begin{bmatrix} 1 & w_1 \\ 0 & 1 \end{bmatrix}$. The image of $\tau_1$ in $F$ clearly fixes the orbits of cluster variables by $N$ we just counted. This must generate the stabilizer since these are the only parabolic elements in $\PSL{2}{\Z}$ with the same stabilizer. Therefore to get the full count we multiply by $|F|\frac{w_1}{n_1}$.
\end{proof}

\begin{lemma}\label{thm:DoublyExtendedVariableLocation}
    Every cluster variable in a doubly extended cluster algebra appears on the tail of a $\T$ quiver (or its dual for nonsimply laced self dual cluster algebras)
\end{lemma}
\begin{proof}
    In each of the finitely many simply laced cases one can check that every cluster variable appears in a $\T$ quiver not on the double edge. This is a finite computation, as we only have to check each location in each quiver isomorphism class has a mutation path to a quiver with a double edge. For most cases this requires extensive computational aid.\footnote{See \url{https://zngzag42.github.io/DoubleExtendedStructureProof/} for full computational details.} However $\db{D}{4}$ only has 4 isomorphism classes so we can show the full computation in \Cref{fig:D4DoubleDoubleEdge}.

\begin{figure}[hb]
    \centering
    \begin{subfigure}{.23\textwidth}
        \includegraphics[width=\textwidth]{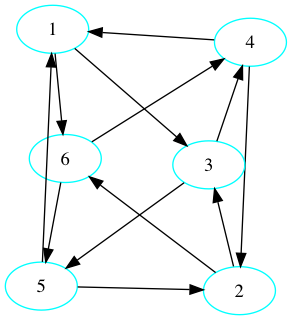}
    \caption{$Q_1$}        
    \end{subfigure}
    \begin{subfigure}{.23\textwidth}
        \includegraphics[width=\textwidth]{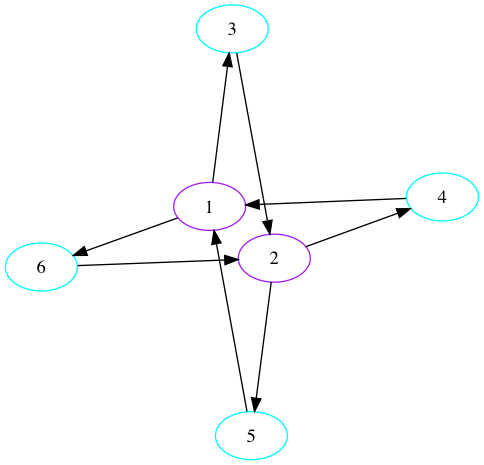}
    \caption{$Q_2$}        
    \end{subfigure}
    \begin{subfigure}{.23\textwidth}
        \includegraphics[width=\textwidth]{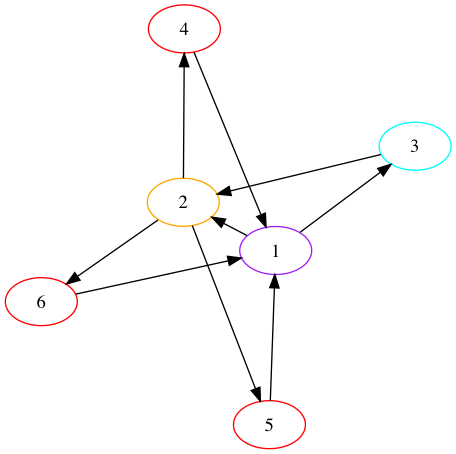}
    \caption{$Q_3$}        
    \end{subfigure}
    \begin{subfigure}{.23\textwidth}
        \includegraphics[width=\textwidth]{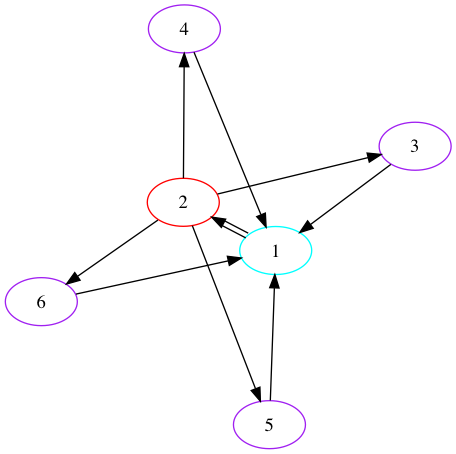}
    \caption{$Q_4$}        
    \end{subfigure}\\
    \begin{subfigure}{.9\textwidth}
        \centering
        \begin{tabular}{ c c c c c }
        i & $Q_1$ & $Q_2$ & $Q_3$ & $Q_4$\\
         \hline\hline
        1 & 6,4,5 & 2,6,5,4 & 2,6,5,4 & 4,2,3,5,6\\
         2 & 6,4,5 & 1,5,6,3 & 1,6,5,4 & 4,1,3,5,6\\
         3 & 5,1,2 & 4,5 & 6,4,5 & $\langle\rangle$\\
         4 & 2,3,6 & 3,6 & 3 & $\langle\rangle$\\
         5 & 2,3,6 & 3,6 & 3 & $\langle\rangle$\\
         6 & 5,1,2 & 4,5 & 3 & $\langle\rangle$\\
        \end{tabular}
    \end{subfigure}
    \caption{The four quiver isomorphism classes for $\db{D}{4}$ and mutation paths so that vertex $i$ is in a double edge quiver without mutating $i$}
    \label{fig:D4DoubleDoubleEdge}
\end{figure}
  We note that for a nonsimply laced cluster algebra the analogous computation shows that in self dual quivers each variable occurs either on $\T$ or its dual.
\end{proof}

We can now count the number of clusters in the cluster complex modulo $\mathcal{N}$. Each cluster variable corresponds to a corank 1 subalgebra of the our cluster algebra. By the proof \Cref{thm:DoubleExtendedCountVariables}, these subalgebras can always be found by freezing variables on the tails of  $\T$ quivers. Thus, every corank 1 subalgebra is affine type.

Let $\A_{i_j}$ be the affine subalgebra obtained by freezing the tail node $i_j$ and let $C_{i_j}$ be the number of clusters in $\A_{i_j}$ up to $\gamma$. Since we are quotienting by $\mathcal{N}$ which contains $\gamma$, the number of clusters in each affine subalgebra in this quotient complex is equal $d|F|\frac{w_1}{n_1}C_{i_j}$. Then the total number of clusters is
\begin{equation}\label{eq:CountingDouble}
    \frac{1}{n}d|F|\frac{w_1}{n_1}\sum_i n_i\sum_{j=2}^{n_i} C_{i_j}
\end{equation} where $n$ is the rank of the doubly extended cluster algebra we are considering. The factor of $1/n$ appears since each cluster appears in $n$ corank 1 subalgebras. 

More generally, we can count the number of any dimension facets on a doubly extended associahedron using the formula 
\begin{equation}
    |C_k(\A)| = \frac{1}{n-k}\sum_{\mathcal{B} \in C^{1}(\A)}C_k(\mathcal{B})
\end{equation} of \Cref{thm:AffineCountingFacets}.

\begin{example}
We will compute the number of clusters in the quotient complex of type $\db{E}{7}$ by $\mathcal{N}$. 
By freezing nodes on a tail of length 4 we can obtain subalgebras of type $\Affine{E}_7$, $\Affine{D}_{6}\times A_1$, $A_{2,4}\times A_2$. These have sizes $25 200$, $5040$, and $1400$ respectively. There is only one node to freeze on the tail of length 2 corresponding to a subalgebra of type $A_{4,4}$ which contains $4900$ clusters up to the action of $\gamma$. So the total number of clusters in $\ClusterComplex{}/\mathcal{N}$ is
\begin{equation}
    \frac{24}{9}\left(2\cdot 4\left(\frac{25,200}{4}+\frac{5040}{4}+\frac{1400}{4}\right) + 2\frac{4900}{4}\right) = 24\frac{65730}{9}= 175 280.
\end{equation}

\end{example}

\begin{example}
    In \Cref{fig:B2G2Dubs} we see the doubly extended affine associahedra of types $\dbf{B}{2}{2}{1}$ and $\db{G}{2}$. 
\end{example}

We note that doubly extended associahedra are not expected to be homotopy equivalent to spheres. Let $\A$ be a doubly extended cluster algebra of rank $n+2$. We conjecture the following:

\begin{conjecture}\label{conj:DoubleAssociahedra}
The exchange complex of $\A$ is homotopy equivalent to $S^{n-1}$. The doubly extended associahedron associated with $\A$ is homotopy equivalent to $S^{n-1} \times S^2$ in all cases other than $\db{E}{8}$ where it instead is homomorphic to $S^7 \times S^1 \times S^1$.
\end{conjecture}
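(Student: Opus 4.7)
The plan is to analyze both parts of the conjecture through the extension structure from Theorem \ref{them:DoubleRelations}, namely
\begin{equation*}
1 \to N \to \Gamma \to \mathcal{B}(X_2(w_1))/\mathcal{Z} \to 1,
\end{equation*}
which in the simply-laced cases becomes $1 \to N \to \Gamma \to \PSL{2}{Z} \to 1$. The key idea is that this sequence should induce a fibration-like decomposition of the doubly extended associahedron into a \emph{modular} part coming from the $\PSL{2}{Z}$-quotient and an \emph{affine} part coming from $N$, which is built from the tail twists $\tau_i$.

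For the first statement $\ClusterComplex{\A} \simeq S^{n-1}$, I would begin from the observation made in the proof of Theorem \ref{thm:DoubleExtendedCountVariables} that every corank-$1$ subalgebra of $\A$ is affine of rank $n+1$. By part (1) of the introductory conjecture, each such subalgebra's associahedron is an $n$-sphere, and the corresponding subcomplexes cover $\ClusterComplex{\A}$, glued along lower rank affine subcomplexes. I would attempt a nerve / Mayer--Vietoris induction on the rank, proving $\ClusterComplex{\A} \simeq S^{n-1}$ by showing that this open cover realizes the boundary of a combinatorial ball. The $\db{D}{4}$ base case is immediately accessible via Penner's analysis \cite{Penner:arc_complexes} of the arc complex of the four-punctured sphere, and the non-simply-laced cases would then follow from simply-laced ones via the folding procedure of Section \ref{sec:FoldingQuivers}.

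For the product decomposition of the associahedron, I would construct a continuous projection
\begin{equation*}
\mathrm{Assoc}(\A) \longrightarrow \overline{\pi(\mathcal{N})\backslash \mathbb{H}^2},
\end{equation*}
where $\pi\colon \Gamma \to \PSL{2}{Z}$ is the modular projection and the target is the standard compactification of the modular curve. For $\db{D}{4}, \db{E}{6}, \db{E}{7}$ the image $\pi(\mathcal{N})$ equals the congruence subgroup $\Gamma(2),\Gamma(3),\Gamma(4)$ respectively, each of which has a genus-zero modular curve compactifying to $S^2$. The fiber of this projection over a generic point should be built to be homotopy equivalent to $\ClusterComplex{\A}$, i.e.\ the $S^{n-1}$ of the first statement, giving the $S^{n-1}\times S^2$ product structure. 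The non-simply-laced cases should inherit this decomposition by equivariant folding.

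The main obstacles are twofold. First, $\db{E}{8}$ is exceptional because $\pi(\mathcal{N}_k)$ is the non-congruence subgroup $F_{k,6/k} \subset \PSL{2}{Z}$ whose modular curve has genus one, explaining the torus $S^1\times S^1$ instead of $S^2$; rigorously constructing the fibration in this case will require an explicit Farey-tesselation description of $F_{k,6/k}\backslash \mathbb{H}^2$ and a careful verification that the $S^7$ fiber does not twist nontrivially over the torus base. Second, in every case the orbifold $\pi(\mathcal{N})\backslash \mathbb{H}^2$ carries cone points coming from $\PSL{2}{Z}$-torsion of orders $2$ and $3$; these cone points must be \emph{smoothed} in the associahedron via the nontrivial action of local isotropy on the $S^{n-1}$ fiber, and verifying this desingularization is likely the most delicate step. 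A useful sanity check throughout will be matching Euler characteristics with the cell counts obtainable from equation \ref{eq:CountingDouble} together with Lemma \ref{thm:AffineCountingFacets}.
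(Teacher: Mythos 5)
The statement you are addressing is left as a conjecture in the paper (Conjecture \ref{conj:DoubleAssociahedra}); the authors support it only by the facet counts of Figures \ref{fig:DoublyExtendedCountingClusters} and \ref{fig:DoublyExtendedCodimCounts} and by low-rank pictures such as Figure \ref{fig:B2G2Dubs}, so there is no proof in the paper to compare yours against. Your proposal is a reasonable program, but as written it is not a proof, and at least one step fails outright. In the Mayer--Vietoris induction for $\ClusterComplex{\A}\simeq S^{n-1}$ you cover the cluster complex by the subcomplexes attached to corank-$1$ affine subalgebras and then assert that ``each such subalgebra's associahedron is an $n$-sphere.'' The pieces of your cover are the links of cluster variables, i.e.\ the (infinite) cluster complexes of affine algebras; the affine generalized associahedron is something else entirely, namely the dual of the quotient of that complex by $\groupgenby{\gamma}$. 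The introductory conjecture you cite speaks only about the latter, so it gives no information about the homotopy type of the pieces in your cover --- and even for the quotient it is itself only a conjecture (verified in type $A$ via \cite{Penner:arc_complexes} and case-by-case for exceptional types). You would need a separate determination of the homotopy type of the cluster complex of each affine algebra before any nerve argument can begin, and you would also need to address the fact that the maximal simplices of $\ClusterComplex{\A}$ do not lie in the link of any vertex, so the links do not literally cover the complex.

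For the product decomposition, the extension $1\to N\to\Gamma\to\PSL{2}{Z}\to 1$ is a statement about groups, not spaces: it does not by itself produce a map from the doubly extended associahedron to $\overline{\pi(\mathcal{N})\backslash\mathbb{H}^2}$, and you do not construct one. Your identifications of $\pi(\mathcal{N})$ with $\Bar{\Gamma}(2)$, $\Bar{\Gamma}(3)$, $\Bar{\Gamma}(4)$ (and, for $\db{E}{8}$, with the genus-one subgroups $F_{k,6/k}$) are consistent with the exact-sequence diagrams in the paper and do explain heuristically why an $S^2$, respectively $S^1\times S^1$, factor should appear. But the three issues you flag yourself --- actually building the projection, trivializing the resulting bundle so as to obtain an honest product up to homotopy, and desingularizing at the order-$2$ and order-$3$ elliptic points --- are precisely the content of the conjecture, not technicalities to be dispatched afterwards. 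Until those are carried out, the proposal should be read as a plausibility argument plus an Euler-characteristic sanity check, which is essentially the evidence the paper already offers.
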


\begin{figure}
    \centering
    \begin{subfigure}{.49\textwidth}
    \centering
    \includegraphics[width=\textwidth]{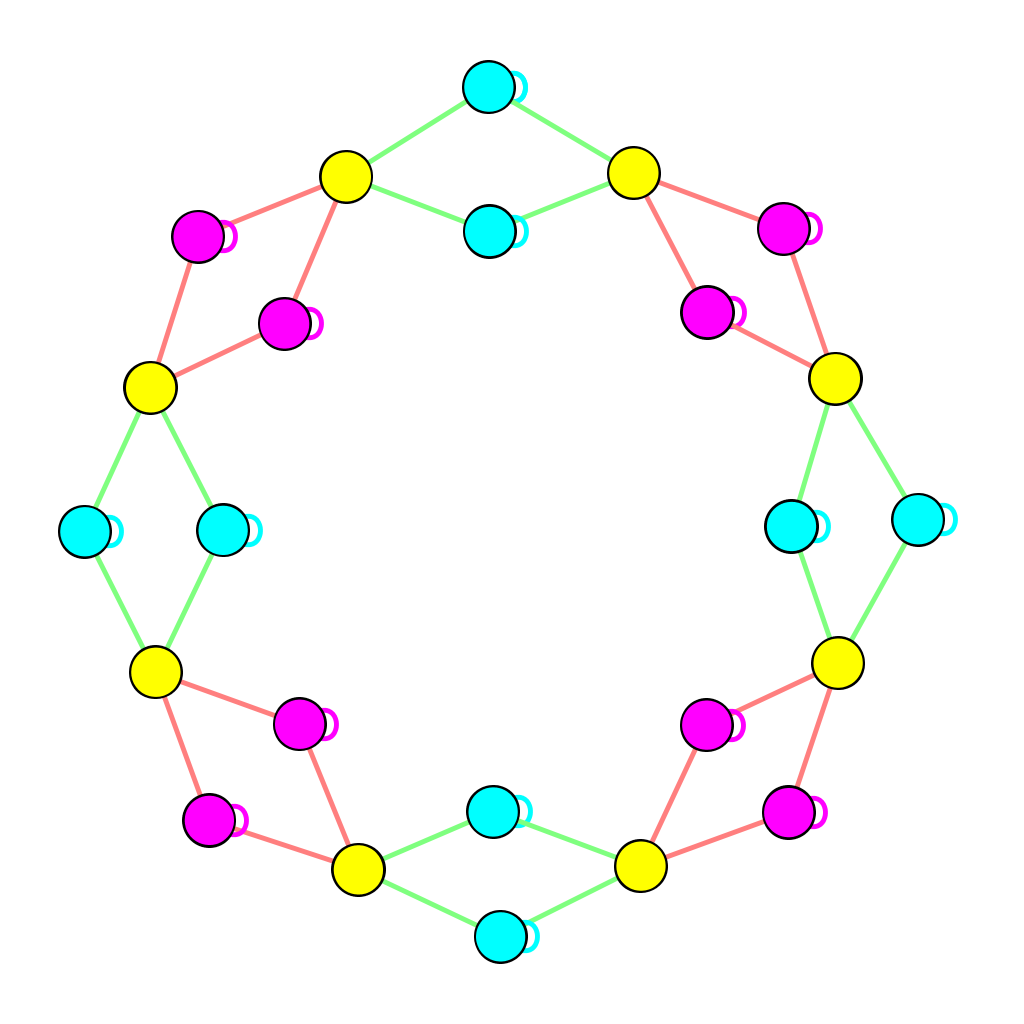}
    \caption{$\dbf{B}{2}{2}{1}$}
    \end{subfigure}
    \begin{subfigure}{.49\textwidth}
    \centering
    \includegraphics[width=\textwidth]{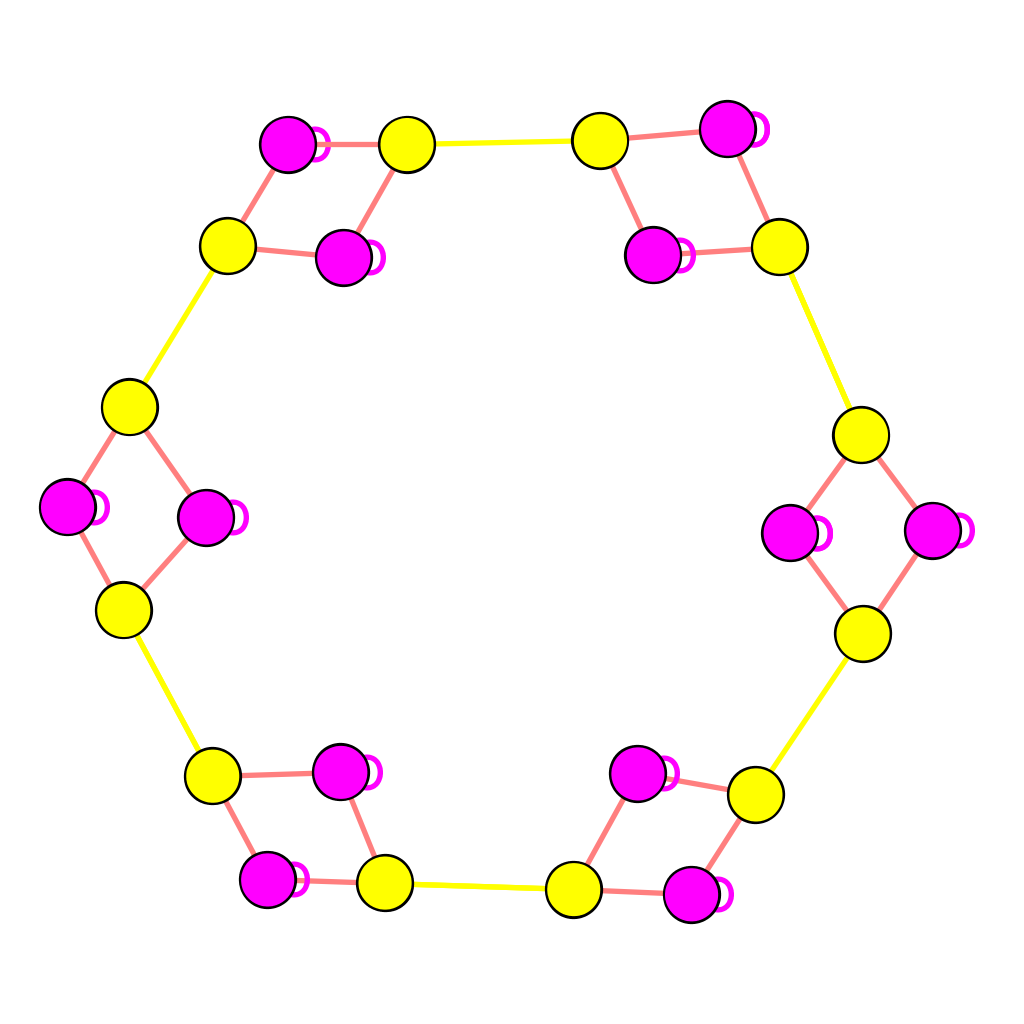}
    \caption{$\db{G}{2}$}
    \end{subfigure}
    \caption{The 1-skeleton of the doubly extended associahedra of types $\dbf{B}{2}{2}{1}$ and $\db{G}{2}$ }
    \label{fig:B2G2Dubs}
\end{figure}

\Cref{fig:DoublyExtendedCountingClusters} contains the results of the counting arguments for the number of clusters in the other doubly extended cases. We include the $A$ and $BC$ cases, which can be done individually and are somewhat degenerate. \Cref{fig:DoublyExtendedCodimCounts} shows the total count of codimension $k$ subalgebras obtained by inductively counting corank $1$ subalgebras. This allows use to compute the Euler characteristic of each space, which match the topologies given in \Cref{conj:DoubleAssociahedra}. 
\begin{figure}[hb]
    \centering
    \begin{tabular}{c c c c}
   Type  & Number of cluster variables in $\ClusterModularGroup{}/\mathcal{N}$ & $|F=(\Gamma /\mathcal{N})/N|$ &  Number of clusters in $\ClusterModularGroup{}/\mathcal{N}$\\
   \hline
   \hline
    $\db{A}{1}$ & 3 & 1 & 1\\
    \hline
    $\db{D}{4}$ & 24 & 6 & 432\\
    \hline
    $\db{E}{6}$ & 72 & 12 & 18,900\\
    \hline
    $\db{E}{7}$ & 156 & 24 & 175,280\\
    \hline
    $\db{E}{8}$ & 38\hspace{1pc}114\hspace{1pc}152 & 6\hspace*{1pc}18\hspace*{1pc}24 & 204,630\hspace*{1pc}613,890\hspace*{1pc}818,520\\
    \hline
    \hline
    $\dbf{BC}{1}{4}{1}$ & 3 & 2 & 2\\
    \hline 
    $\dbf{B}{2}{2}{1}$ & 16 & 2 & 24\\
    \hline
    $\dbf{BC}{2}{4}{2}$ & 16 & 2 & 24\\
    \hline
    $\dbf{G}{2}{1}{1}$ & 12& 6& 24\\
    \hline
    $\dbf{G}{2}{3}{1}$ & 36 & 3 & 63  \\
    \hline
    $\dbf{B}{3}{1}{1}$ & 18 & 6 & 108 \\
    \hline
    $\dbf{F}{4}{1}{1}$ & 48 & 12 & 1260  \\
    \hline
    $\dbf{F}{4}{2}{1}$ & 112 & 8 & 2784  \\
    \hline
\end{tabular}
    \caption{Counting clusters in quotient of doubly extended cluster algebras}
    \label{fig:DoublyExtendedCountingClusters}
\end{figure}
\begin{landscape}
\begin{figure}
    \centering
    \begin{tabular}{c c c c c c c c c c c}
        Type  & 1 & 2 & 3& 4& 5 & 6& 7 & 8 & 9 & 10\\
        \hline
        \hline
        $\db{A}{1}$ & 3 & $\frac{3}{2}$ & 1\\
        \hline
        $\db{D}{4}$ &24& 192& 768& 1,464& 1,296& 432\\
    \hline
    $\db{E}{6}$ & 72& 1,422& 11,772& 47,466& 102,816& 122,472& 75,600& 18,900\\
    \hline
    $\db{E}{7}$ & 156& 4,776& 53,504& 288,840& 857,760& 1,478,400& 1,474,080& 788,760& 175280\\
    \hline
    \multirow{3}{*}{$\db{E}{8}$} & 38& 1,881& 28,046& 196,345& 763,398& 177,6042& 2,531,988& 2,167,722& 1,023,150& 204,630 \\
    
      & 114 & 5,643& 84,138& 589,035& 2,290,194& 5,328,126& 7,595,964& 6,503,166 & 3,069,450& 613,890\\
     & 152& 7,524& 112,184& 785,380& 3,053,592& 7,104,168& 10,127,952& 8,670,888 & 4,092,600& 818,520\\
     \hline
     \hline
     $\dbf{BC}{1}{4}{1}$ & 3 & 3 & 2 \\
     \hline
     $\dbf{B}{2}{2}{1}$ & 16 & 40 & 48 & 24 \\
     \hline
     $\dbf{BC}{2}{4}{2}$ & 16 & 40 & 48 & 24\\
     \hline
     $\db{G}{2}$ & 12 & 36 & 48 & 24\\
     \hline
     $\dbf{G}{2}{3}{1}$ & 36 & 99 & 126 & 63\\
     \hline
     $\db{B}{3}$ & 18 & 96 & 244 & 270 & 108\\
     \hline
     $\db{F}{4}$ & 48 & 516 & 2196 & 4248 & 3780  & 1260\\
     \hline
     $\dbf{F}{4}{2}{1}$ & 112 & 1152 & 4864 & 9392 & 8352 & 2784 \\
     \hline
    \end{tabular}
    \caption{Number of codimension $k$ facet in the doubly extended generalized associahedra.}
    \label{fig:DoublyExtendedCodimCounts}
\end{figure}
\end{landscape}

\clearpage

\bibliographystyle{alpha}
\bibliography{modulargroup.bib}
\pagebreak\clearpage

\appendix
\renewcommand{\thesection}{\Alph{section}}
\section{Dynkin Diagrams}\label{app:DynkinDiagrams}

For reference we include all the finite, affine, and doubly extended Dynkin diagrams. To align with the cluster algebras, we draw the non simply laced diagrams using ``fat'' nodes whose weight (\Cref{fig:dynkinNodeWeight}) corresponds to the number of nodes ``folded'' together from the simply laced diagram. In the standard root system language these fat nodes correspond to the shorter roots of the root system. In the $B,C,F$ cases the fat nodes are all weight 2. In the $BC$ case there are nodes of weight 2 and 4. The $G$ case has nodes of weight $3$.

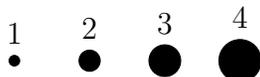
\begin{figure}[hb]
    \centering
    \begin{tikzpicture}
    \node[base] (0) [label= 1] {};
    \node[fat2] (1) [right of = 0,label=2] {};
    \node[fat3] (2) [right of = 1,label=3] {};
    \node[fat4] (3) [right of = 2,label=4] {};
    \end{tikzpicture}
    \caption{Weights of nodes in Dynkin Diagrams}
    \label{fig:dynkinNodeWeight}
\end{figure}

\begin{figure}[hb]
    \centering
    \begin{subfigure}{.4\textwidth}
    \centering
     \begin{tikzpicture}
        \node[base] (0) [] {};
        \node[base] (1) [right of = 0] {};
        \node[base] (2) [right of = 1] {};
        \node[base] (3) [right of = 2] {};
        \node[base] (4) [right of = 3] {};
        \node[base] (5) [right of = 4] {};
        \path[-] (0) edge [left] node {} (1); 
        \path[-] (1) edge [left] node {} (2);
        \path[-] (2) edge [left,dotted] node {} (3); 
        \path[-] (3) edge [left] node {} (4); 
        \path[-] (4) edge [left] node {} (5); 
    \end{tikzpicture}
    \caption{$A_n$}
    \end{subfigure}
  \begin{subfigure}{.4\textwidth}
      \centering
        \begin{tikzpicture}
        \node[base] (0) [] {};
        \node[base] (2) [right of = 0] {};    \node[base] (1) [above of = 2] {};
        \node[base] (3) [right of = 2] {};
        \node[base] (4) [right of = 3] {};
        \node[base] (5) [right of = 4] {};
        \path[-] (0) edge [left] node {} (2); 
        \path[-] (1) edge [left] node {} (2);
        \path[-] (2) edge [left] node {} (3); 
        \path[-] (3) edge [left,dotted] node {} (4); 
        \path[-] (4) edge [left] node {} (5); 
        \end{tikzpicture}
        \caption{$D_n$}
  \end{subfigure}\\
    \begin{subfigure}{.4\textwidth}
        \centering
            \begin{tikzpicture}
            \node[base] (0) [] {};
            \node[base] (2) [right of = 0] {};    
            \node[base] (3) [right of = 2] {};
            \node[base] (1) [above of = 3] {};
            \node[base] (4) [right of = 3] {};
            \node[base] (5) [right of = 4] {};
            \path[-] (0) edge [left] node {} (2); 
            \path[-] (1) edge [left] node {} (3);
            \path[-] (2) edge [left] node {} (3); 
            \path[-] (3) edge [left] node {} (4); 
            \path[-] (4) edge [left] node {} (5);
            \end{tikzpicture}
        \caption{$E_6$}
    \end{subfigure}
    \begin{subfigure}{.4\textwidth}
      \centering
        \begin{tikzpicture}
        \node[base] (0) [] {};
        \node[base] (2) [right of = 0] {};    
        \node[base] (3) [right of = 2] {};
        \node[base] (1) [above of = 3] {};
        \node[base] (4) [right of = 3] {};
        \node[base] (5) [right of = 4] {};
        \node[base] (6) [right of = 5] {};
        \path[-] (0) edge [left] node {} (2); 
        \path[-] (1) edge [left] node {} (3);
        \path[-] (2) edge [left] node {} (3); 
        \path[-] (3) edge [left] node {} (4); 
        \path[-] (4) edge [left] node {} (5);
        \path[-] (5) edge [left] node {} (6);
        \end{tikzpicture}
        \caption{$E_7$}
    \end{subfigure}
  \begin{subfigure}{.7\textwidth}
      \centering
        \begin{tikzpicture}
        \node[base] (0) [] {};
        \node[base] (2) [right of = 0] {};    
        \node[base] (3) [right of = 2] {};
        \node[base] (1) [above of = 3] {};
        \node[base] (4) [right of = 3] {};
        \node[base] (5) [right of = 4] {};
        \node[base] (6) [right of = 5] {};
        \node[base] (7) [right of = 6] {};
        \path[-] (0) edge [left] node {} (2); 
        \path[-] (1) edge [left] node {} (3);
        \path[-] (2) edge [left] node {} (3); 
        \path[-] (3) edge [left] node {} (4); 
        \path[-] (4) edge [left] node {} (5);
        \path[-] (5) edge [left] node {} (6);
        \path[-] (6) edge [left] node {} (7);
        \end{tikzpicture}
     \caption{$E_8$}
     \end{subfigure}
  \caption{Simply Laced Finite Dynkin Diagrams}
    \label{fig:DynkinFiniteSimplyLaced}
  \end{figure}
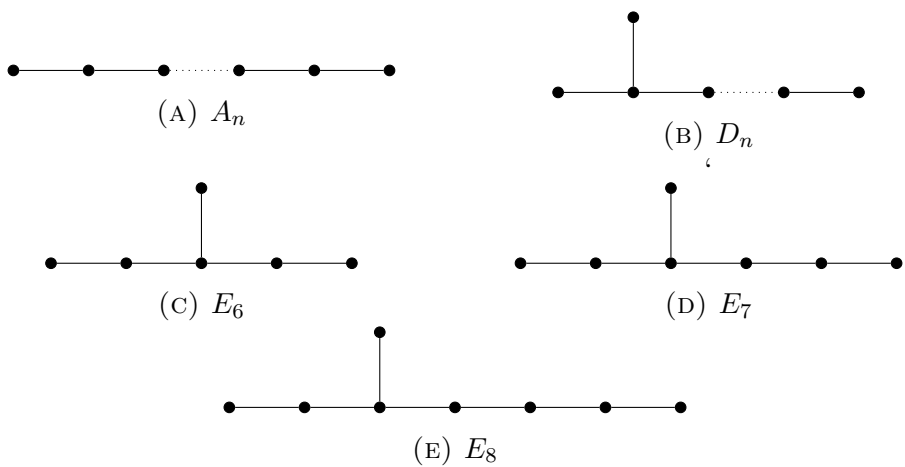 
\begin{figure}[hb]
      
   \centering
  \begin{subfigure}{.4\textwidth}
        \centering
        \begin{tikzpicture}
        \node[fat2] (0) [] {};
        \node[base] (1) [right of = 0] {};
        \node[base] (2) [right of = 1] {};
        \node[base] (3) [right of = 2] {};
        \node[base] (4) [right of = 3] {};
        \node[base] (5) [right of = 4] {};
        \path[-] (0) edge [left] node {} (1); 
        \path[-] (1) edge [left] node {} (2);
        \path[-] (2) edge [left,dotted] node {} (3); 
        \path[-] (3) edge [left] node {} (4); 
        \path[-] (4) edge [left] node {} (5); 
    \end{tikzpicture}
  \caption{$B_n$}
  \end{subfigure}

 \begin{subfigure}{.4\textwidth}
        \centering
         \begin{tikzpicture}
        \node[base] (0) [] {};
        \node[fat2] (1) [right of = 0] {};
        \node[fat2] (2) [right of = 1] {};
        \node[fat2] (3) [right of = 2] {};
        \node[fat2] (4) [right of = 3] {};
        \node[fat2] (5) [right of = 4] {};
        \path[-] (0) edge [left] node {} (1); 
        \path[-] (1) edge [left] node {} (2);
        \path[-] (2) edge [left,dotted] node {} (3); 
        \path[-] (3) edge [left] node {} (4); 
        \path[-] (4) edge [left] node {} (5); 
    \end{tikzpicture}
        \caption{$C_n$}
  \end{subfigure}

    \begin{subfigure}{.4\textwidth}
        \centering
        \begin{tikzpicture}
        \node[base] (0) [] {};
        \node[base] (1) [right of = 0] {};
        \node[fat2] (2) [right of = 1] {};
        \node[fat2] (3) [right of = 2] {};
        \path[-] (0) edge [left] node {} (1);
        \path[-] (1) edge [left] node {} (2);
        \path[-] (2) edge [left] node {} (3);
        \end{tikzpicture}
        \caption{$F_4$}
    \end{subfigure}
    \begin{subfigure}{.4\textwidth}
        \centering
        \begin{tikzpicture}
        \node[base] (0) [] {};
        \node[fat3] (1) [right of = 0] {};
        \path[-] (0) edge [left] node {} (1);
        \end{tikzpicture}
        \caption{$G_2$}
    \end{subfigure}
  \caption{Folded Finite Dynkin Diagrams}
    \label{fig:DynkinFiniteFolded}
  \end{figure}
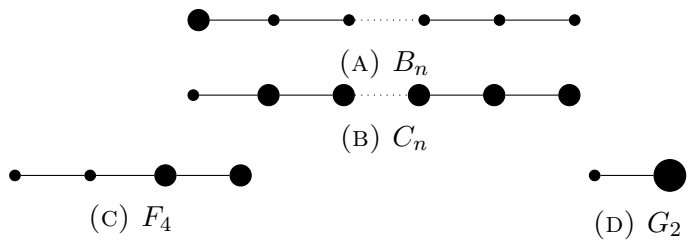
 \clearpage 
  Each affine diagram can be formed by adding a single node to the corresponding finite diagram. In  \Cref{fig:DynkinAffineSimplyLaced,fig:DynkinAffineFolded,fig:DynkinAffineTwisted} the nodes that could be the extension are colored red.
\begin{figure}[hb]
    \centering
    \begin{subfigure}{.4\textwidth}
    \centering
     \begin{tikzpicture}
        \node[affine] (0) [] {};
        \node[affine] (1) [right of = 0] {};
        \node[affine] (2) [right of = 1] {};
        \node[affine] (3) [right of = 2] {};
        \node[affine] (4) [right of = 3] {};
        \node[affine] (5) [right of = 4] {};
        \path[-] (0) edge [left] node {} (1); 
        \path[-] (1) edge [left] node {} (2);
        \path[-] (2) edge [left,dotted] node {} (3); 
        \path[-] (3) edge [left] node {} (4); 
        \path[-] (4) edge [left] node {} (5); 
        \path[-] (5) edge [bend left] node {} (0);
    \end{tikzpicture}
    \caption{$\Affine{A}_n$}
    \end{subfigure}
  \begin{subfigure}{.4\textwidth}
      \centering
        \begin{tikzpicture}
        \node[affine] (0) [] {};
        \node[base] (2) [right of = 0] {};    \node[affine] (1) [above of = 2] {};
        \node[base] (3) [right of = 2] {};
        \node[base] (4) [right of = 3] {};
        \node[base] (5) [right of = 4] {};
        \node[affine] (6) [right of = 5] {};
        \node[affine] (7) [above of = 5] {};
        \path[-] (0) edge [left] node {} (2); 
        \path[-] (1) edge [left] node {} (2);
        \path[-] (2) edge [left] node {} (3); 
        \path[-] (3) edge [left,dotted] node {} (4); 
        \path[-] (4) edge [left] node {} (5); 
        \path[-] (5) edge [left] node {} (6); 
        \path[-] (7) edge [left] node {} (5); 
        \end{tikzpicture}
        \caption{$\Affine{D}_n$}
  \end{subfigure}\\
    \begin{subfigure}{.4\textwidth}
        \centering
            \begin{tikzpicture}
            \node[affine] (0) [] {};
            \node[base] (2) [right of = 0] {};    
            \node[base] (3) [right of = 2] {};
            \node[base] (1) [above of = 3] {};
            \node[base] (4) [right of = 3] {};
            \node[affine] (5) [right of = 4] {};
            \node[affine] (7) [above of = 1] {};
            \path[-] (0) edge [left] node {} (2); 
            \path[-] (1) edge [left] node {} (3);
            \path[-] (2) edge [left] node {} (3); 
            \path[-] (3) edge [left] node {} (4); 
            \path[-] (4) edge [left] node {} (5);
            \path[-] (7) edge [left] node {} (1);
            \end{tikzpicture}
        \caption{$\Affine{E}_6$}
    \end{subfigure}
    \begin{subfigure}{.5\textwidth}
      \centering
        \begin{tikzpicture}
        \node[affine] (0) [] {};
        \node[base] (1) [right of = 0] {};    
        \node[base] (2) [right of = 1] {};
        \node[base] (6) [right of = 2] {};
        \node[base] (5) [right of = 6] {};
        \node[base] (4) [right of = 5] {};
        \node[affine] (3) [right of = 4] {};
        \node[base] (8) [above of = 6] {};
        
        \path[-] (0) edge [] node {} (1);
        \path[-] (1) edge [] node {} (2);
        \path[-] (2) edge [] node {} (6);

        \path[-] (3) edge [] node {} (4);
        \path[-] (4) edge [] node {} (5);
        \path[-] (5) edge [] node {} (6);

        \path[-] (6) edge [] node {} (8);
        \end{tikzpicture}

        \caption{$\Affine{E}_7$}
    \end{subfigure}
  \begin{subfigure}{.7\textwidth}
      \centering
        \begin{tikzpicture}
        \node[base] (0) [] {};
        \node[base] (2) [right of = 0] {};    
        \node[base] (3) [right of = 2] {};
        \node[base] (1) [above of = 3] {};
        \node[base] (4) [right of = 3] {};
        \node[base] (5) [right of = 4] {};
        \node[base] (6) [right of = 5] {};
        \node[base] (7) [right of = 6] {};
        \node[affine] (8) [right of = 7] {};
        \path[-] (0) edge [left] node {} (2); 
        \path[-] (1) edge [left] node {} (3);
        \path[-] (2) edge [left] node {} (3); 
        \path[-] (3) edge [left] node {} (4); 
        \path[-] (4) edge [left] node {} (5);
        \path[-] (5) edge [left] node {} (6);
        \path[-] (6) edge [left] node {} (7);
        \path[-] (7) edge [left] node {} (8);
        \end{tikzpicture}
     \caption{$\Affine{E}_8$}
     \end{subfigure}
  \caption{Simply Laced Affine Dynkin Diagrams}
    \label{fig:DynkinAffineSimplyLaced}
  \end{figure}
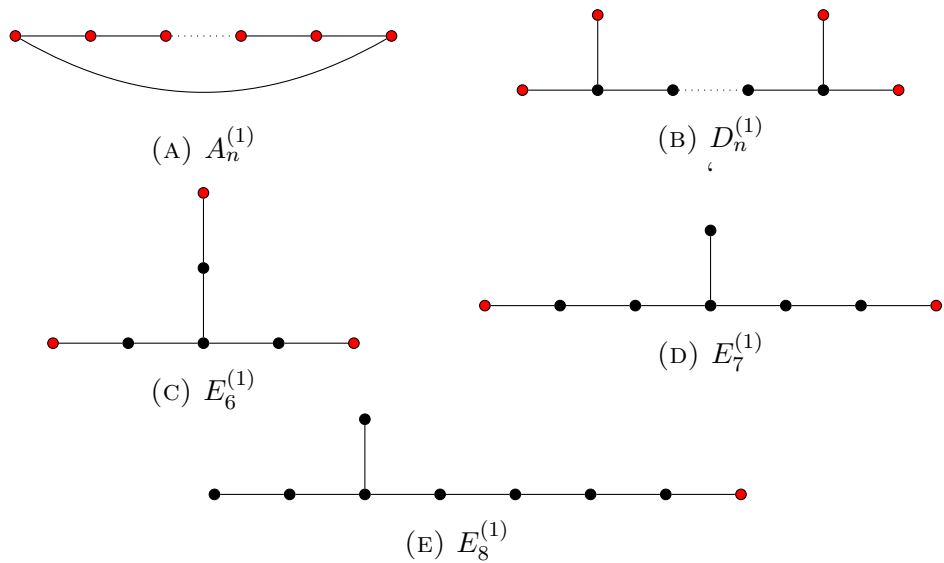 
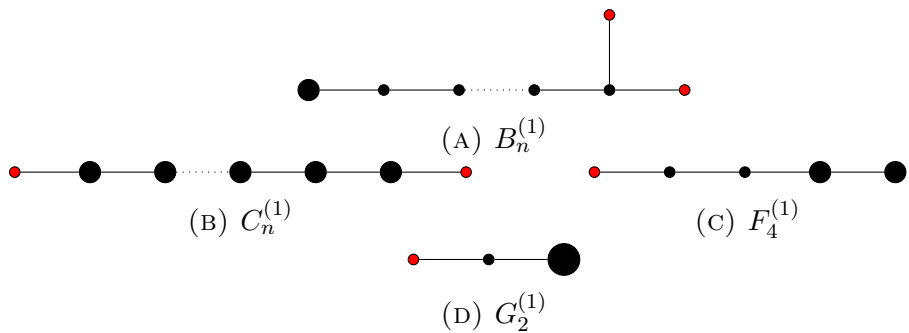
\begin{figure}[hb]
  \centering
  \begin{subfigure}{.4\textwidth}
        \centering
        \begin{tikzpicture}
        \node[fat2] (0) [] {};
        \node[base] (1) [right of = 0] {};
        \node[base] (2) [right of = 1] {};
        \node[base] (3) [right of = 2] {};
        \node[base] (4) [right of = 3] {};
        \node[affine] (5) [right of = 4] {};
        \node[affine] (6) [above of = 4] {};
        \path[-] (0) edge [left] node {} (1); 
        \path[-] (1) edge [left] node {} (2);
        \path[-] (2) edge [left,dotted] node {} (3); 
        \path[-] (3) edge [left] node {} (4); 
        \path[-] (4) edge [left] node {} (5);
        \path[-] (4) edge [] node {} (6);
    \end{tikzpicture}
  \caption{$\Affine{B}_n$}
  \end{subfigure}
    \hspace*{2pc}
    \begin{subfigure}{.47\textwidth}
        \centering
         \begin{tikzpicture}
            \node[affine] (0) [] {};
            \node[fat2] (1) [right of = 0] {};
            \node[fat2] (2) [right of = 1] {};
            \node[fat2] (3) [right of = 2] {};
            \node[fat2] (4) [right of = 3] {};
            \node[fat2] (5) [right of = 4] {};
            \node[affine] (6) [right of = 5] {};
            \path[-] (0) edge [left] node {} (1); 
            \path[-] (1) edge [left] node {} (2);
            \path[-] (2) edge [left,dotted] node {} (3); 
            \path[-] (3) edge [left] node {} (4); 
            \path[-] (4) edge [left] node {} (5);
            \path[-] (5) edge [left] node {} (6);
        \end{tikzpicture}
        \caption{$\Affine{C}_n$}
  \end{subfigure}

    \begin{subfigure}{.4\textwidth}
        \centering
        \begin{tikzpicture}
        \node[base] (0) [] {};
        \node[base] (1) [right of = 0] {};
        \node[fat2] (2) [right of = 1] {};
        \node[fat2] (3) [right of = 2] {};
        \node[affine] (4) [left of = 0] {};
        \path[-] (0) edge [] node {} (4);
        \path[-] (0) edge [left] node {} (1);
        \path[-] (1) edge [left] node {} (2);
        \path[-] (2) edge [left] node {} (3);
        \end{tikzpicture}
        \caption{$\Affine{F}_4$}
    \end{subfigure}
    \begin{subfigure}{.4\textwidth}
        \centering
        \begin{tikzpicture}
        \node[affine] (2) [left of = 0] {};
        \node[base] (0) [] {};
        \node[fat3] (1) [right of = 0] {};
        \path[-] (2) edge [] node {} (0);
        \path[-] (0) edge [left] node {} (1);
        \end{tikzpicture}
        \caption{$\Affine{G}_2$}
    \end{subfigure}
    \caption{Folded Affine Dynkin Diagrams}
    \label{fig:DynkinAffineFolded}
  \end{figure}
  \begin{figure}[hb]
  \centering
  \begin{subfigure}{.47\textwidth}
        \centering
        \begin{tikzpicture}
        \node[base] (0) [] {};
        \node[fat2] (1) [right of = 0] {};
        \node[fat2] (2) [right of = 1] {};
        \node[fat2] (3) [right of = 2] {};
        \node[fat2] (4) [right of = 3] {};
        \node[affine2] (5) [right of = 4] {};
        \node[affine2] (6) [above of = 4] {};
        \path[-] (0) edge [left] node {} (1); 
        \path[-] (1) edge [left] node {} (2);
        \path[-] (2) edge [left,dotted] node {} (3); 
        \path[-] (3) edge [left] node {} (4); 
        \path[-] (4) edge [left] node {} (5);
        \path[-] (4) edge [] node {} (6);
    \end{tikzpicture}
  \caption{Twisted $\TwistedAffine{C}{2}_n$}
  \end{subfigure}
  \hspace*{1pc}
  \begin{subfigure}{.48\textwidth}
        \centering
         \begin{tikzpicture}
            \node[affine2] (0) [] {};
            \node[base] (1) [right of = 0] {};
            \node[base] (2) [right of = 1] {};
            \node[base] (3) [right of = 2] {};
            \node[base] (4) [right of = 3] {};
            \node[base] (5) [right of = 4] {};
            \node[affine2] (6) [right of = 5] {};
            \path[-] (0) edge [left] node {} (1); 
            \path[-] (1) edge [left] node {} (2);
            \path[-] (2) edge [left,dotted] node {} (3); 
            \path[-] (3) edge [left] node {} (4); 
            \path[-] (4) edge [left] node {} (5);
            \path[-] (5) edge [left] node {} (6);
        \end{tikzpicture}
        \caption{Twisted $\TwistedAffine{B}{2}_n$}
  \end{subfigure}
  \begin{subfigure}{.48\textwidth}
        \centering
         \begin{tikzpicture}
            \node[affine] (0) [] {};
            \node[fat2] (1) [right of = 0] {};
            \node[fat2] (2) [right of = 1] {};
            \node[fat2] (3) [right of = 2] {};
            \node[fat2] (4) [right of = 3] {};
            \node[fat2] (5) [right of = 4] {};
            \node[affine4] (6) [right of = 5] {};
            \path[-] (0) edge [left] node {} (1); 
            \path[-] (1) edge [left] node {} (2);
            \path[-] (2) edge [left,dotted] node {} (3); 
            \path[-] (3) edge [left] node {} (4); 
            \path[-] (4) edge [left] node {} (5);
            \path[-] (5) edge [left] node {} (6);
        \end{tikzpicture}
        \caption{Twisted $\TwistedAffine{BC}{4}_n$}
  \end{subfigure}
    \hspace*{1pc}
  \begin{subfigure}{.4\textwidth}
        \centering
        \begin{tikzpicture}
        \node[fat2] (0) [] {};
        \node[fat2] (1) [right of = 0] {};
        \node[base] (2) [right of = 1] {};
        \node[base] (3) [right of = 2] {};
        \node[affine2] (4) [left of = 0] {};
        \path[-] (0) edge [] node {} (4);
        \path[-] (0) edge [left] node {} (1);
        \path[-] (1) edge [left] node {} (2);
        \path[-] (2) edge [left] node {} (3);
        \end{tikzpicture}
        \caption{Twisted $\TwistedAffine{F}{2}_4$}
    \end{subfigure}
  \begin{subfigure}{.4\textwidth}
        \centering
        \begin{tikzpicture}
        \node[affine3] (2) [left of = 0] {};
        \node[fat3] (0) [] {};
        \node[base] (1) [right of = 0] {};
        \path[-] (2) edge [] node {} (0);
        \path[-] (0) edge [left] node {} (1);
        \end{tikzpicture}
        \caption{Twisted $\TwistedAffine{G}{3}_2$}
    \end{subfigure}
  \caption{Twisted Affine Dynkin Diagrams}
    \label{fig:DynkinAffineTwisted}
\end{figure}
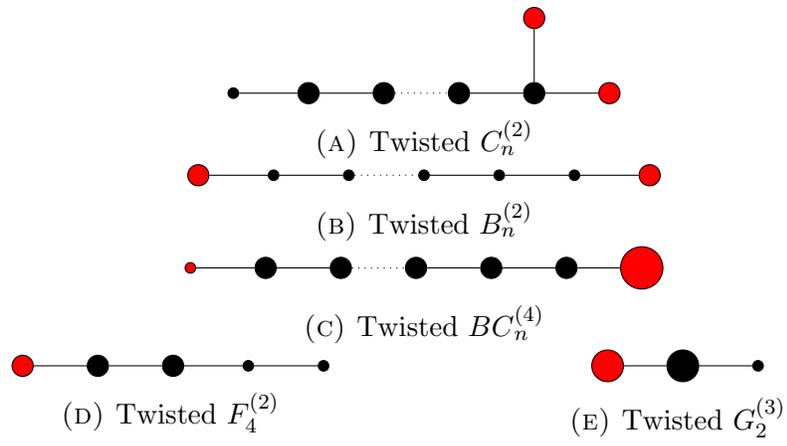
  \clearpage 
  Similarly each double extended diagram can be formed by adding two nodes to a finite diagram or one node to the affine diagram. Each red node in \Cref{fig:DynkinDouble,fig:DynkinDoubleFolded} is a possible extension of the corresponding affine Dynkin diagram.
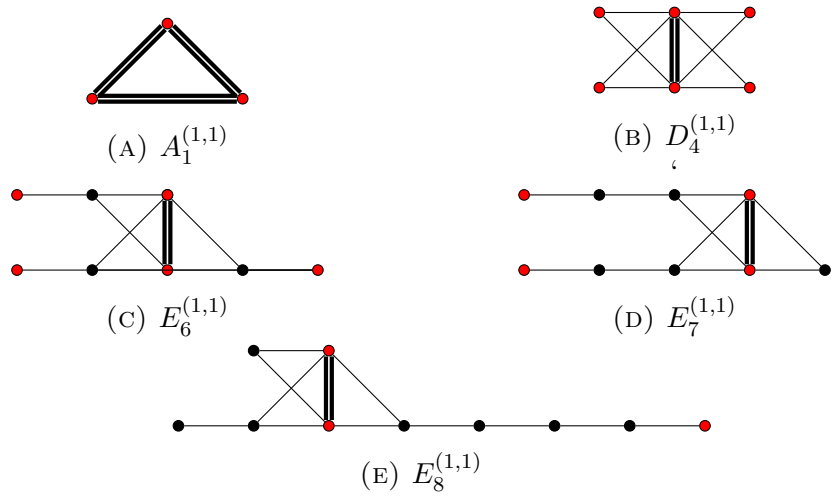
\begin{figure}[hb]
    \centering
    \begin{subfigure}{.4\textwidth}
    \centering
     \begin{tikzpicture}
        \node[affine] (0) [] {};
        \node[affine] (1) [right of = 0,above of = 0] {};
        \node[affine] (2) [right of = 1,below of = 1] {};
        \path[-] (0) edge [double,ultra thick] node {} (1); 
        \path[-] (1) edge [double,ultra thick] node {} (2);
        \path[-] (2) edge [double,ultra thick] node {} (0); 
    \end{tikzpicture}
    \caption{$\db{A}{1}$}
    \end{subfigure}
  \begin{subfigure}{.4\textwidth}
      \centering
        \begin{tikzpicture}
        \node[affine] (0) [] {};
        \node[affine] (2) [right of = 0] {};    
        \node[affine] (1) [above of = 0] {};
        \node[affine] (3) [right of = 2] {};
        \node[affine] (4) [above of = 3] {};
        \node[affine] (5) [above of = 2] {};
        \path[-] (0) edge [] node {} (2); 
        \path[-] (1) edge [] node {} (2);
        \path[-] (2) edge [] node {} (3); 
        \path[-] (2) edge [] node {} (4); 
        \path[-] (2) edge [double,ultra thick] node {} (5);
        \path[-] (0) edge [] node {} (5); 
        \path[-] (1) edge [] node {} (5);
        \path[-] (5) edge [] node {} (3); 
        \path[-] (5) edge [] node {} (4);
        \end{tikzpicture}
        \caption{$\db{D}{4}$}
  `\end{subfigure}\\
    \begin{subfigure}{.4\textwidth}
        \centering
            \begin{tikzpicture}
            \node[affine] (0) [] {};
            \node[base] (2) [right of = 0] {}; 
            \node[affine] (3) [right of = 2] {};
            \node[base] (1) [above of = 2] {};
            \node[affine] (7) [left of = 1] {};
            \node[base] (4) [right of = 3] {};
            \node[affine] (5) [right of = 4] {};
            \node[base] (6) [above of = 3] {};
            \node[affine] (8) [above of = 3] {};
            \path[-] (0) edge [] node {} (2); 
            \path[-] (7) edge [] node {} (1);
            \path[-] (1) edge [] node {} (3);
            \path[-] (2) edge [] node {} (3); 
            \path[-] (3) edge [] node {} (4); 
            \path[-] (4) edge [] node {} (5);

            \path[-] (2) edge [] node {} (5); 
            \path[-] (5) edge [] node {} (4);
            \path[-] (8) edge [double,ultra thick] node {} (3);
            \path[-] (1) edge [] node {} (8);
            \path[-] (2) edge [] node {} (8); 
            \path[-] (8) edge [] node {} (4);
            \end{tikzpicture}
        \caption{$\db{E}{6}$}
    \end{subfigure}
    \begin{subfigure}{.4\textwidth}
      \centering
        \begin{tikzpicture}
        \node[affine] (0) [] {};
        \node[base] (1) [right of = 0] {};    
        \node[base] (2) [right of = 1] {};
        \node[affine] (3) [above of = 0] {};
        \node[base] (4) [right of = 3] {};
        \node[base] (5) [right of = 4] {};
        \node[affine] (6) [right of = 2] {};
        \node[affine] (7) [above of = 6] {};
        \node[base] (8) [right of = 6] {};
        
        \path[-] (0) edge [] node {} (1);
        \path[-] (1) edge [] node {} (2);
        \path[-] (2) edge [] node {} (6);
        \path[-] (2) edge [] node {} (7);
        
        \path[-] (3) edge [] node {} (4);
        \path[-] (4) edge [] node {} (5);
        \path[-] (5) edge [] node {} (6);
        \path[-] (5) edge [] node {} (7);
        
        \path[-] (6) edge [] node {} (8);
        \path[-] (7) edge [] node {} (8);
        
        \path[-] (6) edge [double,ultra thick] node {} (7);
        \end{tikzpicture}
        \caption{$\db{E}{7}$}
    \end{subfigure}
  \begin{subfigure}{.7\textwidth}
      \centering
        \begin{tikzpicture}
        \node[base] (0) [] {};
        \node[base] (2) [right of = 0] {};    
        \node[affine] (3) [right of = 2] {};
        \node[base] (1) [above of = 2] {};
        \node[base] (4) [right of = 3] {};
        \node[base] (5) [right of = 4] {};
        \node[base] (6) [right of = 5] {};
        \node[base] (7) [right of = 6] {};
        \node[affine] (8) [right of = 7] {};
        \node[affine] (9) [above of = 3] {};
        \path[-] (0) edge [] node {} (2); 
        \path[-] (1) edge [] node {} (3);
        \path[-] (2) edge [] node {} (3); 
        \path[-] (3) edge [] node {} (4); 
        \path[-] (4) edge [] node {} (5);
        \path[-] (5) edge [] node {} (6);
        \path[-] (6) edge [] node {} (7);
        \path[-] (7) edge [] node {} (8);
        \path[-] (3) edge [double,ultra thick] node {} (9);
        \path[-] (1) edge [] node {} (9);
        \path[-] (2) edge [] node {} (9); 
        \path[-] (9) edge [] node {} (4);
        \end{tikzpicture}
     \caption{$\db{E}{8}$}
     \end{subfigure}
  \caption{Simply Laced Doubly Extended Dynkin Diagrams}
    \label{fig:DynkinDouble}
  \end{figure}    
\begin{figure}[hb]
\centering
  \begin{subfigure}{.3\textwidth}
      \centering
        \begin{tikzpicture}
        \node[fat2] (0) [] {};
        \node[affine] (2) [right of = 0] {};    
        \node[affine] (3) [right of = 2] {};
        \node[affine] (4) [above of = 3] {};
        \node[affine] (5) [above of = 2] {};
        \path[-] (0) edge [] node {} (2); 
        \path[-] (2) edge [] node {} (3); 
        \path[-] (2) edge [] node {} (4); 
        \path[-] (2) edge [double,ultra thick] node {} (5);
        \path[-] (0) edge [] node {} (5); 
        \path[-] (5) edge [] node {} (3); 
        \path[-] (5) edge [] node {} (4);
        \end{tikzpicture}
     \caption{$\db{B}{3}$}
     \end{subfigure}
  \begin{subfigure}{.3\textwidth}
      \centering
        \begin{tikzpicture}
        \node[fat2] (0) [] {};
        \node[affine] (2) [right of = 0] {};    
        \node[fat2] (3) [right of = 2] {};
        \node[affine] (5) [above of = 2] {};
        \path[-] (0) edge [] node {} (2); 
        \path[-] (2) edge [] node {} (3); 
        \path[-] (2) edge [double,ultra thick] node {} (5);
        \path[-] (0) edge [] node {} (5); 
        \path[-] (5) edge [] node {} (3); 
        \end{tikzpicture}
     \caption{$\dbf{B}{2}{2}{1}$}
     \end{subfigure}
  \begin{subfigure}{.3\textwidth}
      \centering
        \begin{tikzpicture}
        \node[fat4] (0) [] {};
        \node[affine] (2) [right of = 0] {};    
        \node[affine] (5) [above of = 2] {};
        \path[-] (0) edge [] node {} (2);
        \path[-] (2) edge [double,ultra thick] node {} (5);
        \path[-] (0) edge [] node {} (5); 
        \end{tikzpicture}
     \caption{$\dbf{BC}{1}{4}{1}$}
     \end{subfigure}\\

  \begin{subfigure}{.3\textwidth}
      \centering
        \begin{tikzpicture}
        \node[base] (0) [] {};
        \node[affine2] (2) [right of = 0] {};    
        \node[affine2] (3) [right of = 2] {};
        \node[affine2] (4) [above of = 3] {};
        \node[affine2] (5) [above of = 2] {};
        \path[-] (0) edge [] node {} (2); 
        \path[-] (2) edge [] node {} (3); 
        \path[-] (2) edge [] node {} (4); 
        \path[-] (2) edge [double,ultra thick] node {} (5);
        \path[-] (0) edge [] node {} (5); 
        \path[-] (5) edge [] node {} (3); 
        \path[-] (5) edge [] node {} (4);
        \end{tikzpicture}
     \caption{$\dbf{C}{3}{2}{2}$}
     \end{subfigure}
  \begin{subfigure}{.3\textwidth}
      \centering
        \begin{tikzpicture}
        \node[base] (0) [] {};
        \node[affine2] (2) [right of = 0] {};    
        \node[fat4] (3) [right of = 2] {};
        \node[affine2] (5) [above of = 2] {};
        \path[-] (0) edge [] node {} (2); 
        \path[-] (2) edge [] node {} (3); 
        \path[-] (2) edge [double,ultra thick] node {} (5);
        \path[-] (0) edge [] node {} (5); 
        \path[-] (5) edge [] node {} (3); 
        \end{tikzpicture}
     \caption{$\dbf{BC}{2}{4}{2}$}
     \end{subfigure}
  \begin{subfigure}{.3\textwidth}
      \centering
        \begin{tikzpicture}
        \node[base] (0) [] {};
        \node[affine4] (2) [right of = 0] {};    
        \node[affine4] (5) [above of = 2] {};
        \path[-] (0) edge [] node {} (2); 
        \path[-] (2) edge [double,ultra thick] node {} (5);
        \path[-] (0) edge [] node {} (5); 
        \end{tikzpicture}
     \caption{$\dbf{BC}{1}{4}{4}$}
     \end{subfigure}\\
     
    \begin{subfigure}{.3\textwidth}
      \centering
        \begin{tikzpicture}
        \node[fat3] (0) [] {};
        \node[affine] (2) [left of = 0] {};    
        \node[affine] (5) [above of = 2] {};
        \node[affine] (1) [left of = 2] {};
        \path[-] (0) edge [] node {} (2);
        \path[-] (2) edge [double,ultra thick] node {} (5);
        \path[-] (0) edge [] node {} (5); 
        \path[-] (1) edge [] node {} (2);
        \path[-] (1) edge [] node {} (5);
        \end{tikzpicture}
     \caption{$\dbf{G}{2}{1}{1}$}
     \end{subfigure}
     \begin{subfigure}{.4\textwidth}
        \centering
            \begin{tikzpicture}
            \node[fat2] (0) [] {};
            \node[fat2] (2) [right of = 0] {}; 
            \node[affine] (3) [right of = 2] {};
            \node[base] (4) [right of = 3] {};
            \node[affine] (5) [right of = 4] {};
            \node[affine] (8) [above of = 3] {};
            \path[-] (0) edge [] node {} (2); 
            \path[-] (2) edge [] node {} (3); 
            \path[-] (3) edge [] node {} (4); 
            \path[-] (4) edge [] node {} (5);

            \path[-] (2) edge [] node {} (5); 
            \path[-] (5) edge [] node {} (4);
            \path[-] (8) edge [double,ultra thick] node {} (3);
            \path[-] (2) edge [] node {} (8); 
            \path[-] (8) edge [] node {} (4);
            \end{tikzpicture}
        \caption{$\db{F}{4}$}
    \end{subfigure}\\
       \begin{subfigure}{.3\textwidth}
      \centering
        \begin{tikzpicture}
        \node[base] (0) [] {};
        \node[affine3] (2) [left of = 0] {};    
        \node[affine3] (5) [above of = 2] {};
        \node[affine3] (1) [left of = 2] {};
        \path[-] (0) edge [] node {} (2);
        \path[-] (2) edge [double,ultra thick] node {} (5);
        \path[-] (0) edge [] node {} (5); 
        \path[-] (1) edge [] node {} (2);
        \path[-] (1) edge [] node {} (5);
        \end{tikzpicture}
     \caption{$\dbf{G}{2}{3}{3}$}
     \end{subfigure}
     \begin{subfigure}{.4\textwidth}
        \centering
            \begin{tikzpicture}
            \node[base] (0) [] {};
            \node[base] (2) [right of = 0] {}; 
            \node[affine2] (3) [right of = 2] {};
            \node[fat2] (4) [right of = 3] {};
            \node[affine2] (5) [right of = 4] {};
            \node[affine2] (8) [above of = 3] {};
            \path[-] (0) edge [] node {} (2); 
            \path[-] (2) edge [] node {} (3); 
            \path[-] (3) edge [] node {} (4); 
            \path[-] (4) edge [] node {} (5);

            \path[-] (5) edge [] node {} (4);
            \path[-] (8) edge [double,ultra thick] node {} (3);
            \path[-] (2) edge [] node {} (8); 
            \path[-] (8) edge [] node {} (4);
            \end{tikzpicture}
        \caption{$\dbf{F}{4}{2}{2}$}
    \end{subfigure}\\
         \begin{subfigure}{.3\textwidth}
        \centering
            \begin{tikzpicture}
            \node[fat3] (0) [] {};
            \node[fat3] (2) [right of = 0] {}; 
            \node[affine] (3) [right of = 2] {};
            \node[affine] (8) [above of = 3] {};
            \path[-] (0) edge [] node {} (2); 
            \path[-] (2) edge [] node {} (3); 
            \path[-] (8) edge [double,ultra thick] node {} (3);
            \path[-] (2) edge [] node {} (8); 
            \end{tikzpicture}
        \caption{$\dbf{G}{2}{3}{1}$}
    \end{subfigure}
    \begin{subfigure}{.4\textwidth}
        \centering
            \begin{tikzpicture}
            \node[fat2] (0) [] {};
            \node[fat2] (2) [right of = 0] {}; 
            \node[affine] (3) [right of = 2] {};
            \node[base] (4) [right of = 3] {};
            \node[fat2] (5) [left of = 0] {};
            \node[affine] (8) [above of = 3] {};
            \path[-] (0) edge [] node {} (2); 
            \path[-] (2) edge [] node {} (3); 
            \path[-] (3) edge [] node {} (4); 

            \path[-] (5) edge [] node {} (0);
            \path[-] (8) edge [double,ultra thick] node {} (3);
            \path[-] (2) edge [] node {} (8); 
            \path[-] (8) edge [] node {} (4);
            \end{tikzpicture}
        \caption{$\dbf{F}{4}{2}{1}$}
    \end{subfigure}
     \caption{Folded Doubly Extended Dynkin Diagrams}  
     \label{fig:DynkinDoubleFolded}

\end{figure}
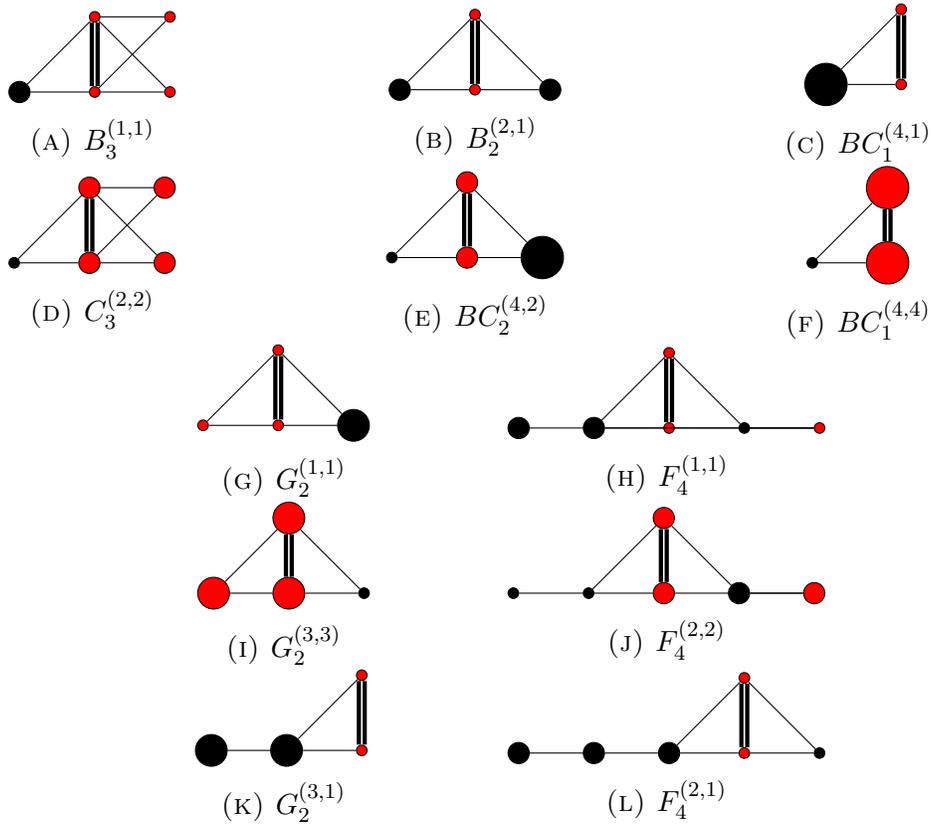

\begin{figure}[hb]
    \centering

\begin{tikzcd}
                                                                                                                                   &  & \dbf{B}{3}{1}{1} \arrow[lld, dashed] \arrow[rr, Leftrightarrow] \arrow[dd]                                    &  & \dbf{C}{3}{2}{2}  \arrow[dd]                                                            &  &                                                                                                                             \\
\db{A}{1} \arrow[Leftrightarrow, loop, distance=2em, in=215, out=145]                                                                        &  &                                                                                              &  &                                                                           &  & \db{E}{7} \arrow[llu, dashed] \arrow[lld, dashed] \arrow[llddd, dashed] \arrow[dd] \arrow[Leftrightarrow, loop, distance=2em, in=35, out=325] \\
                                                                                                                                   &  &  \dbf{B}{2}{2}{1} \arrow[llu, dashed] \arrow[dd] \arrow[Leftrightarrow, loop, distance=2em, in=35, out=325] &  & \dbf{BC}{2}{4}{2} \arrow[dd] \arrow[Leftrightarrow, loop, distance=2em, in=215, out=145] &  &                                                                                                                             \\
\db{D}{4} \arrow[uu, dashed] \arrow[rruuu] \arrow[rru] \arrow[rrd] \arrow[rrddd] \arrow[Leftrightarrow, loop, distance=2em, in=215, out=145] &  &                                                                                              &  &                                                                           &  & \dbf{F}{4}{2}{1} \arrow[lld, dashed] \arrow[llu, dashed] \arrow[Leftrightarrow, loop, distance=2em, in=35, out=325]                        \\
                                                                                                                                   &  & \dbf{BC}{1}{4}{1} \arrow[rr, Leftrightarrow]                                                                  &  & \dbf{BC}{1}{4}{4}                                                                    &  &                                                                                                                             \\
\dbf{G}{2}{3}{1} \arrow[Leftrightarrow, loop, distance=2em, in=215, out=145]                                                                      &  &                                                                                              &  &                                                                           &  &                                                                                                                             \\
                                                                                                                                   &  & \dbf{G}{2}{1}{1} \arrow[rr, Leftrightarrow]                                                                   &  & \dbf{G}{2}{3}{3}                                                                    &  &                                                                                                                             \\
\db{E}{6} \arrow[rrd] \arrow[uu] \arrow[Leftrightarrow, loop, distance=2em, in=215, out=145]                                         &  &                                                                                              &  &                                                                           &  & \db{E}{8} \arrow[llu, dashed] \arrow[lld, dashed] \arrow[Leftrightarrow, loop, distance=2em, in=35, out=325]                          \\
                                                                                                                                   &  & \dbf{F}{4}{1}{1} \arrow[rr, Leftrightarrow]                                                                               &  & \dbf{F}{4}{2}{2}                                                                     &  &                                                                                                                            
\end{tikzcd}
    \caption{The double-extended family tree. The solid arrows represent folding of $T_{\vec{n},\vec{w}}$ quivers, dashed arrows are nonstandard foldings, and the double arrows represent Langlands-duality}
    \label{fig:double_extended_family}
\end{figure}
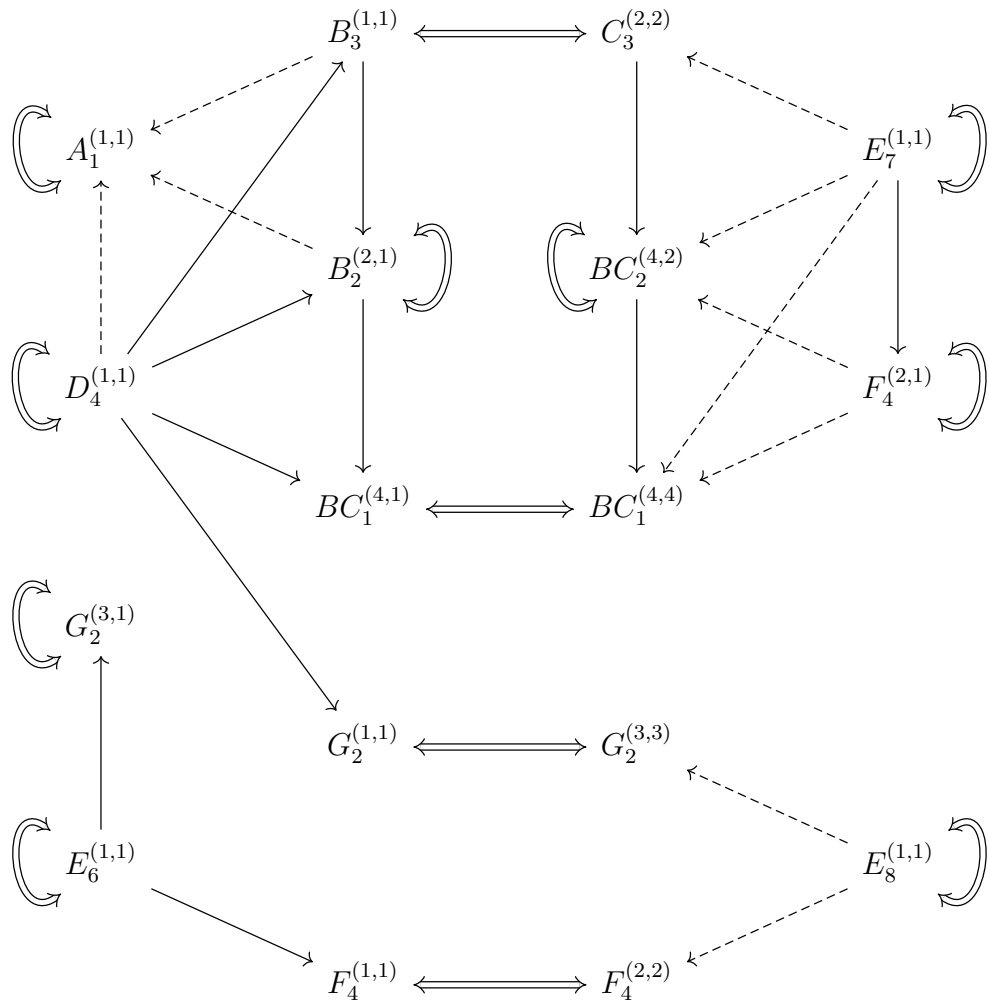
\clearpage

\section{Computations Using Marked Surfaces}

\subsection{Cluster combinatorics from surfaces}\label{sec:SurfaceClusters}
In this section, we will briefly review cluster algebras associated to surfaces. For a complete description see \cite{FST:Triangulated_surfaces} or Section 3 of \cite{williams_ClusterAlgebrasAnIntroduction}. 

\begin{definition}
A \emph{marked surface}, $S_{g,b,p,n}$ is an orientable surface of genus $g$ with $b$ boundary components, $p$ punctures and $n$ marked points on the boundary. We always require that each boundary component has at least one marked point. An \emph{arc} on a marked surface $S$ is a (non-contractible) isotopy class of curves between marked points or punctures on $S$. An \emph{ideal triangulation} of a marked surface is a maximal collection of non-crossing arcs on $S$. 
\end{definition}
Let $S$ be a marked surface. Given  an ideal triangulation $\Delta$ of $S$, we associate a quiver, $Q_\Delta$, as follows: For each arc $e \in \Delta$ we add a node $N_e$ and for each triangle $t \in \Delta$ we add a clockwise oriented cycle of arrows between the nodes associated with the arcs of $t$. In the situation where we have arrows between two nodes in opposite directions, we cancel them. The nodes associated to boundary edges are frozen. There are  $-3\chi(S)+2n $ total nodes and $n$ frozen nodes.

There is a correspondence between the cluster variables of $\ClusterAlgebra{S}:=\ClusterAlgebra{Q_\Delta}$ and the arcs on $S$.
Essentially, cluster variables correspond to arcs, clusters to triangulations and mutation corresponds to a ``flip'' of arcs.  Any two triangulations of a surface can be reached from each other by a sequence of flips. Therefore the quivers associated to two different triangulations of $S$ are in the same mutation class. 

There is one minor complication when $S$ has punctures. In this case it may be possible to have a ``self folded'' triangle in an ideal triangulation of $S$ see \Cref{fig:digon_folded}. In this case, the construction mentioned above does not produce the correct quiver. However, we can always find a triangulation of $S$ with no self folded triangles, and use this to construct a quiver associated with the triangulation.  

Then mutation of nodes in $Q_\Delta$ corresponds to a ``flip'' or ``Whitehead move'' in $\Delta$ at the corresponding arc. Again, there is a caveat to this when $S$ has punctures. The interior arc of a self folded triangle cannot be flipped, but the corresponding node in the quiver can be mutated. This is addressed in \cite{FST:Triangulated_surfaces} by the addition of ``tagged'' arcs. Essentially, we replace the outside arc of a self folded triangulation with a tagged arc as shown in \Cref{fig:digon_flips}. There is then a rule for flipping tagged arcs which agrees with the mutation rule for quivers. With this addition, we may always flip any arc and this always agrees with mutation of corresponding quivers.  We do not need the details of this in general. 

\begin{figure}
    \centering
    \begin{subfigure}{.49\textwidth}
    \centering
    \includegraphics[scale=.8]{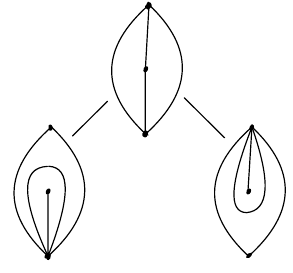}
    \caption{Arcs in a punctured digon.}
    \label{fig:digon_folded}
    \end{subfigure}
    \begin{subfigure}{.49\textwidth}
    \centering
    \includegraphics[scale=.8]{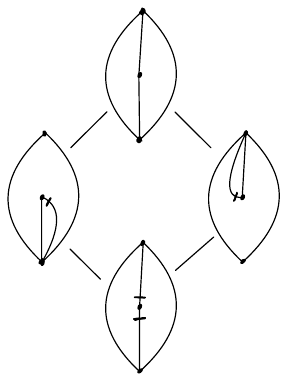}
    \caption{The tagged arc flip graph.}
    \label{fig:digon_tagged}
    \end{subfigure}
    \caption{Untagged vs tagged arcs in a punctured digon.}
    \label{fig:digon_flips}
\end{figure}

\begin{remark}\label{rem:DoubleEdgeTriangles}
A quiver associated to a surface can only have a double edge if the triangulation contains one of the two sub-triangulations in \Cref{fig:doubleEdgeTriangles}.
\end{remark}
\begin{figure}
    \centering
    \begin{subfigure}{.4\textwidth}
           \centering
           \includegraphics[width=\textwidth]{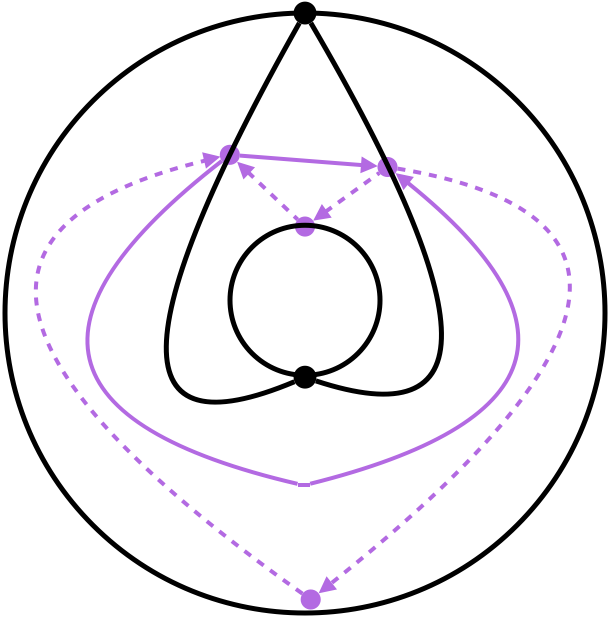}
    \end{subfigure}
        \begin{subfigure}{.4\textwidth}
           \centering
           \includegraphics[width=\textwidth]{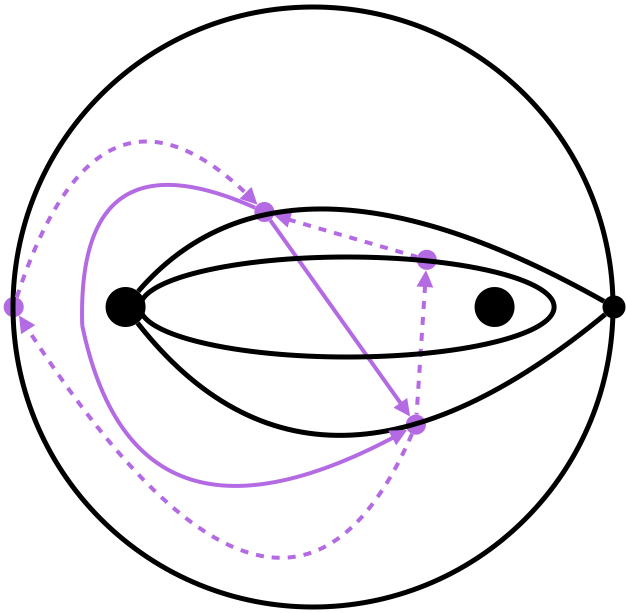}
    \end{subfigure}
    \caption{The only sub-triangulations that produce double edge quivers.}
    \label{fig:doubleEdgeTriangles}
\end{figure}

We can define an action of the mapping class group, $\MappingClassGroup{S}$, on the triangulations of $S$ and hence identify the mapping class group as a subgroup of the cluster modular group, $\Gamma_S$, of our cluster algebra $\ClusterAlgebra{S}$. We give an explicit construction of this subgroup here as a nice example of our notation. We refer to \cite{Margalit:PMCG} Section 2 for computations involving the mapping class group of selected surfaces.

\begin{lemma}\label{lem:TwistGroup}
Let $S$ be an annulus with $n$ marked points on the inner boundary component and $1$ marked point on the outer boundary. The the twist $\tau $ (\Cref{def:tau}) corresponds to rotating the inner boundary component $\frac{2\pi}{n}$ radians and $\gamma$ corresponds to a full Dehn twist and thus $\gamma = \tau^n$.
\end{lemma}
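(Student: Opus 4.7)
The plan is to realize the $T_{(n),(1)}$ quiver as the adjacency quiver of an explicit triangulation $\Delta$ of the annulus $S$, and then translate each mutation path into a sequence of arc flips whose net effect is a recognizable element of $\MappingClassGroup{S}$. First I would choose $\Delta$ so that the two middle nodes $N_1, N_\infty$ correspond to a pair of arcs from the outer marked point $v_0$ to itself which together cut off a wedge containing a single inner marked point (this is precisely the double-edge configuration on the left of Figure \ref{fig:doubleEdgeTriangles}), while the tail arcs $1_2, 1_3, \dots, 1_n$ correspond to a fan of arcs from $v_0$ to the remaining inner marked points. A direct check using the recipe of Appendix \ref{sec:SurfaceClusters} gives $Q_\Delta \simeq T_{(n),(1)}$.

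Next, I would track how the mutation sequence defining $\tau$ acts on $\Delta$. Mutating at the source vertices in $1_{\text{odd}}$ followed by those in $1_{\text{even}}$ is a sequence of independent flips along the tail that ``walks'' the fan one inner marked point counterclockwise: each flip replaces a tail arc with the next one around the inner circle. The remaining mutations at $1_2$, $N_\infty$, $N_1$ adjust the boundary tail arc together with the two double-edge arcs so that, after relabeling by the permutation $(1_2\, N_\infty\, N_1)$, the new triangulation is exactly $\Delta$ with its inner boundary rotated by $2\pi/n$. In other words, the pair $(P_\tau, \sigma_\tau) \in \tilde{\Gamma}_Q$ represents the inner-boundary rotation by one click.

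For $\gamma = \{N_\infty, (N_1\, N_\infty)\}$, a single flip of the arc $N_\infty$ replaces that arc by the other arc in the double-edge configuration wrapping the inner boundary the opposite way; composing with the transposition $(N_1\, N_\infty)$ returns the labels to their initial assignment and exhibits the composite as a Dehn twist around the core curve of $S$. Since $n$ consecutive rotations of the inner boundary by $2\pi/n$ compose in $\MappingClassGroup{S}$ to precisely one full Dehn twist around the core, the identity $\tau^n = \gamma$ then follows.

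The main obstacle is the bookkeeping in the second paragraph: verifying that the composite of the $n+1$ flips (at $1_3, \dots, 1_n, 1_2, N_\infty, N_1$) together with the prescribed label permutation really equals the rotation by $2\pi/n$. I would handle this by drawing out the small cases $n=2,3$ explicitly and then inducting on $n$; lengthening the tail by one node inserts one extra flip at a source in the twist sequence, which extends the rotation to act on the additional inner marked point without disturbing the portion of the annulus that was already rotated in the $n-1$ case.
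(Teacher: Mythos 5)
Your overall strategy is the same as the paper's (realize $T_{(n),(1)}$ as the adjacency quiver of a triangulation of the annulus, track the flips, and identify $\tau$ with a $\tfrac{2\pi}{n}$ rotation and $\gamma$ with the core Dehn twist), but the triangulation you start from is not correct, and since everything downstream is flip-tracking in that triangulation, this is a genuine gap. On an annulus whose outer boundary carries only the single marked point $v_0$, there are no essential arcs from $v_0$ to itself: any such embedded arc bounds, on the side away from the inner circle, an empty bigon with the outer boundary segment (so it is isotopic into the boundary), and on the other side it contains the \emph{entire} inner boundary circle --- it cannot ``cut off a wedge containing a single inner marked point,'' because the inner marked points lie on a connected circle that an embedded arc cannot separate. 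Moreover, the double-edge configuration of Figure \ref{fig:doubleEdgeTriangles} consists of two \emph{crossing} arcs between the two boundary components, not two self loops at $v_0$. Even under the most charitable reading of your picture --- a fan of arcs from $v_0$ to every inner marked point together with one doubled arc --- the resulting quiver is the oriented $(n+1)$-cycle (an $A_{p,q}$-type orientation of $\Affine{A}_n$), in which every node has two neighbors; it is not $T_{(n),(1)}$, where only $1_2$ is adjacent to the double edge and $1_3,\dots,1_n$ hang off it as a path.

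The triangulation you need is the one from the proof of Theorem \ref{thm:AffineType}: node $1_2$ is a self loop based at an \emph{inner} marked point $v_1$ wrapping around the core; this cuts the annulus into an $(n{+}1)$-gon (containing the inner boundary) and a small annulus with one marked point ($v_1$, on the self loop) on each side; $N_1$ and $N_\infty$ are the two crossing arcs from $v_1$ to $v_0$ in that small annulus, and $1_3,\dots,1_n$ are the zig-zag diagonals of the polygon. With that picture, the first block $[1_{\text{odd}}1_{\text{even}}]$ is not a ``fan walk'' but the alternating source--sink move on the zig-zag inside the polygon, which rotates it one tick so that the outermost diagonal runs from $v_2$ around to $v_1$; the remaining mutations $[1_2, N_\infty, N_1]$ then take place in a $T_{(2),(1)}$-configuration and move the self loop from $v_1$ to $v_2$, which is what justifies the permutation $(1_2\,N_\infty\,N_1)$ and the interpretation of $\tau$ as a one-click rotation. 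Your treatment of $\gamma$ and the final step $\tau^n=\gamma$ are fine in spirit once the arcs $N_1,N_\infty$ are correctly placed as crossing arcs, and your plan to check small $n$ and induct on the tail length is a reasonable substitute for the paper's reduction to $T_{(2),(1)}$; but as written the proof tracks flips in a triangulation that does not realize the quiver in question.
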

\begin{proof}
To analyze $\tau$ we break the mutation sequence into two pieces $[i_{\text{odd}}i_{\text{even}}]$, $[i_2, N_\infty, N_1]$. On the annulus, the arc associated with node $i_2$ begins and ends at $v_{1}$. Thus $[i_{\text{odd}}i_{\text{even}}]$ is a ``sinks then sources'' sequence inside an $n-$gon. This rotates the zig-zag triangulation clockwise one tick  so the outermost arc goes from $v_2$ clockwise around to $v_{1}$. Then treating this arc as an arc of the inner boundary component reduces puts us exactly in the situation of a $T_{(2),(1)}$ quiver. 

It is then a simple computation to see that the mutation path $[i_2,N_\infty, N_1]$ returns to a quiver isomorphic to the original but with the self loop around $v_2$ instead of $v_1$. Note that $N_1$ is now the self loop and $i_2$ and $N_\infty$ are the source and sink of the double edge respectively, justifying the permutation $(i_2,N_1,N_\infty)$. See \Cref{fig:singleTwist} for an example of a tail with length $4$.

Therefore each application of $\tau$ moves one tick clockwise around the inner boundary component.
Therefore $n$ twists returns to $v_1$ having made a full clockwise twist about the inner boundary component. Furthermore, the self loop at $v_1$, treated as the edge of the boundary component, always separates $N_1$ and $N_\infty$ from the rest of the tail. 

So it suffices to analyze $\gamma$ on the annulus with one marked point on each boundary component. Then it is clear applying $\gamma$ is equivalent twisting once clockwise around the inner boundary component and so is equal to $\tau^{n}$.
\end{proof}

\Cref{fig:singleTwist} shows the explicit action of twisting about a tail on the surface representation of the cluster algebra.
\begin{figure}[hb]
    \centering
    \includegraphics[width=.3\textwidth]{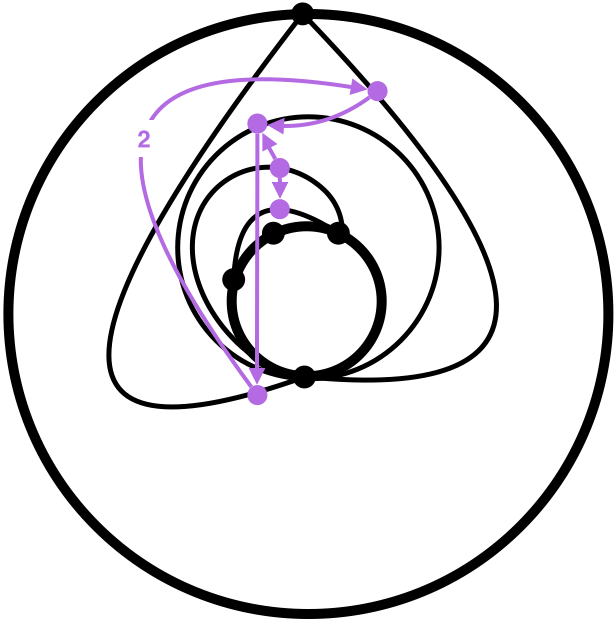}\hspace*{1pc}
    \includegraphics[width=.3\textwidth]{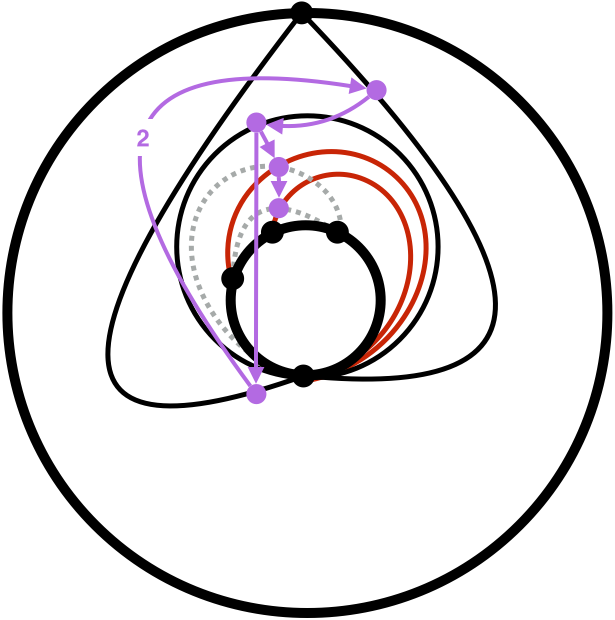}\hspace*{1pc}
    \includegraphics[width=.3\textwidth]{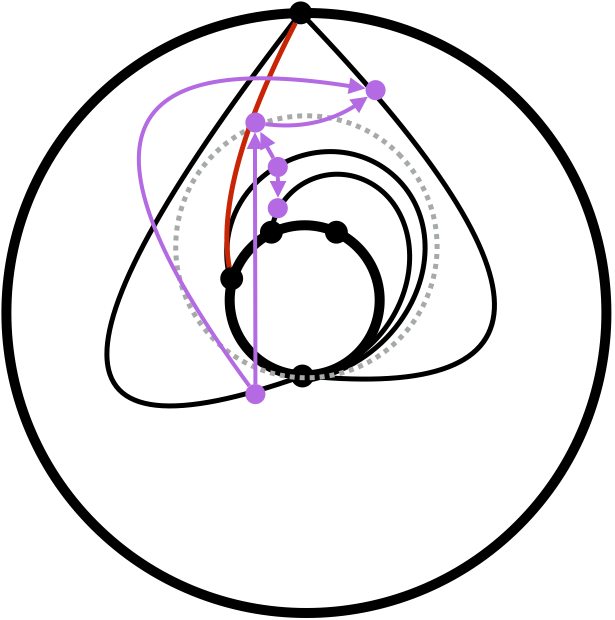}\\
    \includegraphics[width=.3\textwidth]{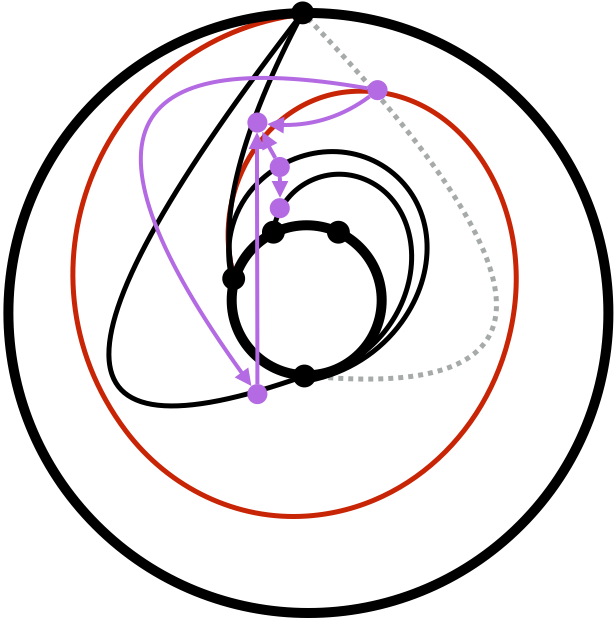}\hspace*{1pc}
    \includegraphics[width=.3\textwidth]{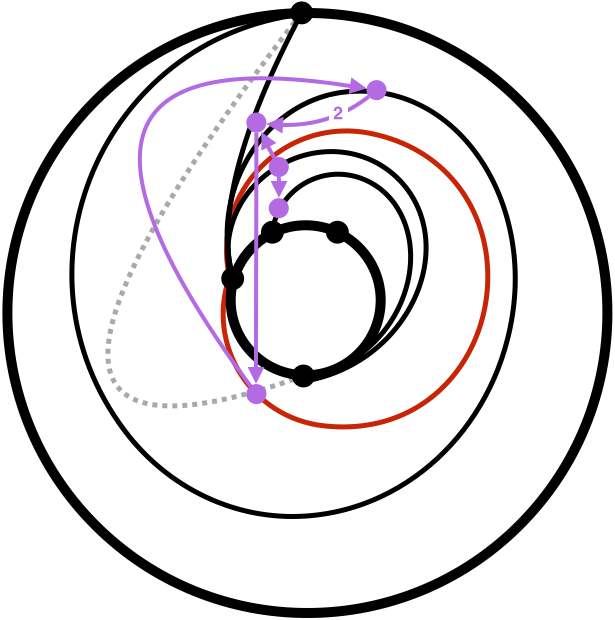}
    \caption{Application of single twist for a tail of length 4. The result is shown after $[i_{\text{odd}}i_{\text{even}}]$, $i_2$, $N_\infty$, and then $N_1$. At each stage the dashed gray edges are replaced with the red edges}
    \label{fig:singleTwist}
\end{figure}

\subsection{Proofs of Affine Surface Theorems}\label{sec:AffineProofs}
In order to prove \Cref{thm:AffineType} we need to carefully analyze the triangulations of both the annulus and the twice punctured disc. 
\begin{definition}\label{def:SurfaceArcClasses}
There are three classes of arcs on an annulus. \emph{Crossing arcs} connect two marked points on different boundary components. \emph{Boundary arcs} connect two marked points on the same boundary component.  A \emph{self loop} is a boundary arc between the same marked point that travels around the center.
\end{definition}

\begin{proof}[Proof of \Cref{thm:AffineType}]
First we note that we can write $T_{(n)} = T_{n,1,1}$ and $T_{(p,q)} = T_{p,q,1}$ so we can handle both of these cases together. Here we can construct a $T_{p,q,1}$ quiver from a triangulation of $S_{0,2,0,p+q}$, the annulus with $p $ marked points on one boundary and $q$ marked points on the other. We also construct a triangulation corresponding to an affine $A_{p,q}$ Dynkin diagram (\Cref{fig:AnnulusTriangles}). Since any two triangulations are related by a series of flips this shows $T_{p,q,1}$ is in the same mutation class as $A_{p,q}$ as needed.

 The first triangulation can be constructed by choosing a self loop on each boundary component. This divides the annulus into three regions: a $p$-gon, an annulus with one marked point on each boundary, and a $q$-gon. In the $p$-gon and $q$-gon, we then use the ``zig/zag'' triangulation starting from the self loop, to obtain portions of quiver that are a single line of nodes starting such that each node is a source or a sink. Finally add two distinct crossing arcs into the inner annulus completing the triangulation. See \Cref{fig:qA44_tails} for an example with $p=4$ and $q=4$.
 
The second triangulation will correspond to an orientation of the $A_{p,q}$ Dynkin diagram with a single source and sink. To construct this quiver, we first add a crossing arc between a marked point on each boundary. Next we connect the outer marked point of the initial arc to each inner marked point in a series of nested clockwise crossing arcs. Similarly attach the inner point of the initial arc to each other outer marked point in a series of nested counterclockwise crossing arcs, see \Cref{fig:qA44_cycle} for an example with $p=4$ and $q=4$.\\
\begin{figure}
    \centering
     \begin{subfigure}{.4\textwidth}
        \includegraphics[width=\textwidth]{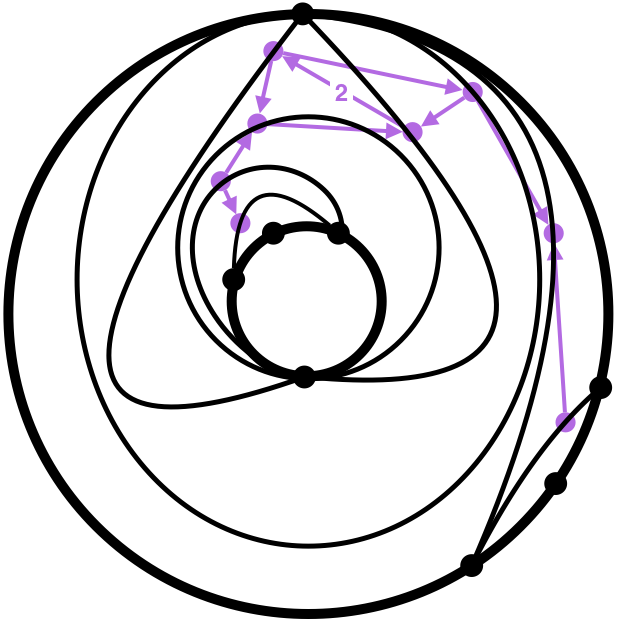}
            \caption{$T_{4,4}$}
            \label{fig:qA44_tails}
    \end{subfigure}
    \hspace*{3pc}
        \begin{subfigure}{.4\textwidth}
        \includegraphics[width=\textwidth]{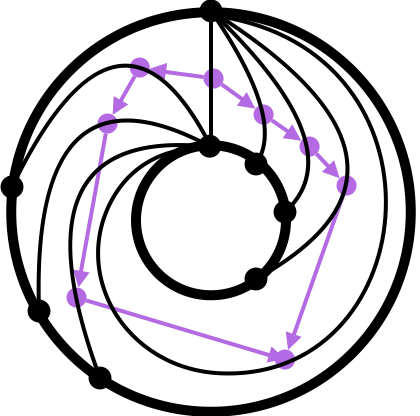}
            \caption{$A_{4,4}$ Dynkin diagram}
            \label{fig:qA44_cycle}
    \end{subfigure}
    \caption{Two different triangulations of an annulus with 4 marked points on each boundary component.}
    \label{fig:A44Triangulations}
\end{figure}

Similarly, $T_{(n,2,2)}$ occurs as the quiver obtained from a triangulation of twice punctured disk with $n$ marked points on the boundary. We also construct a triangulation of the twice punctured disk that corresponds to an $\Affine{D}_{n}$ Dynkin diagram. So as in the $\Affine{A}_n$ case this shows $T_{n,2,2}$ corresponds to the type $\Affine{D}_n$ cluster algebras. 

For the first triangulation, connect the punctures with an edge and a loop from one puncture around the other (tagged arc). Then the outside of this loop is an annulus with one marked point on the inner ``boundary'' and $n$ marked points on the outer boundary. We then complete the quiver using the construction of a $T_{n,1,1}$ quiver as described before (see \Cref{fig:qD6affine_tails}).

The second triangulation corresponding to a sources/sink orientation of a $\Affine{D}_{n}$ Dynkin diagram. First, connect each puncture to a different boundary vertex. Then add a self loop from the boundary vertex around the corresponding puncture. Outside these self loops is a disk with $n$ marked points that can be triangulated with a ``zig/zag'' starting from one self loop and ending at the other (see \Cref{fig:qD6affine_dynkin}).

\begin{figure}
    \centering
     \begin{subfigure}{.4\textwidth}
        \includegraphics[width=\textwidth]{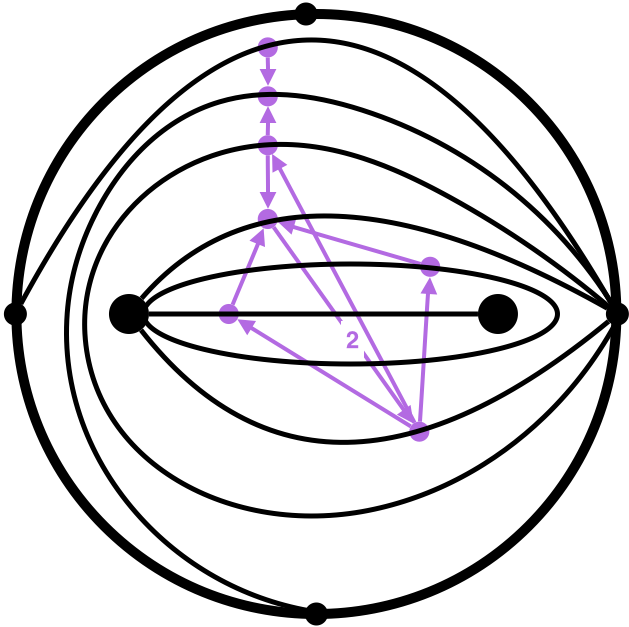}
            \caption{$T_{4,2,2}$}
            \label{fig:qD6affine_tails}
    \end{subfigure}
    \hspace*{3pc}
        \begin{subfigure}{.4\textwidth}
        \includegraphics[width=\textwidth]{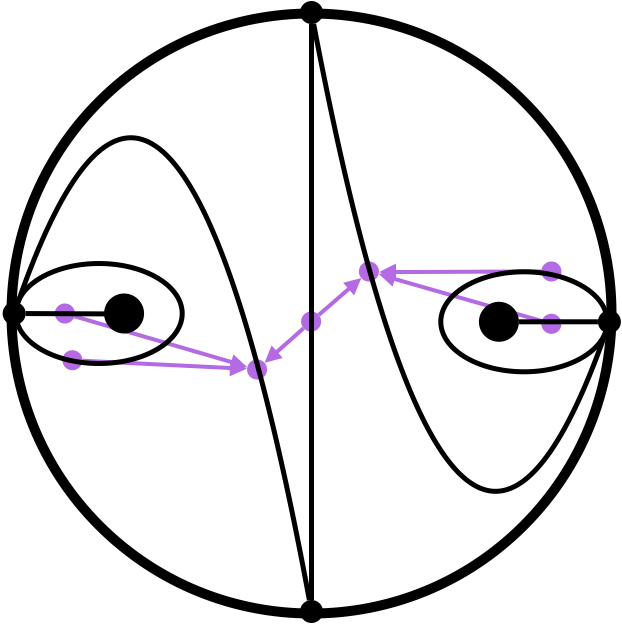}
            \caption{$\Affine{D}_{6}$ Dynkin diagram}
            \label{fig:qD6affine_dynkin}
    \end{subfigure}
    \caption{Two different triangulations of a twice punctured disk with 4 marked points on the boundary.}
    \label{fig:D6affineTriangulations}
\end{figure}

For $k= 3,4,5$ observe that $T'_{k,3,2}$ is an  $E_{k+3}$ finite Dynkin diagram oriented so every vertex is a source or a sink. Let $g = [N_1,i_{\text{odd}},i_{\text{even}},i_{2}]$ be the mutation path corresponding to the sources/sinks move for $E_{k+3}$. One can verify that $g^{h/2}$ transforms $T_{k,3,2}$ into the affine Dynkin diagram for $\Affine{E}_{k+3}$ where $h$ is the order of $g$ in $E_{k+3}$ ($h=7,10,16$ respectively). Note that applying $g$ $\frac{7}{2}$ times for $T_{3,3,2}$ means apply $g$ 3 times, then mutate at the sources $[N_1,i_{\text{odd}}]$ one more time to achieve a sources/sinks orientation of the $\Affine{E}_{6}$ diagram.

For the non simply laced cases we have explicit foldings of the simply laced cases. First consider $T_{(n,2),(1,2)}$ which we claim has type $\Affine{B}_{n+1}$. This quiver can be obtained from the $\Affine{D}_{n+2}$ by folding the length 2 tails of the $T_{n,2,2}$ quiver. As in the other cases doing $h/2$ applications of the underlying finite sources sink mutation transforms this quiver into the standard Dynkin type quiver for $\Affine{B}_{n+1}$. Note this agrees with the usual Dynkin folding of $\Affine{D}_{n+2}$ into $\Affine{B}_{n+1}$. 

The other cases are similar, $\Affine{C}_{n}$ is obtained from folding the two tails $A_{n,n}$ which corresponds on the Dynkin side via $g^{h/2}$ to folding a $2n+1$ cycle in half. $\Affine{F}_{4}$ is obtained from $T_{(3,2),(2,1)}$ by folding the two length three tails of $T_{3,3,2}$ ($\Affine{E}_6)$. The final affine quiver $\Affine{G}_2$ is $T_{(2),(3)}$ obtained by folding all three tails in $T_{2,2,2}$.

Note that every possible affine Dynkin diagram (\Cref{fig:DynkinAffineSimplyLaced,fig:DynkinAffineFolded}) has appeared as one of these cases. 
\end{proof}

\begin{remark}
    In the proof of the previous theorem we observed the mutation path $g = [N_1,i_{\text{odd}},i_{\text{even}},i_{2}]$ corresponds to the sources/sink move on the associated finite cluster algebra. The order $h$ of the sources/sinks move is computed in \cite{fomin_y-systems_2003} in terms of the Coexter number $h'$ of the associated root system. 
    
    Moreover, $g^{h/2}$ takes the affine $T_{p,q,r}$ quiver to an orientation of the affine Dynkin diagram in every case except $A_{p,q}$ with $|p-q|>  3$. In these remaining cases the affine Dynkin diagram cannot be found along this path since it is not possible to find a sources-sinks oriented finite $A_n$ quiver as a sub quiver of an affine $A_{p,q}$ quiver.
\end{remark}

We now prove \Cref{thm:AffineModularGroup} by showing the cluster modular group is $\Gamma_{\tau}\rtimes \Aut{Q}$ in each case. It is clear that  $\Gamma_\tau \rtimes \Aut{Q}$ is a subgroup of the cluster modular group, so it suffices to show their are no other possible cluster modular group elements.

\begin{proof}[Proof of \Cref{thm:AffineModularGroup} for $A_{p,q}$]
Any cluster modular group element must send our original $T_{p,q}$ quiver to another $T_{p,q}$ quiver. So it suffices to construct every possible $T_{p,q}$ quiver on the annulus and show they are in the image of the proposed group.

Once again we will rely on the correspondence between seeds in the cluster algebra and triangulations of an annulus. Since this quiver has a double edge, by \Cref{rem:DoubleEdgeTriangles} the only possible construction of a $T_{p,q,1}$ quiver is the one given in the proof of \Cref{thm:AffineType}.

However there was some freedom in this construction. The first is the choice of marked point on each boundary component to add a self loop around. There are $pq$ total possible choices for this. The other more subtle degree of freedom is the action of the mapping class group of the annulus, generated by a single Dehn twist about the center. Note the Dehn twist only changes crossing arcs which correspond to nodes $N_1$ and $N_\infty$. A simple analysis shows that $\gamma$ corresponds exactly to the action of the Dehn twist.
 
 Then $\Gamma_{\tau}/\groupgenby{\gamma} = \cyclicgroup{p}\times \cyclicgroup{q}$ has order $pq$. Therefore each distinct copy of $T_{p,q,1}$ up to mapping class group is the image of a distinct twist as needed. Since no other triangulation produce an isomorphic quiver we are done as long as $p \neq q$.
 
 When $p=q$ there is an extra symmetry of the triangulation given by swapping the inner and outer boundary components. However this is exactly automorphism of $T_{p,p,1}$ that swaps each tail. This corresponds exactly to the action of $\Aut{T_{p,p,1}}$ on $\Gamma_{\tau}$ as needed.
\end{proof}

\begin{proof}[Proof of \Cref{thm:AffineModularGroup} for $\Affine{D}_{n}$]
 As in the $A_{p,q}$ case the only possible construction of the $T_{n,2,2}$ quiver is the one described in the proof of \Cref{thm:AffineType}. Thus we look at the ambiguity of the construction of the $T_{n,2,2}$ quiver. The obvious choices are which puncture is inside the self loop, the boundary vertex that is attached to the puncture, and the winding number of these crossing edges. There is an additional subtle choice from the tagged arc complex. In this generalization the self loop around a puncture is replaced with a singly tagged arc between the two punctures. There is then an additional way to get an isomorphic quiver by switching the tagging at a puncture. This operation at the puncture with a tagged arc simply swaps the two arcs between the punctures and thus corresponds to the extra semidirect product with $\cyclicgroup{2}$ when $n \neq 4$. However flipping the tagging at the other puncture results in a new triangulation in every case. Putting this all together gives $4n$ triangulations up to winding number. Mutation along the double edge correspond to the Dehn twist around both punctures so we can again see that $\Gamma_\tau / \groupgenby{\gamma} = \cyclicgroup{n} \times \cyclicgroup{2} \times \cyclicgroup{2}$ has order $4n$ and so reaches every possibility.
 
 When $n=4$ not every automorphism of $T_{2,2,2}$ corresponds to a symmetry of the twice punctured disk as described above, but otherwise the analysis is exactly the same.\\
\end{proof}

\begin{proof}[Proof of \Cref{thm:AffineModularGroup} for $\Affine{E}_6,\Affine{E}_7,\Affine{E}_8$]
 In \cite{Schiffler:cluster_automorphisms} they compute the cluster modular group for the Dynkin type quivers as $\Z \times S_3$, $\Z \times \cyclicgroup{2}$, and $\Z$ for $\Affine{E}_6,\Affine{E}_7,\Affine{E}_8$ respectively. In each case the $\Z$ is generated by the full sources/sinks move on the Dynkin quiver. This is the reddening element $r$ by \Cref{thm:tnw_reddening}. By \Cref{rem:Tgroup} the group $\Gamma_\tau$ can be written as a subgroup of $\Z \times \prod \Z_n$. For example in $\Affine{E}_6$, $\Gamma_\tau$ is the subgroup of $\Z\times \Z_3 \times \Z_3 \times \Z_2$  generated by 
 \[\gamma = (18,0,0,0) \hspace{2pc} \tau_1 = (6,1,0,0) \hspace{2pc} \tau_2 = (6,0,1,0) \hspace{2pc} \tau_3 = (9,0,0,1)\]
 Here the reddening element $r = \tau_1\tau_2\tau_3\gamma^{-1} = (3,1,1,1)$. The final generator is the element $\sigma$ of $\Aut{T_{3,3,2}}$ which swaps the two tails of length 3. It satisfies the relations $\sigma \tau_1 \sigma = \tau_2$ and $\sigma \tau_3 \sigma = \tau_3$. Now we verify that the set $\{r, \sigma, \tau_1\tau_2^{-1}\}$ generates the full group with $\groupgenby{r}=\Z$ and $\groupgenby{\sigma, \tau_1\tau_2^{-1}} = S_3$ as needed. It suffices to verify
 \[\tau_1 = (\tau_1\tau_2^{-1})^{-1}r^2 \hspace{2pc} \tau_2 = (\tau_1\tau_2^{-1})r^2 \hspace{2pc} \tau_3 = r^3 \]
 \[\sigma^2 = 1 \hspace{2pc} (\tau_1\tau_2^{-1})^3 = 1 \hspace{2pc} \sigma (\tau_1\tau_2^{-1})\sigma = \tau_1^{-1}\tau_2 \hspace{2pc}\]
 
 In $\Affine{E}_7$, $\Aut{T_{4,3,2}}$ is trivial. So $\Gamma = \Gamma_\tau$ is the subgroup of $\Z\times \Z_4\times\Z_3\times \Z_2$ generated by
\[\gamma = (24,0,0,0) \hspace{2pc} \tau_1 = (6,1,0,0) \hspace{2pc} \tau_2 = (8,0,1,0) \hspace{2pc} \tau_3 = (12,0,0,1)\]
We compute $r = (2,1,1,1)$ and verify that $\{r,\tau_1^2\tau_3^{-1}\}$ generates the full group. This follows from the following computations
\[ \tau_1 = (\tau_1^{2}\tau_3^{-1})r^3 \hspace{2pc }\tau_2 = r^4 \hspace{2pc} \tau_3 = (\tau_1^2\tau_3^{-1})r^6\]
Finally in $\Affine{E}_8$, $\Aut{T_{5,3,2}}$ is trivial and $\Gamma = \Gamma_\tau$ is realized as the subgroup of $\Z\times \Z_5\times \Z_3\times \Z_2$ generated by
\[\gamma = (30,0,0,0) \hspace{2pc} \tau_1 = (6,1,0,0) \hspace{2pc} \tau_2 = (10,0,1,0) \hspace{2pc} \tau_3 = (15,0,0,1)\]
Here $r = (1,1,1,1)$ and we see $\tau_1 = r^6$, $\tau_2 = r^{10}$ and $\tau_3 = r^{15}$. Thus $\Gamma = \groupgenby{r} = \Z$ as claimed.
We remark that in each the presentation of \cite{Schiffler:cluster_automorphisms} is given by $\groupgenby{r} \times (\Gamma_{\tau}^{\circ} \rtimes \Aut{Q})$ where $\Gamma_{\tau}^{\circ}$ is the finite subgroup generated by combinations of twists with finite order.
 
\end{proof}

\begin{lemma}
Folding the tails of the $\T$ quivers only changes the cluster modular group by reducing automorphism group of the quiver and identifying the generators corresponding to twists about the folded tails.
\end{lemma}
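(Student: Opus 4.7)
The plan is to realize the cluster modular group of the folded quiver as a subquotient of the cluster modular group of the unfolded $T_{\vec{n},\vec{w}}$ quiver, and then to track precisely what happens to the generators of $\Gamma_\tau \rtimes \Aut{T_{\vec{n},\vec{w}}}$ under that subquotient operation.

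First I would set up the folding carefully. The only foldings of a $T_{\vec{n},\vec{w}}$ quiver by quiver automorphisms are those that cyclically permute a collection of identical tails (same length $n$ and same weight $w$); call this automorphism $\sigma$ and let $k$ be the number of tails being folded together, so that the node-groups are the $\sigma$-orbits. The middle two nodes are fixed by $\sigma$, and distinct tails share no arrows, so the commutativity hypothesis of Section \ref{sec:FoldingQuivers} is preserved under any sequence of group mutations. The folded quiver is therefore a $T_{\vec{n}',\vec{w}'}$ quiver in which one tail of length $n$ and weight $kw$ replaces the $k$ folded tails.

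Next I would identify the cluster modular group of the folded quiver as the subquotient of $\Gamma_\tau \rtimes \Aut{T_{\vec{n},\vec{w}}}$ consisting of $\sigma$-equivariant mutation paths modulo those that act trivially after folding. For the automorphism factor, the subgroup of $\sigma$-commuting elements of $\Aut{T_{\vec{n},\vec{w}}}$ maps onto $\Aut{T_{\vec{n}',\vec{w}'}}$ with kernel $\langle \sigma \rangle$, since the folded quiver has one tail of length $n$ and weight $kw$ in place of $k$ interchangeable tails; this is exactly the asserted reduction of the automorphism group. For the twist factor, the generator $\gamma$ is $\sigma$-fixed and descends unchanged. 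Remark \ref{rem:FoldingTwists} already computes the image of the product $\tau_{i_1}\tau_{i_2}\cdots\tau_{i_k}$, which is the only $\sigma$-invariant combination of twists on the folded tails, in the cases $kw \in \{2,3\}$: it is exactly the twist of the folded tail. I would extend this calculation to the remaining admissible weights by writing out the mutation sequence defining each $\tau_{i_j}$ and observing that simultaneously performing the $\sigma$-orbit of these mutations produces precisely the group mutation pattern at the folded tail, then applying the termwise folding rule of Section \ref{sec:FoldingQuivers}.

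Finally I would check that the relations $\tau^n = \gamma^{kw}$ on the folded side follow immediately from $\tau_{i_j}^n = \gamma^w$ on the unfolded side by multiplying across the $k$ folded tails, so that no new relations arise beyond the identification of twists. The main obstacle is the weight boundary: when $kw > 3$ the folded tail carries no twist by Definition \ref{def:tau}, so one must verify that in this regime the product $\tau_{i_1}\cdots\tau_{i_k}$ either fails to descend (because the naive lifted mutation path introduces arrows of weight greater than $2$) or is absorbed into $\gamma$ and the automorphism factor, consistent with Remark \ref{rem:FoldingTwists}. This edge analysis, together with the case-by-case verification for the non-standard foldings depicted in Figure \ref{fig:nonstandardFolding}, is the technical core of the argument.
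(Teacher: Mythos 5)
Your argument is correct and follows essentially the same route as the paper: the paper's entire proof is the citation of Remark \ref{rem:FoldingTwists}, namely that a weight $2$ or $3$ twist equals the simultaneous twists of the corresponding number of equal-length unfolded tails, which is exactly the key step of your proposal. The additional bookkeeping you supply (the subquotient structure, the descent of the relations $\tau^{n}=\gamma^{kw}$, and the weight boundary) is left implicit in the paper, and note that the exotic foldings of Figure \ref{fig:nonstandardFolding} are not tail foldings and therefore lie outside the scope of this lemma.
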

\begin{proof}
This follows from \Cref{rem:FoldingTwists} that weight 2 or 3 twists are equivalent to simultaneous twists of the corresponding number of equal length tails. Finally \Cref{thm:FoldingClusterModularGroups} shows that there are no extra elements of the folded cluster modular group.
\end{proof}
\begin{proof}[Proof of \Cref{thm:AffineModularGroup} for non simply laced diagrams]
To prove each non simply laced affine $\T$ corresponded to an affine diagram, we gave an explicit folding of each simply laced $T_{\vec{n},\vec{1}}$ quiver and so the previous lemma applies.
\end{proof}

\begin{lemma}\label{thm:ApqRecurrence}
    $A_{p+1,q} = \displaystyle{2\sum_{i=0}^{p-1}C_iA_{p-i,q} + qC_{p+q} }$.
    \end{lemma}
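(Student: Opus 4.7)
The plan is to use the correspondence from Section \ref{sec:SurfaceClusters}: clusters of $\mathcal{C}(A_{p+1,q})$ modulo $\langle\gamma\rangle$ correspond bijectively to tagged triangulations of the annulus $S = A_{p+1,q}$ (with outer marked points $u_1,\dots,u_{p+1}$ and inner marked points $w_1,\dots,w_q$), taken modulo the Dehn twist $\gamma$ about the hole. Throughout, $A_{p,q}$ denotes the number of such equivalence classes of triangulations.

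The main idea is to fix the distinguished outer marked point $u_{p+1}$ and partition triangulations according to the structure of the fan at $u_{p+1}$, that is, the cyclic sequence of arcs (including the two boundary edges $u_p u_{p+1}$ and $u_{p+1} u_1$) incident to $u_{p+1}$. I will condition on whether the first arc encountered in the fan, after one of the two boundary edges at $u_{p+1}$, goes to an outer vertex or to an inner vertex. The factor of $2$ in the recurrence arises from the two choices of boundary edge at $u_{p+1}$ from which to start reading the fan, and the two terms of the recurrence correspond to these two structural possibilities.

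\textbf{Case I (outer arc, yielding $2\sum_{i=0}^{p-1} C_i A_{p-i,q}$).} Suppose the first arc in the fan (after the chosen boundary edge at $u_{p+1}$) goes to another outer vertex $u_j$. Such an arc $u_{p+1}u_j$ (or, degenerately, the other boundary edge $u_{p+1}u_1$ yielding the corner triangle $u_1 u_p u_{p+1}$) cuts off a polygonal region with $i+2$ boundary vertices for some $0 \le i \le p-1$, triangulated in $C_i$ ways, and leaves a smaller annulus of type $A_{p-i,q}$, whose triangulations modulo $\gamma$ are counted by $A_{p-i,q}$. (The Dehn twist on $S$ restricts to the Dehn twist on the smaller annulus, since the polygon piece carries no winding information.) Summing over $i$ and the two choices of boundary edge yields the first term.

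\textbf{Case II (crossing arc, yielding $qC_{p+q}$).} Suppose all interior arcs at $u_{p+1}$ are crossing arcs to the inner boundary. Then we can cut $S$ along the crossing arc adjacent to $u_p u_{p+1}$ in the fan; this arc connects $u_{p+1}$ to some inner vertex $w_a$. Cutting along it turns $S$ into a disk with $p+q+2$ marked points on its boundary (each of $u_{p+1}$ and $w_a$ is split in two), which has $C_{p+q}$ triangulations. Crucially, the Dehn twist $\gamma$ fixes the inner boundary pointwise, so it changes the winding of crossing arcs but not their endpoints; consequently the choice of $a\in\{1,\dots,q\}$ is a genuine invariant of the $\gamma$-equivalence class, providing the factor of $q$.

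The main obstacle will be the bookkeeping in Case I: verifying that this partition is a bijection with the advertised multiplicities. In particular, the two choices of boundary edge at $u_{p+1}$ both detect the corner triangle $u_1 u_p u_{p+1}$, so one must check carefully that after summing, this contributes with the correct coefficient matching the $i = 0$ term $2 C_0 A_{p,q} = 2A_{p,q}$, and that the reindexing for the intermediate cases $j \in \{2,\dots,p-1\}$ produces the polygon triangulation count $C_i$ and smaller annulus type $A_{p-i,q}$ consistently from both sides.
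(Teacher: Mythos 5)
Your overall strategy (cut the annulus along a distinguished piece of the triangulation and induct, working modulo the Dehn twist) is the right one, but the specific decomposition you chose --- the fan at a distinguished \emph{vertex} $u_{p+1}$, read from either of its two boundary edges --- does not produce the stated recurrence, and the difficulties you flag as ``bookkeeping'' are fatal to this version of the argument rather than technicalities. The paper partitions by the unique triangle containing one fixed boundary \emph{edge} $e=o_1o_2$; the factor of $2$ then comes not from two choices of where to start reading a fan, but from the two positions this single triangle can occupy when its third vertex $v$ is an outer vertex (the polygon may be cut off by either of the two edges $o_1v$, $o_2v$, i.e.\ the triangle winds clockwise or counterclockwise around the hole). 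Your version over-counts: a triangulation whose fan at $u_{p+1}$ has outer arcs adjacent to \emph{both} boundary edges is detected by both readings and contributes twice to your first term (for example, in $A_{4,1}$ take the two arcs joining $u_2$ to $u_4$ that cut off the triangles $u_2u_3u_4$ and $u_4u_1u_2$ respectively, leaving an $A_{2,1}$ between them), and the corner triangle $u_1u_pu_{p+1}$ is likewise detected twice even though the set of triangulations containing it has size $A_{p,q}$, not $2A_{p,q}$; in the paper the coefficient $2$ on $C_0A_{p,q}$ instead counts two \emph{different} configurations (third vertex $o_3$ versus third vertex $o_{p+1}$), each once.

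The indexing also does not close up. An arc $u_{p+1}u_j$ in the fan cutting off an $(i+2)$-gon leaves an annulus with $p+1-i$ outer marked points, since the polygon and the annulus share the two vertices $u_{p+1}$ and $u_j$; this yields $C_iA_{p+1-i,q}$, not $C_iA_{p-i,q}$. In the paper's decomposition the polygon and the annulus are cut off by the two different non-distinguished edges of the triangle $T\ni e$, so $T$ sits between them and they share only the third vertex, which is exactly what makes the counts come out as an $(i+2)$-gon times $A_{p-i,q}$. Similarly, in your Case II, cutting along a single crossing arc produces a disk with $p+q+3$ marked points, not $p+q+2$, and its triangulations are not in bijection with the ones you want (you must additionally force the cut arc to be first in the fan and forbid outer arcs at the surviving copy of $u_{p+1}$). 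The paper avoids all of this by deleting the entire triangle containing $e$, both of whose non-boundary edges are crossing arcs to the same inner vertex $w_a$; what remains is genuinely a $(p+q+2)$-gon with no residual constraints, giving $qC_{p+q}$ directly. I recommend redoing the argument with the edge-based decomposition.
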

    \begin{proof}
    We can obtain this recurrence by partitioning the set of triangulations by the triangle that contains the edge between $o_1$ and $o_2$ on the outer boundary. The third vertex of the triangle can either be on the outer or inner boundary. If the third vertex is some $o$ the edges can either go clockwise or counterclockwise around the center. In either case it splits the annulus into a polygon with $i+2$ sides and an annulus with $p-i$ outer marked points and $q$ inner marked points. The triangulations of the polygon are fixed by $\gamma$ and there are $C_{i}$ ways to triangulate an $i+2$ gon. So there are $2 \sum\limits_{i=0}^{p-1} C_i A_{p-i,q}$ possible triangulations where the third vertex is on the outer boundary component.
    
    If the third vertex is on the inside there is only one possible triangle up to $\gamma$. Once this triangle is picked, it leaves a $p+q+2$ sided polygon regardless of which of the $q$ possible points we choose. So there are $qC_{p+1}$ ways in this case. See \Cref{fig:AnnulusTriangles} for a visual of all three cases.

    \begin{figure}
        \centering
        \includegraphics[width=.9\textwidth]{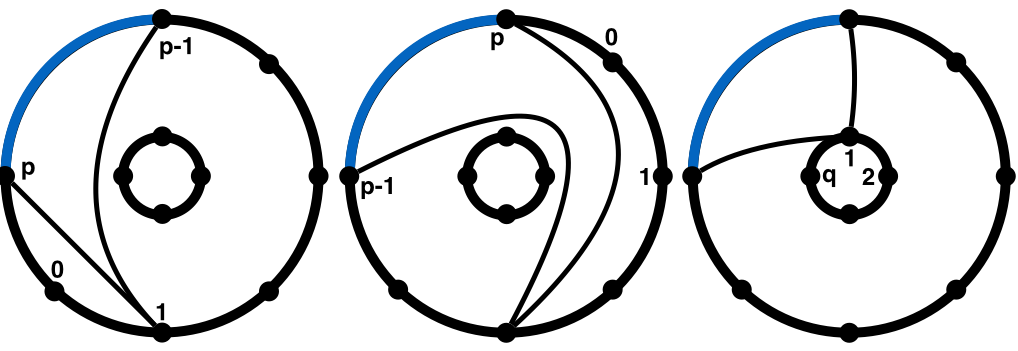}
        \caption{All kinds of triangles including the blue edge up to the action of the mapping class group.}
        \label{fig:AnnulusTriangles}
    \end{figure}
    
   \end{proof}

  \begin{lemma}\label{thm:DnRecurrence}
    $\Affine{D}_{n+1} = 2\sum\limits_{i=0}^{n-3} C_i \Affine{D}_{n-i} + 2 \sum\limits_{j=0}^{n}D_jD_{n-j} $
    \end{lemma}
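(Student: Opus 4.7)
The plan is to mimic the strategy of Lemma \ref{thm:ApqRecurrence}: fix a boundary edge $e$ of the twice-punctured disk $S$ (which has $n-1$ boundary marked points, since we are computing $\Affine{D}_{n+1}$) and partition tagged triangulations of $S$ according to the triangle $T$ containing $e$, together with the topological type of the two bounding arcs of $T$ relative to the pair of punctures. The two summands in the recurrence will correspond respectively to the two possibilities for the third vertex of $T$.

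The first sum, $2\sum_{i=0}^{n-3} C_i \Affine{D}_{n-i}$, should arise when the third vertex of $T$ is a boundary marked point. Because $T$ must be simply connected, its two bounding arcs collectively enclose either zero or both punctures, never exactly one; these two isotopy types give the factor of $2$. In either configuration, the complement of $T$ splits along one of the bounding arcs into a polygon containing no puncture (triangulated in $C_i$ ways) and a twice-punctured subdisk (triangulated in $\Affine{D}_{n-i}$ ways), and summing over the position of the third vertex recovers the first sum. The second sum, $2 \sum_{j=0}^{n} D_j D_{n-j}$, should arise when the third vertex of $T$ is a puncture. The tagged-arc structure around this puncture-corner, in particular the link loop (or its tagged-arc analogue) separating the two punctures, decomposes $S$ into two once-punctured subdisks with $j$ and $n-j$ boundary marked points, triangulated in $D_j$ and $D_{n-j}$ ways respectively. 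The factor of $2$ accounts for the choice of which of the two punctures is the apex of $T$.

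The main obstacle is the puncture-vertex case, where tagged arcs, notched arcs, and potential self-folded configurations at the punctures must be tracked carefully so that each tagged triangulation of $S$ is counted exactly once with the correct multiplicity and the summation realizes the full stated index range (including the degenerate endpoints $j=0$ and $j=n$, which correspond to tagged loops based at a single boundary vertex). Once clean bijections are produced in both cases, adding the two contributions yields the claimed recurrence.
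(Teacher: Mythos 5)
Your overall strategy --- partition tagged triangulations by the triangle $T$ containing a fixed boundary edge --- is exactly the paper's, but your case analysis mis-assigns the terms of the recurrence in a way that breaks the proof. The central error is the claim that when the third vertex of $T$ is a boundary marked point, the two bounding arcs ``enclose either zero or both punctures, never exactly one.'' This is false: the two arcs from $o_1$ and $o_2$ to a boundary vertex $v$ can both pass \emph{between} the punctures, so that $T$ contains no puncture while the two complementary components each contain exactly one. This separating configuration is not pathological --- it is precisely the source of the terms $2\sum_{j=1}^{n-1}D_jD_{n-j}$, since each complementary component is then a once-punctured disk with $j$ resp.\ $n-j$ marked points (the factor $2$ coming from the two non-isotopic ways the triangle can pass between the punctures). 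By excluding it you lose the entire interior of the second sum.

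Correspondingly, your attempt to recover the full sum $2\sum_{j=0}^{n}D_jD_{n-j}$ from the puncture-apex case cannot work. When the apex of $T$ is a puncture, $T$ meets $\partial S$ only along the fixed edge, so its complement is a \emph{single} once-punctured disk with $n$ boundary marked points (the apex puncture now sitting on its boundary); this contributes $D_n=D_0D_n$ per choice of apex puncture, i.e.\ only the degenerate terms $j=0$ and $j=n$. There is no ``link loop separating the two punctures'' inside $T$ to induce a product decomposition $D_jD_{n-j}$, and no parameter $j$ ranging over $0,\dots,n$ in that case. To repair the argument you need the paper's three-way split for the boundary-apex case (punctures on one side, clockwise or counterclockwise, giving $2\sum_i C_i\Affine{D}_{n-i}$; punctures separated, giving $2\sum_{j=1}^{n-1}D_jD_{n-j}$) together with the puncture-apex case supplying only the endpoint terms.
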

    \begin{proof}
     As in the $\Affine{A}_n$ case we partition the triangulations based on the triangle containing a fixed boundary edge. In this case there are six cases up to a full twist around both punctures (\Cref{fig:TwicePuncturedTriangles}). The first two cases correspond to triangles with third vertex on the boundary with edges going around both punctures (clockwise or counter clockwise). In either case the triangle splits the region into a $i$ sided polygon and a twice punctured disk with  $n-i$ marked points. This covers the first summation in the recurrence.
     
     The next two cases correspond to triangle where the edges go between the punctures. If we label the $n-2$ marked points $1$ to $n-1$, the triangle between the punctures going to vertex $j$ splits the region into a punctured disk with $j$ marked points and one with $n-j$ marked points. This covers the terms $2\sum\limits_{j=1}^{n-1}D_j D_{n-j}$.
     
     The final two cases are the triangles with endpoint on a puncture. Up to the full twist there is only one way to reach each puncture. There is an additional tagged triangulation in each case. In any of these cases the remaining region is a disk with $n$ marked points. Since we took $D_0 = 1$ we can write the number of triangulations in this case as $D_0 D_n$ and $D_n D_0$ covering the missing terms in the second summation of the recurrence.

      \begin{figure}
        \centering
        \includegraphics[width=.9\textwidth]{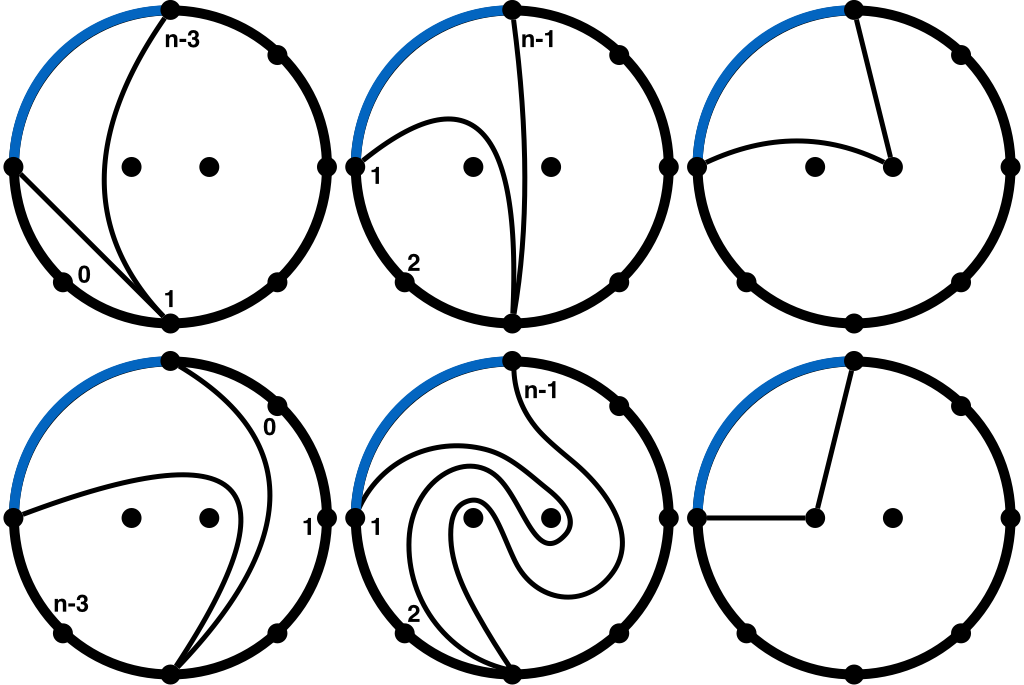}
        \caption{All kinds of triangles including the blue edge up to the action of the mapping class group}
        \label{fig:TwicePuncturedTriangles}
    \end{figure}
    
    \end{proof}

\end{document}